\documentclass[11pt,hidelinks]{article}

\usepackage{orcidlink}

\title{Crossover from subcritical to critical decay:
\\ random walk, self-avoiding walk, percolation
}

\author{
Yucheng Liu\,\orcidlink{0000-0002-1917-8330}\thanks{Beijing International Center for Mathematical Research,
	Peking University,
	Beijing, China 100871.
	\href{mailto:yliu135@pku.edu.cn}{yliu135@pku.edu.cn}.
	}
\and
Gordon Slade\,\orcidlink{0000-0001-9389-9497}\thanks{Department of Mathematics,
	University of British Columbia,
	Vancouver, BC, Canada V6T 1Z2.
	\href{mailto:slade@math.ubc.ca}{slade@math.ubc.ca}.
	}
}
\date{\vspace{-5ex}} 

\usepackage{amsmath, amssymb, amscd, amsthm, amsfonts}
\usepackage{graphicx} 
\usepackage{hyperref} 

\usepackage{booktabs}
\usepackage{xcolor}
\usepackage{ dsfont, bm}
\usepackage[title]{appendix}
\usepackage{comment}
\usepackage{enumerate}
\usepackage{enumitem}
\usepackage{cite}
\usepackage{theoremref}

\usepackage[textwidth=480pt,textheight=650pt,centering]{geometry} 

\usepackage{tikz}
\tikzset{every picture/.style={line width=0.75pt}}

\theoremstyle{plain}
\newtheorem{theorem}{Theorem}[section]
\newtheorem{lemma}[theorem]{Lemma}
\newtheorem{conjecture}[theorem]{Conjecture}
\newtheorem{proposition}[theorem]{Proposition}
\newtheorem{corollary}[theorem]{Corollary}
\newtheorem{definition}[theorem]{Definition}

\theoremstyle{definition}
\newtheorem{remark}[theorem]{Remark}
\newtheorem{example}[theorem]{Example}
\newtheorem{assumption}{Assumption}

\numberwithin{equation}{section}

\newcommand{\D}{\mathrm{d}}

\newcommand{\ie}{i.e.}

\newcommand{\eps}{\varepsilon}

\newcommand{\Z}{\mathbb{Z}}

\newcommand{\R}{\mathbb{R}}
\newcommand{\C}{\mathbb{C}}

\renewcommand{\P}{\mathbb{P}}
\newcommand{\T}{\mathbb{T}}

\newcommand{\mG}{\mathbb{G}}
\newcommand{\bg}{\mathbb{C}}

\newcommand{\Dcal}{\mathcal{D}}

\newcommand{\Ical}{\mathcal{I}}

\newcommand{\Qcal}{\mathcal{Q}}
\newcommand{\Lcal}{\mathcal{L}}
\newcommand{\Rcal}{\mathcal{R}}

\newcommand{\Ucal}{\mathcal{U}}

\newcommand{\Wcal}{\mathcal{W}}
\newcommand{\Zcal}{\mathcal{Z}}

\newcommand{\Rd}{{\mathbb R^d}}
\newcommand{\Zd}{{\mathbb Z^d}}
\newcommand{\Td}{{\mathbb T^d}}

\DeclareMathOperator{\arccosh}{arccosh}
\DeclareMathOperator{\arcsinh}{arcsinh}

\newcommand{\del}{\partial}
\newcommand{\grad}{\nabla}
\newcommand{\inv}{^{-1}}

\newcommand{\half}{\frac{1}{2}}

\newcommand{\1}{\mathds{1}}
\newcommand{\nl}{\nonumber \\}

\renewcommand{\Re}{\mathrm{Re}\,}
\renewcommand{\Im}{\mathrm{Im}\,}

\providecommand{\abs}[1]{\lvert#1\rvert}
\providecommand{\bigabs}[1]{\bigl\lvert#1\bigr\rvert}
\providecommand{\Bigabs}[1]{\Bigl\lvert#1\Bigr\rvert}
\providecommand{\biggabs}[1]{\biggl\lvert#1\biggr\rvert}

\providecommand{\norm}[1]{\lVert#1\rVert}
\providecommand{\bignorm}[1]{\bigl\lVert#1\bigr\rVert}
\providecommand{\Bignorm}[1]{\Bigl\lVert#1\Bigr\rVert}
\providecommand{\biggnorm}[1]{\biggl\lVert#1\biggr\rVert}

\newcommand{\mz}{m_z}
\newcommand{\mS}{m_S}
\renewcommand{\mp}{m_p}

\newcommand{\mux}{{\mu_{\hat x}}}
\newcommand{\muxz}{{\mu_{\hat x,z}}}
\newcommand{\etax}{{\eta_{\hat x}}}

\newcommand{\supmu}{^{(\mu)}}
\newcommand{\supmux}{^{(\mu_{\hat x})}}
\newcommand{\supmuxz}{^{(\mu_{\hat x,z})}}

\newcommand{\supk}[1]{^{(#1)}}

\newcommand{\supzero}{^{(0)}}

\newcommand{\hatx}{{\hat x}}
\newcommand{\hatxz}{{\hat x,z}}
\newcommand{\conv}{\mathrm{Conv}}
\newcommand{\rad}{\mathrm{rad}}

\newcommand{\lam}{\lambda}
\newcommand{\Lam}{\Lambda}

\newcommand{\bubble}{{\sf B}}
\newcommand{\B}{\bubble}
\newcommand{\Bsupmu}{\bubble^{(2\mu)}}
\newcommand{\Triangle}{{\sf T}}
\newcommand{\w}{\hat{b}}

\definecolor{darkmagenta}{rgb}{0.55, 0.0, 0.55}

\newcommand{\nnb}{\nonumber\\}

\newcommand{\veee}[1]{|\!|\!|#1|\!|\!|}

\providecommand{\nnnorm}[1]{\veee {#1}}
\newcommand{\const}{\mathrm{const}}

\providecommand{\floor}[1]{\lfloor #1 \rfloor}

\providecommand{\integer}[1]{\floor{#1}}
\providecommand{\Biginteger}[1]{\Bigl\lfloor #1 \Bigr\rfloor}
\providecommand{\fractional}[1]{\mathrm{frac}(#1)}

\newcommand{\Dnn}{P}
\newcommand{\KIR}{K_{\mathrm{IR}}}

\newcommand{\GL}{S}
\newcommand{\GQ}{ G_{\Jsupmu,g} }
\newcommand{\Jsupmu}{Q}
\newcommand{\g}{h}
\newcommand{\so}{s_0}
\newcommand{\zetao}{{\zeta_0}}
\newcommand{\sym}{{p,\mathrm{sym}}}
\newcommand{\sat}{{{\rm sat}}}

\newcommand{\Casy}{B}

\newcommand{\SAWOne}{
\begin{tikzpicture}[x=0.75pt,y=0.75pt,yscale=-1,xscale=1]

\draw    (140,120) -- (180,120) ;
\filldraw[fill=white] (140,120) circle (0pt) node[left]{$0$};
\draw    (140,120) -- (160.4,85.4) ;
\draw    (160.4,85.4) -- (200.4,85.4) ;
\draw    (160.4,85.4) -- (180,120) ;
\draw    (180,120) .. controls (179.41,117.72) and (180.26,116.28) .. (182.54,115.69) .. controls (184.82,115.1) and (185.67,113.67) .. (185.08,111.39) .. controls (184.49,109.11) and (185.34,107.67) .. (187.62,107.08) .. controls (189.9,106.49) and (190.75,105.05) .. (190.16,102.77) .. controls (189.57,100.49) and (190.42,99.05) .. (192.7,98.46) .. controls (194.98,97.87) and (195.83,96.44) .. (195.24,94.16) .. controls (194.65,91.88) and (195.5,90.44) .. (197.78,89.85) .. controls (200.06,89.26) and (200.91,87.82) .. (200.32,85.54) -- (200.4,85.4) -- (200.4,85.4) ;
\filldraw[fill=white] (160.4,85.4) circle (0pt) node[above]{$a$};
\filldraw[fill=white] (200.4,85.4) circle (0pt) node[above]{$c$};
\draw  [draw opacity=0] (140,119.84) .. controls (140.06,105) and (148.19,92.07) .. (160.24,85.21) -- (180,120) -- cycle ; \draw   (140,119.84) .. controls (140.06,105) and (148.19,92.07) .. (160.24,85.21) ;
\draw  [draw opacity=0] (280.4,85.4) .. controls (280.36,100.36) and (272.11,113.39) .. (259.91,120.22) -- (240.4,85.3) -- cycle ; \draw   (280.4,85.4) .. controls (280.36,100.36) and (272.11,113.39) .. (259.91,120.22) ;
\draw    (183.2,81.8) -- (178.8,89.4) ;
\draw    (162.8,116.4) -- (158.4,124) ;
\draw    (180,120) .. controls (181.67,118.33) and (183.33,118.33) .. (185,120) .. controls (186.67,121.67) and (188.33,121.67) .. (190,120) .. controls (191.67,118.33) and (193.33,118.33) .. (195,120) .. controls (196.67,121.67) and (198.33,121.67) .. (200,120) .. controls (201.67,118.33) and (203.33,118.33) .. (205,120) .. controls (206.67,121.67) and (208.33,121.67) .. (210,120) .. controls (211.67,118.33) and (213.33,118.33) .. (215,120) .. controls (216.67,121.67) and (218.33,121.67) .. (220,120) -- (220,120) ;
\filldraw[fill=white] (180,120) circle (0pt) node[below]{$b$};
\filldraw[fill=white] (220,120) circle (0pt) node[below]{$d$};
\draw    (220,120) -- (240.4,85.4) ;
\draw    (200.4,85.4) -- (240.4,85.4) ;
\draw    (200.4,85.4) -- (220,120) ;
\draw    (223.2,81.8) -- (218.8,89.4) ;
\draw    (240.4,85.4) -- (280.4,85.4) ;
\filldraw[fill=white] (280.4,85.4) circle (0pt) node[right]{$x$};
\draw    (259.91,120.22) -- (280.31,85.62) ;
\draw    (220,120) -- (260,120) ;
\draw    (240.4,85.3) -- (260,119.9) ;
\draw    (262.2,81.8) -- (257.8,89.4) ;
\draw    (243.2,115.8) -- (238.8,123.4) ;
\end{tikzpicture}
}

\newcommand{\SAWTwo}{
\begin{tikzpicture}[x=0.75pt,y=0.75pt,yscale=-1,xscale=1]

\draw    (140,120) -- (180,120) ;
\filldraw[fill=white] (140,120) circle (0pt) node[left]{$0$};
\draw    (140,120) -- (160.4,85.4) ;
\draw    (160.4,85.4) -- (200.4,85.4) ;
\draw    (160.4,85.4) -- (180,120) ;
\draw    (180,120) -- (200.4,85.4) ;
\draw  [draw opacity=0] (140,119.84) .. controls (140.06,105) and (148.19,92.07) .. (160.24,85.21) -- (180,120) -- cycle ; \draw   (140,119.84) .. controls (140.06,105) and (148.19,92.07) .. (160.24,85.21) ;
\draw  [draw opacity=0] (280.4,85.4) .. controls (280.36,100.36) and (272.11,113.39) .. (259.91,120.22) -- (240.4,85.3) -- cycle ; \draw   (280.4,85.4) .. controls (280.36,100.36) and (272.11,113.39) .. (259.91,120.22) ;
\draw    (183.2,81.8) -- (178.8,89.4) ;
\draw    (162.8,116.4) -- (158.4,124) ;
\draw    (180,120) .. controls (181.67,118.33) and (183.33,118.33) .. (185,120) .. controls (186.67,121.67) and (188.33,121.67) .. (190,120) .. controls (191.67,118.33) and (193.33,118.33) .. (195,120) .. controls (196.67,121.67) and (198.33,121.67) .. (200,120) .. controls (201.67,118.33) and (203.33,118.33) .. (205,120) .. controls (206.67,121.67) and (208.33,121.67) .. (210,120) .. controls (211.67,118.33) and (213.33,118.33) .. (215,120) .. controls (216.67,121.67) and (218.33,121.67) .. (220,120) -- (220,120) ;
\filldraw[fill=white] (180,120) circle (0pt) node[below]{$b$};
\filldraw[fill=white] (220,120) circle (0pt) node[below]{$d$};
\draw    (220,120) -- (240.4,85.4) ;
\draw    (200.4,85.4) -- (240.4,85.4) ;
\draw    (200.4,85.4) .. controls (202.67,86.03) and (203.49,87.48) .. (202.86,89.75) .. controls (202.23,92.02) and (203.06,93.47) .. (205.33,94.1) .. controls (207.6,94.73) and (208.42,96.18) .. (207.79,98.45) .. controls (207.16,100.72) and (207.99,102.17) .. (210.26,102.8) .. controls (212.53,103.43) and (213.35,104.88) .. (212.72,107.15) .. controls (212.09,109.42) and (212.92,110.87) .. (215.19,111.5) .. controls (217.46,112.13) and (218.28,113.58) .. (217.65,115.85) -- (220,120) -- (220,120) ;
\filldraw[fill=white] (200.4,85.4) circle (0pt) node[above]{$a$};
\filldraw[fill=white] (240.4,85.4) circle (0pt) node[above]{$c$};
\draw    (223.2,81.8) -- (218.8,89.4) ;
\draw    (240.4,85.4) -- (280.4,85.4) ;
\filldraw[fill=white] (280.4,85.4) circle (0pt) node[right]{$x$};
\draw    (259.91,120.22) -- (280.31,85.62) ;
\draw    (220,120) -- (260,120) ;
\draw    (240.4,85.3) -- (260,119.9) ;
\draw    (262.2,81.8) -- (257.8,89.4) ;
\draw    (243.2,115.8) -- (238.8,123.4) ;
\end{tikzpicture}
}

\newcommand{\PercH}{
\begin{tikzpicture}[x=0.75pt,y=0.75pt,yscale=-1.4,xscale=1.4]

\filldraw[fill=white] (70,110) circle (0pt) node[left]{$z$};
\draw    (70,110) -- (90,110) ;
\filldraw[fill=white] (90,109) circle (0pt) node[below]{$0$};
\draw    (90,110) -- (100,95) ;
\filldraw[fill=white] (110,111) circle (0pt) node[below]{$x$};
\draw    (90,110) -- (110,110) ;
\filldraw[fill=white] (99,94) circle (0pt) node[right]{$w$};
\draw    (100,95) -- (110,110) ;
\filldraw[fill=white] (130,110) circle (0pt) node[right]{$y$};
\draw    (110,110) -- (130,110) ;
\filldraw[fill=white] (100,80) circle (0pt) node[above]{$v$};
\draw    (100,80) -- (100,95) ;
\filldraw[fill=white] (70,80) circle (0pt) node[left]{$z+b$};
\draw    (70,80) .. controls (71.67,78.33) and (73.33,78.33) .. (75,80) .. controls (76.67,81.67) and (78.33,81.67) .. (80,80) .. controls (81.67,78.33) and (83.33,78.33) .. (85,80) .. controls (86.67,81.67) and (88.33,81.67) .. (90,80) .. controls (91.67,78.33) and (93.33,78.33) .. (95,80) .. controls (96.67,81.67) and (98.33,81.67) .. (100,80) -- (100,80) ;
\filldraw[fill=white] (130,80) circle (0pt) node[right]{$y+c$};
\draw    (100,80) .. controls (101.67,78.33) and (103.33,78.33) .. (105,80) .. controls (106.67,81.67) and (108.33,81.67) .. (110,80) .. controls (111.67,78.33) and (113.33,78.33) .. (115,80) .. controls (116.67,81.67) and (118.33,81.67) .. (120,80) .. controls (121.67,78.33) and (123.33,78.33) .. (125,80) .. controls (126.67,81.67) and (128.33,81.67) .. (130,80) -- (130,80) ;
\end{tikzpicture}
}

\begin{document}
\maketitle

\begin{abstract}
The study of the Ornstein--Zernike decay of subcritical two-point functions in equilibrium statistical mechanics has a history going back over a century.
Despite this, the crossover from Ornstein--Zernike decay to critical power-law decay has received scant attention in the literature.
We prove a general theorem which, under appropriate hypotheses, identifies the asymptotic behaviour
of the solution to an Ornstein--Zernike equation
on $\mathbb{Z}^d$
as that of the Green function for Brownian motion with drift,
multiplied by an anisotropic exponentially decaying factor.
The theorem applies to a wide class of
random walks,
to nearest-neighbour self-avoiding walk in dimensions $d \ge 5$,
and to nearest-neighbour percolation in
dimensions $d \ge 15$.
Wide-ranging consequences follow, including details of the crossover
from Ornstein--Zernike to critical decay on the scale of the correlation length, and the fact
that all finite-order correlation lengths are equivalent up to 
universal constants.
The proof is based on a variational characterisation of the direction-dependent rate of exponential decay
and a major extension of Hara's 2008 Gaussian Lemma to noncentred kernels.
\end{abstract}

%
\setcounter{tocdepth}{2}
\tableofcontents

\section{Introduction}
\label{sec:introduction}

\subsection{Ornstein--Zernike versus critical decay}
\label{sec:OZintro}

Let $\g,J,\GL$ be real-valued functions defined on the integer lattice $\Z^d$.
Convolution equations of the form
\begin{equation} \label{eq:OZeq-intro}
    \GL = \g + J*\GL
\end{equation}
are ubiquitous in mathematics and physics.  Our first purpose is to
determine the long-distance asymptotic behaviour of the solution $\GL$
to \eqref{eq:OZeq-intro}, under appropriate assumptions on $\g$ and $J$.

Formally, the solution to \eqref{eq:OZeq-intro} is given by
$\GL = (\delta_0 -J)^{-1}\g$,
where $\delta_0(x) = \delta_{0,x}  = \1_{x=0}$ is the Kronecker delta.
When $\g = \delta_0$,
the solution $G = ( \delta_0-J)^{-1}\delta_0$ is the Green function for the
operator $\delta_0-J$.  The solution to the general case \eqref{eq:OZeq-intro} is then $\GL=G*\g$.  From this perspective, our first purpose is to determine the
asymptotic behaviour of the Green function for $\delta_0-J$.  For example,
if $J(x)=\frac{z}{2d}\1\{\|x\|_1=1\}$ with $z\le 1$, then $G$ is the Green function for the
lattice Laplacian (the massive Laplacian when $z<1$).

Following the pioneering work of Ornstein and
Zernike from the early 1900s \cite{OZ14,Zern16}, we are particularly
interested in determining conditions
on $\g$ and $J$ which guarantee that $\GL$ decays as
\begin{equation} \label{eq:OZdecay-intro}
    \GL(x) \sim
      \frac{ c_{\hat x} }{ \abs x_S^{(d-1)/2} } e^{- m_S \abs x_S } .
\end{equation}
This \emph{Ornstein--Zernike} (OZ) decay in \eqref{eq:OZdecay-intro}
involves a constant factor $c_{\hat x}$ which depends only on the
direction of $x$, a norm $|\cdot|_S$ on $\R^d$ which
satisfies $\|x\|_\infty \le |x|_S \le \|x\|_1$,
and a \emph{mass} $m_S >0$ (also called inverse \emph{correlation length}).

More generally, we consider the case where $\g,J,\GL$ depend on a
parameter $z \in [0,z_c]$, so that
\begin{equation} \label{eq:OZeq-z-intro}
    \GL_z = \g_z + J_z*\GL_z.
\end{equation}
Examples we study in detail are:
\begin{enumerate}
\item[(i)]
    The Green function $\GL_z(x)=\sum_{n=0}^\infty\P(X_n=x) z^n$ for an
     irreducible random walk $(X_n)_{n \ge 0}$ on $\Zd$ with super-exponentially decaying
    transition kernel $D(x)= \P(X_1=x)$.
    The critical value is $z_c=1$.  For $d>2$ and $z\in [0,z_c]$, and
    for $d\le 2$ and $z \in [0,z_c)$,
    $\GL_z$ obeys \eqref{eq:OZeq-z-intro} with $\g_z  = \delta_0$
    and $J_z=zD$, by the Markov property.
\item[(ii)]
    The two-point function $\GL_z(x) = \sum_{n=0}^\infty
    c_n(x)z^n$
    for  the nearest-neighbour self-avoiding walk,
    where $c_n(x)$ is the number of $n$-step self-avoiding walks
    from $0$ to $x$, in dimensions $d \ge 5$.  There is a critical point $z_c$ such that
    $\GL_z$ obeys \eqref{eq:OZeq-z-intro} for $z\in [0,z_c]$,
    with $\g_z = \delta_0$ and $J_z=2dzP+\Pi_z$, where
    $P(x)=\frac{1}{2d}\1\{\|x\|_1=1\}$ and $\Pi_z(x)$ is an explicit function
    given by the lace expansion \cite{BS85,MS93}.
\item[(iii)]
    The two-point function $\GL_z(x) = \P_z(0 \leftrightarrow x)$
    for nearest-neighbour bond percolation, with occupation probability $z$
    in dimensions $d \ge 15$.
    There is a critical point $z_c$ such that
    $\GL_z$ obeys \eqref{eq:OZeq-z-intro} for $z\in [0,z_c]$, with $\g_z$ an explicit
    function given by the lace expansion and $J_z= 2dzP*\g_z$
    \cite{HS90a}.
\end{enumerate}
In all three examples,
the mass $m_{S_z}$ of $S_z$ vanishes at the critical point $z_c$,
and the decay of $\GL_{z_c}$
(with $d>2$ for random walk) is
\begin{equation} \label{eq:Scrit}
    \GL_{z_c}(x) \sim \frac{ \const }{\|x\|_2^{d-2}}.
\end{equation}
In particular, except for a coincidence when $d=3$ (when $\frac{d-1}{2}=d-2$), the critical
decay \eqref{eq:Scrit} does not correspond to setting the mass equal to zero in
the OZ decay \eqref{eq:OZdecay-intro}.
Our second purpose, in addition to determining conditions on $\g_z$ and
$J_z$ which lead to OZ decay for $\GL_z$ when $z<z_c$, is to explain
the crossover between the subcritical OZ decay \eqref{eq:OZdecay-intro} and the critical decay \eqref{eq:Scrit}.

In our main result,
we determine hypotheses on $\g_z$ and $J_z$ under which both
goals are achieved.
These hypotheses are satisfied in each of the above three examples.
In particular, we prove that
\begin{equation}  \label{eq:crossover-intro}
\GL_z(x)  \asymp
	\frac{  \max\{ 1 , m_z \abs x_z \} ^{ (d-3)/2} } { \abs x_z^{d-2} }
	e^{- m_z \abs x_z }
\end{equation}
for all large $\abs x$ in dimensions $d>2$.
We also prove that the subcritical norm $|\cdot|_z$ (anisotropic due to lattice effects) converges to the Euclidean norm $\| \cdot \|_2$  as $z\to z_c$.
This gives the critical decay with power $d-2$ when $\abs x_z$ is below the correlation length $\mz\inv$, and gives the OZ power $\frac{d-1}{2}$ when $\abs x_z$ exceeds the correlation length.
Moreover, our results provide
not just upper and lower bounds with different constants (the meaning
of the notation ``$\asymp$'' above), but also provide exact asymptotic
formulas which give fully detailed information about the crossover.

For example, it follows from our asymptotic result that, for every $\phi > 0$, the correlation length $\xi_\phi(z)$ of order $\phi$ (defined in \eqref{eq:xidef}) obeys the asymptotic formula
\begin{equation}
\xi_\phi(z)   \sim    \frac {a_\phi} {m_z}
	\qquad (z\to z_c)
\end{equation}
with an explicit constant $a_\phi  > 0$ that depends only on $\phi$ and $d$. This implies that all finite-order correlation lengths are equivalent to $m_z\inv$ (``the'' correlation length) and to each other.

There is an extensive literature on OZ decay in the subcritical regime.
In dimensions $d \ge 2$,
the OZ decay \eqref{eq:OZdecay-intro}
has been proved for self-avoiding walk \cite{CC86,CC86b,MS93,Ioff98,IV08},
for percolation \cite{CCC91,CI02},
for the Ising model \cite{CIV03},
and for random cluster models \cite{CIV08,DM26}.
There is also an extensive literature on the critical decay \eqref{eq:Scrit} for high-dimensional statistical-mechanical models.
Most relevant for our results are the fact that \eqref{eq:Scrit} is proved, using the lace expansion, for self-avoiding walk in dimensions $d \ge 5$ \cite{HS92a,Hara08}, and for percolation in dimensions $d \ge 11$ \cite{HS90a,Hara08,FH17}, but the literature is much larger (e.g., \cite{HHS03,Saka07,BHH21,FH21,LS24a,LS24b,DL26}).  In this paper, we give new proofs of OZ
decay for self-avoiding walk in dimensions $d \ge 5$ and percolation in
dimensions $d \ge 15$,
for $z \in [z_c-\delta,z_c)$ for some
$\delta>0$.  These new proofs reveal, for the first time, the crossover present in \eqref{eq:crossover-intro} in these two contexts.
We do not attempt to apply our methods to the high-dimensional Ising
or $|\varphi|^4$ models, or to lattice trees and lattice animals, but we expect that
they should apply when combined with the lace expansions for these models
\cite{HS90b,Saka07,BHH21,Saka15}.

Our results also include a proof of \eqref{eq:crossover-intro} and its more exact asymptotic formula for a large class of random walks.
This class includes the simple random walk, whose Green function is $\GL_z^{\rm SRW}(x)=\sum_{n=0}^\infty z^n P^{*n}(x)$, where $P(x)=\frac{1}{2d}\1\{\|x\|_1=1\}$ is the nearest-neighbour transition kernel.  Explicitly, for $d \ge 1$, $z \in [0,1)$,
and $x\in\Z^d$,
\begin{equation} \label{eq:lattice_Green_fcn}
    \GL_z^{\rm SRW}(x)
    = \int_{[-\pi,\pi]^d} \frac{\D k}{(2\pi)^d}  \frac{e^{ik\cdot x}}{1-z\hat P(k)}
    \quad
    \text{with} \quad
    \hat P(k)=\frac 1d \sum_{j=1}^d \cos k_j .
\end{equation}
A version of \eqref{eq:crossover-intro} was proved in \cite{MS22} for $\GL_z^{\rm SRW}(x)$,
by relying heavily on an explicit representation of
$\GL_z^{\rm SRW}(x)$ as a one-dimensional integral of the modified Bessel function of the first kind, together
with known properties of Bessel functions.
The method of \cite{MS22} does not generalise to our setting.
Our approach is also based on an analysis of Fourier integrals:
we extend Hara's Gaussian Lemma \cite{Hara08}
to noncentred kernels.

We expect, but do not prove, that when the critical
decay has the more general power $\|x\|_2^{-(d-2+\eta)}$,
the crossover bounds
\eqref{eq:crossover-intro} become
\begin{equation} \label{eq:crossover-general}
\GL_z(x)  \asymp
	\frac{  \max\{ 1 , m_z \abs x_z \} ^{ \eta + (d-3)/2} } { \abs x_z^{d-2 + \eta} }
	e^{- m_z \abs x_z }  .
\end{equation}
For the $2$-dimensional Ising model at inverse temperature $z<z_c$, thanks to an exact solution
for the high-temperature spin-spin correlation function, \eqref{eq:crossover-general}  has been proved with $\eta=\frac 14$
and $m_z \sim c(z_c-z)^{1}$  \cite[Theorem~2.7.2]{Palm07} (the norm is as in \eqref{eq:normSRW}, see \cite{Mess06}).
Recently, a formula similar to \eqref{eq:crossover-general} has been established for the 2-dimensional
random cluster model with $1 \le q < 4$ \cite{DM26}.  The results of \cite{DM26} are less explicit than \eqref{eq:crossover-general}, since the critical exponent $\eta$ is not generally known to exist; the crossover is instead captured via the one-arm probability.

It is tempting to expect that OZ decay will apply whenever subcritical $\g_z$ and $J_z$ have a sufficient number of finite moments.  However, this is not the case for some infinite-range models, as has been discovered and studied in \cite{AIOV21,AIOV21PRE,AOV23,AOV24}.  The counterexamples to OZ decay are connected to a \emph{saturation} phenomenon in which the mass $m_z$ becomes constant for small $z$, rather than diverging to infinity as is common for finite-range
models.  The fact that saturation is excluded from our theory
is discussed in  Section~\ref{sec:counterx}.

\smallskip \noindent \textbf{Notation.}
We write $a \vee b = \max \{ a , b \}$ and $a \wedge b = \min \{ a , b \}$.
We write $f = O(g)$ or $f \lesssim g$ to mean that there exists a constant $C> 0$ such that $\abs {f} \le C\abs {g}$,
write $f \asymp g$ to mean that $f \lesssim g$ and $g \lesssim f$,
write $f\sim g$ to mean that $\lim f/g=1$,
and write $f= o(g)$
to mean that $\lim f/g = 0$.
We use the two notations
\begin{equation}
\abs x = \norm x_2
\end{equation}
for the Euclidean norm on $\Rd$,
with $\norm x_2$ favoured
when the context requires more clarity.
The standard basis vectors of $\Z^d$ are denoted $e_1,\ldots,e_d$.
A function $f$ on $\Z^d$ is said to be $\Z^d$-\emph{symmetric} if the value of $f(x)$ is unchanged when a component of $x$ is multiplied by $-1$ or when the components of $x$ are permuted.
The Fourier transform of $f:\Zd \to \R$ is the function $\hat f:\T^d\to \C$ defined by $\hat f(k)=\sum_{x\in \Zd}f(x) e^{-ik\cdot x}$; here $\T^d$ is the $d$-dimensional torus of period $2\pi$.
The convolution of $L^1$ functions $f,g:\Zd\to \R$ is $(f*g)(x)=\sum_{y\in\Zd} f(x-y)g(y)$.

\subsection{The Crossover Theorem}

In this section, we state our general crossover theorem, Theorem~\ref{thm:crossover}.

\subsubsection{Assumptions}

Our goal is to prove a precise asymptotic formula for a
$\Zd$-symmetric function $S : \Zd \to [0, \infty)$
which decays exponentially as $\abs x \to \infty$.
In order to identify the polynomial prefactor in the exponential decay, it is useful to
define the \emph{exponential tilt} of a function $f:\Zd\to \R$,
for $\mu \in \Rd$, by
\begin{equation}
    f\supmu(x) =f(x)e^{\mu\cdot  x} .
\end{equation}
The \emph{tilted susceptibility} $\chi\supmu$ is then defined by
\begin{equation}
    \chi\supmu = \sum_{x\in \Zd} \GL\supmu (x) .
\end{equation}
By symmetry,
\begin{equation}
\label{eq:chisym}
    \chi\supmu
    = \sum_{x\in \Zd} \GL(x)\cosh (\mu\cdot x)
    = \sum_{x\in \Zd} \GL(x) \prod_{j=1}^d \cosh (\mu_j x_j).
\end{equation}
We also define a set $\Omega\subset \R^d$ by
\begin{equation}
\label{eq:Omega-def}
\Omega = \bigl\{\mu\in \R^d :
	\chi\supmu
	< \infty \bigr\} .
\end{equation}
The closure $\overline{\Omega}$ of $\Omega$ is called the \emph{Wulff shape} (the terminology is justified in Section~\ref{sec:Wulff}).
The first assumption
says that $S(x)$ decays exponentially in an averaged sense.

\begin{assumption} \label{ass:Omega}
We assume $S:\Zd \to [0,\infty)$ is a $\Zd$-symmetric function
such that $\Omega$ is an open set containing $0$,
and such that $\mu_1 e_1 \not \in \Omega$ for some $\mu_1 > 0$.
\end{assumption}

The next lemma indicates some
 elementary properties that follow from Assumption~\ref{ass:Omega}.
We define
\begin{equation} \label{eq:def_mS}
\mS  = \sup \bigl\{ t \ge 0 : \chi \supk{ te_1} < \infty \bigr\} .
\end{equation}

\begin{lemma} \label{lem:Omega_convex}
Under Assumption~\ref{ass:Omega},
the set $\Omega$ is bounded and convex.
In addition, $\mS  \in (0,\mu_1]$ and the closure $\overline\Omega$ of $\Omega$
obeys
\begin{equation} \label{eq:Omega_incl}
    \bigl\{\mu\in \R^d : \|\mu\|_1 \le \mS  \bigr\}  \subset
    \overline \Omega
    \subset \bigl\{\mu\in \R^d : \|\mu\|_\infty \le \mS  \bigr\} .
\end{equation}
Also, if $\mu\in \partial \Omega$ then $\|\mu\|_\infty \le \mS  \le
\|\mu\|_1$ and $\chi \supmu = \infty$.
\end{lemma}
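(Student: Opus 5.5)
Convexity comes first, and I would get it from the representation \eqref{eq:chisym}. For each $x$ the map $\mu\mapsto \cosh(\mu\cdot x)$ is convex, and $S\ge 0$, so $\mu\mapsto\chi\supmu\in[0,\infty]$ is convex as a sum of nonnegative convex functions. Its sublevel set $\Omega=\{\chi\supmu<\infty\}$ is therefore convex. The same representation shows that $\Omega$ is invariant under sign changes and permutations of the coordinates, since $S$ is $\Zd$-symmetric and $\cosh$ is even. Next, monotonicity: $\cosh(\mu_j x_j)$ is nondecreasing in $|\mu_j|$, so $\chi\supmu$ is coordinatewise nondecreasing in $(|\mu_1|,\dots,|\mu_d|)$. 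Hence if $\mu\in\Omega$ and $|\nu_j|\le|\mu_j|$ for all $j$, then $\nu\in\Omega$.

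With these in hand I would treat $\mS$. Since $\Omega$ is open and contains $0$, we have $te_1\in\Omega$ for small $t>0$, so $\mS>0$. By monotonicity, $\mu_1e_1\notin\Omega$ forces $te_1\notin\Omega$ for all $t\ge\mu_1$, so $\mS\le\mu_1$. On the segment $\{te_1\}$ the set $\Omega$ is an interval, so $te_1\in\Omega$ exactly when $t<\mS$; since $\Omega$ is open, $\mS e_1\notin\Omega$.

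Both inclusions in \eqref{eq:Omega_incl} then follow. For the outer inclusion, suppose $\mu\in\Omega$ with $|\mu_j|>\mS$ for some $j$. By monotonicity, $|\mu_j|e_j\in\Omega$, and by permutation symmetry $|\mu_j|e_1\in\Omega$, which contradicts the previous paragraph. So $\Omega\subset\{\|\mu\|_\infty\le\mS\}$, and taking closures gives the outer inclusion; in particular $\Omega$ is bounded. For the inner inclusion, the points $\pm te_j$ lie in $\Omega$ for all $t<\mS$ by symmetry. Convexity then gives $\{\|\mu\|_1<\mS\}\subset\Omega$, since this set is the convex hull of $\{\pm te_j\}$ taken over $t<\mS$. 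Taking closures gives $\{\|\mu\|_1\le\mS\}\subset\overline\Omega$.

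Finally, let $\mu\in\partial\Omega$. Because $\Omega$ is open, $\mu\notin\Omega$, and so $\chi\supmu=\infty$. Also $\mu\in\overline\Omega$, so the outer inclusion gives $\|\mu\|_\infty\le\mS$. For the bound $\mS\le\|\mu\|_1$, I argue by contradiction: if $\|\mu\|_1<\mS$, then $\mu$ lies in the open set $\{\|\nu\|_1<\mS\}\subset\Omega$, shown above, so $\mu\in\Omega$, contradicting $\mu\in\partial\Omega$ with $\Omega$ open.

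I expect no real obstacle in this argument. The only points needing care are, first, treating $\chi\supmu$ as a $[0,\infty]$-valued convex function, so that finiteness is preserved under convex combinations; and second, using openness of $\Omega$ to place $\mS e_1$ and the whole of $\partial\Omega$ outside $\Omega$.
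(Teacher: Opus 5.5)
Your proof is correct and takes essentially the same route as the paper. Convexity comes from the convexity of $\mu\mapsto e^{\mu\cdot x}$ with $S\ge 0$; the outer inclusion comes from $\prod_j\cosh(\mu_jx_j)\ge\cosh(\mu_ix_i)$ together with $\Z^d$-symmetry; and the inner inclusion comes from symmetry plus convexity. Your explicit check that the open set $\{\|\nu\|_1<\mS\}$ lies in $\Omega$ is extra care the paper skips: it reads $\mS\le\|\mu\|_1$ on $\partial\Omega$ off the closed inclusion \eqref{eq:Omega_incl}, which tacitly needs exactly that interior statement.
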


\begin{proof}
Since $\GL(x) \ge 0$,
the convexity of $\Omega$ follows from the convexity of the map $\mu \mapsto e^{\mu \cdot x}$ for each $x$.
Since $\Omega$ is open and contains $0$ by Assumption~\ref{ass:Omega}, we have $\mS  > 0$.
The bound $\mS  \le \mu_1$ also follows from Assumption~\ref{ass:Omega}.

For the first inclusion of \eqref{eq:Omega_incl},
since $\mS  e_1 \in \overline \Omega$ by definition,
the $\Zd$-symmetry of $S$ implies that $\pm \mS  e_j \in \overline \Omega$ for all directions $j$. The first inclusion then follows by convexity of $\overline \Omega$.
For the second inclusion,
let $\mu = (\mu_1, \dots, \mu_d) \in \Rd$.
If $|\mu_i| > \mS $ for some $i$, then, by \eqref{eq:chisym},
\begin{equation}
\chi \supk \mu
\ge \sum_{x\in\Z^d} \GL(x) \cosh(\mu_i x_i)
= \infty ,
\end{equation}
so $\mu \not \in \Omega$.
This proves $\Omega \subset \{\mu : \|\mu\|_\infty \le \mS \}$, and the second inclusion follows by taking the closure.

Lastly, if $\mu\in \del \Omega$ then $\norm \mu_1 \ge \mS $ and $\norm \mu_\infty \le \mS $ by \eqref{eq:Omega_incl}.
We also know $\mu \not \in \Omega$, because $\Omega$ is open so $\chi \supmu = \infty$.
\end{proof}

To state our second and more substantial assumption, we first introduce a certain class of functions.
Functions in this class obey specific norm estimates,
as well as an \emph{infrared bound} \eqref{eq:Q_infrared}.

\begin{definition} \label{def:A}
Let $d \ge 1$ and let $M,\KIR,\zeta > 0$ be given.
We define $\Qcal_{M,\KIR,\zeta}$ to be the set of pairs of functions $(\Jsupmu,g)$,
each mapping $\Zd$ into $\R$, such that the following bounds apply:
\begin{gather}
\label{eq:Q_moments}
\norm{ \Jsupmu(y) }_{1} ,\,
\bignorm{ \abs y ^{2+\zeta}  \Jsupmu(y) }_{1}
	\le M ,
\\
\label{eq:Q_infrared}
\Re[ \hat \Jsupmu(0)  - \hat \Jsupmu(k) ]
	\ge \KIR \abs k ^2 	
	\qquad (k\in \Td) ,
\\
\label{eq:g_moments}
\norm{ g(y) }_{1} ,\,
	\bignorm{ \abs y ^{\zeta} g(y) }_{1}  ,\,
    \bignorm{ \abs y ^{2}  g(y) }_{ p_d \wedge 2  }
	\le M ,
\end{gather}
with $p_d=2$ for $d=1$ and $p_d=\frac{d}{d-2+(\zeta\wedge 2)}$ for $d \ge 2$.
For $d \ge 4$, we additionally assume that
\begin{equation} \label{eq:Q_d-1}
\bignorm{ \abs y^{d-1} \Jsupmu(y) }_{ \frac d 3 \wedge 2},\,
\bignorm{ \abs y^{d-1} g(y) }_{ 2}
	\le M .
\end{equation}
\end{definition}

\begin{remark} \label{rmk:Q}
The bounds \eqref{eq:Q_d-1} are invoked only in the proof of
Lemma~\ref{lem:It_decay}.
When $g=\delta_0$, as is the case in our applications to random walk and self-avoiding walk, the hypotheses on $g$ are vacuous.
In our application to percolation, we verify the last inequality in \eqref{eq:g_moments}
by proving that $\norm{ \abs y ^{2}  g(y) }_{1}\le M$; this is stronger since $p_d \ge 1$ and the $L^p(\Zd)$ norms are decreasing in $p$.
\end{remark}

For any function $\Jsupmu$ satisfying \eqref{eq:Q_moments}--\eqref{eq:Q_infrared},
we define a vector $\eta \in \Rd$ and a symmetric matrix matrix $\Lam \in \R^{d \times d}$ as the first and second moments of $\Jsupmu$,
namely
\begin{equation} \label{eq:def_Lambda-intro}
\eta_j = \sum_{y\in \Zd} y_j \Jsupmu(y),
\qquad
\Lambda_{j l } = \sum_{y\in \Zd} y_j y_l \Jsupmu(y) .
\end{equation}
It is a simple consequence of
\eqref{eq:Q_moments} that
$\abs{ \eta_j}, \abs{ \Lam_{jl} } \le M$
and $k\cdot \Lambda k \le M \norm k_1^2$ for all $k \in \R^d$.
By Taylor expansion in  \eqref{eq:Q_infrared},
we also have $\half k \cdot \Lam k \ge \half \KIR \abs k^2$ for small $k$.
This extends to all $k\in \Rd$ by homogeneity.
Thus, the matrix $\Lam$ is positive-definite, and we have
\begin{equation} \label{eq:quadratic_form}
k \cdot \Lam k \asymp \abs k^2 \asymp k \cdot \Lam\inv k
	\qquad (k\in \Rd) ,
\end{equation}
with constants depending only on $d,M,\KIR$.

\begin{assumption} \label{ass:J}
Let $d \ge 1$.  We assume that $\GL$ obeys the generalised OZ equation
\begin{equation} \label{eq:OZeq}
    \GL = \g + J*\GL ,
\end{equation}
with $\Zd$-symmetric functions $J,\g \in L^1(\Zd)$.
We assume further that there are $M,\KIR,\zeta > 0$ such that
the exponential tilts of $J,\g$ obey
$(J\supmu,\g\supmu) \in \Qcal_{M,\KIR,\zeta}$ uniformly in $\mu\in \overline\Omega$,
and that
\begin{equation} \label{eq:gnonzero}
\hat \g\supmu (0)  > 0
	\qquad (\mu \in \overline \Omega) .
\end{equation}
\end{assumption}

The importance of the tilted functions $(J\supmu,\g\supmu)$
in Assumption~\ref{ass:J} can be appreciated from the fact that, upon tilting,
the convolution equation \eqref{eq:OZeq} becomes
\begin{equation}
    \GL\supmu = \g\supmu + J\supmu *\GL\supmu.
\end{equation}
Most of our analysis relates to this tilted convolution equation.

\subsubsection{Geometry of $\Omega$}
\label{sec:Omega-geom-intro}

The following proposition is proved in Section~\ref{sec:rate}.
Its ingredients $\mux,\eta_\hatx, \Lambda_\hatx$
appear in the crossover theorem. See Figure~\ref{fig:geom}.

\begin{proposition} \label{prop:geom-intro}
Let $d\ge 1$.
Suppose that Assumptions~\ref{ass:Omega} and~\ref{ass:J}
hold, but without requiring that $\g\supmu,J\supmu$ satisfy
\eqref{eq:Q_d-1}.
Then the set $\overline \Omega$ is strictly convex, and every $\mu \in \overline \Omega$ obeys $\abs \mu^2 \le M/\KIR$.
For each $\mu \in \overline\Omega$, we have
\begin{equation} \label{eq:J1-intro}
    \sum_{y\in \Zd} J \supmu(y) \le 1
    \quad \text{with equality if and only if $\mu\in\partial\Omega$.}
\end{equation}
For each nonzero $ x \in \Rd$,
there is a unique vector $\mux \in \del \Omega$, depending only on the direction $\hat x = x / \abs x$ of $x$, such that
\begin{equation}
\label{eq:optimal-mu-intro}
\mu_{\hat x}\cdot x = \max_{\mu \in \overline\Omega }\, \mu\cdot x.
\end{equation}
The vector $\eta_\hatx \in \Rd$ with components
\begin{equation} \label{eq:eta_cond-intro}
\eta_{\hatx,j}
= \sum_{y\in \Zd} y_j J\supmux(y)
\end{equation}
satisfies $\eta_{\hat x} = |\eta_{\hat x}|\hat x$.
Also,
with the matrix $\Lam_\hatx$ defined by
$\Lambda_{\hat x, j l } = \sum_{y\in \Zd} y_j y_l J\supmux(y)$,
we have
\begin{equation} \label{eq:crossover-bds-intro}
\mu_\hatx \cdot x  \asymp \mS  \abs x ,
	\qquad
|\eta_{\hat x}| \asymp \mS ,
	\qquad
k \cdot \Lam_\hatx k \asymp \abs k^2
	\quad (k\in \Rd) ,
\end{equation}
with constants depending only on $d,M,\KIR$.
\end{proposition}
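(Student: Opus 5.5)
The plan is to study the function $f(\mu) = \hat J\supmu(0) = \sum_y J(y)e^{\mu\cdot y}$ on $\overline\Omega$ and show that $\overline\Omega = \{\mu : f(\mu)\le 1\}$, with $f$ uniformly strictly convex there. Everything else will be read off from this description.

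\emph{Step 1: the identity \eqref{eq:J1-intro}.} For $\mu\in\Omega$ we have $\chi\supmu<\infty$. Summing the tilted equation $\GL\supmu = \g\supmu + J\supmu*\GL\supmu$ over $x$ is legitimate, because $J\supmu\in L^1$ by Assumption~\ref{ass:J} and $\GL\supmu\ge 0$ is summable. This gives $\chi\supmu(1-f(\mu)) = \hat\g\supmu(0) > 0$, so $f(\mu)<1$ on $\Omega$.

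Now take $\mu\in\partial\Omega$. Since $\Omega$ is open, convex and contains $0$, the points $t\mu$ lie in $\Omega$ for $t\in[0,1)$. Monotone convergence (using $\cosh(t\mu\cdot x)\uparrow\cosh(\mu\cdot x)$ in \eqref{eq:chisym}) gives $\chi\supk{t\mu}\to\chi\supmu = \infty$ by Lemma~\ref{lem:Omega_convex}. The numerator $\hat\g\supk{t\mu}(0)$ is bounded by $M$, so $f(t\mu)\to 1$.

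To conclude $f(\mu)=1$ I need continuity of $f$ along the segment. This follows from dominated convergence with the bound $e^{t\mu\cdot y}\le 1+e^{\mu\cdot y}$, since $|J|+|J\supmu|$ is summable. The same domination, now with the weights $|y|^2$ and $|y|^{2+\zeta}$ from \eqref{eq:Q_moments}, justifies differentiating twice under the sum along segments inside $\overline\Omega$. This gives
\[
\nabla f(\mu)=\eta\supmu \quad\text{(the first moment of } J\supmu\text{)}, \qquad D^2 f(\mu)=\Lambda\supmu \quad\text{(the second moment)}.
\]
Here $\Lambda\supmu \ge \KIR I$ by \eqref{eq:quadratic_form} applied to $J\supmu$. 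I expect this regularity up to the boundary to be the main technical obstacle: tilts beyond $\overline\Omega$ are not available, so only one-sided derivatives along chords of $\overline\Omega$ can be used. The convexity domination above is what I will use to handle it.

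\emph{Step 2: geometry.} Since $f$ is strictly convex with $D^2f\ge\KIR I$, the sublevel set $\overline\Omega=\{f\le1\}$ is strictly convex, and its boundary is the level set $\{f=1\}$. For the bound on $|\mu|$, set $\phi(t)=f(t\mu)$. Then $\phi'(0)=0$ by $\Zd$-symmetry, and $\phi''(t)=\mu\cdot\Lambda\supk{t\mu}\mu \ge \KIR|\mu|^2$. Integrating gives
\[
\KIR|\mu|^2\le \phi'(1)=\mu\cdot\eta\supmu \qquad\text{and}\qquad \tfrac12\KIR|\mu|^2\le f(\mu)-f(0).
\]
I would combine these with the moment bound $\Lambda\supmu\le M$ from \eqref{eq:Q_moments} to reach $|\mu|^2\le M/\KIR$. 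The precise constant may require a careful choice between these two routes; if the sharp form resists, I will first settle for $|\mu|\lesssim 1$, which suffices for everything downstream.

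\emph{Step 3: the optimal direction and the bounds \eqref{eq:crossover-bds-intro}.} The maximiser $\mux$ in \eqref{eq:optimal-mu-intro} exists by compactness and is unique by strict convexity. By the Lagrange condition at the smooth boundary point, $\nabla f(\mux)=\eta_\hatx$ is an outward normal that is parallel to $x$, and it is positively oriented since $\eta_\hatx\cdot\mux=\phi'(1)>0$. Hence $\eta_\hatx=|\eta_\hatx|\hat x$.

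From \eqref{eq:Omega_incl} we get
\[
\mS\norm{x}_\infty\le\mux\cdot x\le \sup_{\|\mu\|_\infty\le \mS}\mu\cdot x=\mS\norm{x}_1,
\]
which gives $\mux\cdot x\asymp \mS|x|$. For $\eta_\hatx$:
\begin{itemize}[leftmargin=*]
\item Lower bound: $|\eta_\hatx|\,|\mux|\ge\KIR|\mux|^2$ together with $|\mux|\ge\mS$ gives $|\eta_\hatx|\ge \KIR\mS$.
\item Upper bound: $\eta_\hatx=\int_0^1\Lambda\supk{t\mux}\mux\,dt$ gives $|\eta_\hatx|\lesssim |\mux|\le\sqrt d\,\mS$.
\end{itemize}
Finally, $k\cdot\Lam_\hatx k\asymp|k|^2$ is exactly \eqref{eq:quadratic_form} applied to $Q=J\supmux$.
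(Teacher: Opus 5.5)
Your plan follows the paper's proof in all essentials. Summing the tilted OZ equation gives $f<1$ on $\Omega$. A limit gives $f=1$ on $\partial\Omega$; your radial limit $t\mu\uparrow\mu$, using $\cosh(t\mu\cdot x)\uparrow\cosh(\mu\cdot x)$ and domination by $1+e^{\mu\cdot y}$, is a tidier version of the paper's sequence $\tilde\mu_n$. The tilted infrared bound gives strict convexity, the gradient $\eta_{\hat x}$ is used as the normal at $\mu_{\hat x}$, and the bounds on $\eta_{\hat x}$ come from the same moment estimates.

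Two estimates need repair. First, you do not reach the stated constant in $\abs\mu^2\le M/\KIR$. The paper's one line is
\[
M\ \ge\ \norm{J\supmu}_1=\sum_y \abs{J(y)}\cosh(\mu\cdot y)\ \ge\ \sum_y J(y)\,\tfrac12(\mu\cdot y)^2\ \ge\ \KIR\abs\mu^2 .
\]
The last step is the untilted infrared bound expanded to second order, which gives $\tfrac12 k\cdot\Lambda k\ge\KIR\abs k^2$. Applied to $J\supk{t\mu}$, this sharp form gives $\phi''\ge 2\KIR\abs\mu^2$, and then your route also closes: $\KIR\abs\mu^2\le f(\mu)-f(0)=\sum_y J(y)(\cosh(\mu\cdot y)-1)\le\norm{J\supmu}_1\le M$. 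Second, $\abs{\mu_{\hat x}}\ge m_S$ is false in general. If $\overline\Omega$ is close to the $\ell^1$ ball $\{\norm\mu_1\le m_S\}$, the diagonal boundary point has Euclidean norm close to $m_S/\sqrt d$. Lemma~\ref{lem:Omega_convex} only gives $\norm{\mu_{\hat x}}_1\ge m_S$, hence $\abs{\eta_{\hat x}}\ge\KIR\abs{\mu_{\hat x}}\ge\KIR m_S/\sqrt d$, which still suffices.

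The remaining point is the one you flagged, and domination along chords does not settle it. One-sided derivatives give $\eta_{\hat x}\cdot(\nu-\mu_{\hat x})\le 0$ for all $\nu\in\overline\Omega$, so $\eta_{\hat x}$ is an outer normal at $\mu_{\hat x}$, and so is $x$. To conclude they are parallel you need the normal cone at $\mu_{\hat x}$ to be a single ray, i.e., no corner there. This does not follow from ``$\overline\Omega=\{f\le 1\}$''. You have not shown that; you have only $\overline\Omega\cap\{f=1\}=\partial\Omega$. Moreover $f$ need not be defined, let alone convex, outside $\overline\Omega$, so the implicit function theorem cannot be applied to $\{f=1\}$. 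The paper dispatches this step by asserting that $\partial\Omega$ is $C^2$ because it lies in a level set of a function that is $C^2$ on $\overline\Omega$.

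Here is an elementary argument that uses only data on $\overline\Omega$. Let $g$ be the gauge (Minkowski functional) of $\overline\Omega$; it is convex, with $\partial\Omega=\{g=1\}$. Fix $w\in\R^d$ and, for $s\downarrow 0$, put $\mu_s=(\mu_{\hat x}+sw)/g(\mu_{\hat x}+sw)\in\partial\Omega$. Then $\mu_s=\mu_{\hat x}+s\bigl(w-g'(\mu_{\hat x};w)\,\mu_{\hat x}\bigr)+o(s)$, where $g'(\cdot\,;w)$ is the one-sided directional derivative. Since $f(\mu_s)=f(\mu_{\hat x})=1$, integrate $\eta\supk{\nu}$ along the chord $[\mu_{\hat x},\mu_s]\subset\overline\Omega$. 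Continuity of $\nu\mapsto\eta\supk{\nu}$ on $\overline\Omega$ holds because the uniform $(2+\zeta)$-moment bound makes its tails uniformly small. Letting $s\downarrow 0$ gives
\[
g'(\mu_{\hat x};w)=\frac{\eta_{\hat x}\cdot w}{\eta_{\hat x}\cdot\mu_{\hat x}} ,
\]
where the denominator is positive by your bound $\eta_{\hat x}\cdot\mu_{\hat x}\gtrsim\abs{\mu_{\hat x}}^2$. This is linear in $w$, so $g$ is differentiable at $\mu_{\hat x}$. By optimality of $\mu_{\hat x}$, the vector $x/(x\cdot\mu_{\hat x})$ is a subgradient of $g$ at $\mu_{\hat x}$. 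It must therefore equal $\nabla g(\mu_{\hat x})=\eta_{\hat x}/(\eta_{\hat x}\cdot\mu_{\hat x})$, which gives $\eta_{\hat x}=\abs{\eta_{\hat x}}\hat x$.
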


\begin{figure}[ht]
\centering{
\includegraphics{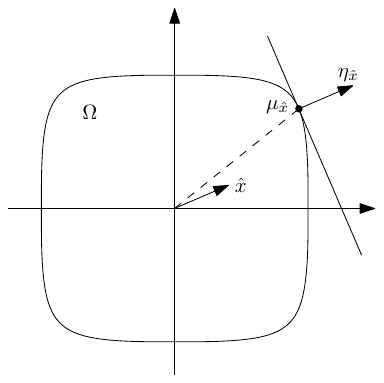}

\caption{
The optimal vector $\mu_{\hat x}$ is the point on $\partial\Omega$
(in the orthant containing $\hat x$)
whose tangent plane is orthogonal to $\hat x$.  Since $\partial\Omega$
is contained in the level set $\sum_{y\in\Zd}J\supmu(y)=1$, the gradient $\eta_{\hat x}$
of $\sum_{y\in\Zd}J\supmu(y)$ points in the same direction as $\hat x$.
}
\label{fig:geom}}
\end{figure}

For $x = e_1$, it follows from Lemma~\ref{lem:Omega_convex} that
$\mu_{e_1} = \mS  e_1$, so the mass $\mS$ obeys
$\hat J^{(m_S e_1)}(0)=1$.

\begin{corollary} \label{cor:norm}
Let $d\ge 1$.
Suppose that Assumptions~\ref{ass:Omega} and~\ref{ass:J}
hold, but without requiring that $\g\supmu,J\supmu$ satisfy
\eqref{eq:Q_d-1}.  Then
the function $|\cdot|_S :  \R^d \to [0,\infty)$ defined by
$\abs 0_S = 0$ and
\begin{equation}
\abs x_S = \frac{ \mu_\hatx \cdot x }{ \mS  }
	\qquad (x\ne 0)
\end{equation}
is a norm, and it satisfies $\|x\|_\infty \le |x|_S \le \|x\|_1$ for every $x \in \R^d$.
\end{corollary}

\begin{proof}
The function $|\cdot|_S$ is positively homogeneous by definition,
and by \eqref{eq:optimal-mu-intro} it satisfies $\abs{-x}_S = |x|_S$ (because $- \Omega = \Omega$).
It is also positive-definite by the first lower bound of \eqref{eq:crossover-bds-intro}.
For the triangle inequality, we take nonzero $x,y\in \Rd$ and use the optimality of $\mux$ and $\mu_{\hat y}$ to get
\begin{equation}
\mu_{\widehat{ x+y }} \cdot (x+y)
\le \mux \cdot x + \mu_{\hat y } \cdot y .
\end{equation}
Division by $\mS $ then gives $\abs{ x+y }_S \le \abs x_S + \abs y_S$.

For the bounds,
using H\"older's inequality we have
$\abs x_S   \le \mS \inv \norm{ \mux }_\infty \norm x_1$,
which is at most $\norm x_1$ by Lemma~\ref{lem:Omega_convex}.
For the lower bound,
we use the $\Zd$ symmetry to assume $x_1 = \norm x_\infty$ without loss of generality, and we take $\mu = \mS e_1 \in \overline \Omega$ in \eqref{eq:optimal-mu-intro} to get
$\abs x_S \ge e_1 \cdot x = x_1 = \norm x_\infty$.
This concludes the proof.
\end{proof}

In Section~\ref{sec:Wulff}, we show that the Wulff shape $\overline\Omega$
is the ball of radius $\mS$ for the norm dual to $|\cdot|_S$.

\subsubsection{The Crossover Theorem}
\label{sec:cross}

Let $d \ge 1$.
Given a vector $\eta \in \R^d$ and a symmetric positive-definite matrix $\Lambda \in \R^{d\times d}$,
the \emph{heat kernel} of the Brownian motion on $\Rd$ with drift $\eta\in\Rd$ and covariance $\Lambda$
is given by
\begin{align} \label{eq:def_rho}
\rho_t(x; \eta, \Lambda)
&= \frac 1 { \sqrt{\det \Lambda } } \frac{ 1 }{(2\pi t)^{d/2} }
	\exp \Bigl\{ - \frac 1 {2t} (x-t\eta) \cdot \Lambda\inv (x - t\eta) \Bigr\} .
\end{align}
Its \emph{Green function} is the integral
\begin{equation} \label{eq:def_C}
\bg(x; \eta, \Lambda) = \int_0^\infty \D t \, \rho_t(x; \eta, \Lambda)
	\qquad(x\in \Rd) .
\end{equation}
The Green function is finite for all $d \ge 1$ and all $x\in\R^d$ when $\eta \ne 0$, and it is finite for all $d>2$ and all $\eta\in \R^d$
when $x \ne 0$.

The following theorem is our main result.

\begin{theorem} [Crossover Theorem]
\label{thm:crossover}
Let $d\ge 1$.
Suppose Assumptions~\ref{ass:Omega} and~\ref{ass:J} hold.
For nonzero $x \in \Zd$, let $\mux,\eta_{\hat x},\Lam_{\hat x}$ be given by Proposition~\ref{prop:geom-intro}.
If $d > 2$ then
\begin{equation} \label{eq:main_asymp-intro}
\GL(x)
= \bg(x; \eta_{\hat x} , \Lambda_{\hat x}) e^{-\mS  \abs x_S} \Bigl[ \hat \g \supmux (0) + O\Bigl(\frac 1 {\abs x^\eps}\Bigr) \Bigr]
\end{equation}
as $\abs x \to \infty$, with some $\eps > 0$
and with the constant depending only on $d, \zeta, M, \KIR$.
If $d \le 2$ then
\eqref{eq:main_asymp-intro} still holds when the limit is taken subject to the condition that $\mS  \abs x \ge \so$ for some $\so > 0$, and then
the constant also depends on $\so$.
\end{theorem}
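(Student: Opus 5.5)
The plan is to tilt by the optimal vector $\mux$ and reduce everything to a Fourier-analytic statement about a noncentred kernel. Since $\mux\cdot x = \mS\abs x_S$, we have $\GL(x) = e^{-\mS\abs x_S}\GL\supmux(x)$, so it suffices to prove
\begin{equation*}
\GL\supmux(x) = \bg(x;\eta_\hatx,\Lam_\hatx)\bigl[\hat\g\supmux(0)+O(\abs x^{-\eps})\bigr].
\end{equation*}
The tilted equation $\GL\supmux = \g\supmux + J\supmux*\GL\supmux$ together with $\hat J\supmux(0)=1$ from \eqref{eq:J1-intro} gives, at least formally,
\begin{equation*}
\GL\supmux(x)=\int_{\Td}\frac{\D k}{(2\pi)^d}\,\frac{\hat\g\supmux(k)\,e^{ik\cdot x}}{1-\hat J\supmux(k)} .
\end{equation*}
To justify this I would approximate $\mux$ from inside $\Omega$ by $(1-\delta)\mux$, where everything is summable, and pass to the limit using monotone convergence. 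This works because the integrand is integrable: by \eqref{eq:Q_infrared} we have $\abs{1-\hat J\supmux(k)}\ge \Re[\hat J\supmux(0)-\hat J\supmux(k)]\ge\KIR\abs k^2$, which is integrable for $d>2$. When $d\le 2$ I would instead exploit the drift term $-i\eta_\hatx\cdot k$ in the denominator, using $\abs{\eta_\hatx}\asymp\mS$ from \eqref{eq:crossover-bds-intro} and the condition $\mS\abs x\ge\so$.

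Next I decompose the integral. Write $1-\hat J\supmux(k) = -i\eta_\hatx\cdot k+\frac12 k\cdot\Lam_\hatx k + R(k)$, where the $2+\zeta$ moment bound \eqref{eq:Q_moments} gives $\abs{R(k)}\lesssim\abs k^{2+\zeta'}$ for some $\zeta'>0$. The model function $\bigl(-i\eta\cdot k+\frac12 k\cdot\Lam k\bigr)^{-1}=\int_0^\infty e^{-t(-i\eta\cdot k+\frac12 k\cdot\Lam k)}\,\D t$ has inverse Fourier transform over $\Rd$ exactly equal to $\bg(x;\eta,\Lam)$. So the main term comes from replacing $\hat\g\supmux(k)$ by $\hat\g\supmux(0)$ and the denominator by its quadratic approximation, and from extending the integral from $\Td$ to $\Rd$ with a smooth cutoff. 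Each replacement must then be shown to produce an error of relative size $\abs x^{-\eps}$ compared with $\bg(x;\eta_\hatx,\Lam_\hatx)$. This is where the uniform constants of Proposition~\ref{prop:geom-intro} come in: they give $\bg(x)\asymp \abs x^{-(d-2)}(1\vee\mS\abs x)^{(d-3)/2}$. Here it matters that $\eta_\hatx$ is parallel to $\hat x$, so that the Gaussian exponent in $\rho_t$ is not exponentially small at its saddle point $t\approx\abs x/\abs{\eta_\hatx}$.

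The error terms are the main obstacle. I will handle them in the spirit of Hara's Gaussian Lemma, extended to a noncentred kernel. The idea is to represent the difference of the two resolvents as $\int_0^\infty\bigl(e^{-t(1-\hat J)}-e^{-t(\text{quadratic})}\bigr)\,\D t$, or equivalently as the product of resolvents times $R(k)$. I then gain decay in $x$ by differentiating in $k$ roughly $d-2$ times, integrating by parts, and bounding the resulting derivatives using the $L^p$ weighted moment hypotheses \eqref{eq:g_moments} and \eqref{eq:Q_d-1} through Hausdorff--Young. The delicate point is uniformity in $\mS$ across the two regimes $\mS\abs x\lesssim1$ (critical) and $\mS\abs x\gg1$ (OZ). In the OZ regime the error must be smaller than $\abs x^{-(d-1)/2}\mS^{(d-3)/2}$, which beats the naive bound $\abs x^{-(d-2)}$. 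To achieve this I would split $k$ into its components parallel and perpendicular to $\hat x$. In the parallel direction I would deform the contour, or equivalently use the analytic structure of the drift, noting that tilts near $\mux$ stay in $\overline\Omega$ only in the tangential sense. In the perpendicular direction I would apply a Gaussian-lemma-type bound on the scale $t\sim\abs x/\mS$. I expect the balancing of these two regimes, with constants depending only on $d,\zeta,M,\KIR$, to be the hardest step.
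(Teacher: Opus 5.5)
Your reduction for $d>2$ is the paper's. You tilt by $\theta\mux$ with $\theta<1$, write $S^{(\theta\mux)}(x)$ as a Fourier integral, and let $\theta\to1$. The convergence step is \emph{dominated} convergence, not monotone (the integrand is complex), and it uses $\abs{1-\hat J^{(\theta\mux)}(k)}\ge\KIR\abs k^2$ uniformly in $\theta$, which holds because $\hat J^{(\theta\mux)}(0)\le1$.

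The gap is in everything after that, which is the whole content of the noncentred Gaussian Lemma.
\begin{itemize}
\item Your one concrete new device, a contour shift in $k_\parallel$, is not available. Replacing $k$ by $k+i\kappa\hat x$ is the same as tilting by $\mux+\kappa\hat x$. The sign that would give extra decay $e^{-\kappa\abs x}$ leaves $\overline\Omega$, where no moment bounds on $J^{(\mu)}$ are assumed. The other sign is just the tautology $S^{(\mux)}(x)=e^{\kappa\abs x}S^{(\mux-\kappa\hat x)}(x)$. With finitely many moments there is no analyticity across the real torus.
\item Hara-type integration by parts does not survive the drift either. Near $k=0$ one has $\abs{1-\hat J^{(\mux)}(k)}\asymp\abs{\eta_{\hat x}}\abs{k_\parallel}+\abs k^2$. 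For this anisotropic singularity, $n$ derivatives of the resolvent difference (even with the factor $R(k)$) stop being integrable once $n$ is about $(d-1)/2$. So the $d-2$ derivatives you plan are unavailable in the OZ regime for $d\ge4$.
\item Equivalently, in the time representation each $k$-derivative of $e^{-t[\hat J(0)-\hat J(k)]}$ brings down a factor $t\eta$. This has size $\abs x$ exactly at the dominant times $t\approx\abs x/\abs\eta$, so integrating by parts against $e^{ik\cdot x}$ gains nothing.
\item Your comparison is also inverted. For $d\ge3$ and $\mS\abs x\ge1$ one has $\abs x^{-(d-1)/2}\mS^{(d-3)/2}\ge\abs x^{-(d-2)}$. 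The issue is not beating $\abs x^{-(d-2)}$; it is obtaining any decay in $\abs x$ at all.
\end{itemize}

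The missing idea is to recentre at the drift, time by time. The paper writes $I_t(x)=\int_{\Td}e^{ik\cdot x}\hat g(k)e^{-t[\hat Q(0)-\hat Q(k)]}\frac{\D k}{(2\pi)^d}$ as the inverse Fourier transform of $\hat g\,e^{-\hat R}$ at the point $x-\integer{t\eta}$, where $\hat R(k)=-ik\cdot\integer{t\eta}+t[\hat Q(0)-\hat Q(k)]$. Using the lattice point $\integer{t\eta}$ keeps $e^{-\hat R}$ periodic. Now $\nabla\hat R=i\,\fractional{t\eta}+t\nabla[\tfrac12 k\cdot\Lam k+\hat R_2(k)]$ has no large drift term. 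The tools you name (weighted $L^p$ moments and Hausdorff--Young), plus weak derivatives, then give two estimates:
\begin{itemize}
\item $\abs{I_t(x)}\lesssim(1+\sqrt t)^{-d}\bigl((1+\sqrt t)/\abs{x-\integer{t\eta}}\bigr)^n$ for $n\le2\vee(d-1)$;
\item a large-$t$ comparison with $\hat g(0)\rho_t(x;\eta,\Lam)$ of the same recentred form, for $n\le2$.
\end{itemize}
The $t$-integral is split at $T=\eps\abs x/((\so^{-1}\abs\eta)\vee\abs x^{-1})$. Below $T$, both $I_t$ and $\rho_t$ contribute $O(\sqrt\eps)$ relative to $\bg(x;\eta,\Lam)$. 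Above $T$, a Laplace-type split around $t_0=\abs x/\abs\eta$, with a window of width of order $\sqrt{t_0}/\abs\eta$, uses the alignment $x=t_0\eta$ to get $\abs{x-t\eta}=\abs\eta\abs{t-t_0}$. Choosing $\eps=\abs x^{-\beta}$ then gives relative error $\abs x^{-\delta}$, uniformly over the critical and OZ regimes, with no splitting of $k$.

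The same time representation is what handles $d=1$, where your Fourier-side plan fails. There $\abs{1-\hat J^{(\mux)}(k)}\asymp\abs\eta\abs k$ near $0$, so the $\theta=1$ integral is not absolutely convergent. Its principal value also misses the $\delta$-function part of the $\theta\uparrow1$ limit; for one-dimensional simple random walk it is off by exactly $1/(2\abs{\eta_{\hat x}})$. The paper instead keeps $\int_0^\infty\D t\,e^{-t[1-\hat J^{(\theta\mux)}(0)]}I_{t,\theta}(x)$. It then dominates $\abs{I_{t,\theta}(x)}$, uniformly for $\theta$ near $1$, by the recentred bound with $n=0$ and $n=2$.
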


Theorem~\ref{thm:crossover}
reduces the asymptotic behaviour of $S(x)$ to that of
$\C(x; \eta_{\hat x} , \Lambda_{\hat x})$.
The latter is studied in Appendix~\ref{app:Brown}.
In particular, it
decays polynomially as $\abs x \to \infty$.
Since $ \hat \g \supmux (0) > 0$ by
Assumption~\ref{ass:J}, it follows from Theorem~\ref{thm:crossover} that
\begin{equation} \label{eq:decay_rate}
\lim_{n \to \infty} \frac {- \log \GL (n x) } n
= \mS  \abs x_S = \mux \cdot x
	\qquad (0 \ne x \in \Zd) .
\end{equation}
Thus $S$ decays exponentially in the $x$ direction with rate $\mS  \abs x_S$. This rate/norm can be computed via the variational problem on $\overline \Omega$ in Proposition~\ref{prop:geom-intro}.
For example, as we show in Section~\ref{sec:nnRW},
for the subcritical Green function $\GL_z^{\rm SRW}$ for nearest-neighbour simple random walk with parameter $z\in (0,1)$,
our method gives an effortless
way to compute its norm
\begin{equation}
|x|_z  =  \frac{1}{\mz }\sum_{j=1}^d x_j \arcsinh (u_z(x)x),
\end{equation}
with $u_z(x) \ge 0$ the unique solution to the equation
$\sum_{j=1}^d  \sqrt{1+ u_z(x)^2 x_j^2  } = d/z$.
A more indirect computation was given in \cite{MS22}.

The fact that the exponential rate of decay in
\eqref{eq:main_asymp-intro} involves the optimal tilt $\mux$ from \eqref{eq:optimal-mu-intro}
can be understood intuitively as follows.
We exponentially tilt $S$ in all possible directions such that $\chi\supmu$ remains finite.
Since the direction $x$ ``feels'' rate $\mu \cdot x$ in such a tilting,
the heaviest possible tilt corresponds to the decay rate of $\GL$ in that direction.

In much of the literature, proofs of OZ decay involve analysing
typical configurations (of self-avoiding walks, percolation clusters, etc.) which realise connections beyond the correlation length.  These
configurations are highly stretched-out compared to critical connections, and display a renewal structure (e.g., \cite{Ioff98,CI02,CIV08,DM26}).
In Theorem~\ref{thm:crossover}, this feature presents itself in the Brownian Green function.  Its drift is aligned with $x$, which causes
typical configurations to be stretched towards $x$.

\begin{remark}
The fact that the decay rate \eqref{eq:decay_rate} defines a norm is usually proven using an FKG-type inequality
\begin{equation}
    \GL(x) \ge \const\, \GL(y)\GL(x-y)
	\qquad( x,y\in \Zd)
\end{equation}
for some positive constant (e.g., \cite{MS22,Pfis91}, an exception is \cite{Ott22}).
We do not need this assumption.
\end{remark}

To interpret Theorem~\ref{thm:crossover}, we apply standard
asymptotic properties of the Brownian Green function.
For odd dimensions, the Brownian Green function can be expressed in terms of elementary functions.
This is simplest for $d=1$ and $d=3$, where for all $x \neq 0$, and with $\eta \neq 0$ for $d=1$, we have
\begin{alignat}2
\bg(x; \norm\eta_2  \hat x, \Lambda)
&=
\frac{ 1 } { \norm\eta_2 }
    && (d=1),
    \\
    \bg(x;  \norm\eta_2 \hat x,  \Lambda)
&= \frac { 1} { 2\pi  \sqrt{\det \Lambda } }
\frac{ 1 } {\sqrt{ \hat x \cdot \Lam\inv \hat x} } \frac{1}{\norm x_2}
    \qquad &&(d=3)
\end{alignat}
(see Lemma~\ref{lem:CBess}).
The  following two corollaries of Theorem~\ref{thm:crossover}
are proved in Appendix~\ref{app:Brown}, by inserting the asymptotic behaviour of $\bg$ into \eqref{eq:main_asymp-intro} to yield the asymptotic behaviour of $S(x)$.

\begin{corollary} [Ornstein--Zernike decay]
\label{cor:OZ}
Let $d\ge 1$ and
suppose that Assumptions~\ref{ass:Omega} and~\ref{ass:J} hold.
For nonzero $x \in \Zd$, let $\mux,\eta_{\hat x},\Lam_{\hat x}$ be given by Proposition~\ref{prop:geom-intro}.
Then as $\abs x\to \infty$, we have
\begin{equation}
S(x) =
	\frac 1 { (2\pi)^{(d-1)/2} \sqrt{\det\Lambda_{\hat x}} }
	\frac{1}{ (\hat x \cdot \Lam_{\hat x}\inv \hat x)^{1/2} }
	\frac{ \norm {\eta_{\hat x}}_2^{(d-3)/2}} { \norm  x_2^{(d-1)/2}}
	e^{-\mS  \abs x_S}
	[ \hat \g \supmux (0) + o(1)] ,
\end{equation}
with the $o(1)$ depending only on $d,M,\KIR,\zeta,\mS$.
\end{corollary}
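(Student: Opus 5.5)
The corollary follows from Theorem~\ref{thm:crossover} once we have the large-$\abs x$ asymptotics of the Brownian Green function $\bg(x;\eta,\Lam)$ with drift parallel to $x$. By Proposition~\ref{prop:geom-intro}, $\eta_\hatx = \abs{\eta_\hatx}\hat x$. The bounds \eqref{eq:crossover-bds-intro} give $\abs{\eta_\hatx}\asymp \mS$ and $\Lam_\hatx \asymp I$ uniformly in $\hat x$, with constants depending only on $d,M,\KIR$. So it suffices to show that, uniformly over drifts $\eta=\abs\eta\hat x$ with $\abs\eta$ in a compact subset of $(0,\infty)$ and $\Lam$ in a compact set of positive-definite matrices,
\begin{equation*}
\bg(x;\abs\eta \hat x,\Lam) = \frac{1}{(2\pi)^{(d-1)/2}\sqrt{\det\Lam}}\,\frac{\abs\eta^{(d-3)/2}}{(\hat x\cdot\Lam\inv\hat x)^{1/2}}\,\frac{1}{\abs x^{(d-1)/2}}\,[1+o(1)]
\end{equation*}
as $\abs x\to\infty$. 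We then insert this into \eqref{eq:main_asymp-intro}. For $d\le 2$ the extra condition $\mS\abs x\ge \so$ is automatic as $\abs x\to\infty$, with $\mS$ fixed. The $O(\abs x^{-\eps})$ error is absorbed into $o(1)$ because $\hat\g\supmux(0)$ is bounded by $M$.

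To prove the display, write $x=r\hat x$ and set $a=\hat x\cdot\Lam\inv\hat x$. Expanding the exponent of $\rho_t$ gives
\begin{equation*}
-\frac{a}{2t}(r-t\abs\eta)^2 .
\end{equation*}
Hence
\begin{equation*}
\bg = \frac{1}{\sqrt{\det\Lam}}\int_0^\infty (2\pi t)^{-d/2}\exp\Bigl\{-\frac{a}{2t}(r-t\abs\eta)^2\Bigr\}\,\D t .
\end{equation*}
For large $r$ this is a Laplace-type integral concentrated at $t_0=r/\abs\eta$. Substitute $t=t_0+s\sqrt{t_0}$. The exponent becomes $-\tfrac{a\abs\eta^2 s^2}{2}(1+O(s/\sqrt{t_0}))^{-1}$, and $(2\pi t)^{-d/2}\to(2\pi t_0)^{-d/2}$ on the bulk. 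The Gaussian integral then gives
\begin{equation*}
(2\pi t_0)^{-d/2}\,\sqrt{t_0}\,\sqrt{2\pi/(a\abs\eta^2)} = (2\pi)^{-(d-1)/2}\, t_0^{-(d-1)/2}\, a^{-1/2}\abs\eta^{-1}.
\end{equation*}
Substituting $t_0=r/\abs\eta$ gives $\abs\eta^{(d-1)/2-1}=\abs\eta^{(d-3)/2}$, which is the claimed constant. As a check, for $d=1,3$ this agrees with the exact formulas quoted above.

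The main obstacle is controlling the tails uniformly in the parameters; it is routine but needs care.
\begin{itemize}
\item \emph{Small $t$} (say $t<t_0/2$): the exponent is at most $-c\,a r\abs\eta$, which beats the singular factor $t^{-d/2}$. Indeed $t^{-d/2}e^{-c r^2/t}$ is integrable near $0$ and exponentially small in $r$.
\item \emph{Large $t$} (say $t>2t_0$): the exponent is at most $-c\,a\abs\eta^2 t$, which is again exponentially small in $r$.
\item \emph{Intermediate region:} dominated convergence in $s$, using $(1+s/\sqrt{t_0})^{-d/2}\le C$ there, gives the $o(1)$.
\end{itemize}
All constants depend only on upper and lower bounds for $\abs\eta$, $a$ and $\det\Lam$. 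By \eqref{eq:crossover-bds-intro} these bounds are controlled by $d,M,\KIR,\mS$, which yields the stated dependence of the $o(1)$.
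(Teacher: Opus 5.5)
Your proof is correct, and its skeleton matches the paper's: apply Theorem~\ref{thm:crossover}, then insert the large-$\abs x$ asymptotics of $\bg(x;\eta_\hatx,\Lam_\hatx)$ for drift aligned with $x$. The only difference is how you obtain that asymptotics.

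The paper does no Laplace analysis here. It uses the exact Bessel representation \eqref{eq:Brown_pf} from Lemma~\ref{lem:CBess}. Apart from the explicit prefactor $2(2\pi)^{-d/2}(\det\Lam)^{-1/2}(\abs\eta/\abs x)^{(d-2)/2}$, that formula depends on the parameters only through the single variable $z=(\hat x\cdot\Lam\inv\hat x)\abs\eta\abs x$, via $K_{(d-2)/2}(z)e^{z}$. The paper then quotes the classical large-argument asymptotics \eqref{eq:Kasy-infty}. Uniformity in $\hat x$ comes for free, since $z\gtrsim\mS\abs x$.

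Your direct Laplace method replaces that special-function input with an elementary computation. It mirrors the window-around-$t_0$ splitting the paper itself uses in proving Lemma~\ref{lem:R3_integral-2}. The price is doing the tail and uniformity bookkeeping by hand.

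On uniformity: dominated convergence alone gives convergence only for each fixed $(\eta,\Lam)$. The clean way to get the uniformity you claim is the substitution $t=t_0\tau$. This gives $\bg=(2\pi)^{-d/2}(\det\Lam)^{-1/2}t_0^{1-d/2}\int_0^\infty\tau^{-d/2}e^{-z(1-\tau)^2/(2\tau)}\,\D\tau$ with $z=a\abs\eta r$ in your notation. The ratio to your main term is then a function of $z$ alone, and it tends to $1$ as $z\to\infty$. In fact this integral equals $2e^{z}K_{(d-2)/2}(z)$, which is exactly the paper's formula.

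Beyond this corollary, the paper's closed form also yields the two-sided crossover bounds of Lemma~\ref{lem:C_bdds} and the exact $d=1,3$ formulas. Your argument addresses only the regime $z\to\infty$.
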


\begin{corollary} \label{cor:crossover_weak}
Let $d\ge 1$  and suppose that Assumptions~\ref{ass:Omega} and~\ref{ass:J} hold.
Suppose also that $\hat \g\supmu (0)  \ge M\inv > 0$ for all $\mu \in \del \Omega$.
\begin{enumerate}
\item[(i)]
For $d >2$, there exists $R > 0$ such that
\begin{equation} \label{eq:GLasy}
\GL(x) \asymp
	\frac{  \max\{ 1 , \mS  \abs x_S \} ^{ (d-3)/2} } { \abs x_S^{d-2} }
	e^{- \mS  \abs x_S }
	\qquad (\abs x_S \ge R)
\end{equation}
with constants that depend only on $d,M,\KIR,\zeta$.
\item[(ii)]
For $d\le 2$ and $\so > 0$, there exists $R > 0$ such that
\begin{equation}
\GL(x) \asymp
	\frac{  1  } { \mS   ^{ (3-d)/2}  \abs x_S^{ (d-1)/2} }
	e^{- \mS  \abs x_S }
	\qquad (\abs x_S \ge R \vee \frac{ \so }{ \mS  })
\end{equation}
with constants that depend only on $d,M,\KIR,\zeta,\so$.
\end{enumerate}
\end{corollary}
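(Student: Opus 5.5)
We derive Corollary~\ref{cor:crossover_weak} directly from Theorem~\ref{thm:crossover}, using uniform two-sided bounds on the Brownian Green function $\bg(x;\eta,\Lambda)$ for $\eta = |\eta|\hat x$ parallel to $x$. The theorem gives $S(x) = \bg(x;\eta_\hatx,\Lambda_\hatx)e^{-\mS|x|_S}[\hat \g\supmux(0)+O(|x|^{-\eps})]$. The error constant depends only on $d,\zeta,M,\KIR$, and also on $\so$ when $d\le 2$. The hypothesis $\hat\g\supmux(0)\ge M\inv$ (recall $\mux\in\partial\Omega$), together with $\hat\g\supmux(0)\le\|\g\supmux\|_1\le M$, lets us choose $R$ so large that the bracket lies in $[\tfrac12 M\inv, 2M]$ for $|x|\ge R$. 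Since $|x|_S\asymp|x|$ by Corollary~\ref{cor:norm}, the threshold can be stated in terms of $|x|_S$. It then remains to show
\begin{equation}
\bg(x;|\eta|\hat x,\Lambda)\asymp \frac{\max\{1,|\eta||x|\}^{(d-3)/2}}{|x|^{d-2}}
\end{equation}
uniformly over $\Lambda$ with $k\cdot\Lambda k\asymp|k|^2$. By \eqref{eq:crossover-bds-intro} we have $|\eta_\hatx|\asymp\mS$ and $\mS|x|_S\asymp \mS|x|$. Substituting then gives (i), and (ii) follows once we note that for $d\le2$ the condition $\mS|x|\gtrsim\so$ puts us in the regime $|\eta||x|\gtrsim 1$.

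For the Green function bound, we compute
\begin{equation}
\bg=\int_0^\infty \frac{dt}{\sqrt{\det\Lambda}(2\pi t)^{d/2}}\exp\Bigl\{-\frac{x\cdot\Lambda\inv x}{2t}+\eta\cdot\Lambda\inv x-\frac t2\eta\cdot\Lambda\inv\eta\Bigr\}.
\end{equation}
Write $a=x\cdot\Lambda\inv x\asymp|x|^2$ and $b=\eta\cdot\Lambda\inv\eta\asymp|\eta|^2$. Because $\eta$ is parallel to $x$, Cauchy--Schwarz in the $\Lambda\inv$ inner product is an equality, so $\eta\cdot\Lambda\inv x=\sqrt{ab}$. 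The exponent becomes $-\tfrac12(\sqrt{a/t}-\sqrt{bt})^2$, and
\begin{equation}
\bg=(\det\Lambda)^{-1/2}(2\pi)^{-d/2}\int_0^\infty t^{-d/2}e^{-\frac12(\sqrt{a/t}-\sqrt{bt})^2}\,dt.
\end{equation}
This is a modified Bessel function, equal to $c\,(a/b)^{(2-d)/4}e^{\sqrt{ab}}K_{d/2-1}(\sqrt{ab})$, and a direct estimate is also easy. Substituting $t=s\sqrt{a/b}$ gives
\begin{equation}
(a/b)^{(2-d)/4}\int_0^\infty s^{-d/2}e^{-\frac{\sqrt{ab}}2(s^{-1/2}-s^{1/2})^2}\,ds.
\end{equation}
With $\lambda=\sqrt{ab}\asymp|\eta||x|$, Laplace's method at $s=1$ bounds the integral by a multiple of $\lambda^{-1/2}$ when $\lambda\ge1$. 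This yields $\bg\asymp |x|^{-(d-1)/2}|\eta|^{(d-3)/2}$, in agreement with Corollary~\ref{cor:OZ}. For $\lambda\le1$ and $d>2$ we instead use the unscaled form: the integral over $t\le a$ is $\asymp a^{1-d/2}$, and the region $t\ge a$ contributes at most $\int_a^\infty t^{-d/2}dt\lesssim a^{1-d/2}$, giving $\bg\asymp|x|^{2-d}$. The uniform constants come from $\det\Lambda\asymp1$ and from \eqref{eq:quadratic_form}.

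The main obstacle is making the Laplace-method bounds uniform in $\lambda\ge1$, especially near $\lambda\approx 1$, and in all parameters. We handle this by splitting the $s$-integral into $|s-1|\le\lambda^{-1/2}$, where the exponent is bounded, and its complement. On the complement we use the bound $(s^{-1/2}-s^{1/2})^2\ge c\min\{(s-1)^2,|s-1|\}$ and integrability at $0$ and $\infty$. Near $0$ the factor $e^{-\lambda/(2s)}$ kills $s^{-d/2}$. At infinity, the decay $e^{-\lambda s/2}$ handles $d\le2$, while for $d>2$ the tail is integrable in any case. Finally, we match the two regimes at $\lambda\asymp1$ and absorb constants, which gives the stated $\max\{1,\cdot\}$ form.
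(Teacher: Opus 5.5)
Your proposal is correct and follows the paper's proof: apply Theorem~\ref{thm:crossover} with the bracket pinned in $[\frac1{2M},2M]$, use the two-sided bound $\bg(x;|\eta|\hat x,\Lambda)\asymp\Casy(x;\eta)$ (the paper's Lemma~\ref{lem:C_bdds}), and conclude with $|\eta_{\hat x}|\asymp\mS$ and $|x|_S\asymp|x|$. The only difference is in the Green function bound: the paper gets it from the Bessel representation \eqref{eq:Brown_pf} and the standard asymptotics of $K_\nu$ at $0$ and $\infty$, whereas you redo it by a hand Laplace estimate; if you keep that route, shrink the window to $|s-1|\le\frac12\lambda^{-1/2}$ (so it avoids $s\approx0$ when $\lambda\approx1$), and use the $e^{-c\lambda|s-1|}$ decay on the tail in every dimension, since mere integrability for $d>2$ gives only $O(1)$ rather than $O(\lambda^{-1/2})$.
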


We call Theorem~\ref{thm:crossover} the Crossover Theorem because
the uniform estimates in \eqref{eq:GLasy} encompass both the critical decay ($\max=1$)
and the OZ
($\max=\mS  \abs x_S$) decay.
However, we stress that the explicit nature of the Brownian Green function gives an exact asymptotic formula for $\GL(x)$ in all regimes.

Applications of the Crossover Theorem are presented
for random walk in Section~\ref{sec:RW},
for self-avoiding walk in Section~\ref{sec:SAW},
and for percolation in Section~\ref{sec:perc}.

\subsection{The noncentred Gaussian Lemma}

The next theorem, which we call the noncentred Gaussian Lemma, extends Hara's Gaussian Lemma \cite{Hara08} from the case $\eta=0$ to arbitrary drift aligned with $x$.  As we will see, the Crossover Theorem
quickly follows from Theorem~\ref{thm:deconv-intro}.

\begin{theorem} \label{thm:deconv-intro}
Let $d \ge 1$ and
let $M,\KIR,\zeta>0$ be given.
Suppose $(\Jsupmu,g)\in \Qcal_{M,\KIR,\zeta}$.
Let $\eta$ and $\Lambda$ be given by \eqref{eq:def_Lambda-intro},
and let $x\in\Zd$ be a nonzero vector in the same direction as $\eta$,
\ie, $\eta = \abs \eta \hat x$ ($\eta=0$ is permitted here).
If $d \le 2$, we further assume $\abs x \abs \eta \ge \so$ for some $\so > 0$.
Then, as $\abs x \to \infty$,
\begin{equation} \label{eq:Q_int-intro}
\GQ (x) :=
\int_0^\infty \D t    \int_\Td \frac{\D k }{(2\pi)^d}
	e^{ik\cdot x} \hat g(k)  e^{-t [ \hat \Jsupmu(0) - \hat \Jsupmu (k) ] }
=  \bg(x; \eta, \Lam) \Bigl[\hat{g}(0) + O\Bigl(\frac 1 {\abs x^\eps}\Bigr) \Bigr]
\end{equation}
with some $\eps > 0$
and with the constant depending only on $d, \zeta, M, \KIR, \so$.
\end{theorem}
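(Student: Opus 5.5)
The plan is to compare the lattice integral with its continuum Gaussian counterpart. Write $\hat F(k) = \hat\Jsupmu(0)-\hat\Jsupmu(k)$. By \eqref{eq:def_Lambda-intro} and \eqref{eq:Q_moments}, $\hat F(k) = -i\eta\cdot k + \half k\cdot\Lam k + O(\abs k^{2+\zeta'})$ with $\zeta'=\zeta\wedge 1$ (using $\abs{e^{-iy\cdot k}-1+iy\cdot k+\half(y\cdot k)^2}\lesssim \abs{y\cdot k}^{2+\zeta'}$). The continuum Green function is itself a Fourier integral:
\begin{equation*}
\bg(x;\eta,\Lam)=\int_0^\infty \D t\int_\Rd\frac{\D k}{(2\pi)^d}e^{ik\cdot x}e^{-t[-i\eta\cdot k+\half k\cdot\Lam k]}.
\end{equation*}
So $\GQ(x)-\hat g(0)\bg(x;\eta,\Lam)$ splits as: (a) the error from replacing $\hat g(k)$ by $\hat g(0)$; (b) the error from replacing $e^{-t\hat F}$ by the Gaussian with drift; (c) the tail $k\notin\Td$ of the continuum integral. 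Each must be shown to be $O(\abs x^{-\eps})\,\bg(x;\eta,\Lam)$. Since $\eta=\abs\eta\hat x$, Appendix~\ref{app:Brown} gives $\bg(x;\eta,\Lam)\asymp \max\{1,\abs\eta\abs x\}^{(d-3)/2}/\abs x^{d-2}$ uniformly in $\abs\eta$ (for $d\le2$ only with $\abs\eta\abs x\ge\so$). This is the target size, and the error must beat it uniformly across the crossover $\abs\eta\abs x\approx 1$.

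I split the $t$-integral at the scale where the drift Gaussian is centred near $x$: the dominant contribution comes from $t$ with $\abs{x-t\eta}\lesssim\sqrt t$, that is, $t\approx \abs x/\abs\eta$ in the OZ regime and $t\approx\abs x^2$ in the critical regime. For small $t$ (say $t\le \abs x^{1-\delta}$ in the critical regime, or $t\le(1-\delta')\abs x/\abs\eta$ in the OZ regime) and for very large $t$, the continuum kernel is exponentially small or has a power of $\abs x$ to spare. For these $t$ I bound the lattice term crudely. The infrared bound \eqref{eq:Q_infrared} gives $\abs{e^{-t\hat F(k)}}\le e^{-t\KIR\abs k^2}$. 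To get decay in $x$ I integrate by parts in $k$, equivalently multiply by $x_j$ and differentiate. This uses the $d-1$ moments \eqref{eq:Q_d-1}, presumably through Lemma~\ref{lem:It_decay}, and the $\abs y^2$ and $\abs y^\zeta$ moments of $g$ with Hausdorff--Young for the $L^{p}$ norms. For $t$ in the main window I write $e^{-t\hat F}-e^{-t(-i\eta\cdot k+\half k\cdot\Lam k)}$ using $\abs{e^{a}-e^{b}}\le\abs{a-b}\max(\abs{e^a},\abs{e^b})$. The $O(t\abs k^{2+\zeta'})$ error then integrates against $e^{-ct\abs k^2}$ to give a gain $t^{-\zeta'/2}$ over the main term, which is a power of $\abs x$. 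Similarly $\hat g(k)-\hat g(0)=O(\abs k^{\zeta\wedge1})+$(linear term vanishing by symmetry, or handled via the second moment) gives a gain.

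The main obstacle is that the naive absolute-value bounds above lose the oscillation $e^{it\eta\cdot k}e^{ik\cdot x}$. This oscillation is exactly what makes $\bg$ small ($\abs x^{-(d-1)/2}$ rather than the larger $t^{-d/2}$ integrated crudely) in the OZ regime. To handle it I would shift the contour in the direction $\hat x$, $k\mapsto k+i\theta\hat x$, or equivalently complete the square. The centred integrals then carry the factor $e^{-(x-t\eta)\cdot\Lam^{-1}(x-t\eta)/2t}$ in the error terms as well. Alternatively, I would decompose $k=(k_\parallel,k_\perp)$ and integrate by parts in $k_\parallel$, using that $\partial_{k_\parallel}$ of the phase is $x_\parallel - t\abs\eta + O(t\abs k)$. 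Controlling the third-order remainder of $\hat F$ uniformly in $\abs\eta$ here is where the $2+\zeta$ moment and the smallness $\abs\eta\le M$ enter. I expect this to require the most care, in particular near $t\abs\eta\approx\abs x$ in low dimensions $d\le 2$, where the $\so$ condition ensures the drift dominates.
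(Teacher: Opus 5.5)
There is a genuine gap, at the step you yourself flag as the main obstacle, and the ``crude'' bounds you propose for the other ranges of $t$ do not close it.

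\textbf{What is missing.} The proof needs an estimate on $I_t(x)$, and on $I_t(x)-\hat g(0)\rho_t(x;\eta,\Lam)$, that decays in $\abs{x-t\eta}/\sqrt t$ rather than in $\abs x$, uniformly in $\eta$. It must hold for all $t$ outside a window of Gaussian width $\asymp\sqrt{t_0}/\abs\eta$ around $t_0=\abs x/\abs\eta$. None of your devices gives it.

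\textbf{Why your crude bounds fail in the OZ regime.}
\begin{itemize}
\item For large $t$, the bound $\abs{I_t}\lesssim t^{-d/2}$ integrated over $t\ge(1+\delta')t_0$ gives $t_0^{1-d/2}\asymp\sqrt{\abs\eta\abs x}\,\bg(x;\eta,\Lam)$, and it diverges if $d\le2$.
\item Multiplying by $x_j$ and differentiating does not help. Every derivative of $e^{-t\hat F}$ brings down $t\eta$, so for $t\in[\frac12 t_0,(1-\delta')t_0]$ you again get only $\abs{I_t}\lesssim t^{-d/2}$, with the same loss of $\sqrt{\abs\eta\abs x}$.
\item Your $n=0$ comparison bound $t^{-(d+\zeta')/2}$, integrated over a window of fixed relative width $\delta' t_0$, gives $\delta'\sqrt{\abs\eta\abs x}\,t_0^{-\zeta'/2}\,\bg$. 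This is not small when $\abs\eta\asymp1$, because $\zeta'\le1$: with a drift the cubic Taylor term does not vanish.
\end{itemize}

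\textbf{Your two remedies.} The contour shift $k\mapsto k+i\theta\hat x$ (or completing the square beyond the Gaussian part) is not available. It requires $\hat\Jsupmu$ and $\hat g$ to extend to a complex strip, i.e.\ exponential moments in direction $\hat x$, whereas $\Qcal_{M,\KIR,\zeta}$ provides only polynomial moments. In the application, $\Jsupmu=J\supmux$ with $\mux\in\del\Omega$ already sits at the edge of the admissible tilts.

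Integration by parts in $k_\parallel$ is the right instinct, but as stated it is not an argument. Since only $\abs{\grad\hat F(k)-\grad\hat F(0)}\lesssim\abs k$ is known, the phase derivative is bounded below only after localising to $\abs k\lesssim\abs{x-t\eta}/t$. You would also need to control two derivatives of $\hat g$ and of $e^{-t\hat F}$ under $L^p$-type moment hypotheses, and to combine the resulting decay with the Gaussian comparison. If your citation of Lemma~\ref{lem:It_decay} was meant to supply this, note that it is a decay estimate in $\abs{x-\integer{t\eta}}$, i.e.\ exactly the missing ingredient; its proof is where the work lies.

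A smaller point: $\hat\Jsupmu(0)-\hat\Jsupmu(k)=ik\cdot\eta+\half k\cdot\Lam k+\cdots$. With your sign, the Gaussian integral you wrote is $\bg(x;-\eta,\Lam)$.

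\textbf{The paper's idea: rounding the drift to the lattice.} Since $\integer{t\eta}\in\Zd$, $I_t(x)$ is the inverse Fourier transform, evaluated at the lattice point $x-\integer{t\eta}$, of the still periodic function $\hat g\,e^{-\hat R}$. Here $\hat R(k)=-ik\cdot\integer{t\eta}+t[\hat\Jsupmu(0)-\hat\Jsupmu(k)]$, whose gradient is $i\fractional{t\eta}+O(t\abs k)$. The drift therefore disappears from every derivative, and no lower bound on a phase derivative is needed.

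\emph{Small times.} Plain integration by parts on $\Td$ gives Lemma~\ref{lem:It_decay}: $\abs{I_t(x)}\lesssim(1+\sqrt t)^{-d}\bigl((1+\sqrt t)/\abs{x-\integer{t\eta}}\bigr)^n$. This uses weak derivatives, with Lemma~\ref{lem:Fourier} interpolating the moment bounds \eqref{eq:Q_d-1} to reach $d-1$ derivatives. It controls $t\le T=\eps\abs x/((\so\inv\abs\eta)\vee\abs x\inv)$, where $\abs{x-\integer{t\eta}}\ge\frac14\abs x$, with a gain of $\sqrt\eps$.

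\emph{Large times.} For $t\ge T$ the paper applies the Laplacian to $\hat g e^{-\hat R}$ and compares it with $\hat g(0)\Delta e^{-\hat P}$, where $\hat P(k)=ik\cdot\fractional{t\eta}+\frac t2k\cdot\Lam k$. It uses Hara's three-region split at $\abs k=(\eps t)^{-1/2}$. This gives Lemma~\ref{lem:It_asymp-2}: for $n=0,1,2$, $\abs{I_t-\hat g(0)\rho_t}\lesssim[e^{-c/\eps}+\eps^{-(d+4)/2}(\eps t)^{-\zeta_0/2}]\,t^{-d/2}(\sqrt t/\abs{x-\integer{t\eta}})^n$.

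\emph{The $t$-integral.} This is done Laplace-style: $n=0$ on $\abs{t-t_0}\le\sqrt{t_0\so}/(2\abs\eta)$, and $n=2$ outside. There $x=t_0\eta$ gives $\abs{x-\integer{t\eta}}\ge\frac12\abs\eta\abs{t-t_0}$. Finally one takes $\eps=\abs x^{-\beta}$.

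Your $\abs{e^a-e^b}$ estimate is a fine way to get the $n=0$ comparison. But the decay in $\abs{x-t\eta}/\sqrt t$, which is the heart of the noncentred lemma, is absent from your plan.
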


\begin{remark} \label{rmk:Fubini}
In dimensions $d > 2$,
the double integral in \eqref{eq:Q_int-intro} is absolutely integrable
by the infrared bound \eqref{eq:Q_infrared} and
the identity $z\inv = \int_0^\infty e^{-t z} \D t$ for $\Re(z) > 0$.
Fubini's theorem then implies that
\begin{equation}
\label{eq:GFub}
\GQ (x)
= \int_\Td \frac{\D k }{(2\pi)^d}
	\frac { \hat g(k)  } { \hat \Jsupmu(0) - \hat \Jsupmu (k) } e^{ik\cdot x} .
\end{equation}
\end{remark}

It is easy to guess that \eqref{eq:Q_int-intro} should hold, as follows.
We expect the behaviour of the integral for large $x$ to be determined by small $k$.  Accordingly, we make the second-order Taylor approximation
\begin{equation}
    \hat \Jsupmu(0) - \hat \Jsupmu (k)
    \approx
    ik\cdot \eta + \frac 12 k\cdot\Lam k,
\end{equation}
we approximate $\hat g(k)$ by $\hat g(0)$, and then we extend the integral over $k$ from $\T^d$ to $\R^d$.
Recall from \eqref{eq:def_rho} the definition of $\rho_t(k;\eta,\Lam)$, the heat kernel of the Brownian motion on $\Rd$ with drift $\eta$ and covariance $\Lambda$.
With the above approximations, the left-hand side of \eqref{eq:Q_int-intro} becomes
\begin{align}
    \hat g(0)
    \int_0^\infty \D t    \int_\Rd \frac{\D k }{(2\pi)^d}
	e^{ik\cdot x}   e^{-t [ ik\cdot \eta + \frac 12 k\cdot\Lam k ] }
    &=
    \hat g(0)
    \int_0^\infty \D t    \int_\Rd \frac{\D k }{(2\pi)^d}
	e^{ik\cdot x}
    \hat \rho_t(k;\eta,\Lam)
    \nnb & =
    \hat g(0)
    \int_0^\infty \D t\,
    \rho_t(x;\eta,\Lam)
    =
    \hat g(0) \bg(x;\eta,\Lam).
\end{align}
The proof of Theorem~\ref{thm:deconv-intro}, in Section~\ref{sec:tilted_gaussian}, involves a careful justification of the above uncontrolled approximations.

When $g(x)=\delta_{0,x}$ and when $Q$ is a probability distribution on $\Zd$,
$\GQ (x)$ is the Green function of the
random walk with transition probability $Q$.
This random walk has drift $\eta$
and covariance $\Lam$.
Theorem~\ref{thm:deconv-intro} shows that
when we evaluate the Green function at a large multiple of the drift,
asymptotically we see the Green function of Brownian motion with the same
drift and covariance.

Theorem~\ref{thm:deconv-intro} is related to a classical theorem
of Ney and Spitzer \cite[Theorem~2.2]{NS66}.
Important improvements
present in Theorem~\ref{thm:deconv-intro}, compared to \cite[Theorem~2.2]{NS66},
are the facts that we do \emph{not}
assume that $Q(x) \ge 0$, and that the convergence in \eqref{eq:Q_int-intro}
is uniform as $\eta \to 0$.  The former is crucial for our applications to
self-avoiding walk and percolation, and the latter permits control of the crossover to a critical point.

\subsection{Proof of the Crossover Theorem~\ref{thm:crossover}}
\label{sec:pf_crossover}

The proof of Theorem~\ref{thm:crossover} is easier in dimensions $d> 2$ because we can apply Fubini's theorem as in Remark~\ref{rmk:Fubini}.
We prove this case here and defer the proof for $d\le 2$ to Appendix~\ref{app:1-2D}.

\begin{proof}[Proof of Theorem~\ref{thm:crossover} for $d> 2$]
Suppose Assumptions~\ref{ass:Omega} and \ref{ass:J} hold.
We want to compute the asymptotic behaviour of $S(x)$ as $\abs x \to \infty$, by applying Theorem~\ref{thm:deconv-intro} with $\Jsupmu = J \supmux$ and $g = \g \supmux$, with $\mux$ given by Proposition~\ref{prop:geom-intro}.
The importance of tilting by $\mux$ is that then $\eta_{\hat x}$ is aligned with $x$, by Proposition~\ref{prop:geom-intro}, as is required for application of Theorem~\ref{thm:deconv-intro}.

To begin,
we let $\theta \in [0, 1)$ and multiply the generalised OZ
equation $\GL =\g +J*\GL$ from Assumption~\ref{ass:J} by $e^{\theta \mux \cdot x} = e^{\theta \mux \cdot (x-y)} e^{\theta \mux \cdot y}$.
This gives the tilted equation
\begin{equation}
S(x) e^{\theta \mux \cdot x}
= \g(x) e^{ \theta \mux \cdot x }
	+ \sum_{y\in \Zd} J(x-y) e^{\theta \mux \cdot (x-y)}
			S(y) e^{\theta \mux \cdot (y)} ,
\end{equation}
which can be written as $S \supk{ \theta \mux } = \g \supk{ \theta \mux } + J \supk{ \theta \mux } * S \supk{ \theta \mux }$.
Since $\theta < 1$, since $\mux \in \del \Omega$, and since $\Omega$ is strictly convex by Proposition~\ref{prop:geom-intro},
we know ${ \theta \mux } \in \Omega$.  This tells us that
$S \supk{ \theta \mux } \in L^1(\Zd)$, by the definition of $\Omega$.
We also know $J \supk{ \theta \mux }, \g \supk{ \theta \mux } \in L^1(\Zd)$ by Assumption~\ref{ass:J}.
Therefore, we can solve the convolution equation using the (inverse) Fourier transform, as
\begin{equation} \label{eq:S_int_pf}
S(x) e^{\theta \mu_{\hat x}\cdot x}
= \int_\Td \frac{\D k }{(2\pi)^d}
	\frac { \hat \g\supk{\theta \mux} (k)  } { 1 - \hat J\supk{\theta \mux} (k) }   e^{ik\cdot x} .
\end{equation}
We want to take the limit $\theta \to 1$
using dominated convergence.
The numerator obeys $\abs{\hat \g\supk{\theta \mux} (k) }
\le \norm{ \g\supk{\theta \mux} (y) }_1 \le M$.
Also, since $\hat J\supk{\theta \mux} (0)$ is real and less than $1$
by Proposition~\ref{prop:geom-intro}, the denominator obeys
\begin{equation}
\abs{ 1 - \hat J\supk{\theta \mux} (k) }
\ge
    \Re[ 1 - \hat J\supk{\theta \mux}(k) ]
\ge
    \Re[ \hat J\supk{\theta \mux} (0) - \hat J\supk{\theta \mux}(k) ]
\ge \KIR \abs k ^2  .
\end{equation}
Since $d > 2$, this allows us to take the limit $\theta \to 1$
 in \eqref{eq:S_int_pf} using dominated convergence,
to get \eqref{eq:S_int_pf} with $\theta = 1$.
Then, since $\hat J\supmux(0) = 1$
and $\eta_\hatx = \abs{ \eta_\hatx} \hat x$ by Proposition~\ref{prop:geom-intro},
it follows from Theorem~\ref{thm:deconv-intro} and Remark~\ref{rmk:Fubini} that
\begin{equation}
S(x) e^{\mu_{\hat x}\cdot x}
= \int_\Td \frac{\D k }{(2\pi)^d}
	\frac { \hat \g\supk{\mux} (k)  } { 1 - \hat J\supk{\mux} (k) }
= \bg(x; \eta_\hatx, \Lam_\hatx) \Bigl[\hat \g\supk{\mux} (0)  + O\Bigl(\frac 1 {\abs x^\eps}\Bigr) \Bigr]
\end{equation}
as $\abs x\to \infty$.
This gives the desired result after multiplication by $e^{-\mu_{\hat x}\cdot x} = e^{- \mS  \abs x_S}$ (which holds by Corollary~\ref{cor:norm}).
\end{proof}

\subsection{Guide to the paper}

In Section~\ref{sec:introduction}, we have been concerned with a single OZ equation $\GL=\g+J*\GL$.  For applications, we instead consider a family of OZ equations $\GL_z=\g_z+J_z*\GL_z$ indexed by a parameter $z \in [0,z_c]$.  The point $z_c$ is a critical point in the sense that
the mass $m_{z}$ of $S_z$ goes to $0$ as $z \to z_c$.
In Section~\ref{sec:critical},
under this vanishing mass assumption, and assuming bounds on the
tilted functions $(J_z\supmu,\g_z\supmu)$ that hold uniformly in $\mu\in \overline\Omega_z$ and in $z$ near $z_c$, we analyse the asymptotic behaviour of the norm $|\cdot|_z$,
the susceptibility $\chi(z)$, and the correlation length $\xi_\phi(z)$ of any order $\phi >0$, in the limit $z \to z_c$.  In particular, we identify a universal limit for $m_{z}\xi_\phi(z)$ as $z \to z_c$.

Section~\ref{sec:RW} is concerned with applications of our results to Green functions for random walks on $\Zd$.
All our results apply to irreducible random walks whose kernel decays super-exponentially.
We also study the massive ($z\to 0$) limit of the norm $|\cdot|_z$ when the kernel is finitely supported.
The norm is computed explicitly for two examples.
We also give an example of a kernel without super-exponential decay, whose Green function does not have OZ decay when $z$ is small.

In Section~\ref{sec:rate},
we prove the important Proposition~\ref{prop:geom-intro}, concerning the geometry and variational problem associated to the set $\overline \Omega$. We also justify the terminology ``Wulff shape'' for $\overline\Omega$.

In Section~\ref{sec:tilted_gaussian}, we prove the noncentred Gaussian Lemma (Theorem~\ref{thm:deconv-intro}).
The proof is based on Fourier analysis and is independent of the rest of the paper.

Section~\ref{sec:SAW} contains the application of our results to the nearest-neighbour self-avoiding walk on $\Zd$ in dimensions $d \ge 5$.
Section~\ref{sec:perc} contains the application to  nearest-neighbour Bernoulli bond percolation on $\Zd$ in dimensions $d \ge 15$.
For both models, we prove that all results of Sections~\ref{sec:introduction}--\ref{sec:critical} apply
in the vicinity of the critical point.
In particular, we obtain detailed asymptotic behaviour of the two-point function, including the crossover from OZ to critical decay.

Finally, there are two appendices.
Appendix~\ref{app:Brown} provides elementary properties of the Brownian Green function.
Appendix~\ref{app:1-2D} gives the extra argument needed to prove the $d\le 2$ case of Theorem~\ref{thm:crossover};
it uses estimates obtained in Section~\ref{sec:tilted_gaussian}.

\section{Critical limits}
\label{sec:critical}

In our applications of the crossover theorem, we consider models which are parametrised by $z \in [0,z_c]$ with a critical point $z_c$.
The OZ equation then becomes
\begin{equation}
\label{eq:OZz}
    S_z = \g_z + J_z*\GL_z.
\end{equation}
We use a subscript $z$ to denote quantities associated with $S_z$. For example, $\Omega_z$ denotes the set $\Omega$ defined from $S_z$, and
$\mu_{\hat x, z } \in \del \Omega_z$ denotes the vector produced by Proposition~\ref{prop:geom-intro}.
We also write $\mz = m_{S_z}$ and $|\cdot|_z = |\cdot|_{S_z}$.
We assume that $m_z>0$ for $z<z_c$ and $\lim_{z\to z_c}m_z=0$; the
latter is what makes $z_c$ \emph{critical}.
In this section, we consider general results which apply when Assumptions~\ref{ass:Omega} and \ref{ass:J} hold uniformly near the critical point.

\subsection{Critical limit of the norm}

The first theorem says that the limit of the norm $\abs x_z$ is the Euclidean norm, as $z$ approaches the critical value $z_c$.
Its proof uses only Proposition~\ref{prop:geom-intro} and not the Crossover Theorem.
This greatly generalises a result of \cite{MS22}, which proved
Theorem~\ref{thm:critical_limit} for the specific example of the Green function for the nearest-neighbour random walk.

For $z \in [0,z_c)$, the \emph{susceptibility} is defined by
$    \chi(z) = \sum_{x\in\Zd}S_z(x).$
We also define $\sigma_z$ by
\begin{equation}
    \sigma_z^2 = \sum_{x\in\Zd}\norm x_2^2 J_z(x);
\end{equation}
note that the right-hand side is positive when the infrared bound is satisfied.

\begin{theorem} \label{thm:critical_limit}
Let $d\ge 1$ and $\delta>0$.
Suppose that Assumptions~\ref{ass:Omega} and~\ref{ass:J} hold for $S_z$ for each $z\in [z_c-\delta,z_c)$, suppose that $m_z \to 0$ as $z \to z_c$,
and suppose that there are constants $M,\KIR,\zeta$ such that
$(J_z\supmu,\g_z\supmu) \in \Qcal_{M,\KIR,\zeta}$ uniformly in $\mu\in \overline\Omega_z$ and in $z \in [z_c-\delta,z_c)$.
Then, in the limit $z \to z_c$,
\begin{equation} \label{eq:crit_limit}
\abs x_{z}
= \norm x_2 \bigl[ 1 + O(\mz ^{ \zeta \wedge 2 } ) \bigr]
\end{equation}
uniformly in $x\in \Rd$.
\end{theorem}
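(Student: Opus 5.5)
Since $\abs x_z = \mu_{\hat x,z}\cdot x/\mz$, and by Proposition~\ref{prop:geom-intro} every $\mu\in\partial\Omega_z$ satisfies $\hat J_z\supmu(0)=1$ with $\abs\mu^2\le M/\KIR$ and indeed $\abs\mu\lesssim \mz$, the plan is to show that $\partial\Omega_z$ is a Euclidean sphere of radius $\mz$ up to a relative error $O(\mz^{\zeta\wedge 2})$. The norm is then the support function of $\overline\Omega_z$ divided by $\mz$, so it equals $\norm x_2$ up to the same relative error. Positive homogeneity of both sides reduces the claim to unit vectors $x$.

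\textbf{Step 1 (expansion of $\hat J_z\supmu(0)$ for small $\mu$).} By $\Zd$-symmetry, $J_z$ has vanishing odd moments and diagonal second moments $\sigma_z^2/d$. For $\mu\in\overline\Omega_z$ I would write
\[
\hat J_z\supmu(0)=\sum_y J_z(y)\cosh(\mu\cdot y)=\hat J_z(0)+\tfrac{1}{2d}\sigma_z^2\abs\mu^2+E_z(\mu).
\]
The error obeys $\abs{E_z(\mu)}\le \sum_y \abs{J_z(y)}\,\abs{\cosh(\mu\cdot y)-1-\tfrac12(\mu\cdot y)^2}$. To control it, I use $\abs{\cosh u-1-u^2/2}\le C\abs u^{2+\zeta'}\cosh u$ with $\zeta'=\zeta\wedge 2$, together with the moment bound $\norm{\abs y^{2+\zeta}J_z\supmu}_1\le M$ applied to tilts $\pm\mu$. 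Since $\cosh(\mu\cdot y)=\tfrac12(e^{\mu\cdot y}+e^{-\mu\cdot y})$ and $\mu,-\mu\in\overline\Omega_z$, this gives $\abs{E_z(\mu)}\lesssim \abs\mu^{2+\zeta'}$. The bound uses $\abs y^{2+\zeta'}\le \abs y^2+\abs y^{2+\zeta}$, and $\norm{\abs y^2 J\supmu}_1\le M$ by interpolation.

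\textbf{Step 2 (the boundary is nearly a sphere).} For $\mu\in\partial\Omega_z$, Step 1 gives $1-\hat J_z(0)=\tfrac{1}{2d}\sigma_z^2\abs\mu^2+O(\abs\mu^{2+\zeta'})$. I would apply this at $\mu=\mz e_1\in\partial\Omega_z$ to get $1-\hat J_z(0)=\tfrac{1}{2d}\sigma_z^2\mz^2(1+O(\mz^{\zeta'}))$, provided $\sigma_z^2$ is bounded below. That lower bound follows from the infrared bound \eqref{eq:Q_infrared} with $\mu=0$ (giving $\sigma_z^2\ge d\KIR$); the upper bound $\sigma_z^2\le M$ also holds. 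Subtracting the two expansions yields $\abs\mu^2=\mz^2(1+O(\mz^{\zeta'}))$, and so $\abs\mu=\mz(1+O(\mz^{\zeta'}))$ for every $\mu\in\partial\Omega_z$. Here I use $\abs\mu\lesssim\mz$ from \eqref{eq:Omega_incl}, since $\norm\mu_1\le\sqrt d\,\norm\mu_\infty\cdot\sqrt d$, which absorbs $\abs\mu^{2+\zeta'}$ into $\mz^{2+\zeta'}$.

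\textbf{Step 3 (conclusion).} For a unit vector $x$, $\mu_{\hat x,z}\cdot x\le \abs{\mu_{\hat x,z}}\le \mz(1+O(\mz^{\zeta'}))$. Conversely, I take the point $\mu\in\partial\Omega_z$ on the ray in direction $x$; this exists since $\overline\Omega_z$ is convex, bounded and contains $0$ in its interior. Optimality in \eqref{eq:optimal-mu-intro} gives $\mu_{\hat x,z}\cdot x\ge \mu\cdot x=\abs\mu\ge \mz(1-O(\mz^{\zeta'}))$. Dividing by $\mz$ gives \eqref{eq:crit_limit}, with constants depending only on $d,M,\KIR,\zeta$, hence uniformly in $x$.

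The main obstacle is the uniform error bound in Step 1. It requires care that the tilted moment bounds in $\Qcal_{M,\KIR,\zeta}$ apply at all $\mu$ on the segment to the boundary, including $\pm\mu$; this is fine since these points lie in $\overline\Omega_z$ by symmetry and convexity. It also requires that the remainder estimate for $\cosh$ carries the $\cosh$ weight rather than $e^{\abs{\mu\cdot y}}$.
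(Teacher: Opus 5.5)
Your proof is correct, and it departs from the paper's only in the last step.

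\textbf{Where the two proofs coincide.} Both expand $\sum_y J_z(y)(\cosh(\mu\cdot y)-1)$ at a boundary point and compare it with the same expansion at $m_z e_1\in\partial\Omega_z$. This gives $|\mu|^2=m_z^2[1+O(m_z^{\zeta\wedge 2})]$. The paper routes this through $\hat h_z(0)/\chi(z)=1-\hat J_z(0)$, but that quantity cancels in the comparison.

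\textbf{Where they differ.} The remaining task is to pass from $|\mu_{\hat x,z}|$ to $\mu_{\hat x,z}\cdot\hat x$.
\begin{itemize}
\item The paper uses the first-order condition of Proposition~\ref{prop:geom-intro}, that $\eta_{\hat x,z}$ is parallel to $\hat x$. It combines this with a $\sinh$ expansion $\eta_{\hat x,z}=\frac{\sigma_z^2}{d}\mu_{\hat x,z}+O(m_z^{1+\zeta})$. This gives $\mu_{\hat x,z}\cdot\hat x=\mu_{\hat x,z}\cdot\eta_{\hat x,z}/|\eta_{\hat x,z}|=|\mu_{\hat x,z}|[1+O(m_z^{\zeta})]$.
\item You instead apply the radius estimate at every point of $\partial\Omega_z$, so $\overline\Omega_z$ is squeezed between Euclidean balls of radii $m_z[1\pm O(m_z^{\zeta\wedge2})]$. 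You then sandwich the support function: Cauchy--Schwarz gives the upper bound, and the boundary point on the ray through $\hat x$ gives the lower bound. The paper uses this same ray device for the lower bound on $\mu_{\hat x}\cdot x$ in Proposition~\ref{prop:geom}.
\end{itemize}

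\textbf{What each buys.} Your route is more elementary. It needs only $\hat J_z^{(\mu)}(0)=1$ on $\partial\Omega_z$ and the maximising property of $\mu_{\hat x,z}$. It does not need the $C^2$ boundary or the normal-vector characterisation. The paper's route yields the asymptotics of $\eta_{\hat x,z}$ as a by-product, and these are reused in the proof of Theorem~\ref{thm:critical_decay}.

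\textbf{Two small slips, neither affecting the argument.}
\begin{itemize}
\item The inequality you need is $\|\mu\|_2\le\sqrt d\,\|\mu\|_\infty\le\sqrt d\,m_z$, from the second inclusion in \eqref{eq:Omega_incl}. Your stated bound on $\|\mu\|_1$ is not the relevant one.
\item The infrared bound at $\mu=0$ actually gives $\sigma_z^2\ge 2dK_{\mathrm{IR}}$. Your weaker $dK_{\mathrm{IR}}$ also suffices.
\end{itemize}
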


Theorem~\ref{thm:critical_limit} is complemented by
Theorem~\ref{thm:massive_limit} for the $z\to 0$ limit, for the case of
random walk with finitely supported step distribution.

\begin{proof}
Since the hypothesis is weaker when $\zeta$ is smaller, we assume $\zeta \le 2$.
By \eqref{eq:Q_moments}--\eqref{eq:Q_infrared} with $Q = J_z$, we have
$2d\KIR \le \sigma_z^2 \le M$.
We may assume without loss of generality that $x\ne 0$, since both norms in \eqref{eq:crit_limit} equal zero when $x=0$.
For nonzero $x$, by Corollary~\ref{cor:norm} the norm is given by
\begin{equation}
\abs x_{z}
= \frac{ \mu_{\hatx, z} \cdot x } { m_z } .
\end{equation}

Let $x \ne 0$.
Since $\muxz \in \del \Omega_z$, we know that $\norm{ \muxz }_2 \asymp \mz$.
We first compute its asymptotic behaviour.
By summing the OZ equation \eqref{eq:OZz}
and by using $\hat J_z \supmuxz(0)  = 1$, we have
\begin{equation}
\frac{ \hat \g_z(0) } { \chi(z) }
= 1 - \hat J_z(0)
= \hat J_z \supmuxz(0) - \hat J_z(0)
= \sum_{y\in \Zd} J_z(y) ( \cosh(\muxz \cdot y) - 1 ) .
\end{equation}
Since $\cosh t = 1 + \frac 12 t^2 +O(|t|^{2+\zeta}\cosh t)$ when $\zeta\le2$, we have
\begin{equation}
\label{eq:muxz_asymp0}
\frac{ \hat \g_z(0) } { \chi(z) }
= \half \sum_{y\in \Zd} J_z(y)  (\muxz \cdot y)^2
	+ O( \mz^{2+\zeta} ) \sum_{y\in \Zd} \abs y^{2+\zeta} \abs{ J_z \supmuxz(y) }
= \frac{ \sigma_z^2 }{ 2d } \norm{ \muxz }_2^2
	+ O( \mz^{2+\zeta} ) ,
\end{equation}
where in the last equality we used the bound on the $(2+\zeta)^{\rm th}$ moment
of $ J_z \supmuxz(y)$ given by \eqref{eq:Q_moments}.
The same formula holds when $\hat x = e_1$, for which $\mu_{e_1, z} = m_z e_1$ (due to \eqref{eq:Omega_incl}).
Comparing the two formulas then shows
\begin{equation} \label{eq:muxz_asymp}
\norm{ \muxz }_2^2
= \frac{ 2d \hat \g_z(0) } { \sigma_z^2 \chi(z) } + O(m_z^{2+\zeta})
= \norm{ \mu_{e_1, z} }_2^2  + O(m_z^{2+\zeta})
= m_z^2 [ 1 + O(m_z^\zeta) ] ,
\end{equation}
uniformly in $x\ne 0$.

Next, using the fact that $\sinh t = t + O(\abs t^{1+\zeta} \cosh t)$ when $\zeta \le 2$, we similarly get
\begin{equation} \label{eq:eta_asymp}
(\eta_{\hat x,z})_j
= \sum_{y\in \Zd} y_j J_z(y) \sinh(\muxz \cdot y)
= \sum_{y\in \Zd} y_j J_z(y) (\muxz \cdot y)
	+ O(m_z^{1+\zeta})
= \frac{ \sigma_z^2 } d (\muxz)_j
	+ O(m_z^{1+\zeta})
\end{equation}
for all components $j$.
Since $\eta_{\hatx,z}$ is in the direction of $\hatx$ by Proposition~\ref{prop:geom-intro}, the above gives
\begin{equation}
\mu_{\hatx, z} \cdot x
= { \norm x_2 } \frac { \mu_{\hatx, z} \cdot \eta_{\hatx, z}  }
	{ \norm{ \eta_{\hatx, z} }_2  }
= { \norm x_2 } \frac
	{ \frac{ \sigma_z^2 } d \norm{ \muxz }_2^2 + O(\mz^{2+\zeta}) }
	{ \frac{ \sigma_z^2 } d \norm{ \muxz }_2 + O(\mz^{1+\zeta}) }
= \norm x_2 \norm { \muxz }_2 [ 1 + O(\mz^\zeta) ] .
\end{equation}
The desired result then follows from \eqref{eq:muxz_asymp}.
\end{proof}

From the proof of Theorem~\ref{thm:critical_limit}, we extract the following corollary.

\begin{corollary}
\label{cor:chim}
Let $d\ge 1$ and $\delta>0$.
Suppose that Assumptions~\ref{ass:Omega} and~\ref{ass:J} hold for $S_z$ for each $z\in [z_c-\delta,z_c)$, suppose that $m_z \to 0$ as $z \to z_c$,
and suppose that there are constants $M,\KIR,\zeta$ such that
$(J_z\supmu,\g_z\supmu) \in \Qcal_{M,\KIR,\zeta}$ uniformly in $\mu\in \overline\Omega_z$ and in $z \in [z_c-\delta,z_c)$.
Then $\chi(z) \to \infty$ as $z \to z_c$.  More precisely,
\begin{equation}
\label{eq:chim2}
    \chi(z)
    =
    \frac{2d\hat h_{z}(0)}{\sigma_{z}^2}
      \frac{1}{m_z^{2}}
    \big[ 1+O(m_z^{\zeta\wedge2}) \big] .
\end{equation}
\end{corollary}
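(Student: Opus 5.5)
The plan is to read \eqref{eq:chim2} off from \eqref{eq:muxz_asymp0}, specialised to the direction $\hat x = e_1$. We take $\zeta\le 2$ without loss of generality, as in the proof of Theorem~\ref{thm:critical_limit}.

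First, we recall the summed OZ equation. Since $\chi(z)<\infty$ for $z<z_c$ (because $0\in\Omega_z$), summing \eqref{eq:OZz} over $x$ gives
\begin{equation*}
\chi(z) = \hat \g_z(0) + \hat J_z(0)\chi(z),
\qquad\text{so}\qquad
\frac{\hat\g_z(0)}{\chi(z)} = 1-\hat J_z(0).
\end{equation*}

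Next, we apply \eqref{eq:muxz_asymp0} with $\mu_{e_1,z}=m_z e_1$. Using $\hat J_z^{(m_z e_1)}(0)=1$, this yields
\begin{equation*}
\frac{\hat\g_z(0)}{\chi(z)} = \frac{\sigma_z^2}{2d}m_z^2 + O(m_z^{2+\zeta}).
\end{equation*}
Here the error constant is uniform in $z\in[z_c-\delta,z_c)$ because the $(2+\zeta)$-moment bound \eqref{eq:Q_moments} holds uniformly. Also $\sigma_z^2\ge 2d\KIR>0$, so we may divide by the leading term to get
\begin{equation*}
\frac{\hat\g_z(0)}{\chi(z)} = \frac{\sigma_z^2}{2d}m_z^2\bigl[1+O(m_z^\zeta)\bigr].
\end{equation*}

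It remains to invert this relation. We need $\hat\g_z(0)>0$, which holds by \eqref{eq:gnonzero} since $0\in\overline\Omega_z$. We also need $\hat\g_z(0)\le M$, which follows from \eqref{eq:g_moments}. Inverting then gives $\chi(z)=\frac{2d\hat\g_z(0)}{\sigma_z^2 m_z^2}[1+O(m_z^\zeta)]$ once $m_z$ is small enough that the bracket stays bounded away from zero. This is exactly \eqref{eq:chim2}, because $\zeta\wedge2=\zeta$ after our reduction.

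Finally, we deduce $\chi(z)\to\infty$. We have $\sigma_z^2\le M$ and $m_z\to0$, so the right-hand side of \eqref{eq:chim2} diverges provided $\hat\g_z(0)$ stays bounded below. Nothing so far gives a uniform lower bound on $\hat\g_z(0)$, so this is the one point that needs an extra argument. I would obtain it by comparing with the tilted value: $\hat\g_z(0)=\sum_y \g_z(y)\cosh(\mu_{e_1,z}\cdot y)+O(m_z^{\zeta})$, using $|\cosh t-1|\le |t|^{\zeta}\cosh t$ together with the tilted bound $\|\abs y^{\zeta}\g_z^{(\mu)}(y)\|_1\le M$ (via symmetry). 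If a uniform positive lower bound on $\hat\g^{(\mu)}_z(0)$ is not available, I would instead prove divergence directly from $\hat\g_z(0)/\chi(z)=1-\hat J_z(0)=O(m_z^2)$, where $\hat\g_z(0)$ is only positive. In the applications, $\hat\g_z(0)$ is bounded below in any case.
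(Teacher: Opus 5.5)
Your derivation of \eqref{eq:chim2} is the paper's own argument. You sum the OZ equation to get $\hat h_z(0)/\chi(z)=1-\hat J_z(0)=\hat J_z^{(m_ze_1)}(0)-\hat J_z(0)$. You then Taylor-expand $\cosh$ as in \eqref{eq:muxz_asymp0}, using the uniform $(2+\zeta)$-moment bound, divide by $\sigma_z^2m_z^2/(2d)$ using $\sigma_z^2\ge 2dK_{\mathrm{IR}}$, and solve for $\chi(z)$. Specialising to $\hat x=e_1$ makes \eqref{eq:muxz_asymp} unnecessary. The bound $\hat h_z(0)\le M$ is also not needed for the inversion, which is purely multiplicative.

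The gap is in your last step. Neither fallback proves $\chi(z)\to\infty$:
\begin{itemize}
\item Comparing $\hat h_z(0)$ with $\hat h_z^{(m_ze_1)}(0)$ only moves the problem to the tilted quantity. The hypotheses give it positivity for each $z$, but no uniform lower bound.
\item The relation ``$\hat h_z(0)/\chi(z)=O(m_z^2)$ with $\hat h_z(0)>0$'' is compatible with bounded $\chi(z)$ whenever $\hat h_z(0)\to 0$.
\end{itemize}

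In fact no argument can succeed under the stated hypotheses. Take $D$ with super-exponential decay, let $G_z$ be its Green function, and set $S_z=(1-z)G_z$. Then $S_z=(1-z)\delta_0+zD*S_z$, so $h_z=(1-z)\delta_0$ and $J_z=zD$. Check the hypotheses:
\begin{itemize}
\item Scaling by $1-z$ leaves $\Omega_z$ and $m_z$ unchanged, so $m_z\to0$.
\item Theorem~\ref{thm:RW} gives the uniform $\mathcal{Q}_{M,K_{\mathrm{IR}},\zeta}$ bounds for $zD^{(\mu)}$.
\item The bounds on $h_z^{(\mu)}=(1-z)\delta_0$ are trivial, and $\hat h_z^{(\mu)}(0)=1-z>0$.
\end{itemize}
Yet $\chi(z)=1$ for all $z$. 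Formula \eqref{eq:chim2} still holds here, because $\hat h_z(0)$ and $\chi(z)$ are both multiplied by $1-z$ relative to $G_z$; only the divergence fails.

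So the assertion $\chi(z)\to\infty$ needs an extra hypothesis. A uniform bound $\hat h_z(0)\ge M^{-1}$, as in Theorem~\ref{thm:xiphi}, suffices, as does merely $\hat h_z(0)/m_z^2\to\infty$. With either, divergence follows at once from \eqref{eq:chim2} and $\sigma_z^2\le M$. The paper's proof also just reads $\chi(z)\to\infty$ off \eqref{eq:chim2}, tacitly using such a lower bound. That bound holds in the applications, for example $h_z=\delta_0$ for random walk and self-avoiding walk.
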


\begin{proof}
The combination of \eqref{eq:muxz_asymp0} and \eqref{eq:muxz_asymp}, together with their assumption that $\zeta \le 2$,  gives
\begin{equation}
    \frac{\hat h_z(0)}{\chi(z)} = \frac{\sigma_z^2}{2d} m_z^2
    \bigl[ 1+O(m_z^{\zeta\wedge2}) \bigr] .
\end{equation}
The desired result then follows by solving for $\chi(z)$.
\end{proof}

\subsection{Critical decay and correlation length}

We now consider the (massive) critical limit where $\mz \abs x_z \to s \in [0,\infty)$.
The limit involves the (massive) continuum Green function, which is defined
for $a \ge 0$ if $d>2$, and for $a>0$ for $d \le 2$, by
\begin{equation} \label{eq:def_mG}
    \mG_a(x) = \int_0^\infty \D t\, \rho_t(x;0,\mathrm{Id}) e^{-\half ta^2}
    =
    \int_0^\infty \frac{ \D t}{(2\pi t)^{d/2} } e^{-\frac{1}{2t}\|x\|_2^2} e^{-\half ta^2} .
\end{equation}

\begin{theorem}  [Critical decay]
\label{thm:critical_decay}
Let $d\ge 1$ and $\delta>0$.
Suppose that Assumptions~\ref{ass:Omega} and~\ref{ass:J} hold for $S_z$ for each $z\in [z_c-\delta,z_c)$, suppose that $m_z \to 0$ as $z \to z_c$,
and suppose that there are constants $M,\KIR,\zeta$ such that
$(J_z\supmu,\g_z\supmu) \in \Qcal_{M,\KIR,\zeta}$ uniformly in $\mu\in \overline\Omega_z$ and in $z \in [z_c-\delta,z_c)$.
If $z \to z_c$ and $\abs x \to \infty$ in such a way that $m_z \abs x_z \to s \in [0,\infty)$, with $s > 0$ if $d \le 2$, then we have
\begin{equation}
S_z(x)
= \frac { d } {  \sigma_z^2 \norm x_2^{d-2} } \mG_{ s }(\hat x)
	[ \hat \g_z (0) + o(1)] .
\end{equation}
\end{theorem}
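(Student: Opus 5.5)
The plan is to apply the Crossover Theorem~\ref{thm:crossover} at each $z$ and then take the joint limit inside the explicit Brownian Green function. The constants in \eqref{eq:main_asymp-intro} depend only on $d,\zeta,M,\KIR$ (and on $\so$ when $d\le2$), and the hypotheses hold uniformly in $z\in[z_c-\delta,z_c)$. Hence
\begin{equation}
S_z(x) = \bg(x;\eta_{\hat x,z},\Lam_{\hat x,z})\, e^{-\mz\abs x_z}\bigl[\hat\g_z^{(\muxz)}(0)+O(\abs x^{-\eps})\bigr]
\end{equation}
uniformly in $z$. For $d\le2$ I need $\mz\abs x\ge \so$, which holds eventually with $\so=s/2$, because $\abs x_z\sim\norm x_2$ by Theorem~\ref{thm:critical_limit}. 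The exponential factor tends to $e^{-s}$. For the numerator, $\norm{\muxz}_2\asymp\mz\to0$, and $\abs{e^{\mu\cdot y}-1}\le \abs{\mu}\abs y^{\zeta\wedge1}$-type bounds combined with the uniform moment bound $\norm{\abs y^\zeta \g_z^{(\mu)}}_1\le M$ give $\hat\g_z^{(\muxz)}(0)=\hat\g_z(0)+o(1)$. The last step can be done by splitting at $\abs y\le R$ and using the tilted moment bound for large $\abs y$.

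The main task is the asymptotics of $\bg(x;\eta,\Lam)$ with $\eta=\eta_{\hat x,z}$ and $\Lam=\Lam_{\hat x,z}$. From the proof of Theorem~\ref{thm:critical_limit}, \eqref{eq:eta_asymp} and \eqref{eq:muxz_asymp} give $\eta=\frac{\sigma_z^2}{d}\mz\hat x\,[1+o(1)]$. By $\Zd$-symmetry of $J_z$, the untilted matrix is $\frac{\sigma_z^2}{d}\mathrm{Id}$. Tilting by $\muxz$ changes it by $O(\mz^{\zeta\wedge 1})$, again via the $(2+\zeta)$-moment bound. So $\Lam=\frac{\sigma_z^2}{d}(\mathrm{Id}+o(1))$.

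I then substitute $t=\frac{d}{\sigma_z^2}\norm x_2^{2}\,u$ in \eqref{eq:def_C}. Expanding the exponent, the cross term $x\cdot\Lam^{-1}\eta=\abs x\,\mz\,(1+o(1))$ tends to $s$, and the drift term gives $\frac{t}{2}\eta\cdot\Lam^{-1}\eta\to \frac{u}{2}s^2$ after scaling. This yields
\begin{equation}
\bg(x;\eta,\Lam)= \frac{d}{\sigma_z^2\norm x_2^{d-2}}\, e^{s}\int_0^\infty \frac{\D u}{(2\pi u)^{d/2}}e^{-\frac1{2u}-\frac{u}{2}s^2}\,(1+o(1))=\frac{d}{\sigma_z^2\norm x_2^{d-2}}e^{s}\mG_s(\hat x)(1+o(1)).
\end{equation}
The factor $e^{s}$ cancels $e^{-\mz\abs x_z}\to e^{-s}$, giving the claim. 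The $O(\abs x^{-\eps})$ error is absorbed into $o(1)$ because $\bg\cdot e^{-s}$ is of the same order as the main term.

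The step I expect to be the main obstacle is justifying the limit under the integral with matrix and drift errors that are only $o(1)$ relative. I would use dominated convergence in $u$. The exponent is bounded below by $c(\frac1u+u s^2)$ minus a bounded cross term, uniformly for $z$ near $z_c$, by \eqref{eq:quadratic_form}. This gives an integrable dominating function: for $d>2$ when $s=0$ (decay $u^{-d/2}$ at infinity), and for all $d$ when $s>0$. The relative errors $o(1)\cdot x\cdot\Lam^{-1}x/t$ are controlled by $o(1)/u$ in the exponent, which is harmless against $-1/(2u)$. If a cleaner route is preferred, the explicit Bessel representation from Appendix~\ref{app:Brown} (Lemma~\ref{lem:CBess}) could replace this step, with continuity in parameters.
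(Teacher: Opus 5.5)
Your proposal is correct and follows essentially the paper's route. You apply the Crossover Theorem uniformly in $z$, use $\Lam_{\hat x,z}\to(\sigma_z^2/d)\mathrm{Id}$ and $\eta_{\hat x,z}\sim(\sigma_z^2/d)m_z\hat x$ from the proof of Theorem~\ref{thm:critical_limit}, and let the cross term $e^{x\cdot\Lam^{-1}\eta}\to e^{s}$ cancel $e^{-m_z|x|_z}$. The only difference is packaging: the paper completes the square and rescales via Lemma~\ref{lem:C=G} and \eqref{eq:Gscaling} and then invokes joint continuity of $\mG_a(x)$, whereas your substitution $t=\frac{d}{\sigma_z^2}\|x\|_2^2u$ with dominated convergence does the same thing directly and handles the $z$-dependence of $\sigma_z$ a little more transparently.
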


\begin{remark}
One goal of the renormalisation group method is to represent the two-point function
of an interacting model as the two-point function of a non-interacting model
at renormalised parameters.  An instance of this for $4$-dimensional models
can be seen in \cite[(2.6) and (2.8)]{BSTW-clp}.  Theorem~\ref{thm:critical_decay} is another
example:
after application of the scaling relation \eqref{eq:Gscaling} for $\mG$,
it expresses $\GL_z(x)$ in terms of a multiple of the free Green function
$\mG_{m_z}(x)$ at a renormalised mass $m_z$.
\end{remark}

\begin{proof}[Proof  of Theorem~\ref{thm:critical_decay}]
For nonzero $x \in \Zd$, let $\mu_{\hat x,z},\eta_{\hat x,z},\Lam_{\hat x,z}$ be given by Proposition~\ref{prop:geom-intro}.
By Theorem~\ref{thm:crossover},
\begin{equation}  \label{eq:SG1}
\GL_z(x)
= \bg(x; \eta_{\hat x,z} , \Lambda_{\hat x,z}) e^{-\mz  \abs x_z} [ \hat \g \supmuxz_z (0) + o_x(1)]
	\qquad (\abs x \to \infty) .
\end{equation}
We define $a>0$ by $a^2 = \eta_{\hatx, z} \cdot \Lam_{\hatx, z}\inv \eta _{\hatx, z }$.
By Lemma~\ref{lem:C=G},
\begin{equation}
\label{eq:SG2}
    \bg(x; \eta_{\hat x,z} , \Lambda_{\hat x,z})
    =
    \frac {1} { \norm x_2^{d-2} \sqrt{\det \Lam_\hatxz}}
	\mG_{a\norm x_2}( \Lam_\hatxz^{-1/2} \hat x)
	\exp\bigl\{ x\cdot \Lam_\hatxz\inv \eta_\hatxz \bigr\}.
\end{equation}
Since $\Lam_\hatxz \sim  (\sigma_z^2 / d ) \mathrm{Id}$ as $z\to z_c$ (uniformly in $x$),
we have $\Lam_\hatxz^{-1/2} x \sim ( \sqrt d /\sigma_z ) x$.
From \eqref{eq:eta_asymp} and \eqref{eq:muxz_asymp}, we have
\begin{equation} \label{eq:SG3}
a
\sim \frac{ \sqrt d}{ \sigma_z } \norm{ \eta_{\hatx, z} }_2
\sim \frac{ \sigma_z }{ \sqrt d} \norm{ \mu_{\hatx, z} }_2
\sim \frac{ \sigma_z \mz }{ \sqrt d} .
\end{equation}
Since $m_z \abs x_z \to s$ by assumption and $\abs x_z \to \norm x_2$ by Theorem~\ref{thm:critical_limit}, we
thus have $a \norm x_2 \sim \sigma_z s / \sqrt d$ and
\begin{equation}
x \cdot \Lam_\hatxz\inv \eta_\hatxz
= (\hat x \cdot \Lam_{\hatx, z}\inv \hat x)
	\norm x_2 \norm{ \eta_\hatxz} _2
\sim \mz \norm x_2
\to s .
\end{equation}
We combine these with $\hat h_z \supmuxz(0) \sim \hat h_z (0)$ and
\eqref{eq:SG1}--\eqref{eq:SG2}, to obtain
\begin{equation}
S_z(x)
= \frac {1} { \norm x_2^{d-2} \sqrt{\det \Lam_\hatxz}}
	\mG_{a\norm x_2}( \Lam_\hatxz^{-1/2} \hat x)
	\exp\{ s - s\}
	[ \hat \g_z (0) + o(1)]  .
\end{equation}
By the joint continuity of $\mG_a(x)$ in $a$ and $x$ (apparent from
the right-hand side of \eqref{eq:def_mG}),
and by the scaling identity \eqref{eq:Gscaling} for $\mG$, we therefore have
\begin{align}
S_z(x)
&=  \frac {1} { \norm x_2^{d-2}  (\sigma_z / \sqrt d)^d }
	\mG_{ \sigma_z s / \sqrt d}\biggl( \frac { \sqrt d }{ \sigma_z} \hat x\biggr)
	 [ \hat \g_z (0) + o(1)] \nl
&=  \frac { d } {  \sigma_z^2 \norm x_2^{d-2} } \mG_{ s }(\hat x)
	[ \hat \g_z (0) + o(1)] ,
\end{align}
as desired.
\end{proof}

The asymptotic formula \eqref{eq:chim2} for the susceptibility
is greatly extended in the next theorem.  For its statement, we define the \emph{correlation length $\xi_\phi(z)$ of order} $\phi > 0$ by
\begin{equation}
\label{eq:xidef}
    \xi_\phi(z)  =
    \biggl( \frac{1}{\chi(z)} \sum_{x\in\Zd}\|x\|_2^\phi S_z(x)
    \biggr)^{1/\phi}.
\end{equation}
For $\phi\ge 0$, we define a positive constant $A_\phi$ by
\begin{equation} \label{eq:Aphidef}
A_\phi     =
\int_{\Rd} \|y\|_2^\phi  \mG_{1}(y) \D y
=  	2^{\phi + 1}
	\frac{\Gamma( \frac{\phi+2}{2})\Gamma( \frac{\phi+d}{2} )}{\Gamma(\frac d2)}
\end{equation}
(the last equality is justified in Lemma~\ref{lem:G1}).
In particular, $A_0 = 2$.

\begin{theorem} \label{thm:xiphi}
Let $d\ge 1$ and $\delta >0$.
Suppose that Assumptions~\ref{ass:Omega} and~\ref{ass:J} hold for $S_z$ for each $z\in [z_c-\delta,z_c)$, suppose that $m_z \to 0$ as $z \to z_c$,
and suppose that there are constants $M,\KIR,\zeta$ such that
$(J_z\supmu,\g_z\supmu) \in \Qcal_{M,\KIR,\zeta}$
and $\hat \g_z\supmu (0)  \ge M\inv > 0$
uniformly in $\mu\in \overline\Omega_z$ and in $z \in [z_c-\delta,z_c)$.
Suppose further that $\lim_{z\to z_c}\sigma_z = \sigma_{z_c}$
and $\lim_{z\to z_c}\hat h_z(0) = \hat h_{z_c}(0)$.
Then for every $\phi \ge 0$,
as $z \to z_c$ we have
\begin{equation} \label{eq:xi-unnormalised}
\sum_{x\in \Zd} \| x\|_2^\phi S_z(x)
\sim A_\phi \frac { d \hat h_{z_c}(0) } {  \sigma_{z_c}^2   }
\frac {1} {    \mz^{2+\phi} }
\end{equation}
and
\begin{equation}\label{eq:xiphiasy}
\xi_\phi(z)
\sim \Bigl(\frac{A_\phi}{A_0}\Bigr)^{1/\phi} \frac 1 \mz
= 2 \biggl(\frac{\Gamma( \frac{\phi+2}{2})\Gamma( \frac{\phi+d}{2} )}{\Gamma(\frac d2)}\biggr)^{1/\phi}
	\frac 1 \mz .
\end{equation}
\end{theorem}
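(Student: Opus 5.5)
The plan is to write the moment sum as a Riemann sum on the scale $m_z^{-1}$ and show it converges to the integral defining $A_\phi$. Concretely, set $u = m_z x$, so that
\[
\sum_x \|x\|_2^\phi S_z(x) = m_z^{-\phi-d} \sum_{u\in m_z\Zd} m_z^d\, \|u\|_2^\phi\, S_z(u/m_z).
\]
By Theorem~\ref{thm:critical_decay}, for $u$ fixed with $u\neq 0$ (and $m_z\abs{u/m_z}_z\to\|u\|_2$ by Theorem~\ref{thm:critical_limit}), we have
\[
S_z(x) \approx \frac{d\,\hat h_{z_c}(0)}{\sigma_{z_c}^2}\,\|x\|_2^{-(d-2)}\,\mG_{\|u\|_2}(\hat x).
\]
By the scaling identity \eqref{eq:Gscaling} this equals
\[
\frac{d\,\hat h_{z_c}(0)}{\sigma_{z_c}^2}\, m_z^{d-2}\,\mG_1(u),
\]
using $\mG_{s}(\hat u)\,\|u\|^{2-d}=\mG_1(u)$. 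Hence the summand converges pointwise to $\|u\|^\phi\mG_1(u)\cdot d\hat h_{z_c}(0)/\sigma_{z_c}^2$. Here we use the hypotheses $\sigma_z\to\sigma_{z_c}$ and $\hat h_z(0)\to\hat h_{z_c}(0)$. The prefactor is $m_z^{-\phi-d}m_z^{d-2}=m_z^{-2-\phi}$, which gives \eqref{eq:xi-unnormalised}. Taking the ratio of the cases $\phi$ and $\phi=0$ gives \eqref{eq:xiphiasy}, using the closed form for $A_\phi$ from Lemma~\ref{lem:G1}; the case $\phi=0$ is Corollary~\ref{cor:chim} consistently, since $A_0=2$.

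The main step is justifying the passage from the Riemann sum to the integral, i.e.\ dominated convergence. I would need a summable majorant, uniform in $z$ near $z_c$, and would split $\Zd$ into three regions.

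\textbf{Region (a): $|x|\le R$ for a fixed $R$.} This contributes $O(1)$. It is negligible against $m_z^{-2-\phi}$ once we know $S_z$ is bounded there. Boundedness follows from $S_z\le \chi$-type bounds, or more directly from \eqref{eq:S_int_pf} and the infrared bound when $d>2$.

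\textbf{Region (b): $R\le|x|\le \epsilon m_z^{-1}$.} Here the uniform crossover bound of Corollary~\ref{cor:crossover_weak}(i) gives $S_z(x)\lesssim |x|^{-(d-2)}$. The contribution is then $\lesssim \sum_{|x|\le \epsilon/m_z}|x|^{\phi+2-d}\asymp (\epsilon/m_z)^{\phi+2}$, which is small relative to $m_z^{-2-\phi}$ when $\epsilon\to0$. This matches integrability of $\|u\|^{\phi}\mG_1(u)\sim\|u\|^{\phi+2-d}$ at the origin.

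\textbf{Region (c): $|x|\ge K m_z^{-1}$.} Corollary~\ref{cor:crossover_weak} together with the norm equivalence $\|x\|_\infty\le|x|_z$ gives
\[
S_z(x)\lesssim |x|^{-(d-2)}(m_z|x|)^{(d-3)/2}e^{-c m_z|x|},
\]
so this tail is $\lesssim m_z^{-2-\phi}\int_{\|u\|\ge K}(\cdots)\,du$, which is small as $K\to\infty$.

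\textbf{Intermediate region: $\epsilon m_z^{-1}\le|x|\le Km_z^{-1}$.} Here I need the convergence of Theorem~\ref{thm:critical_decay} to be uniform over $u$ in the compact annulus $\epsilon\le\|u\|\le K$. I would obtain this by a subsequence/contradiction argument: any sequence $x_n, z_n$ with $m_{z_n}x_n\to u$ along a subsequence falls under Theorem~\ref{thm:critical_decay}, and the limit is continuous in $u$. Uniform continuity of $\mG_1$ on the annulus then lets the Riemann sum converge to the integral.

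The hypothesis $\hat h_z^{(\mu)}(0)\ge M^{-1}$ is exactly what Corollary~\ref{cor:crossover_weak} needs. Its constants depend only on $d,M,\KIR,\zeta$, which makes the bounds in (b) and (c) uniform in $z$. For $d\le2$, part (ii) of Corollary~\ref{cor:crossover_weak} and the condition $s>0$ complicate the small-$u$ region. In that case I would handle $|x|\le \epsilon/m_z$ through the bound $\lesssim m_z^{-(3-d)/2}|x|^{-(d-1)/2}$ (applicable for $|x|\ge s_0/m_z$) together with the Fourier representation below that scale. I expect this low-dimensional small-$x$ region to be the main technical obstacle, alongside the uniformity in Theorem~\ref{thm:critical_decay}.
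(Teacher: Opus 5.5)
For $d>2$ your argument is sound and has the same skeleton as the paper's proof. You rescale by $m_z$, obtain the pointwise limit $\abs y^\phi\mG_1(y)$ from Theorem~\ref{thm:critical_decay} and the scaling identity \eqref{eq:Gscaling}, and dominate using Corollary~\ref{cor:crossover_weak}. Your uniform-convergence-on-annuli step is more than is needed. If you write the sum as $\int f_z(y)\,\D y$ with $f_z$ a step function, then pointwise convergence plus the exponentially decaying majorant on $\abs y\ge \frac12 s_0$ suffices.

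The genuine gap is the small-scale region $\abs x\le \epsilon/m_z$ when $d\le 2$. The statement includes these dimensions, and you leave this region open. Your suggested Fourier route does not obviously close it. The bound one reads off from \eqref{eq:S_int_pf}, using $1-\hat h_z(0)/\chi(z)$-type control $1-\hat J_z(0)\asymp m_z^2$, is $S_z(x)\lesssim\int_{\Td}\D k/(m_z^2+\abs k^2)$. For $d=2$ this is $\asymp\log(1/m_z)$ uniformly in $x$, so it only gives $\sum_{\abs x\le\epsilon/m_z}\abs x^\phi S_z(x)\lesssim \epsilon^{\phi+2}\log(1/m_z)\,m_z^{-2-\phi}$. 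That is not small compared with $m_z^{-2-\phi}$.

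The missing idea is that no pointwise information is needed at small scales. Since $S_z\ge0$ and $\phi>0$, Corollary~\ref{cor:chim} gives, in every dimension $d\ge1$,
\[
\sum_{\abs x\le s_0/m_z}\abs x^\phi S_z(x)\le\Bigl(\frac{s_0}{m_z}\Bigr)^\phi\chi(z)\lesssim \frac{s_0^\phi}{m_z^{2+\phi}} .
\]
This is exactly what the paper does. It then applies dominated convergence only on $\abs y>s_0$. There Corollary~\ref{cor:crossover_weak}(ii) is available for $d\le2$, and Theorem~\ref{thm:critical_decay} needs only $s>0$. It takes $\limsup$ and $\liminf$, lets $s_0\to0$, and handles $\phi=0$ directly by Corollary~\ref{cor:chim}. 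This dimension-free bound also makes your regions (a) and (b) unnecessary when $d>2$.
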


In terms of \emph{the correlation length} $\xi(z)=m_z\inv$,
\eqref{eq:xiphiasy} shows that for every $\phi>0$
the ratio $\xi_\phi/\xi$ converges to a $d$-dependent constant:
\begin{equation}
\label{eq:xiphi-xi-ratio}
    \lim_{z\to z_c}
    \frac{\xi_\phi(z)}{\xi(z)} =
    \Bigl(\frac{A_\phi}{A_0}\Bigr)^{1/\phi}
    =
    2 \biggl(\frac{\Gamma( \frac{\phi+2}{2})\Gamma( \frac{\phi+d}{2} )}{\Gamma(\frac d2)}\biggr)^{1/\phi}.
\end{equation}
The limiting constant is \emph{universal} in the sense that it
does not depend on $h_z$ or $J_z$.
The particular case
$\xi_2(z)/\xi(z) \to \sqrt{2d}$ was proved for self-avoiding walk in dimensions $d \ge 5$,  in \cite[Theorems~1.2,~1.5]{HS92a}.
Similarly, for any $\phi,\psi>0$, the limit
\begin{equation}
\label{eq:xiratio}
    \lim_{z\to z_c}
    \frac{\xi_\phi(z)}{\xi_\psi(z)}
    =
    \frac{[\Gamma( \frac{\phi+2}{2})\Gamma( \frac{\phi+d}{2})]^{1/\phi} }
    {[\Gamma( \frac{\psi+2}{2})\Gamma( \frac{\psi+d}{2} )]^{1/\psi}}
    \Gamma\Big(\frac d2 \Big)^{\frac{1}{\psi}-\frac{1}{\phi}}
\end{equation}
is a universal ratio.
The universal ratio \eqref{eq:xiratio} was proved for $4$-dimensional $n$-component $|\varphi|^4$ models in \cite{BSTW-clp}.
We see from \eqref{eq:xiphi-xi-ratio}--\eqref{eq:xiratio} that all correlation lengths are
equivalent, up to universal constants.

\begin{proof} [Proof of Theorem~\ref{thm:xiphi}]
The $\phi = 0$ case of \eqref{eq:xi-unnormalised} follows from \eqref{eq:chim2} and $A_0 = 2$, and \eqref{eq:xiphiasy} follows
from \eqref{eq:xi-unnormalised}, so it suffices to prove \eqref{eq:xi-unnormalised} for $\phi >0$.
Throughout the proof, we write $|x|$ for the Euclidean norm $\|x\|_2$.

Let $\phi > 0$ and fix $\so > 0$.
For $x$ that satisfies $\abs x \le \so / \mz$, by \eqref{eq:chim2} we have
\begin{equation} \label{eq:corr_pf}
0 \le \sum_{x : \abs x \le \so / \mz} \abs x^\phi S_z(x)
\le \Bigl( \frac \so \mz \Bigr)^\phi \chi(z)
\lesssim \Bigl( \frac \so \mz \Bigr)^\phi
	\frac{ \hat h_z(0) }{ \sigma_z^2 \mz^2 } .
\end{equation}
For the remaining large $x$, we change variables $y = \mz x$ and write
\begin{equation}
\sum_{ \substack{ x \in \Zd \\ \abs x > \so / \mz } } \abs x^\phi S_z(x)
= \sum_{ \substack{ y\in \mz \Zd \\ \abs y > \so} }
	\Bigabs{ \frac y {\mz} }^\phi S_z\Bigl(\frac y{\mz}\Bigr)
= \frac {d \hat \g_z (0)} { \sigma_z^2 \mz^{2+\phi} } \mz^d
	\sum_{ \substack{ y\in \mz \Zd \\ \abs y > \so}}
		\frac{ \sigma_z^2 } { d \mz^{d-2} \hat \g_z (0)}
		\abs y^\phi S_z\Bigl(\frac y{\mz}\Bigr) .
\end{equation}
For nonzero $y\in \Rd$, we define
\begin{equation}
f_z(y)
= \frac{ \sigma_z^2 } { d \mz^{d-2} \hat \g_z (0)}
		\mz^\phi \Biginteger{ \frac y {\mz} }^\phi
		S_z\Bigl( \Biginteger{ \frac y{\mz} } \Bigr) ,	
\end{equation}
so that
\begin{equation}
\sum_{ \substack{ x \in \Zd \\ \abs x > \so / \mz } } \abs x^\phi S_z(x)
= \frac {d \hat \g_z (0)} { \sigma_z^2 \mz^{2+\phi} }
	\int_ {\abs{\integer{y/\mz}}> \so / \mz} f_z(y) \D y  .
\end{equation}
By Proposition~\ref{thm:critical_decay},
followed by the scaling identity \eqref{eq:Gscaling} for $\mG$,
\begin{equation}
\lim_{z\to z_c} f_z(y)
=  \frac {  \abs y^\phi } {  \abs {y}^{d-2} } \mG_{ \abs y }(\hat y)
=   \abs y^\phi  \mG_{ 1 }(y) 	.
\end{equation}
Also, by Corollary~\ref{cor:crossover_weak}, by the fact that
$\sigma_z^2 \le M$, and by our assumption that $\hat \g_z (0)  \ge M\inv$,
we have the uniform estimate
\begin{equation}
f_z(y)
\lesssim \frac{ 1 } { \mz^{d-2} }
	\abs y^\phi
	\frac{ \mz^{d-2} } { \abs y^{(d-1)/2} } e^{-c \abs y}
= 	\frac{ \abs y^\phi } { \abs y^{(d-1)/2} } e^{-c \abs y}
	\qquad (\abs y \ge \half s_0)
\end{equation}
for some $c > 0$.
This gives a dominating function, so by dominated convergence
and by inserting the limits of $\sigma_z$ and $\hat h_z(0)$,
\begin{equation}
\sum_{\substack{ x\in \Zd \\ \abs x > \so / \mz}} \abs x^\phi S_z(x)
\sim \frac {d \hat h_{z_c}(0) }{ \sigma_{z_c}^2 \mz^{2+\phi} }
	\int_{\abs y > \so}  \abs y^\phi \mG_1(y) \D y .
\end{equation}
Combined with \eqref{eq:corr_pf}, we obtain
\begin{equation}
\begin{aligned}
\frac{  \sigma_{z_c}^2   } { d \hat h_{z_c}(0) }
\limsup_{z\to z_c}
	\mz^{2+\phi}
	\sum_{x\in \Zd} \abs x^\phi S_z(x)
&\le O(\so^\phi) + \int_{\abs y > \so}  \abs y^\phi \mG_1(y) \D y ,
\\
\frac{  \sigma_{z_c}^2   } { d \hat h_{z_c}(0) }
\liminf_{z\to z_c}
	\mz^{2+\phi}
	\sum_{x\in \Zd} \abs x^\phi S_z(x)
&\ge 0 + \int_{\abs y > \so}  \abs y^\phi \mG_1(y) \D y .
\end{aligned}
\end{equation}
Since the left-hand sides are independent of $\so$, we can take $\so \to 0$ to conclude
\begin{equation}
\frac{  \sigma_{z_c}^2   } { d \hat h_{z_c}(0) }
\lim_{z\to z_c}
	\mz^{2+\phi}
	\sum_{x\in \Zd} \abs x^\phi S_z(x)
= \int_{\Rd}  \abs y^\phi \mG_1(y) \D y
= A_\phi,
\end{equation}
as desired.
\end{proof}

The combination of Corollary~\ref{cor:chim} and Theorem~\ref{thm:xiphi}
leads to an expression for the limiting ratios $\xi_\phi^2/\chi$
and $\xi_\phi/\xi$ (with $\xi=m_z^{-1}$).
For $\phi=2$, the next lemma enhances this with an error estimate, which
we apply to percolation in Section~\ref{sec:perc-mr}.

\begin{lemma} \label{lem:xi2-chi-ratio}
Let $d\ge 1$ and $\delta >0$.
Suppose that Assumptions~\ref{ass:Omega} and~\ref{ass:J} hold for $S_z$ for each $z\in [z_c-\delta,z_c)$, suppose that $m_z \to 0$ as $z \to z_c$,
and suppose that there are constants $M,\KIR,\zeta$ such that
$(J_z\supmu,\g_z\supmu) \in \Qcal_{M,\KIR,\zeta}$
and $\hat \g_z\supmu (0)  \ge M\inv > 0$
uniformly in $\mu\in \overline\Omega_z$ and in $z \in [z_c-\delta,z_c)$.
Then
\begin{equation} \label{eq:xi2-chi-ratio}
\frac{\xi_2(z)^2}{\chi(z)}
	= \frac{\sigma_{z}^2}{\hat \g_z(0)}
		\Bigl[1 + O\Bigl( \frac 1 { \chi(z) } \Bigr) \Bigr] ,
\qquad
\frac{\xi_2(z)}{\xi(z)}
	= \sqrt{2d} \Bigl[ 1 +O\Bigl( \frac 1 {\xi(z)^{\zeta\wedge 2} }\Bigr)  \Bigr] .
\end{equation}
\end{lemma}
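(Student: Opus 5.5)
Both claims in \eqref{eq:xi2-chi-ratio} will come from exact Fourier identities for the OZ equation \eqref{eq:OZz}. There is no need to use the Crossover Theorem here.

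For $z<z_c$ we have $0\in\Omega_z$, and $\Omega_z$ is open, so $S_z$ has exponentially decaying tails. The moment bounds \eqref{eq:Q_moments} and \eqref{eq:g_moments} with $\mu=0$ give $\sum|y|^2|J_z(y)|\le M$. For $g$ we need a second moment in $L^1$; this is not literally among the hypotheses, but a small tilt inside $\Omega_z$ and the bounds uniform in $\mu$ give finiteness. Hence $\hat S_z$, $\hat J_z$ and $\hat h_z$ are $C^2$ near $k=0$. Fourier transforming \eqref{eq:OZz} gives $\hat S_z(k)=\hat h_z(k)/(1-\hat J_z(k))$.

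Apply $-\Delta_k$ at $k=0$. By $\Zd$-symmetry all first derivatives vanish at $0$. Using $\chi(z)=\hat S_z(0)$ and $1-\hat J_z(0)=\hat h_z(0)/\chi(z)$, this gives
\begin{equation}
\sum_x |x|^2 S_z(x)=\frac{\sum_y |y|^2 h_z(y)}{1-\hat J_z(0)}+\frac{\hat h_z(0)\,\sigma_z^2}{(1-\hat J_z(0))^2}
=\chi(z)\,\frac{\sum_y|y|^2h_z(y)}{\hat h_z(0)}+\chi(z)^2\,\frac{\sigma_z^2}{\hat h_z(0)} .
\end{equation}
Dividing by $\chi(z)^2$ yields
\begin{equation}
\frac{\xi_2(z)^2}{\chi(z)}=\frac{\sigma_z^2}{\hat h_z(0)}+\frac{1}{\chi(z)}\frac{\sum_y|y|^2h_z(y)}{\hat h_z(0)} .
\end{equation}
The first claim follows once $\sum_y|y|^2 |h_z(y)|$ is bounded uniformly in $z$. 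We also use $\hat h_z(0)\ge M^{-1}$ and $\sigma_z^2\ge 2d\KIR$, which were noted in the proof of Theorem~\ref{thm:critical_limit}. The uniform bound on the second moment of $h_z$ is the one genuinely delicate point, because \eqref{eq:g_moments} only bounds $\||y|^2h\|_{p_d\wedge 2}$ and $\||y|^\zeta h\|_1$. I would first try the tilt trick. Pick $\mu=t\,m_z e_j\in\overline\Omega_z$ and use $\cosh(t m_z y_j)-1\ge \tfrac12 t^2m_z^2y_j^2$. This controls the second moment only with a factor $m_z^{-2}$, which is too weak, so I expect to need the $L^1$ second-moment bound directly. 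If that bound fails in general, the fallback is to read it as an $O(1/\chi)$ bound times $\sum|y|^2|h_z|$. Remark~\ref{rmk:Q} indicates that in applications this moment is verified in $L^1$ anyway.

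The second claim follows by combining the first with Corollary~\ref{cor:chim}:
\begin{equation}
\xi_2(z)^2=\frac{\sigma_z^2}{\hat h_z(0)}\chi(z)\Bigl[1+O\bigl(\chi(z)^{-1}\bigr)\Bigr]=\frac{2d}{m_z^2}\Bigl[1+O\bigl(m_z^{\zeta\wedge 2}\bigr)+O\bigl(m_z^{2}\bigr)\Bigr].
\end{equation}
Here we used $\chi(z)^{-1}\asymp m_z^2$ from \eqref{eq:chim2}. Since $m_z^2\le m_z^{\zeta\wedge2}$ for small $m_z$, taking square roots and recalling $\xi(z)=m_z^{-1}$ gives $\xi_2/\xi=\sqrt{2d}\,[1+O(\xi^{-(\zeta\wedge2)})]$.
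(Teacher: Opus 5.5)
Your route is the paper's. The paper takes the second moment of \eqref{eq:OZz} directly in $x$-space, where the cross term vanishes by symmetry. This gives exactly your identity $\sum_x|x|^2S_z(x)=\chi\,\sigma_{h_z}^2/\hat h_z(0)+\chi^2\sigma_z^2/\hat h_z(0)$, with $\sigma_{h_z}^2=\sum_y|y|^2h_z(y)$. The paper then gets the second ratio by combining with Corollary~\ref{cor:chim}, as you do.

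The point you flag is genuine, and the paper's proof passes over it. It treats $\sigma_{h_z}^2/(\hat h_z(0)\chi)$ as a relative $O(1/\chi)$ correction, which tacitly assumes $\sup_z\sum_y|y|^2|h_z(y)|<\infty$. Definition~\ref{def:A} does not give this in low dimensions. For example, take $d=4$, $\zeta\in(0,2)$, $J_z=zP$ and $h_z(y)=\delta_{0,y}+\1_{y\neq0}|y|^{-6}e^{-2m_z\|y\|_1}$, with $m_z$ the simple random walk mass. Since $h_z$ decays faster than the random walk Green function, $\Omega_z$ and $m_z$ are unchanged. This choice meets all the hypotheses uniformly for $z\in[\frac12,1)$, yet $\sigma_{h_z}^2\asymp\log(1/m_z)$, so the correction is of order $\chi^{-1}\log\chi$, not $\chi^{-1}$.

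The bound does hold in every application in the paper. For random walk and self-avoiding walk $h_z=\delta_0$, and for percolation $\||y|^2h_p^{(\mu)}\|_1\le M$ is verified directly. For $d\ge7$ it also follows from \eqref{eq:Q_d-1} by Cauchy--Schwarz, since $\sum_{y\ne0}|y|^{6-2d}<\infty$. So your fallback, making it an explicit hypothesis, is the right reading of the first ratio.

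For the second ratio you can avoid the extra hypothesis. Apply your tilt trick to the $\zeta$-weighted norm instead of to $\|h_z^{(\mu)}\|_1$. Since $\pm m_ze_j\in\overline\Omega_z$, we have $\sum_y|y|^{\zeta}|h_z(y)|\cosh(m_zy_j)\le M$. Combined with $\|y\|_\infty^{2-\zeta}\lesssim m_z^{\zeta-2}e^{m_z\|y\|_\infty}$ for $\zeta<2$ (the case $\zeta\ge2$ is trivial), this gives $\sum_y|y|^2|h_z(y)|\lesssim m_z^{(\zeta\wedge2)-2}$. The correction is then $O(m_z^{\zeta\wedge2})=O(\xi^{-(\zeta\wedge2)})$ relative to the main term, which is exactly the precision claimed for $\xi_2/\xi$.
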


\begin{proof}
Our hypotheses imply $\hat h_z(0) / \sigma_z^2 \asymp 1$,
so by inverting the asymptotic relation given by Corollary~\ref{cor:chim}, we have
\begin{equation}
\xi(z)^2 = \frac 1 { \mz^2 }
= \frac{ \sigma_z^2}{ 2d \hat h_z(0) }
	\chi(z) \Bigl[ 1 + O\Bigl(\frac1{\chi(z)^{ (\zeta \wedge 2)/2 }} \Bigr) \Bigr] .
\end{equation}
By dividing by the above and taking a square root, the second ratio of \eqref{eq:xi2-chi-ratio} follows from the first ratio. We therefore only need to prove the first ratio of \eqref{eq:xi2-chi-ratio}.

Given $f:\Zd\to\R$, we write $\sigma_f^2 = \sum_{x\in\Zd}|x|^2f(x)$
(so $\sigma_{J_z}$ is the same as $\sigma_z$).
In what follows,
we drop labels $z$, and all Fourier transforms are
evaluated at $k=0$.
By taking the second moment of the OZ equation \eqref{eq:OZz} and using $\Zd$-symmetries, we have
\begin{equation}
    \sigma_S^2 = \sigma_{\g}^2 + \sigma_J^2\chi + \hat J \sigma_S^2 .
\end{equation}
We solve for $\sigma_S^2$, and then use the relation $\chi = \hat \g / (1-\hat J)$ which comes from summing the OZ equation, to get
\begin{equation}
    \sigma_S^2  = \frac{\sigma_{\g}^2}{1-\hat J} + \frac{ \sigma_J^2\chi}{1-\hat J}
	= \frac{ \sigma_h^2 \chi }{ \hat h } + \frac{ \sigma_J^2 \chi^2 }{ \hat h } .
\end{equation}
Dividing by $\chi^2$ and using the definition for $\xi_2$ in \eqref{eq:xidef}, we obtain
\begin{equation}
\frac{ \xi_2^2 }\chi
= \frac{ \sigma_S^2 }{ \chi^2 }
= \frac{ \sigma_J^2 }{ \hat h } + \frac{ \sigma_h^2 }{ \hat h \chi }
= \frac{ \sigma_J^2 }{ \hat h }
	\Bigl[1 + O\Bigl( \frac 1 { \chi } \Bigr) \Bigr] ,
\end{equation}
which is the desired result.
\end{proof}

\section{Random walk}
\label{sec:RW}

In this section, we consider random walks on $\Zd$.
Let $D: \Zd \to [0,1]$ be a $\Zd$-symmetric probability distribution
that is not supported on a proper sublattice of $\Zd$, with
$\sigma_D^2 = \sum_{x\in \Zd}|x|^2 D(x) < \infty$.
The random walk $(X_n)_{n\ge 0}$ started at the origin with step distribution $D$, i.e., with $\P(X_n=x)=D^{*n}(x)$, is therefore irreducible.
The support hypothesis also ensures that $\hat D(k) <1$ for all nonzero $k \in \T^d$.
We combine this fact
with a Taylor expansion near $k=0$ to get the infrared bound
\begin{equation} \label{eq:D_infrared}
\hat D(0) - \hat D(k)
\ge K_D \abs k^2
	\qquad (k\in \Td)
\end{equation}
for some $K_D > 0$.

For $z\in (0,1)$, the \emph{subcritical Green function} $S_z:\Zd\to [0,\infty)$ is defined by
\begin{equation}
    S_z(x) = \sum_{n=0}^\infty     \P(X_n=x) z^n
    = \sum_{n=0}^\infty z^n D^{*n}(x).
\end{equation}
By the Markov property (or from the last equality above), $S_z$ obeys
\begin{equation} \label{eq:OZRW}
S_z(x) = \delta_{0,x} + (zD * S_z)(x) .
\end{equation}
This is the OZ equation \eqref{eq:OZz} with $h_z = \delta_0$ and $J_z = z D$.  For $z \in [0,1)$, summation of \eqref{eq:OZRW} over $\Z^d$ gives
\begin{equation}
\label{eq:chiRW}
    \chi(z) = \frac{1}{1-z}.
\end{equation}
By definition, we have
\begin{equation} \label{eq:Omega_RW}
\Omega_z
= \bigl\{ \mu\in \Rd : \chi\supmu(z) < \infty \bigr \}
= \bigl\{ \mu\in \Rd : z \hat D\supmu(0) < 1 \bigr \} .
\end{equation}

\subsection{Vanishing mass at criticality}

The next lemma proves that the mass $m_z$ goes to zero as $z$ approaches the critical point $z_c=1$.

\begin{lemma} \label{lem:m_to_0}
Let $d \ge 1$ and $z \in (0,1)$.
Suppose $D: \Zd \to [0,1]$ is a $\Zd$-symmetric
probability distribution, not supported on a proper sublattice of $\Zd$, with $\sigma_D^2 = \sum_{x\in \Zd}|x|^2 D(x) < \infty$.
Then the limit
\begin{equation} \label{eq:tilde_m}
\tilde m_z
= \lim_{n\to \infty}  \frac {- \log S_z(ne_1)  } n
\in [0,\infty)
\end{equation}
is well-defined,
coincides with the quantity $m_z$ defined in \eqref{eq:def_mS},
and satisfies $\tilde m_z \to 0$ as $z\to 1$.
\end{lemma}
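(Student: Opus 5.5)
We will prove the three assertions of Lemma~\ref{lem:m_to_0} in turn: existence of $\tilde m_z$ via supermultiplicativity, equality $\tilde m_z = m_z$ by comparing pointwise decay with summability of the tilted function, and $m_z \to 0$ as $z\to 1$ from the characterisation \eqref{eq:Omega_RW} of $\Omega_z$.

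\emph{Existence of the limit.} The Green function satisfies $S_z(x+y) \ge S_z(x) S_z(y)$. Indeed, concatenating a walk from $0$ to $x$ with an independent walk from $x$ to $x+y$ gives
\begin{equation}
z^{n+m} D^{*(n+m)}(x+y) \ge z^n D^{*n}(x)\, z^m D^{*m}(y),
\end{equation}
and we sum over $n,m$ to get $\sum_{N}(N+1) z^N D^{*N}(x+y)$ on the left. To keep a constant-free inequality, it is cleaner to use
\begin{equation}
S_z(x+y) \ge \sum_n z^n D^{*n}(x)\, z^m D^{*m}(y)
\end{equation}
for a single fixed optimal choice, or simply accept the factor: with $a_n = -\log S_z(ne_1)$, the sequence $a_n + \log C$ is subadditive once $S_z(ne_1)>0$ for all $n$. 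Positivity holds by irreducibility. Fekete's lemma then gives that $\tilde m_z = \lim a_n/n = \inf_n (a_n+c)/n$ exists in $[0,\infty)$. Finiteness of $\tilde m_z$ follows from $S_z(ne_1)\ge S_z(e_1)^n$. Actually supermultiplicativity holds exactly, since $S_z(x+y)=\sum_N z^N D^{*N}(x+y)$ and $D^{*N} \ge \sum_{n+m=N} \cdots/(N+1)$ is awkward. Instead I would use the hitting decomposition: $S_z(x) = F_z(x) S_z(0)$, where $F_z(x)$ is the generating function of the first visit to $x$. This gives $F_z(x+y)\ge F_z(x)F_z(y)$ by the strong Markov property, so $-\log F_z(ne_1)$ is subadditive and the limit exists for $S_z=S_z(0)F_z$.

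\emph{Identification with $m_z$.} If $t<\tilde m_z$, then $S_z(x)$ decays faster than $e^{-t'|x_1|}$ along the axis. We need decay summable over all of $\Zd$ with weight $\cosh(t x_1)$; for this we use \eqref{eq:Omega_RW}. Let $t_0 = \sup\{t : z\hat D^{(te_1)}(0)<1\}$, so that $m_z=t_0$. If $z\hat D^{(te_1)}(0)<1$, then $\chi^{(te_1)}=\sum_n (z\hat D^{(te_1)}(0))^n<\infty$, and in particular $S_z(ne_1)e^{tn}\le \chi^{(te_1)}$, giving $\tilde m_z\ge t$; hence $\tilde m_z \ge m_z$. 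Conversely, if $\tilde m_z > m_z$, pick $t\in(m_z,\tilde m_z)$. By supermultiplicativity, $F_z(ne_1)\le e^{-n\tilde m_z}$ for all $n$ (Fekete's infimum). To contradict $\chi^{(te_1)}=\infty$ we need control of off-axis points. Here we use \emph{convexity}: the rate $-\lim \frac1n\log F_z(nx)$ defines a norm $\beta$ (subadditive and homogeneous). By the $\Zd$-symmetry and convexity, $\beta(x)\ge \beta(x_1e_1)$ holds, since $x_1e_1$ is the average of reflections of $x$. This gives
\begin{equation}
S_z(x)\le S_z(0)e^{-\tilde m_z|x_1|},
\end{equation}
and hence $\sum_x S_z(x)\cosh(tx_1)$ is controlled by $\sum_x S_z(x)^{\theta}e^{\cdots}$. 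Summability still needs decay in the other coordinates too. I would get this by applying the same bound to $S_{z'}$ with $z<z'<1$, using $S_z(x)\le \sum_n z^n\P(X_n=x)$, where the factor $(z/z')^n$ forces $n\ge\|x\|_\infty$ only if $D$ has finite range. This step, controlling transverse directions without super-exponential tail assumptions, is the main obstacle. A fallback is to use $\hat D^{(te_1)}(0)$ directly: when $z\hat D^{(te_1)}(0)>1$, a local CLT or large-deviation lower bound for the tilted walk gives $S_z(ne_1)\ge e^{-tn}\cdot$ subexponential, so $\tilde m_z\le t$. Taking the infimum over such $t$ gives $\tilde m_z\le m_z$, using continuity of $t\mapsto\hat D^{(te_1)}(0)$ (convex and lower semicontinuous) at $t_0$. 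I would pursue this fallback (Cramér's theorem in direction $e_1$) as the primary route.

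\emph{Vanishing as $z\to1$.} Since $\hat D^{(te_1)}(0)=\sum_x D(x)\cosh(tx_1)\to 1$ as $t\downarrow0$ (whenever it is finite), $m_z=t_0(z)$ solves $z\hat D^{(t_0e_1)}(0)=1$, or else equals the radius of finiteness. In either case $\hat D^{(te_1)}(0)\ge 1+\tfrac12 t^2\sigma_D^2/d$, so $z(1+ct^2)<1$ forces $t\le \sqrt{(1-z)/(cz)}\to0$, which gives $m_z\to0$.
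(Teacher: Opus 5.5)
Your existence argument is correct: the first-passage generating function $F_z$ satisfies $S_z=S_z(0)F_z$ and $F_z(x+y)\ge F_z(x)F_z(y)$ by the strong Markov property, which is the paper's FKG-type inequality in another form. The false starts before it should be deleted. Your proof of $\tilde m_z\ge m_z$ is also fine.

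\textbf{The gap: the reverse inequality $\tilde m_z\le m_z$}, i.e.\ $\chi^{(te_1)}(z)<\infty$ for $t<\tilde m_z$. You essentially had it. From convexity of the rate and reflection symmetry you derived $S_z(x)\le S_z(0)e^{-\tilde m_z|x_1|}$, and then you called the missing transverse decay the main obstacle. It is not an obstacle. $S_z$ is also invariant under permutations of coordinates, so the same bound holds with $|x_j|$ for every $j$. This gives $S_z(x)\le S_z(0)e^{-\tilde m_z\|x\|_\infty}$, and then $\sum_x S_z(x)e^{tx_1}\le S_z(0)\sum_x e^{-(\tilde m_z-|t|)\|x\|_\infty}<\infty$ for $|t|<\tilde m_z$. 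This is exactly the paper's argument.

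\textbf{Why the fallback fails.} You instead make the tilted-walk local CLT/Cram\'er argument your primary route, and it does not work under the lemma's hypotheses. Only $\sigma_D^2<\infty$ is assumed, so $\hat D^{(te_1)}(0)$ can be $+\infty$ for every $t>m_z$.
\begin{itemize}
\item For polynomially decaying $D$, $m_z=0$ and no tilted walk exists at any $t>0$.
\item In the saturation example of Proposition~\ref{prop:xex} with $z<z_{\rm sat}$, $z\hat D^{(t)}(0)<1$ at $t=m_z=1$ and $z\hat D^{(t)}(0)=\infty$ for all $t>1$.
\end{itemize}
In such cases $z\hat D^{(te_1)}(0)>1$ holds only because the series diverges, and there is no tilted probability measure to apply a local limit theorem to.

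Cram\'er's theorem for the projection $X_N\cdot e_1$ does not rescue this. It bounds $\P(X_N\cdot e_1\ge aN)$, not the point mass at $ne_1$. Passing from one to the other needs exactly the transverse control you were trying to avoid.

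\textbf{Consequence for the last step.} Your proof that $\tilde m_z\to 0$ goes through $m_z\to0$ together with $\tilde m_z\le m_z$, so the gap affects that conclusion too.

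Once the $\ell^\infty$ bound is in place, your last step is a valid and simpler alternative to the paper's. The identity $\chi^{(te_1)}(z)=\sum_n (z\hat D^{(te_1)}(0))^n$ and $\cosh s\ge 1+\frac12 s^2$ give the quantitative bound $m_z^2\le 2d(1-z)/(z\sigma_D^2)$ directly. The paper instead argues by contradiction: it compares $\chi(z)=(1-z)^{-1}$ with $C_\eps S_z(0)$ and bounds $S_z(0)$ via the infrared bound separately for $d=1$, $d=2$ and $d>2$.
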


\begin{proof}
Exactly as in \cite[Theorem~A.2]{MS93}, it follows from
the FKG-type inequality
\begin{equation}
    S_z(x) \ge S_z(0)^{-1}S_z(y)S_z(x-y)
   \qquad (x,y\in \Zd)
\end{equation}
and Fekete's subadditive lemma that the limit \eqref{eq:tilde_m}
exists in $[0,\infty)$ and that
\begin{equation} \label{eq:RW_mass_pf}
S_z(x) \le S_z(0) e^{- \tilde m_z \norm x_\infty}
   \qquad (x\in \Zd) .
\end{equation}
This implies $\chi\supk{te_1}(z) = \sum_{x\in  \Zd} S_z(x) e^{t x_1}< \infty$ for all $\abs t < \tilde m_z$,
so we obtain $m_z \ge \tilde m_z$.
On the other hand, it follows from \eqref{eq:tilde_m} that $\chi^{(te_1)}(z) =\infty$ if $t>\tilde m_z$, so we also have
the opposite inequality $m_z \le \tilde m_z$.

To prove $\tilde m_z \to 0$, we assume for contradiction that $\tilde m_z \ge \eps > 0$ along some subsequence $z_n \to 1$.
Summing \eqref{eq:RW_mass_pf} over $x\in \Zd$ then gives
\begin{equation} \label{eq:RW_mass_pf2}
\frac 1 {1-z_n} = \chi(z_n)
\le S_{z_n}(0) \sum_{x\in \Zd} e^{-  \eps \norm x_\infty}
= C_\eps S_{z_n}(0) .
\end{equation}
On the other hand, by the infrared bound \eqref{eq:D_infrared}
and the inverse Fourier transform,
\begin{equation}
S_{z_n}(0) = \int_\Td \frac{\D k }{(2\pi)^d}
	\frac { 1 } { 1 - z_n\hat D (k) }
\le \int_\Td \frac{\D k }{(2\pi)^d}
	\frac { 1  } { 1- z_n + z_n K_D \abs k^2 }
\lesssim \begin{cases}
	(1-z_n)^{-1/2}		&( d=1 )  \\
	\abs{ \log (1-z_n)   } 	& (d=2) 	\\
	1 				& (d>2) .
\end{cases}
\end{equation}
This is not compatible with \eqref{eq:RW_mass_pf2} as $z_n\to1$.
\end{proof}

\subsection{Crossover for random walk}

We say that $D(x)$ has \emph{super-exponential decay} if, for every $a>0$,  $D(x) = o(e^{-a\abs x})$ as $\abs x \to \infty$.

\begin{theorem} \label{thm:RW}
Let $d \ge 1$. Suppose $D(x)$ has super-exponential decay.
Then $S_z$ obeys Assumptions~\ref{ass:Omega} and~\ref{ass:J} for every $z\in (0,1)$.
Moreover, for any $\eps \in (0,1)$ and any $\zeta > 0$,
there are constants $M, \KIR$ such that
$(zD\supmu, \delta_0\supmu) \in \Qcal_{M,\KIR,\zeta}$ \emph{uniformly} in $\mu\in \overline\Omega_z$ and in $z \in [\eps, 1)$.
\end{theorem}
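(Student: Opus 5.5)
We verify the two assumptions directly from the explicit description \eqref{eq:Omega_RW} of $\Omega_z$, namely $\Omega_z = \{\mu : z\hat D\supmu(0) < 1\}$. This description holds because $\chi\supmu(z) = \sum_n z^n (\hat D\supmu(0))^n$, since tilting is multiplicative under convolution. Super-exponential decay of $D$ gives $\hat D\supmu(0) = \sum_y D(y)\cosh(\mu\cdot y)$ finite for every $\mu\in\Rd$. Moreover $\mu\mapsto \hat D\supmu(0)$ is continuous, so $\Omega_z$ is open. It contains $0$ because $z<1$. Irreducibility means $D(y)>0$ for some $y$ with $y_1\ne 0$, and then $\hat D\supk{te_1}(0)\to\infty$ as $t\to\infty$, so $te_1\notin\Omega_z$ for large $t$. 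This gives Assumption~\ref{ass:Omega}. For Assumption~\ref{ass:J}, $h_z=\delta_0$ is symmetric, $\hat h\supmu(0)=1>0$, and the conditions on $g=\delta_0$ in Definition~\ref{def:A} are trivially satisfied with $M\ge1$. So everything reduces to the $Q=zD\supmu$ bounds, uniformly in $\mu\in\overline\Omega_z$ and $z\in[\eps,1)$.

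The key step is a uniform bound on the tilts. For $\mu\in\overline\Omega_z$ we have $z\hat D\supmu(0)\le 1$, hence $\hat D\supmu(0)\le 1/\eps$. Because $\cosh(\mu\cdot y)\ge \cosh(\mu_j y_j)$ (by symmetry of $D$, after averaging over sign flips as in \eqref{eq:chisym}) and $D$ has mass in every coordinate direction, this forces $\|\mu\|_\infty\le R_\eps$ for a constant $R_\eps$ depending only on $\eps$ and $D$. Then all the moment norms in \eqref{eq:Q_moments} and \eqref{eq:Q_d-1} are at most
\begin{equation}
\sup_{\|\mu\|_\infty\le R_\eps}\sum_y (1+|y|^{d+3})D(y)e^{|\mu\cdot y|} <\infty ,
\end{equation}
again by super-exponential decay. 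The $L^p$ norms with $p\ge1$ are dominated by the $L^1$ norms, so \eqref{eq:Q_d-1} is covered as well. This handles every $\zeta>0$, with $M$ depending on $\zeta$ and $\eps$.

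The main obstacle is the infrared bound \eqref{eq:Q_infrared}, which we need uniformly. By symmetry of $D$,
\begin{equation}
\Re[\hat D\supmu(0)-\hat D\supmu(k)] = \sum_y D(y)\cosh(\mu\cdot y)(1-\cos k\cdot y) \ge \sum_y D(y)(1-\cos k\cdot y) = \hat D(0)-\hat D(k) ,
\end{equation}
where we pair $y$ with $-y$, so that the $\sinh$ terms cancel in the real part and $\cosh\ge1$. Combined with \eqref{eq:D_infrared}, this gives
\begin{equation}
\Re[z\hat D\supmu(0)-z\hat D\supmu(k)]\ge \eps K_D|k|^2 .
\end{equation}
Thus we may take $\KIR=\eps K_D$. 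The only delicate point is to check that the pairing argument is legitimate, which holds since $D(y)=D(-y)$, and that the bound on $\|\mu\|_\infty$ really is uniform as $z\to1$. Uniformity is automatic, since $\overline\Omega_z$ increases with $z$ and is contained in $\overline\Omega$ defined with the constraint $\hat D\supmu(0)\le 1/\eps$.
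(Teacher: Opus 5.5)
Your proof is correct. The infrared bound is the same $\cosh\ge 1$ computation as in the paper, giving $\KIR=\eps K_D$. Your explicit bound on $\|\mu\|_\infty$ from $\hat D^{(\mu)}(0)\le 1/\eps$ is a hands-on version of the paper's step, which uses $\Omega_z\subset\Omega_\eps$ together with boundedness of $\Omega_\eps$ from Lemma~\ref{lem:Omega_convex}.

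The genuine difference is in Assumption~\ref{ass:Omega}. You read off $\Omega_z=\{\mu : z\hat D^{(\mu)}(0)<1\}$ from the geometric series $\chi^{(\mu)}(z)=\sum_n (z\hat D^{(\mu)}(0))^n$. Openness then follows from continuity of $\mu\mapsto\hat D^{(\mu)}(0)$ on all of $\R^d$, which is exactly where super-exponential decay enters. Exclusion of $te_1$ follows from $\hat D^{(te_1)}(0)\to\infty$.

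The paper argues differently on both counts:
\begin{itemize}
\item For exclusion of $te_1$, it shows $\chi^{(\mu_1e_1)}(z)=\infty$ via $D^{*nk}(ne_1)\ge[D^{*k}(e_1)]^n$.
\item For openness, it uses a Simon--Lieb inequality plus a lemma of Hara. This shows that $S_z^{(\mu)}$ decays exponentially for each $\mu\in\Omega_z$, and then it perturbs $\mu$.
\end{itemize}
Your route is shorter and self-contained, but it depends on the exact renewal structure of random walk. The paper's route never uses a closed form for $\chi^{(\mu)}$ and yields pointwise exponential decay of the tilted Green function. It is also the template reused essentially verbatim for self-avoiding walk and percolation (Theorems~\ref{thm:SAW}(i) and~\ref{thm:perc-intro}(i)), where no such formula exists.

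A few small slips, none affecting validity:
\begin{itemize}
\item $\overline\Omega_z$ \emph{decreases} as $z$ increases; this is why $\Omega_z\subset\Omega_\eps$ for $z\ge\eps$. You do not actually need monotonicity, since $\hat D^{(\mu)}(0)\le 1/z\le 1/\eps$ directly.
\item The weight $|y|^{d+3}$ only covers $\zeta\le d+1$. Replace it by $|y|^{(2+\zeta)\vee(d-1)}$, which is harmless under super-exponential decay.
\item $\cosh(\mu\cdot y)\ge\cosh(\mu_jy_j)$ is false pointwise. What is true, and what you need, is $\sum_y D(y)\cosh(\mu\cdot y)=\sum_y D(y)\prod_i\cosh(\mu_iy_i)\ge\sum_y D(y)\cosh(\mu_jy_j)$.
\end{itemize}
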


\begin{proof}
\emph{Verification of Assumption~\ref{ass:Omega}.}
By our assumption on $D$, the random walk is irreducible and hence $p_k = D^{*k}(e_1) > 0$ for some $k > 0$.
Since $D^{*nk}(ne_1) \ge [D^{*k}(e_1)]^n$,
we thus have
\begin{equation}
\chi\supk{\mu_1 e_1 }(z)
\ge \sum_{n=1}^\infty S_z(ne_1) e^{n \mu_1}
\ge \sum_{n=1}^\infty z^{nk} p_k^n  e^{n \mu_1}
= \infty
\end{equation}
for all $\mu_1 \ge - \log(z^k p_k)$.
Such $\mu_1 e_1$ do not belong to $\Omega_z$.

To prove that $\Omega_z$ is an open set, we take $\mu\in\Omega_z$ and apply an argument in the spirit of Simon--Lieb.
For any $R \ge 1$ and any $x\not \in B_R =\{y\in\Zd : \|y\|_\infty \le R\}$, we have
\begin{equation}
    S\supmu_z (x) \le \sum_{y\in B_R}\sum_{w\not\in B_R} S\supmu_z(y) zD\supmu (w-y)S\supmu_z (x-w).
\end{equation}
This can be seen via the Markov property, by taking $w$ to be the first visited vertex outside of $B_R$, taking $y$ to be the vertex right before $w$, and then tilting.
Since $D(x)$ has super-exponential decay, $\sum_{x\in \Zd} zD\supmu(x)e^{\abs x}<\infty$.
We also have $\chi\supmu(z) <\infty$ since $\mu\in \Omega_z$.
It is then an immediate consequence of \cite[Proposition~A.3]{Hara90} that
\begin{equation}
    \GL\supmu_z(x) \le C e^{-c\abs x}
    	\qquad(x\in \Zd)
\end{equation}
for some $c,C>0$ (depending on $z,\mu$).
By the Cauchy--Schwarz inequality, for $\nu\in \Rd$ this gives
\begin{equation}
    \GL_z^{(\mu+\nu)}(x)
    = \GL_z^{(\mu)}(x) e^{\nu \cdot x}
    \le C e^{-(c-\abs\nu )\abs x }.
\end{equation}
Summation then shows that $\chi^{(\mu+\nu)}(z)<\infty$ when $\abs\nu <c$, so $\Omega_z$ is an open set.

\smallskip\noindent
\emph{Verification of uniform Assumption~\ref{ass:J}.}
We fix $\eps \in (0,1)$, $\zeta >0$ and want to show $(zD\supmu, \delta_0\supmu) \in \Qcal_{M, \KIR, \zeta}$ uniformly in $\mu\in \overline\Omega_z$ and in $z \in [\eps, 1)$.
The conditions on $g = \delta_0\supmu = \delta_0$ hold trivially for any $M\ge 1$.
For the moments of $Q = zD\supmu$,
using $z\le 1$ and $\Omega_z \subset \Omega_\eps$ we have
\begin{equation} \label{eq:RW_moment_pf}
\bignorm{ \abs y^{2+\zeta} z D \supmu(y) }_1
\le \Bignorm{ \abs y^{2+\zeta} D(y)
	\exp\bigl\{ \max_{\mu \in \overline \Omega_\eps}\, \abs \mu \abs y \bigr\} }_1 .
\end{equation}
Since $\Omega_\eps$ is a bounded set by Lemma~\ref{lem:Omega_convex}, the maximum over $\overline \Omega_\eps$ is finite, and we can bound \eqref{eq:RW_moment_pf} by a constant using the super-exponential decay of $D(x)$.
Other moments of $zD\supmu$ can be bounded analogously.
For the infrared bound, we recall from \eqref{eq:D_infrared} that
\begin{equation}
\hat D(0) - \hat D(k)
\ge K_D \abs k^2
	\qquad (k\in \Td) .
\end{equation}
Using $\cosh(\cdot) \ge 1$, for any $z \in [\eps, 1)$ and $\mu\in \Rd$
we then have
\begin{align} \label{eq:RW_infrared_pf}
\Re [z \hat D\supmu(0) - z \hat D\supmu(k) ]
&= z \sum_{y\in \Zd} D(y) \cosh(\mu \cdot y) [ 1 - \cos (k \cdot y) ] \nl
&\ge \eps \sum_{y\in \Zd} D(y) [ 1 - \cos (k \cdot y) ]
\ge \eps K_D \abs k^2 ,
\end{align}
so we can take $\KIR = \eps K_D$.
This concludes the proof.
\end{proof}

\begin{remark}
The assumption of super-exponential decay in Theorem~\ref{thm:RW} has been made for convenience.
If we instead have the weaker hypothesis of exponential decay (i.e., $D(x) \lesssim e^{-a \abs x}$ for some $a>0$),
then the above proof can be adapted to prove that Assumptions~\ref{ass:Omega} and \ref{ass:J} hold uniformly in
$z \in [1-\delta_a,1)$ for some $\delta_a >0$.
Both the Simon--Lieb argument and the tilted moments of $J_z = z D$ impose restrictions on the size of $\delta_a$.
Related results without uniformity in $z$ are stated (but not proved) in \cite[Appendix~C]{TH15}.
It is not always true that the Assumptions will hold
for all $z \in (0,1)$.
Indeed, OZ decay can fail for small $z$; see Section~\ref{sec:counterx}.
\end{remark}

By Theorem~\ref{thm:RW}, if $D$ has super-exponential decay, then
Theorem~\ref{thm:crossover} and
Corollaries~\ref{cor:OZ} and~\ref{cor:crossover_weak} hold uniformly in $z \in [\eps,1)$ for any choice of $\eps > 0$,
and all results of Section~\ref{sec:critical} apply.
We highlight a few of the results:
\begin{itemize}
\item
Let $d \ge 1$ and $z\in (0,1)$.
Precise asymptotics for the Green function are given in Theorem~\ref{thm:crossover}:
\begin{equation} \label{eq:RW-asymp-intro}
\GL_z(x)
= \C(x; \eta_{\hat x,z} , \Lambda_{\hat x,z}) e^{-\mz  |x|_z} [ 1 + O(|x|^{-\eps})]
\end{equation}
as $\abs x \to \infty$,
with a $z$-dependent norm $|\cdot|_z$, mass $m_z>0$,
$|\eta_{\hatxz}| \asymp m_z$ and $\hat x \cdot \Lam_{\hatxz}\inv \hat x\asymp 1$.

\item
Let $d >2$.  In these transient dimensions, the error term in \eqref{eq:RW-asymp-intro} remains uniform in the limit $z\to 1$.  In this limit, $m_z\to 0$, $\eta_{\hat x,z}\to 0$,
and $\Lambda_{\hat x,z} \to (\sigma_D^2/d){\rm Id}$.  It then follows from
\eqref{eq:Ccrit} that
\begin{equation}
\label{eq:RWcrit}
\GL_{1}(x) =
    \frac{d\Gamma(\frac{d-2}{2})}{2\pi^{d/2}}
    \frac{1}{\sigma_D^2|x|^{d-2}}
    [ 1 + O(|x|^{-\eps})].
\end{equation}
This recovers the well-studied asymptotic behaviour of the critical Green function
\cite{LL10,Uchi98}.

\item
Let $d \ge 1$ and $z\in (0,1)$.
By Corollary~\ref{cor:OZ}, $S_z$ exhibits the OZ decay
\begin{equation}
\label{eq:RW-OZ}
\GL_z(x)
=
\frac 1 { (2\pi)^{(d-1)/2} \sqrt{\det\Lambda_{\hatxz}} }
	\frac{1}{ (\hat x \cdot \Lam_{\hatxz}\inv \hat x)^{1/2} }
    \frac{ \abs {\eta_{\hatxz}}^{(d-3)/2}}{ \abs  x^{(d-1)/2}}
    e^{-\mz  \abs x_z}[1+o(1)].
\end{equation}

\item
Let $d \ge 1$ and $z\in (0,1)$.
By Theorem~\ref{thm:critical_limit}, we have
\begin{equation} \label{eq:crit_limit-RW}
\abs x_z = \norm x_2 [ 1 + O(\mz ^{ 2 } ) ]
	\qquad (z\to 1).
\end{equation}
\item
Let $d \ge 1$ and $z\in (0,1)$.
By Corollary~\ref{cor:chim}  and
the exact expression $\chi(z) = (1-z)\inv$ from \eqref{eq:chiRW}, we have
\begin{equation}
\label{eq:chimRW}
\mz^2 = \frac{2d}{\sigma_D^2} (1-z)[ 1+O(1-z) ]
	\qquad (z\to 1).
\end{equation}

\item
Let $d>2$.
By Corollary~\ref{cor:crossover_weak}, we have
\begin{equation}
    \GL_z(x) \asymp
    \frac{    \max\{ 1 , \mz  |x|_z \} ^{(d-3)/2} } { \abs x_z^{d-2} }
	e^{- m_z| x|_z}
	\qquad (z \in [\eps,1),\ |x|_z \ge R)
\end{equation}
with constants depending only on $\eps$.
A similar but weaker result holds for $d\le 2$.

\item
Let $d\ge 1$ and $z\in (0,1)$.
Recall the definition of the correlation length of order $\phi$ from \eqref{eq:xidef}.
By Theorem~\ref{thm:xiphi} and \eqref{eq:chimRW}, for any $\phi>0$,
\begin{equation}
\label{eq:RWxiphi}
    \xi_\phi(z)
    \sim
    \Bigl(\frac{A_\phi}{A_0}\Bigr)^{1/\phi} \frac 1 \mz
    \sim
    \Bigl(\frac{A_\phi}{2}\Bigr)^{1/\phi}
    \frac{\sigma_{D} }{\sqrt{2d} } \frac{1}{(1-z)^{1/2}}
    \qquad (z\to 1).
\end{equation}
In the special case of simple random walk in dimensions $d>2$, for which $D(x) = P(x) = \frac{1}{2d}\1\{\|x\|_1=1\}$, a very different proof of \eqref{eq:RWxiphi} is
given in the Appendix of \cite{BSTW-clp}.
\end{itemize}

All the above conclusions of Theorem~\ref{thm:RW} also apply
to $D$ decaying  exponentially, uniformly in $z \in [1-\delta, 1)$ for some $\delta >0$.

For the case of the nearest-neighbour random walk,
the asymptotic behaviour of $S_z$ (the lattice Green function) given by Theorem~\ref{thm:crossover} recovers the results of \cite{MS22}.
The method of \cite{MS22} used an explicit representation of
the lattice Green function as an integral of modified Bessel functions of the
first kind, and it does not apply in our general
setting.  Related results for the nearest-neighbour walk can be found in \cite{Mess06,MY12}.

The leading behaviour of $S_z$ in the massive critical regime
of Theorem~\ref{thm:critical_decay}
can be inferred from \cite[Proposition~3.1]{DGGZ22},
whose proof uses the local central limit theorem.
In the massive critical regime,
higher-order terms beyond the leading term in Theorem~\ref{thm:critical_decay} are presented in \cite[(21)]{PS99} without rigorous control of the error.

\subsection{Massive limit of norm}

We now prove a complementary result to \eqref{eq:crit_limit-RW}
for the $z\to 0$ limit of the norm $|\cdot |_z$,
under the additional assumption that $D(x)$ has finite support $\Ucal$.
We write $\conv(\Ucal)$ for the closed convex hull of $\Ucal$,
and define
\begin{equation}
\rad(\Ucal) = \max_{x\in \Ucal}\, x_1  > 0 .
\end{equation}
Using the finite support,
and using $\mz e_1 \in \del \Omega_z$,
we have
\begin{equation} \label{eq:exp_rad}
1 = \hat J_z \supk{ \mz e_1 }(0)
= z \sum_{y \in \Ucal} D(y) e^{\mz y_1}
\asymp  z e^{m_z \rad(\Ucal)} .
\end{equation}
It follows that $\mz \sim \rad(\Ucal)\inv \log(1/z) \to \infty$ as $z\to 0$, so the mass diverges logarithmically to infinity in this limit.

\begin{theorem} \label{thm:massive_limit}
Let $d \ge 1$. Suppose $D(x)$ has finite support $\Ucal$.
Let $x \in \Rd$ be on the boundary of $\conv(\Ucal)$.
Then as $z \to 0$, we have
\begin{equation}
\abs x_{z} = \rad(\Ucal) + O(\mz\inv) .
\end{equation}
In particular, $\lim_{z\to 0} | \cdot |_z$ defines a norm on $\Rd$, with its unit ball being $\conv(\Ucal) / \rad(\Ucal)$.
\end{theorem}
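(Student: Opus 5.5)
Throughout, $x \in \partial\,\conv(\Ucal)$ is fixed and $x \ne 0$; this is automatic, since $\Ucal$ is $\Zd$-symmetric and spans $\Zd$, so $0$ lies in the interior of $\conv(\Ucal)$.

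The plan is to use the variational formula $\abs x_z = \mS^{-1}\max_{\mu\in\overline\Omega_z}\mu\cdot x$ from Corollary~\ref{cor:norm} and Proposition~\ref{prop:geom-intro}, with
\begin{equation*}
\overline\Omega_z=\Bigl\{\mu: z\sum_{y\in\Ucal}D(y)e^{\mu\cdot y}\le 1\Bigr\}
\end{equation*}
from \eqref{eq:Omega_RW}. Write $\mu = \mz \nu$. By \eqref{eq:exp_rad}, $\log(1/z) = \mz\rad(\Ucal)+O(1)$. The constraint then reads
\begin{equation*}
\sum_{y\in\Ucal}D(y)\,e^{\mz(\nu\cdot y-\rad(\Ucal))}\le e^{O(1)},
\end{equation*}
where the $O(1)$ is uniform as $z\to0$.

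Let $h(\nu)=\max_{y\in\Ucal}\nu\cdot y$ be the support function of $\conv(\Ucal)$. Since $D$ has finite support, $c=\min_{y\in\Ucal}D(y)>0$, so the constraint implies
\begin{equation*}
\mz(h(\nu)-\rad(\Ucal))\le O(1)+\log(1/c).
\end{equation*}
Conversely, it is implied by $\mz(h(\nu)-\rad(\Ucal))\le -O(1)$, because $\sum_y D(y)=1$. Therefore the rescaled set $\overline\Omega_z/\mz$ is sandwiched between
\begin{equation*}
\{\nu: h(\nu)\le \rad(\Ucal)-C/\mz\}\quad\text{and}\quad\{\nu: h(\nu)\le\rad(\Ucal)+C/\mz\}.
\end{equation*}
Equivalently, up to additive $O(\mz^{-1})$ errors, it is $\rad(\Ucal)$ times the polar body $\conv(\Ucal)^\circ=\{\nu:h(\nu)\le1\}$. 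Since $h$ is positively homogeneous, these two sets are the dilates $(\rad(\Ucal)\mp C/\mz)\,\conv(\Ucal)^\circ$.

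Maximising $\nu\cdot x$ over this sandwich gives
\begin{equation*}
\abs x_z=(\rad(\Ucal)+O(\mz^{-1}))\max_{\nu\in\conv(\Ucal)^\circ}\nu\cdot x .
\end{equation*}
By bipolar duality, the last maximum is the gauge of $\conv(\Ucal)$ evaluated at $x$, which equals $1$ because $x\in\partial\,\conv(\Ucal)$. This proves $\abs x_z=\rad(\Ucal)+O(\mz^{-1})$. The limiting norm is $\rad(\Ucal)$ times the gauge of $\conv(\Ucal)$, so its unit ball is $\conv(\Ucal)/\rad(\Ucal)$. By homogeneity the estimate holds uniformly on compact sets, and in particular the limit is a norm.

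The main care point is uniformity of the constants in the sandwich. The additive $O(1)$ errors come from \eqref{eq:exp_rad}, namely $\log(1/z)-\mz\rad(\Ucal)=O(1)$, which follows from $D(y)\in[c,1]$ on $\Ucal$ together with $\mz\to\infty$, and from $\log c$. Both are independent of $\nu$, so converting to multiplicative dilations of the polar body is immediate.
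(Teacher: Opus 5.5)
Your proof is correct and follows essentially the paper's route. Both rest on the $\nu$-uniform comparison $\hat J_z^{(m_z\nu)}(0)\asymp \exp\{m_z(\max_{y\in\Ucal}\nu\cdot y-\rad(\Ucal))\}$ and then optimise $\nu\cdot x$. Your sandwich of $\overline\Omega_z/m_z$ between dilates of $\conv(\Ucal)^\circ$, combined with bipolar duality, simply packages the paper's two steps: the bound $\mu\cdot x\le\max_{y\in\Ucal}\mu\cdot y$ for the upper estimate, and an explicitly rescaled supporting functional at $x$ for the lower estimate.

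One small correction: $\log(1/z)-m_z\rad(\Ucal)\in[\log c,0]$ holds for every $z$ without using $m_z\to\infty$. The divergence of $m_z$ is needed only to keep the inner dilation factor $\rad(\Ucal)-C/m_z$ positive.
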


\begin{proof}
Let $x$ be on the boundary of $\conv(\Ucal)$.
Let $z\in (0,1)$ and $\nu \in \Rd$.
Since the support $\Ucal$ of $D$
is finite, we have
\begin{equation}
\hat J_z \supk{ \mz \nu } (0)
= z \sum_{y\in \Ucal} D(y) e^{\mz \nu \cdot y}
\asymp z  \exp\{ \mz \max_{y\in \Ucal}\, \nu \cdot y \} .
\end{equation}
By dividing the above by \eqref{eq:exp_rad}, we find that there are constants (independent of $z,\nu$) such that
\begin{equation} \label{eq:Jmnu}
\hat J_z \supk{ \mz \nu } (0)
\asymp
\exp\Bigl\{  m_z ( \max_{y\in \Ucal}\, \nu \cdot y   - \rad(\Ucal)) \Bigr\} .
\end{equation}

\smallskip
\noindent \emph{Upper bound.}
By taking a convex combination, it suffices to consider only $x\in \Ucal$.
Since $\muxz \in \del \Omega_z$,
we have $\hat J_z \supk{ \muxz }(0) = 1$.
Applying \eqref{eq:Jmnu} with $\nu = \muxz / \mz$ then gives
\begin{equation}
\Bigabs{ \max_{y\in \Ucal}\, \frac{ \muxz \cdot y } { \mz }   - \rad(\Ucal) }
\asymp \frac 1 {\mz} .
\end{equation}
Using the definition of the norm, we then have
\begin{equation}
\abs x_z
= \frac{ \muxz \cdot x } { \mz}
\le \max_{y\in \Ucal}\, \frac{ \muxz \cdot y } { \mz }
= \rad(\Ucal)  + O(\mz\inv) ,
\end{equation}
as desired.

\smallskip
\noindent \emph{Lower bound.}
Since $x$ is on the boundary of $\conv(\Ucal)$, there exists $\nu_x \in \Rd$ for which $\nu_x \cdot x  = \max_{y\in \Ucal} \nu_x \cdot y > 0$.
Let $\theta \in (0,1)$.
We apply \eqref{eq:Jmnu}  to $\nu = \frac{ \theta \rad(\Ucal) }{ \nu_x \cdot x } \nu_x $, and obtain
\begin{equation}
\hat J_z \supk{ \mz \nu } (0)
\asymp
\exp\Bigl\{  m_z \Bigl( \frac{ \theta \rad(\Ucal) }{ \nu_x \cdot x } \nu_x \cdot x   - \rad(\Ucal) \Bigr) \Bigr\}
= \exp\Bigl\{  -(1-\theta) m_z  \rad(\Ucal) \Bigr\} .
\end{equation}
Using $\mz \to \infty$ as $z\to 0$,
we pick $\theta = 1 - C \mz\inv$ with a sufficiently large $C$ so that $\hat J_z \supk{ \mz \nu }(0) \le \frac 1 2$ for all $z$ sufficiently small.
Then \eqref{eq:J1-intro} shows $\mz \nu \in \Omega_z$ for these $z$,
and the optimality of $\muxz$ from \eqref{eq:optimal-mu-intro} gives
\begin{equation}
\abs x_z
= \frac{ \muxz \cdot x }{ \mz }
\ge \nu \cdot x
= \theta \rad(\Ucal)
= ( 1 - C \mz\inv) \rad( \Ucal) .
\end{equation}
This concludes the proof.
\end{proof}

\subsection{Examples}
\label{sec:norm_comp}

We now explicitly compute the norm $|\cdot|_z$ for two examples.
We use Proposition~\ref{prop:geom-intro} with $J_z = zD$.
As discussed after Proposition~\ref{prop:geom-intro}, the mass $\mz$ can be computed from
$z \hat D\supk{\mz e_1 }(0) = 1$.
When $D(x)$ decays super-exponentially, the function $\mu \mapsto \hat D\supmu(0) = \sum_{x\in \Zd} D(x) \cosh(\mu\cdot x)$ is smooth on $\Rd$, so by \eqref{eq:Omega_RW} we have
\begin{equation}
\overline \Omega_z
= \bigl\{ \mu\in \Rd : z \hat D\supmu(0) \le 1 \bigr \} .
\end{equation}
Therefore, for any nonzero $x \in \Rd$, the optimisation problem \eqref{eq:optimal-mu-intro} that determines $\mu_{\hatx, z}$ is exactly
\begin{equation}
\mu_{\hat x, z}\cdot x = \max_\mu \bigl\{ \mu\cdot x : z \hat D\supmu(0) \le 1  \bigr\} .
\end{equation}
This can be solved by the method of Lagrange multipliers: the optimiser $\mu_{\hatx, z}$ and the multiplier $\lam_{x, z}$ are the unique solution to the system
\begin{equation} \label{eq:Lagrange}
z \hat D \supmu(0) = 1,
\qquad
z \grad \hat D \supmu(0) = \lam x .
\end{equation}

\subsubsection{Simple random walk}
\label{sec:nnRW}

The (nearest-neighbour) simple random walk has transition probability
\begin{equation}
D(x) = P(x) = \frac{1}{2d} \1\{ \|x\|_1=1 \} ,
\end{equation}
so $\hat D\supmu(0) = \frac 1 d\sum_{j=1}^d \cosh \mu_j$.
By setting $\mu=\mz e_1$ in the first equation of \eqref{eq:Lagrange}, we find that $\mz$ is given by the positive solution of
\begin{equation} \label{eq:mass_NN}
    \cosh \mz  = 1 + \frac dz (1-z).
\end{equation}

For $x\ne 0$, the system \eqref{eq:Lagrange} becomes
\begin{equation}
\sum_{j=1}^d \cosh \mu_j = \frac{d}{z},
\qquad
\sinh \mu_j = \frac {d  \lam x_j} z .
\end{equation}
We solve for $\mu_j$ in the second equation, and then use $\cosh(\arcsinh  t) = \sqrt{1+t^2}$ in the first equation to get
\begin{equation} \label{eq:nn_implicit_eqn}
\sum_{j=1}^d  \sqrt{1+\Bigl( \frac{d \lam x_j}{z} \Bigr)^2 } = \frac{d}{z}.
\end{equation}
Since the left-hand side is strictly increasing in $\lam$,
there is a unique solution $\lam = \lam_{z,x}$.
By Corollary~\ref{cor:norm}, we obtain
\begin{equation}
\label{eq:normSRW}
|x|_z = \frac{\mu_{\hat x, z}\cdot x}{\mz}
=
\frac{1}{\mz }\sum_{j=1}^d x_j \arcsinh \frac{d \lam_{z,x} x_j}{z}.
\end{equation}
(In the notation of \cite{MS22}, $\lam_{z,x} = \frac z d u_a(x)$ where $1 + a^2 = 1/z$.)
This norm was proven in \cite{MS22} to interpolate monotonically from the $\ell^1$-norm at $z=0$ to the $\ell^2$-norm at $z=1$; see Figure~\ref{fig:nn}.

\begin{figure}[ht]
\centering{
\includegraphics[width=4cm, height=4cm]{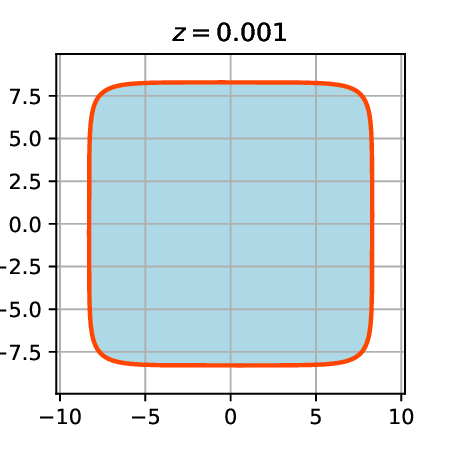}
\hspace{5mm}
\includegraphics[width=4cm, height=4cm]{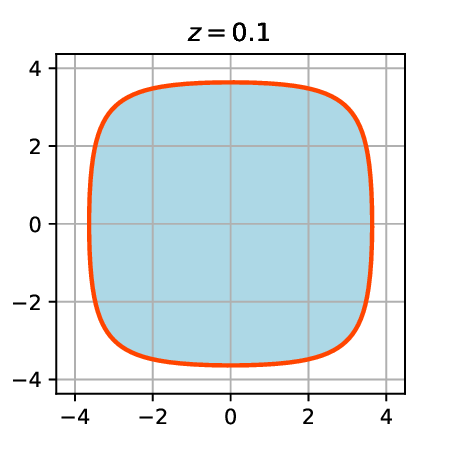}
\hspace{5mm}
\includegraphics[width=4cm, height=4cm]{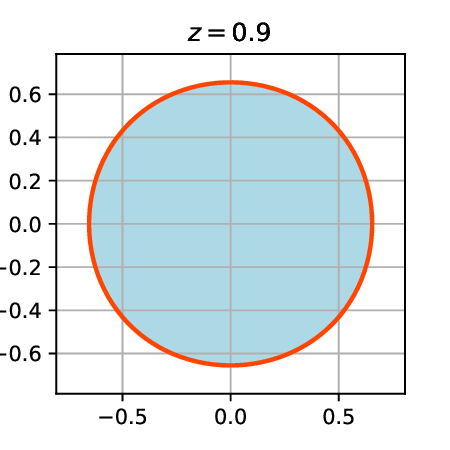}

\vspace{5mm}

\includegraphics[width=4cm, height=4cm]{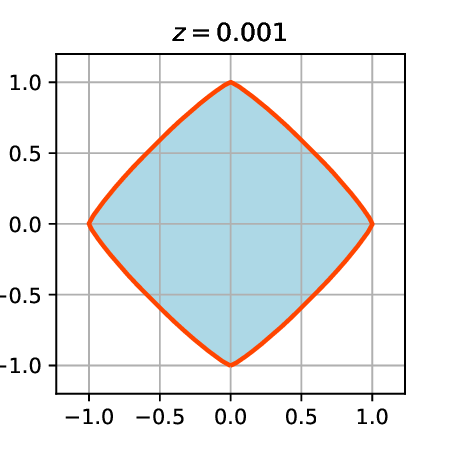}
\hspace{5mm}
\includegraphics[width=4cm, height=4cm]{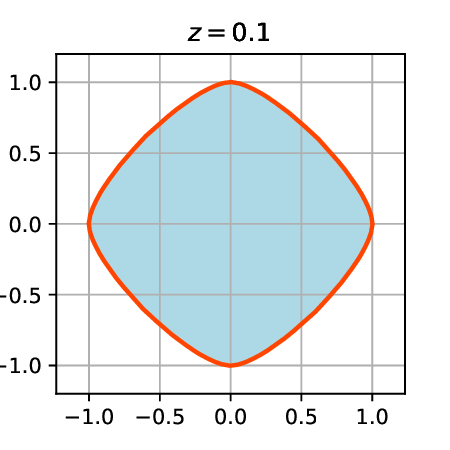}
\hspace{5mm}
\includegraphics[width=4cm, height=4cm]{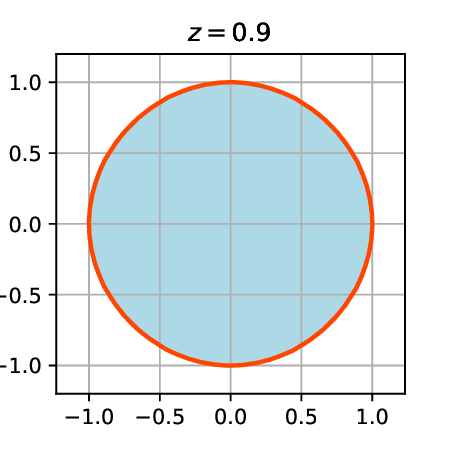}

\caption{
Wulff shape $\overline \Omega_z$ (upper) and
unit ball of $|\cdot|_z$ (lower) for simple random walk in dimension $d=2$.
}
\label{fig:nn}}
\end{figure}

\begin{remark}
For $d=1$, the simple random walk Green function can be computed exactly
(by contour integration, or by using \cite[(3.616.7)]{GR07}
with $b = e^{-\mz}$) as
\begin{equation}
    S_z(x) = \int_{-\pi}^\pi \frac{e^{ikx}}{1- z\cos k} \frac{dk}{2\pi}
    =
    \frac{e^{-m_z |x|}}{z\sinh m_z}
    = \frac{e^{-m_z |x|}}{ \sqrt{1-z^2}} ,
\end{equation}
while the Crossover Theorem gives
\begin{equation}
    S_z(x) \sim \bg(|x|;\eta_1,\Lam_1) e^{-m_z|x|}
    = \frac{1}{|\eta_1|} e^{-m_z|x|}.
\end{equation}
Using \eqref{eq:nn_implicit_eqn} and \eqref{eq:Lagrange}, one can easily compute $\eta_1 = \sqrt{1-z^2}$,
so the asymptotic form is identical to the exact formula for $S_z(x)$ when $d=1$.
For $d=3$, $S_1(0)$ is a Watson integral, and values for $z<1$ and some nonzero $x$ have also been studied \cite{Zuck11,Gutt10}.
\end{remark}

\begin{figure}[ht]
\centering{
\includegraphics[width=4cm, height=4cm]{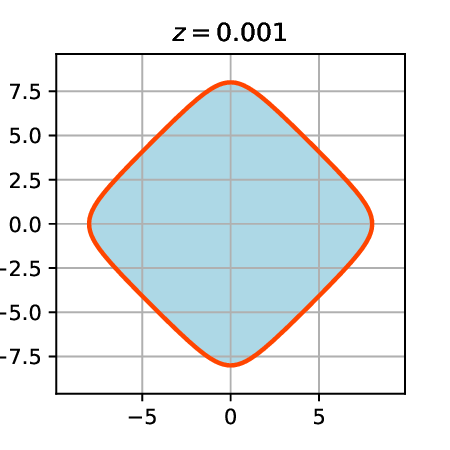}
\hspace{5mm}
\includegraphics[width=4cm, height=4cm]{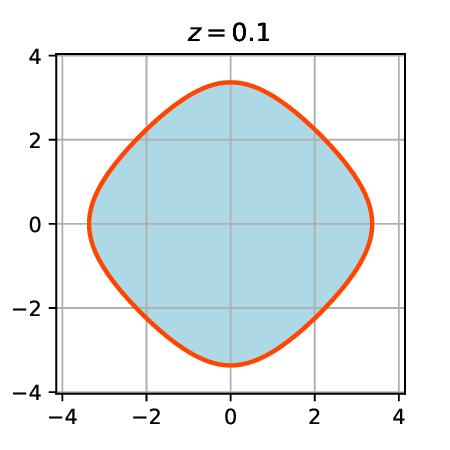}
\hspace{5mm}
\includegraphics[width=4cm, height=4cm]{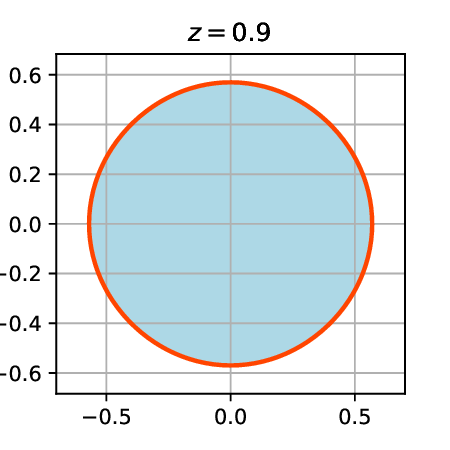}

\vspace{5mm}

\includegraphics[width=4cm, height=4cm]{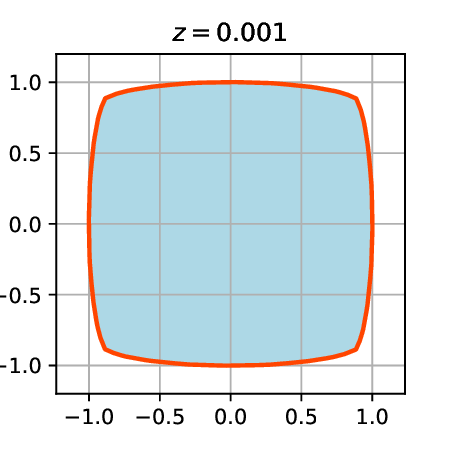}
\hspace{5mm}
\includegraphics[width=4cm, height=4cm]{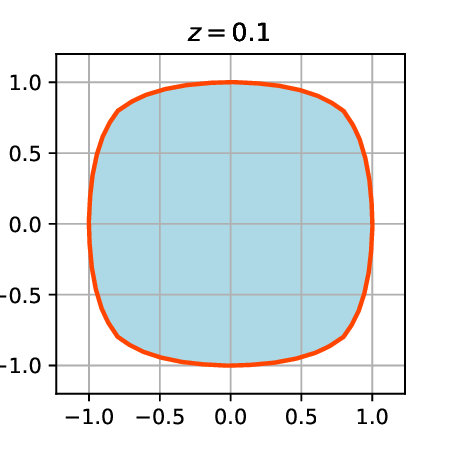}
\hspace{5mm}
\includegraphics[width=4cm, height=4cm]{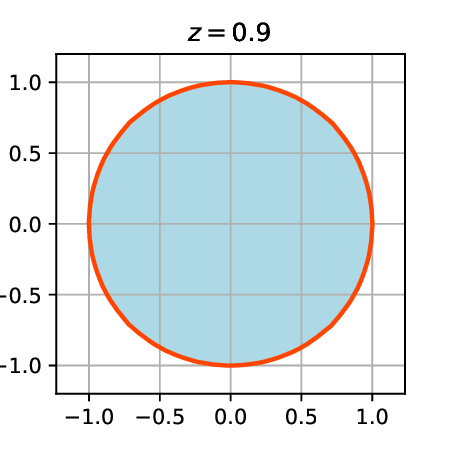}

\caption{
Wulff shape $\overline \Omega_z$ (upper) and
unit ball of $|\cdot|_z$ (lower) for the $\ell^\infty$ random walk in dimension $d=2$.
}
\label{fig:infty}}
\end{figure}

\subsubsection{$\ell^\infty$ random walk}

We now consider the kernel
\begin{equation}
D(x) = \frac 1 {3^d}\1\{\|x\|_\infty \le 1\} .
\end{equation}
By definition, $\hat D\supmu(0) = \frac{1}{3^d} \prod_{j=1}^d (1+2\cosh \mu_j)$.
By setting $\mu=\mz e_1$ in the first equation of \eqref{eq:Lagrange}, we find that $\mz$ is given by the positive solution of
\begin{equation} \label{eq:mass_infty}
    \cosh \mz  = \frac{ 3-z}{2z} .
\end{equation}

For $x\ne 0$,
to solve the system \eqref{eq:Lagrange},
we insert the first equation into the second to obtain
\begin{equation} \label{eq:Lagrange_infty}
\prod_{j=1}^d ( 1 + 2 \cosh \mu_j ) = \frac{3^d}{z},
\qquad
\lam x_j = \frac{ \grad_j \hat D \supmu(0) } { \hat D \supmu(0) }
	=  \frac{2\sinh \mu_j}{1+2\cosh \mu_j} .
\end{equation}
Since $\sinh \mu_j = \sqrt{ (\cosh \mu_j)^2 - 1 }$, the second equation
in \eqref{eq:Lagrange_infty} can be solved using the quadratic formula. The solution is
\begin{equation}
2\cosh \mu_j = \frac{ (\lam x_j)^2 + \sqrt{4 - 3 (\lam x_j)^2} }{1 - (\lam x_j)^2 } ,
\end{equation}
which can be inserted into the first equation of \eqref{eq:Lagrange_infty} to give
\begin{equation} \label{eq:lam_infty}
\prod_{j=1}^d  \frac{ 1+ \sqrt{4 - 3 (\lam x_j)^2} }{1 - (\lam x_j)^2 } = \frac{3^d}{z} .
\end{equation}
Since the left-hand side is strictly increasing in $\lam$, there is a unique solution $\lam = \lam_{z,x}$.
By Corollary~\ref{cor:norm}, we obtain
\begin{equation}
|x|_z = \frac{\mu_{\hat x, z}\cdot x}{\mz}
= \frac 1 { \mz  } 	\sum_{j=1}^d x_j
	\arccosh \biggl(
	\frac{ (\lam_{z,x} x_j)^2 + \sqrt{4 - 3 (\lam_{z,x} x_j)^2} }{2[1 - (\lam_{z,x} x_j)^2] }
	\biggr) .
\end{equation}
This norm converges to the $\ell^2$ norm as $z\to 1$ by Theorem~\ref{thm:critical_limit},
and it converges to the $\ell^\infty$ norm as $z \to 0$ by Theorem~\ref{thm:massive_limit}.
We believe the norm interpolates between the two limits monotonically (see Figure~\ref{fig:infty}), but we do not have a proof.

\subsubsection{On monotonicity of the norm}

Motivated in part by the above two examples,
we conjecture a sufficient condition for monotonicity of the norm.

\begin{conjecture}
Let $d\ge 1$.
Suppose $D(x)$ has finite support $\Ucal$.
If $D(x)$ is uniformly distributed on $\Ucal$,
and if every $x\in \Ucal$ is on the boundary of $\conv(\Ucal)$, then the map $z\mapsto |x|_z$ is monotone for each $x$.
\end{conjecture}

The monotonicity can fail when $D(x)$ is not a uniform distribution, as a small perturbation to $D$ can drastically change $\conv(\Ucal)$ and the $z\to 0$ limit of $|\cdot |_z$. The next example illustrates this with a perturbation of the simple random walk.

\begin{example} \label{ex:non_monotone}
Let $d=2$ and let $\alpha \in (0,1]$ be a small parameter.
We consider
\begin{equation}
D(x) = \frac{ 1 - \alpha } 4 \1\{ \norm x_1 = 1 \}
	+ \frac \alpha 4 \1\{ x = (\pm 1, \pm 1) \} .
\end{equation}
Since $\alpha > 0$,
we know its norm converges to the $\ell^\infty$ norm (not the $\ell^1$ norm) as $z \to 0$ by Theorem~\ref{thm:massive_limit},
and to the $\ell^2$ norm as $z\to 1$ by Theorem~\ref{thm:critical_limit}.
Using $\Zd$ symmetries, we solved the system \eqref{eq:Lagrange} and computed explicitly $|(1,1)|_z$ as a function of $\alpha,z$.
Numerical results show that, when $\alpha \in (0,0.17]$, there exists $z \in (0,1)$ for which
\begin{equation}
|(1,1)|_z > \sqrt 2 = \max \{ \norm{ (1,1)  }_\infty, \norm{ (1,1) }_2 \} .
\end{equation}
The map $z\mapsto |(1,1)|_z$ is therefore not monotone for these values of $\alpha$,
as can be discerned from Figure~\ref{fig:non_monotone}.
\end{example}

\begin{figure}[ht]
\centering{
\includegraphics[width=4cm, height=4cm]{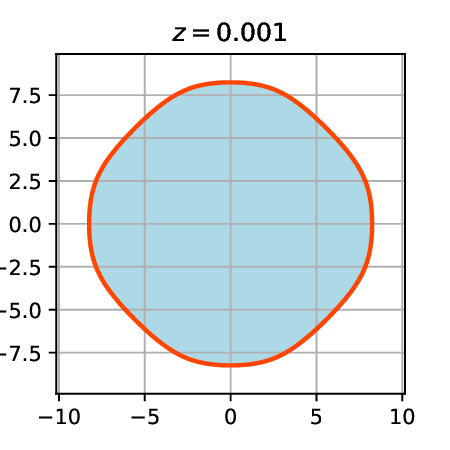}
\hspace{5mm}
\includegraphics[width=4cm, height=4cm]{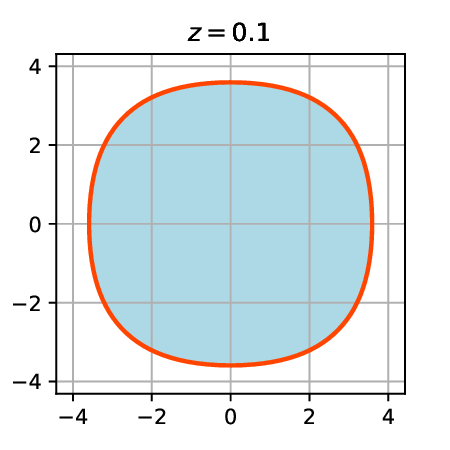}
\hspace{5mm}
\includegraphics[width=4cm, height=4cm]{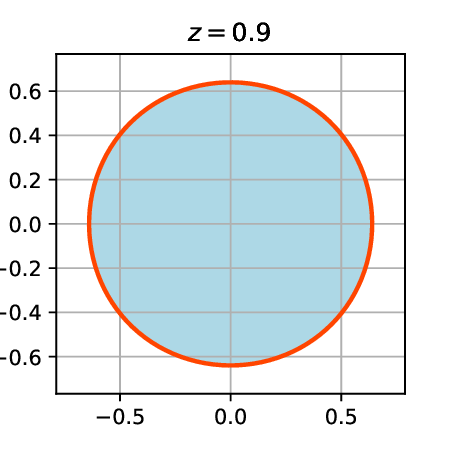}

\vspace{5mm}

\includegraphics[width=4cm, height=4cm]{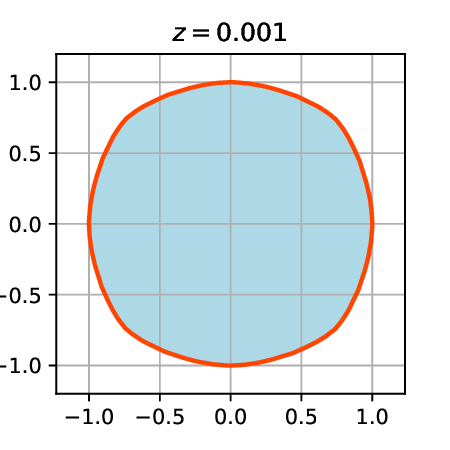}
\hspace{5mm}
\includegraphics[width=4cm, height=4cm]{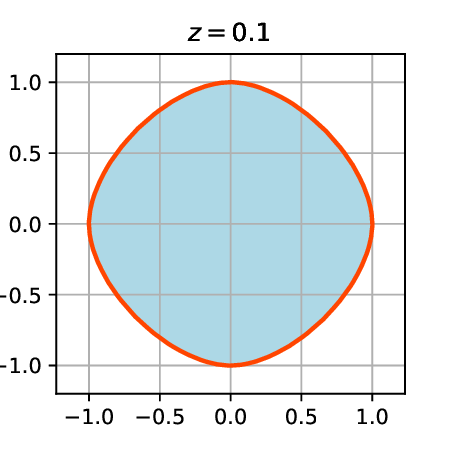}
\hspace{5mm}
\includegraphics[width=4cm, height=4cm]{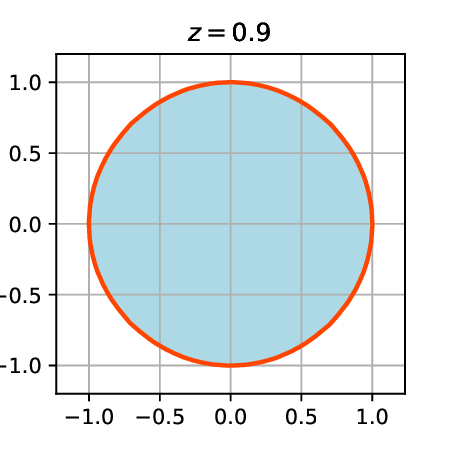}

\caption{
Wulff shape $\overline \Omega_z$ (upper) and
unit ball of $|\cdot|_z$ (lower) for Example~\ref{ex:non_monotone} with $\alpha = 0.05$. The map $z \mapsto |(1,1)|_z$ is not monotone.
}
\label{fig:non_monotone}}
\end{figure}

\subsection{Failure of OZ decay: saturation phenomenon}
\label{sec:counterx}

It is natural to expect that the Green function of any
random walk with rapidly decaying
transition probabilities should exhibit OZ decay.
However, this is false.  It is a discovery of
\cite{AIOV21,AIOV21PRE,AOV23,AOV24} that rapidly decaying
infinite-range models can fail to satisfy OZ decay
when $z$ is small. This failure is due to a
\emph{saturation} phenomenon, in which the mass $m_z$ becomes constant for small $z$.
A simple example of a $1$-dimensional random walk, adapted from \cite{AIOV21PRE}, is presented below.
A much broader discussion of saturation phenomena can be found in \cite{AIOV21,AIOV21PRE,AOV23,AOV24}.
In particular, for the Potts model, the precise decay of $S_z$ in the saturation regime (not OZ decay) is the subject of \cite{AOV23}.

\begin{proposition} \label{prop:xex}
Let $d=1$ and $p>  1$.
Consider the kernel
\begin{equation} \label{eq:def_Dsat}
D(x) = \frac{c_p}{|x|^p}e^{-|x|} \1\{x\neq 0\} ,
\end{equation}
where $c_p$ is the normalising constant.
Then:
\begin{enumerate}
\item[(i)] \cite{AIOV21PRE}
There exists $z_0 > 0$ such that for all $z \in (0, z_0)$,
\begin{equation} \label{eq:xexG}
zD(x) \le S_z(x) \le 2 zD(x)
\qquad(x\ne 0).
\end{equation}

\item[(ii)]
Let $z_{\rm sat} = 1/ \sum_{x\in \Z} D(x) e^{x}  \in (0,1)$.
For every $z \in (z_{\rm sat}, 1)$,
Assumptions~\ref{ass:Omega} and~\ref{ass:J} hold.
If $p > 3$,
then Assumptions~\ref{ass:Omega} and~\ref{ass:J} also hold for $z=z_{\rm sat}$.
\end{enumerate}
\end{proposition}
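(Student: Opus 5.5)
For part (i), I will expand the Green function as a sum over walks. Since $D(0)=0$, for $x\ne 0$ we have $S_z(x)=\sum_{n\ge1} z^n D^{*n}(x)$. The $n=1$ term is $zD(x)$, which gives the lower bound. For the upper bound, the key estimate is a one-step convolution bound
\[
D*D(x)\le C_p D(x)\qquad (x\ne 0).
\]
To prove it, write $D(y)D(x-y)=c_p^2|y|^{-p}|x-y|^{-p}e^{-(|y|+|x-y|)}$. For $y$ between $0$ and $x$ we have $|y|+|x-y|=|x|$. There the sum is $\sum |y|^{-p}|x-y|^{-p}\lesssim |x|^{-p}$, because $p>1$ makes $\sum|y|^{-p}$ finite and one of $|y|,|x-y|$ is at least $|x|/2$. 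For $y$ outside that segment, $|y|+|x-y|=|x|+2\,\mathrm{dist}(y,[0,x])$, which yields an extra geometric factor, so again the contribution is $\lesssim|x|^{-p}e^{-|x|}$. Iterating gives $D^{*n}\le C_p^{n-1}D$ on $x\ne0$, where the $x=0$ terms of intermediate convolutions are bounded crudely via $D^{*m}(0)\le1$. I will handle that point by working with $\tilde D=\max(D,\delta_0)$-type bounds, which only changes the constant. Summing gives $S_z(x)\le zD(x)\sum_n (C_pz)^{n-1}\le 2zD(x)$ for $z<z_0=1/(2C_p)$.

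For part (ii), I will use $J_z=zD$ and $\g=\delta_0$. Then $\hat J_z^{(\mu)}(0)=z\sum_x D(x)\cosh(\mu x)$, which is finite exactly for $|\mu|\le1$ since $p>1$. Set $F(\mu)=z\hat D^{(\mu)}(0)$. This is increasing in $|\mu|$ and continuous on $[-1,1]$. For $z>z_{\rm sat}$ we have $F(1)=z/z_{\rm sat}>1$, so there is a unique $m_z\in(0,1)$ with $F(m_z)=1$. As in \eqref{eq:Omega_RW}, summing the Neumann series shows $\chi^{(\mu)}<\infty$ iff $F(\mu)<1$, so $\Omega_z=(-m_z,m_z)$. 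This is open, contains $0$, and excludes $\mu_1=1$, which gives Assumption~\ref{ass:Omega}.

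For Assumption~\ref{ass:J}, $\overline\Omega_z=[-m_z,m_z]$ with $m_z<1$. The tilted kernel satisfies $D^{(\mu)}(y)\le c_p|y|^{-p}e^{-(1-m_z)|y|}$, so every polynomial moment is bounded and \eqref{eq:Q_moments} holds for any $\zeta$. The infrared bound follows exactly as in \eqref{eq:RW_infrared_pf}, using $\cosh\ge1$. In $d=1$ the extra condition \eqref{eq:Q_d-1} is not required, and $\hat\delta_0^{(\mu)}(0)=1>0$.

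The case $z=z_{\rm sat}$ with $p>3$ is the main obstacle. Here $F(1)=1$, so $m_z=1$ and $\Omega_z=(-1,1)$, which is still open. The closure now includes $\mu=\pm1$. At that endpoint $J^{(1)}(y)=zc_p|y|^{-p}e^{|y|-|y|}$ for $y>0$, so $J^{(1)}(y)\asymp|y|^{-p}$ as $y\to+\infty$. The moment $\sum|y|^{2+\zeta}|y|^{-p}$ is finite iff $p>3+\zeta$, and since $p>3$ we may choose $\zeta=(p-3)/2>0$. The infrared bound still holds by the same $\cosh\ge1$ argument, and the norm bounds are uniform over $[-1,1]$ because they are monotone in $|\mu|$. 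It remains to check that $\chi^{(1)}=\infty$, i.e.\ that $\pm1\notin\Omega$. This holds because $F(1)=1$ makes $\sum_n F(1)^n$ diverge, so the Neumann series $\chi^{(\mu)}=\sum_n F(\mu)^n$ diverges at $\mu=\pm1$.
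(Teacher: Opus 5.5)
Your proposal is correct. Part (ii) is essentially the paper's argument: the Neumann series gives $\Omega_z=(-m_z,m_z)$, moments follow from the factor $e^{-(1-m_z)|y|}$ or, at $z_{\rm sat}$, from $p>3+\zeta$, and the infrared bound comes from $\cosh\ge 1$. The paper in fact proves the stronger statement that the bounds are uniform in $z\in[z_{\rm sat}+\varepsilon,1)$, which the proposition does not require.

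Part (i) takes a genuinely different route.

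\textbf{The paper's route.} The paper bounds the $n$-fold convolution in one shot, with all $y_i\ne 0$ automatic because $D(0)=0$.
\begin{itemize}
\item It uses $e^{-\sum_i|y_i|}\le e^{-|x|}$.
\item It singles out the largest step, at a cost of a factor $n$.
\item It applies $1/s\le 2/(s+t)$ repeatedly to get $\prod_i|y_i|^{-1}\le |x|^{-1}\prod_{i\ge 2}2/|y_i|$.
\item It then sums $y_2,\dots,y_n$ freely.
\end{itemize}
The result is $S_z(x)\le zD(x)\bigl[1+\sum_{n\ge2} n\bigl(2^p c_p z\sum_{y\ne0}|y|^{-p}\bigr)^{n-1}\bigr]$. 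There is no atom at the origin to track.

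\textbf{Your route.} You prove the two-fold inequality $D*D\le C_pD$ off the origin and iterate. This forces you to deal with $D^{*m}(0)>0$ for $m\ge 2$. Your $\tilde D=D+\delta_0$ device handles this correctly: using $D*D(0)\le 1$, one gets $\tilde D*\tilde D\le (C_p+2)\tilde D$ pointwise, hence $D^{*n}\le \tilde D^{*n}\le (C_p+2)^{n-1}\tilde D$.

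\textbf{A simplification.} Your inside/outside-the-segment case split is unnecessary. For every $y$ you have $\max(|y|,|x-y|)\ge |x|/2$ and $e^{-(|y|+|x-y|)}\le e^{-|x|}$. Together these give $D*D(x)\le 2^{p+1}c_p\bigl(\sum_{y\ne0}|y|^{-p}\bigr)D(x)$ directly.

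\textbf{What each approach buys.} Your approach is the standard convolution-closed (subexponential-type) kernel argument and is modular: it reduces everything to a single two-fold estimate. The paper's approach avoids the induction bookkeeping and the atom at $0$, and produces an explicit constant directly.
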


In particular, for $z \in (0,z_0)$, \eqref{eq:xexG} implies $S_z(x) \asymp z \abs x^{-p} e^{-\abs x}$;
this is different from the OZ decay which would be $|x|^{-(d-1)/2}e^{-m_z|x|}=e^{-m_z|x|}$ for $d=1$.
In contrast,
for $z \in (z_{\rm sat}, 1)$  (including $z_\sat$ if $p>3$), Corollary~\ref{cor:OZ} implies that $S_z(x)$ does satisfy the OZ decay.

\begin{proof}[Proof of Proposition~\ref{prop:xex}(i)]
Let $x \neq 0$, so $S_z(x) = 0 + \sum_{n=1}^\infty (zD)^{*n}(x)$.
The $n=1$ term gives the desired lower bound.
For the upper bound, we use the triangle inequality to estimate
\begin{equation}
    (zD)^{*n}(x)
    = z^n \sum_{y_1+\cdots +y_n=x} D(y_1)\cdots D(y_n)
    \le (zc_p)^n e^{-|x|}\sum_{\substack{y_1+\cdots +y_n=x \\ y_i \neq 0}} \prod_{i=1}^n \frac{1}{|y_i|^p}.
\end{equation}
At the cost of a factor $n$ in the sum, we may assume that $|y_1|$ is
the maximum of the $|y_i|$. Summation over $n$ then gives
\begin{equation}  \label{eq:xex-1}
S_z(x) \le zD(x) + e^{-|x|}\sum_{n=2}^\infty (zc_p)^n
    n\sum_{\substack{y_1+\cdots +y_n=x  \\y_i\neq 0 \\ |y_1|=\mathrm{max}|y_i|}}
    \biggl( \prod_{i=1}^n \frac{1}{|y_i|} \biggr)^p.
\end{equation}
We use the fact that if $s \ge t > 0$ then
\begin{equation}
    \frac{1}{s}  \le  \frac{2}{s+t} .
\end{equation}
Applying the inequality $n-1$ times,
with $s=|y_1|+\cdots+|y_{i-1}|$ and $t=|y_i|$ for $i=2,\ldots,n$, we get
\begin{equation}
    \prod_{i=1}^n \frac{1}{|y_i|}
    \le
    \frac{2^{n-1}}{|y_1|+\cdots+|y_n|}\prod_{i=2}^n \frac{1}{|y_i|}
    \le
    \frac{1}{|x|}\prod_{i=2}^n \frac{2}{|y_i|}.
\end{equation}
The large term on the right-hand side of \eqref{eq:xex-1} is then bounded above by
\begin{equation}
zD(x)
    \sum_{n=2}^\infty (zc_p)^{n-1} n
    \sum_{y_2,\ldots, y_n \neq 0}
    \prod_{i=2}^n \frac{2^p}{|y_i|^p}
    =
    zD(x) \sum_{n=2}^\infty  n
    \biggl(zc_p \sum_{y\neq 0}\frac{2^p}{|y|^p}\biggr)^{n-1}.
\end{equation}
Since $p>1=d$, the series in $n$ is convergent and is $O(z)$ when $z$ is small. We pick $z_0$ sufficiently small so that the series is $\le 1$ for all $z\le z_0$. Then from \eqref{eq:xex-1} we get $S_z(x) \le 2zD(x)$, as desired.
\end{proof}

Consistent with the failure of OZ decay when $z < z_0$, we can show that $S_z$ fails to satisfy Assumption~\ref{ass:Omega} when  $z < z_\sat$.
Let $z\in (0,1)$, $p>1$, and $\mu \in \R$.
By tilting and summing $S_z = \sum_{n=0}^\infty (zD)^{*n}$, Fubini's theorem gives
\begin{equation} \label{eq:chi_Dsat}
\chi\supmu(z)
= \sum_{n=0}^\infty  \biggl( z \sum_{x\in \Z} D(x) e^{\mu x} \biggr)^n .
\end{equation}
Since
\begin{equation}
\sum_{x\in \Z} D(x) e^{\mu x}
= \sum_{x\neq 0} \frac{c_p}{|x|^p}e^{-|x|+\mu x} < \infty
	\quad \text{if and only if} \quad
	\abs \mu \le 1 ,
\end{equation}
and since $z_{\rm sat} \sum_{x\in \Z} D(x) e^{x} = 1$ by definition,
when $z < z_\sat$ we have
\begin{equation}
z \sum_{x\in \Z} D(x) e^{\mu x}  \;
\begin{cases}
< 1		&(\abs \mu \le 1) \\
= \infty	&(\abs \mu > 1) .
\end{cases}
\end{equation}
This implies that $\Omega_z = [1,1]$, which is not an open set, violating Assumption~\ref{ass:Omega}.

\begin{proof}[Proof of Proposition~\ref{prop:xex}(ii)]
Let $z \in [ z_\sat,1)$.
Using the exact formula \eqref{eq:chi_Dsat} for $\chi\supmu(z)$,
we directly have $\Omega_z = (-\mz, \mz)$ where $\mz \in (0,1]$ is given implicitly by the equation
\begin{equation}
z \sum_{x\in \Z} D(x) e^{\mz x} = 1 .
\end{equation}
In particular, $\Omega_z$ is open and Assumption~\ref{ass:Omega} holds.
Observe also that $m_{z_\sat} = 1$ if and only if $z = z_\sat$.

For Assumption~\ref{ass:J},
we prove the following stronger statement:
For any $\eps \in (0,1)$ and any $\zeta > 0$,
there are constants $M, \KIR$ such that
$(zD\supmu, \delta_0\supmu) \in \Qcal_{M,\KIR,\zeta}$ uniformly in $\mu\in \overline\Omega_z$ and in $z \in [z_{\rm sat}+\eps, 1)$.
The infrared bound in fact is uniform over $z \in [z_\sat, 1)$,
as can be seen
by setting $\eps = z_\sat$ in \eqref{eq:RW_infrared_pf}.
For the moments, using $z \le 1$
and $\abs \mu \le \mz \le m_{z_\sat + \eps}$ we have
\begin{equation}
    \bignorm{ |y|^{2+\zeta} zD\supmu(y) }_1
    \le
    c_p\sum_{y \neq 0} \frac{|y|^{2+\zeta}}{|y|^p}e^{-(1-m_{z_\sat + \eps})|y|} ,
\end{equation}
which is finite since $m_{z_\sat + \eps} < 1$.

Lastly, if $p > 3$ we can verify the moment condition with $\eps = 0$ and any $\zeta \in (0,  p -3)$, since the sum above converges due to $ p - (2 + \zeta ) > 1 = d$.
This concludes the proof.
\end{proof}

\section{Geometry of the Wulff shape}
\label{sec:rate}

In this section, we prove Proposition~\ref{prop:geom-intro},
which provides key properties of the vector $\eta_{\hat x}$, the matrix $\Lam_{\hat x}$, and the vector $\mux$ that ultimately give the rate of exponential decay for $\GL(x)$.
We begin in Section~\ref{sec:Omega} by deriving geometric properties of
the Wulff shape $\overline\Omega$, under Assumptions~\ref{ass:Omega} and \ref{ass:J}.
Then in Section~\ref{sec:geom_pf} we prove Proposition~\ref{prop:geom-intro}.
Finally, in Section~\ref{sec:Wulff}, we justify the terminology ``Wulff shape'' for $\overline\Omega$.

\subsection{Properties of the set $\Omega$}
\label{sec:Omega}

Recall the notation $\GL\supmu(x) =\GL(x)e^{\mu\cdot  x}$ for exponentially tilting by a vector $\mu \in \Rd$.
Also recall the definition
\begin{equation}
\Omega = \Bigl\{\mu\in \R^d : \chi\supmu = \sum_{y\in \Zd} S\supmu(y) < \infty \Bigr\} .
\end{equation}

\begin{lemma} \label{lem:strictly_convex}
Let $d\ge 1$.
Suppose that Assumptions~\ref{ass:Omega} and~\ref{ass:J}
hold, but without requiring that $\g\supmu,J\supmu$ satisfy
\eqref{eq:Q_d-1}.
Then
\begin{align} \label{eq:J<1}
\Omega &\subset \Bigl\{ \mu \in \Rd :
	J\supmu   \in L^1(\Zd),\;
	\sum_{y\in \Zd} J \supmu(y) < 1 \Bigr\} ,
\\
\label{eq:OmegaJ}
    \partial \Omega &\subset
     \Bigl\{ \mu \in \Rd :
	J\supmu   \in L^1(\Zd),\;
	\sum_{y\in \Zd} J \supmu(y) = 1 \Bigr\} ,
\end{align}
the set $\overline \Omega$ is strictly convex,
and its boundary $\del \Omega$ is $C^2$.
\end{lemma}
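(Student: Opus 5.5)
The plan is to work throughout with the function $F(\mu) = \hat J\supmu(0) = \sum_{y} J(y)\cosh(\mu\cdot y)$. Assumption~\ref{ass:J} gives moment bounds for $J\supmu$ with $\mu\in\overline\Omega$, so $J\supmu\in L^1$ there. We will then use the tilted OZ equation, summed over $x$, to relate $\chi\supmu$ to $F(\mu)$.

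\emph{Step 1: inclusion \eqref{eq:J<1}.} Let $\mu\in\Omega$. Then $S\supmu, J\supmu, \g\supmu\in L^1$, and summing $S\supmu=\g\supmu+J\supmu*S\supmu$ gives
\[
\chi\supmu(1-F(\mu))=\hat\g\supmu(0).
\]
Since $\hat\g\supmu(0)>0$ by \eqref{eq:gnonzero} and $\chi\supmu\in(0,\infty)$, this forces $F(\mu)<1$.

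\emph{Step 2: inclusion \eqref{eq:OmegaJ}.} Let $\mu\in\partial\Omega$. For $\theta\in[0,1)$ we have $\theta\mu\in\Omega$, because $\Omega$ is convex, open and contains $0$. The identity from Step~1 at $\theta\mu$ reads
\[
\chi\supk{\theta\mu}\bigl(1-F(\theta\mu)\bigr)=\hat\g\supk{\theta\mu}(0).
\]
Now let $\theta\to1$. By monotone convergence (using the $\cosh$ form \eqref{eq:chisym}), $\chi\supk{\theta\mu}\to\chi\supmu=\infty$. Meanwhile $F(\theta\mu)\to F(\mu)$ and $\hat\g\supk{\theta\mu}(0)\to\hat\g\supmu(0)$ by dominated convergence, using the uniform $L^1$ bounds on the tilts; a dominating function is $|J(y)|\cosh(\mu\cdot y)$, which is summable since $J\supmu\in L^1$ for $\mu\in\overline\Omega$ by assumption and $J$ is symmetric. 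The limit $\hat\g\supmu(0)$ is positive and finite, so $1-F(\mu)=0$.

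\emph{Step 3: strict convexity and $C^2$ boundary.} Combining Steps~1 and~2, $\overline\Omega\subset\{F\le1\}$ and $\partial\Omega\subset\{F=1\}$. Where $F$ is finite near $\overline\Omega$ it is differentiable, with
\[
\nabla F(\mu)=\sum_y yJ(y)\sinh(\mu\cdot y)=\eta^{(\mu)},
\qquad
\nabla^2F(\mu)=\sum_y y y^{T}J(y)\cosh(\mu\cdot y)=\Lambda^{(\mu)}.
\]
By the infrared bound and \eqref{eq:quadratic_form}, $\Lambda^{(\mu)}$ is uniformly positive definite. This is the crux, because $J$ may take negative values, so convexity of $F$ is not automatic; the Hessian is positive definite only because the tilted kernel obeys \eqref{eq:Q_infrared}.

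The main obstacle is differentiability at the boundary itself, since the moment bounds are only known on $\overline\Omega$ and not on a neighbourhood of it. To handle this, I would compute directional derivatives along segments lying inside $\overline\Omega$ (one-sided at the boundary), with dominated convergence justified by the uniform $(2+\zeta)$-moment bound of $J\supmu$ for all $\mu\in\overline\Omega$. Taylor's theorem along the segment then shows that $F$ is strictly convex on $\overline\Omega$, with
\[
F(\nu)\ge F(\mu)+\nabla F(\mu)\cdot(\nu-\mu)+c|\nu-\mu|^2 .
\]
Strict convexity of $\overline\Omega$ follows. If two boundary points $\mu,\nu$ had their midpoint on $\partial\Omega$, then $F=1$ at all three points, contradicting strict convexity of $F$ along the segment.

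For the $C^2$ claim I need $\nabla F\ne0$ on $\partial\Omega$. Since $F$ is convex, even and $F(0)<1$, we get $\nabla F(\mu)\cdot\mu\ge F(\mu)-F(0)>0$. The boundary is then a level set of $F$ with nonvanishing gradient. Extending $F$ slightly outside $\overline\Omega$ requires care; I would argue via the implicit function theorem in radial coordinates. Write $\partial\Omega$ as $\{r(\omega)\omega\}$, where $r(\omega)$ solves $F(r\omega)=1$. Here $\partial_rF>0$ on the one-sided domain, and $F$ is $C^2$ in the one-sided sense with continuous second derivatives by the uniform moments. This yields $r\in C^2$.
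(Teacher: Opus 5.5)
Your proposal is correct and follows the paper's route. Summing the tilted OZ equation gives \eqref{eq:J<1}, letting $\chi^{(\mu)}\to\infty$ from inside $\Omega$ while $\hat h^{(\mu)}(0)$ stays bounded gives \eqref{eq:OmegaJ}, and uniform convexity of $F(\mu)=\hat J^{(\mu)}(0)$ from the tilted infrared bound gives strict convexity of $\overline\Omega$ and, through the level set $\{F=1\}$, the $C^2$ boundary. The differences are minor and in your favour: approaching $\partial\Omega$ along the ray $\theta\mu$ makes the monotonicity of $\chi^{(\theta\mu)}$ immediate, where the paper builds a sequence $\tilde\mu_n$ with monotone components; you also check two details the paper leaves implicit, namely $\nabla F\neq 0$ on $\partial\Omega$ and that derivatives are available only from inside $\overline\Omega$ (your one-sided implicit-function step is most cleanly justified by a Whitney extension of $F$ off the convex set $\overline\Omega$, or by difference quotients along segments in $\overline\Omega$).
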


\begin{proof}
We first prove \eqref{eq:J<1}.
Suppose $\mu\in\Omega$, so that $\chi \supmu < \infty$.
The function $S$ cannot be identically zero since
$\chi \supk {\mu_1 e_1} = \infty$ in Assumption~\ref{ass:Omega}, so
$\chi\supmu >0$.  By Assumption~\ref{ass:J},
$J\supmu   \in L^1(\Zd)$.
We multiply the OZ equation \eqref{eq:OZeq} by $e^{\mu\cdot x}$ and sum over $x \in \Zd$, to obtain
\begin{equation} \label{eq:chisum}
    \chi\supmu = \hat \g \supmu(0) + \hat J \supmu(0) \chi\supmu.
\end{equation}
Since $\hat \g \supmu(0) > 0$ by Assumption~\ref{ass:J}, the above
equation can only hold if $\hat J \supmu(0) <1$.
This proves \eqref{eq:J<1}.
Also, \eqref{eq:chisum} then implies the identity
\begin{equation} \label{eq:chi^mu}
    \chi^{(\mu)} = \frac{\hat \g^{(\mu)}(0)}{1-\hat J^{(\mu)}(0)}
    \qquad (\mu \in \Omega) .
\end{equation}

Next we prove \eqref{eq:OmegaJ}.
Let $\mu_\infty\in \partial\Omega$.  Since $\mu_\infty \not\in \Omega$, we have $\chi\supk {\mu_\infty}= \infty$.
There must be a sequence $\{ \mu_n \}_n \subset \Omega$ that converges to $\mu_\infty$.
By passing to a subsequence, we can assume each of its component sequences $\{ \mu_{ n,j} \}_n \subset \R$, $1\le j \le d$, takes only one sign ($\ge0$ or $\le 0$) and is monotone.
We define another sequence  $\{ \tilde \mu_n \}_n \subset \R^d$ by
\begin{equation}
\tilde \mu_{n,j} = \begin{cases}
\mu_{\infty,j} 	& \text{if $\abs{  \mu_{n,j}  } \ge \abs{ \mu_{\infty,j} }$}  \\
\mu_{n,j}	 	& \text{otherwise.}
\end{cases}
\end{equation}
Then $\tilde \mu_n \to \mu_\infty$ too, and
$\{ \abs{ \tilde \mu_{ n,j} } \}_n$ is increasing in $n$ for each $j$.
Also, using \eqref{eq:chisym}, we have
\begin{equation}
\chi\supk { \tilde \mu_n }
= \sum_{y\in \Z^d} \GL(y)  \prod_{j=1}^d \cosh( \tilde\mu_{n,j} y_j)
\le \sum_{y\in \Z^d} \GL(y)  \prod_{j=1}^d \cosh( \mu_{n,j} y_j)
< \infty ,
\end{equation}
so $\tilde \mu_n \in \Omega$ for all $n$.
We now apply \eqref{eq:chi^mu} to $\tilde \mu_n$ and take the limit as $n \to \infty$. By monotone convergence,
\begin{equation}
\lim_{n\to \infty}  \frac{\hat \g^{(\tilde \mu_n)}(0)}{1-\hat J^{(\tilde \mu_n)}(0)}
= \lim_{n\to \infty} \chi\supk {\tilde \mu_n}
= \chi\supk {\mu_\infty}
= \infty.
\end{equation}
Since $\abs{ \hat \g^{(\tilde \mu_n)}(0) }
\le \sup_{\mu \in \Omega} \norm{ \g\supmu(y)}_1 < \infty $,
the continuity of the map $\mu \mapsto \hat J\supmu(0)$
gives $\hat J \supk {\mu_\infty} (0)
=  \lim_{n\to \infty} \hat J^{(\tilde \mu_n)} (0) = 1$.
This proves \eqref{eq:OmegaJ}.

The set inclusion \eqref{eq:OmegaJ} implies that
the boundary is $C^2$, because the map $\mu \mapsto \sum_{y\in \Zd} J \supmu(y)$ is $C^2$ in $\overline \Omega$, by the second moment estimate for $J\supmu(y)$ in \eqref{eq:Q_moments}.

Finally,
for the strict convexity of $\overline \Omega$,
we need to show that $\del \Omega$ contains no line segments.
The map $\mu \mapsto \sum_{y\in \Zd} J \supmu(y)$ is uniformly convex, because the tilted infrared bound
in \eqref{eq:Q_infrared}
implies a uniform lower bound on all second derivatives.
Therefore, the level sets of $\mu \mapsto \sum_{y\in \Zd} J \supmu(y)$ cannot contain any line segments (along which the second derivative would be zero).
It then follows from \eqref{eq:OmegaJ} that $\del \Omega$ does not contain any line segments, and the proof is complete.
\end{proof}

\subsection{Proof of Proposition~\ref{prop:geom-intro}}
\label{sec:geom_pf}

We restate Proposition~\ref{prop:geom-intro}, for convenient reference,
as the next proposition.

\begin{proposition} \label{prop:geom}
Let $d\ge 1$.
Suppose that Assumptions~\ref{ass:Omega} and~\ref{ass:J}
hold, but without requiring that $\g\supmu,J\supmu$ satisfy
\eqref{eq:Q_d-1}.
Then the set $\overline \Omega$ is strictly convex, and every $\mu \in \overline \Omega$ obeys $\abs \mu^2 \le M/\KIR$.
For each $\mu \in \overline\Omega$, we have
\begin{equation} \label{eq:J1}
    \sum_{y\in \Zd} J \supmu(y) \le 1
    \quad \text{with equality if and only if $\mu\in\partial\Omega$.}
\end{equation}
For each nonzero $ x \in \Rd$,
there is a unique vector $\mux \in \del \Omega$, depending only on the direction $\hat x = x / \abs x$ of $x$, such that
\begin{equation}
\label{eq:optimal-mu}
\mu_{\hat x}\cdot x = \max_{\mu \in \overline\Omega }\, \mu\cdot x.
\end{equation}
The vector $\eta_\hatx \in \Rd$ with components
\begin{equation} \label{eq:eta_cond}
\eta_{\hatx,j}
= \sum_{y\in \Zd} y_j J(y) e^{\mux \cdot y}
\end{equation}
satisfies $\eta_{\hat x} = |\eta_{\hat x}|\hat x$.
Also,
with the matrix $\Lam_\hatx$ defined by
$\Lambda_{\hat x, j l } = \sum_{y\in \Zd} y_j y_l J(y) e^{\mux \cdot y}$,
we have
\begin{equation} \label{eq:crossover-bds}
 \mu_\hatx \cdot x  \asymp \mS  \abs x ,
	\qquad
|\eta_{\hat x}| \asymp \mS ,
	\qquad
k \cdot \Lam_\hatx k \asymp \abs k^2
	\quad (k\in \Rd) ,
\end{equation}
with constants depending only on $d,M,\KIR$.
\end{proposition}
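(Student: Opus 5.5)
Strict convexity of $\overline\Omega$ is already given by Lemma~\ref{lem:strictly_convex}. So the plan is to verify the remaining claims in turn from Lemma~\ref{lem:strictly_convex}, Lemma~\ref{lem:Omega_convex} and the uniform bounds in $\Qcal_{M,\KIR,\zeta}$. Throughout, let $F(\mu)=\hat J\supmu(0)=\sum_y J(y)\cosh(\mu\cdot y)$, using $\Zd$-symmetry. $F$ is $C^2$ on a neighbourhood of $\overline\Omega$ relative to the region where the tilted moments are controlled. Its gradient is $\nabla F(\mu)=\sum_y yJ\supmu(y)$ and its Hessian is $\sum_y y y^{\mathsf T} J\supmu(y)$, i.e.\ the vector $\eta$ and matrix $\Lam$ of \eqref{eq:def_Lambda-intro} for $Q=J\supmu$. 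By \eqref{eq:quadratic_form}, uniformly in $\mu\in\overline\Omega$, the Hessian satisfies $\KIR|k|^2\le k\cdot\Lam k\le M\|k\|_1^2$. This immediately gives the third bound in \eqref{eq:crossover-bds} once $\mux$ is defined.

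\textbf{Step 1: \eqref{eq:J1}.} Lemma~\ref{lem:strictly_convex} already gives $F<1$ on $\Omega$ and $F=1$ on $\partial\Omega$.

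\textbf{Step 2: the bound $|\mu|^2\le M/\KIR$.} Since $F$ is even, $\nabla F(0)=0$. Taylor's theorem along the segment $[0,\mu]\subset\overline\Omega$ then gives
\[
F(\mu)\ge F(0)+\tfrac12\KIR|\mu|^2 .
\]
I would like $F(0)\ge 1-\tfrac12 M$ or similar, but that is not obviously available. A cleaner route: $F(0)\ge 0$ is not guaranteed either, since $J$ may be signed. I will instead use the bound $F(0)\ge -\|J\|_1\ge -M$. Combined with $F(\mu)\le1$ this yields $|\mu|^2\le 2(1+M)/\KIR$. To reach exactly $M/\KIR$, I would try the identity
\[
F(\mu)-F(0)=\sum_y J(y)(\cosh(\mu\cdot y)-1)\le 1-F(0)
\]
and bound $1-F(0)$ via \eqref{eq:chisum} at $\mu=0$, which gives $1-F(0)=\hat\g(0)/\chi>0$. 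I expect some constant bookkeeping here; the precise constant is not essential.

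\textbf{Step 3: existence and uniqueness of $\mux$, and alignment of $\eta_\hatx$.} $\overline\Omega$ is compact (Lemma~\ref{lem:Omega_convex}), so the linear functional $\mu\cdot x$ attains its maximum. The maximiser lies on $\partial\Omega$ because $0\in\Omega$ is interior and $x\ne0$. It is unique by strict convexity, and it depends only on $\hat x$ by homogeneity. Because $\partial\Omega$ lies in the level set $\{F=1\}$ with $F<1$ inside, and $\nabla F\neq 0$ on $\partial\Omega$, the boundary is a $C^2$ level set with outward normal $\nabla F/|\nabla F|$. Nonvanishing of $\nabla F$ follows from uniform convexity: $\nabla F(\mu)\cdot\mu\ge \KIR|\mu|^2>0$ by integrating the Hessian from $0$. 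At the maximiser, the Lagrange condition says $x$ is parallel to the outward normal, so $\eta_\hatx=\nabla F(\mux)=|\eta_\hatx|\hat x$. The positive sign comes from $\eta_\hatx\cdot\mux>0$ and $\mux\cdot x>0$.

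\textbf{Step 4: the two-sided bounds.} For $\mux\cdot x\asymp\mS|x|$:
\[
\mux\cdot x\le |\mux|\,|x|\le \sqrt d\,\|\mux\|_\infty|x|\le\sqrt d\,\mS|x|
\]
by Lemma~\ref{lem:Omega_convex}. For the lower bound, the point $\mS\hat x/\|\hat x\|_1$ has $\ell^1$ norm $\mS$, so it lies in $\overline\Omega$ by \eqref{eq:Omega_incl}; hence $\mux\cdot x\ge \mS|x|/\|\hat x\|_1\ge\mS|x|/\sqrt d$.

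For $|\eta_\hatx|\asymp\mS$: the upper bound integrates the Hessian bound from $0$, giving $|\nabla F(\mux)|\lesssim M|\mux|\lesssim\mS$. For the lower bound,
\[
|\eta_\hatx|\,|\mux|\ge\eta_\hatx\cdot\mux=\int_0^1 \mux\cdot\mathrm{Hess}F(t\mux)\,\mux\,dt\ge\KIR|\mux|^2 ,
\]
so $|\eta_\hatx|\ge\KIR|\mux|\ge\KIR\|\mux\|_1/\sqrt d\ge \KIR\mS/\sqrt d$, using $\|\mux\|_1\ge\mS$ from Lemma~\ref{lem:Omega_convex}.

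\textbf{Main obstacle.} I expect the main difficulty to be justifying the $C^2$ regularity of $F$ and the Hessian bounds on segments $[0,\mu]$. This should be fine because $\overline\Omega$ is convex and contains $0$, and the $\Qcal$ bounds hold uniformly on $\overline\Omega$, which justifies differentiation under the sum by dominated convergence.
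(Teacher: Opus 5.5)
Apart from one point, your proposal follows the paper's proof step for step:
\begin{itemize}
\item strict convexity and \eqref{eq:J1} come from Lemma~\ref{lem:strictly_convex};
\item existence and uniqueness of $\mux$ come from compactness and strict convexity;
\item the alignment of $\eta_\hatx$ comes from the normal to the level set $\{F=1\}$;
\item the bounds on $\mux\cdot x$ and $\abs{\eta_\hatx}$ come from the $\ell^1$/$\ell^\infty$ inclusions and from integrating the Hessian along $[0,\mux]$.
\end{itemize}
You are slightly more careful than the paper in checking that $\nabla F\neq0$ on $\partial\Omega$ and that $\eta_\hatx$ is a \emph{positive} multiple of $\hat x$.

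\textbf{The gap: Step~2 does not give the stated constant.} The claim you do not prove is $\abs\mu^2\le M/\KIR$. Step~2 uses only $F(\mu)\le1$ and $F(0)\ge-\norm{J}_1\ge-M$. This yields $2(1+M)/\KIR$, or $(1+M)/\KIR$ with the sharp Hessian constant, not $M/\KIR$. Your proposed refinement via $1-F(0)=\hat\g(0)/\chi$ only gives $\abs\mu^2\le M/(\chi\KIR)$. That would require something like $\chi\ge1$, which the assumptions do not provide: they are invariant under multiplying $S$ and $\g$ by a small positive constant. The proposition asserts this specific constant, so calling it inessential leaves that part of the statement unproved.

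\textbf{The missing idea.} Use the tilted $L^1$ bound $\norm{J\supmu}_1\le M$ directly, with absolute values, instead of the signed level-set bound. By $\Zd$-symmetry, $\norm{J\supmu}_1=\sum_y\abs{J(y)}\cosh(\mu\cdot y)$. Since $\cosh t\ge\frac12t^2$, for $\mu\in\overline\Omega$ you get
\[
M\ \ge\ \sum_{y\in\Zd}\abs{J(y)}\cosh(\mu\cdot y)\ \ge\ \frac12\sum_{y\in\Zd}\abs{J(y)}\,(\mu\cdot y)^2\ \ge\ \frac12\,\mu\cdot\Lam\mu\ \ge\ \KIR\abs\mu^2 .
\]
Here $\Lam$ is the second-moment matrix of the \emph{untilted} $J$, which is allowed since $0\in\overline\Omega$.

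The last inequality needs the sharp form of the Taylor-expanded infrared bound \eqref{eq:Q_infrared}. From $\Re[\hat J(0)-\hat J(k)]=\frac12k\cdot\Lam k+o(\abs k^2)\ge\KIR\abs k^2$ you get $\frac12k\cdot\Lam k\ge\KIR\abs k^2$. This is twice the constant in the bound $k\cdot\Lam k\ge\KIR\abs k^2$ that you quoted. This chain is exactly the paper's argument. It works because it never passes through the signed sum $F(\mu)$, where cancellations cost you the constant.
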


\begin{remark}
The two equations (consequences of \eqref{eq:J1} and \eqref{eq:eta_cond})
\begin{equation} \label{eq:J_cond-intro}
\hat J \supmux(0)
= 1,
\qquad
\nabla_\mu  \hat J \supmux(0) = \lambda x
\end{equation}
are exactly the conditions for a critical point of the Lagrangian
(cf.\ \eqref{eq:Lagrange} for random walks),
but we are working in a more general setting where $\hat J\supmu$ need not be defined for all $\mu \in \Rd$.
\end{remark}

\begin{proof}[Proof of Proposition~\ref{prop:geom}]
We have already proved the strict convexity
and \eqref{eq:J1} in Lemma~\ref{lem:strictly_convex}.
For the bound $\abs\mu^2 \le M/\KIR$, we observe that for any $\mu \in \overline \Omega$, \eqref{eq:Q_moments} and \eqref{eq:Q_infrared} give
\begin{equation}
M \ge  \sum_{y\in \Zd} \abs{ J(y) } \cosh(\mu \cdot y)
\ge  \sum_{y\in \Zd}  J(y)  \frac{ (\mu \cdot y)^2 } 2
\ge \KIR \abs\mu^2 .
\end{equation}

For the existence of the optimal $\mux$,
let $0 \ne x \in \Rd$.
We want to maximise the linear functional $\mu \cdot x$ over $\mu \in \overline \Omega$.
We know the set $\overline \Omega$ is convex and bounded by Lemma~\ref{lem:Omega_convex}, so a maximiser exists on $\del \Omega$.
The maximiser is unique because $\overline \Omega$ is strictly convex by Lemma~\ref{lem:strictly_convex}.
Also, it depends on the direction $\hat x = x / \norm x_2$ only, because maximising $\mu \cdot x$ is equivalent to maximising $\mu \cdot \hat x$.

To prove  that $\eta_{\mux}$ defined by \eqref{eq:eta_cond} points in the same direction
as $\hat x$, we argue as follows.
Since $\mux$ is a maximiser,
the tangent vectors to $\del \Omega$ at $\mux$ (which exist because $\del \Omega$ is $C^2$ by Lemma~\ref{lem:strictly_convex}) must be contained in a level set of $\mu \cdot x$.
This implies the outward-pointing normal vector of $\del \Omega$ at $\mux$ to be in the same direction as $x$.
Using the inclusions \eqref{eq:J<1}--\eqref{eq:OmegaJ}, we can compute this normal vector by computing the gradient of the map $\mu \mapsto \sum_{y\in \Zd} J(y) e^{\mu \cdot y}$ at $\mu = \mux$, which gives exactly $\eta_\hatx$.
Therefore, we can write $\eta_\hatx = \norm{ \eta_\hatx}_2 \hat x$, as desired.

Finally, we prove the bounds \eqref{eq:crossover-bds}.
The bound $k \cdot \Lam_\hatx k \asymp \abs k^2$
has been shown to follow from Assumption~\ref{ass:J} below the statement of Definition~\ref{def:A}.
For $\mux \cdot x$, using Lemma~\ref{lem:Omega_convex} we have $\mux \cdot x \le \abs{ \mux } \abs x \lesssim \mS  \abs x$.
By choosing $a > 0$ such that $a\hat x \in \del\Omega$, the optimality of $\mux$ gives the opposite inequality
\begin{equation}
\mux \cdot x
\ge (a\hatx) \cdot x
= \abs{ a \hat x } \abs{ x }
\gtrsim \mS  \abs x .
\end{equation}
For the upper bound on $\abs{ \eta_\hatx}$, we use $\abs{\sinh t} \le \abs t\cosh t$ to get
\begin{equation}
\abs{ \eta_{\hatx} }
\le \sum_{y\in \Zd} \abs{y} \abs{ J(y) }
	\abs{ \mux \cdot y } \cosh(\mux \cdot y)
\le \abs {\mux}  \sum_{y\in \Zd} \abs{y}^2 \abs{ J \supmux(y) }
\le \abs {\mux} M
\lesssim \mS  .
\end{equation}
For the lower bound on $\abs{ \eta_\hatx}$,
we observe that, for any $t\in [0,1]$,
\begin{equation}
\frac{ \D }{ \D t }
	\sum_{y\in \Zd} (\mux \cdot y) J(y) \sinh( t \mux \cdot y )
= \sum_{y\in \Zd} (\mux \cdot y)^2 J(y) \cosh( t \mux \cdot y )
\ge \KIR \abs{ \mux }^2
\end{equation}
by the infrared bound \eqref{eq:Q_infrared} of $J\supk{ t\mux }$ and \eqref{eq:quadratic_form}.
Integrating the above from $t=0$ to $1$, we get
\begin{equation}
\abs{ \mux } \abs{ \eta_\hatx}
\ge \mux \cdot \eta_\hatx
\ge \mux \cdot 0 + \KIR \abs{ \mux }^2 ,
\end{equation}
so $\abs{ \eta_\hatx} \ge \KIR \abs \mux  \gtrsim \mS $.
This completes the proof.
\end{proof}

\subsection{Wulff shape and norm duality}
\label{sec:Wulff}

We now justify the terminology ``Wulff shape'' for $\overline \Omega$.

\begin{proposition}
Let $d\ge 1$.
Suppose that Assumptions~\ref{ass:Omega} and~\ref{ass:J}
hold, but without requiring that $\g\supmu,J\supmu$ satisfy
\eqref{eq:Q_d-1}.
For nonzero $x \in \Rd$,
define its \emph{surface tension} by $\sigma(x) = \mS  \abs x_S
= \mu_\hatx \cdot x $.
Then the \emph{Wulff shape} for $\sigma(x)$, defined by
\begin{equation}
\label{eq:Wulff-def}
\Wcal
= \bigl\{ \mu \in \Rd    :     \mu \cdot x \le  \sigma(x)
	\text{ for all nonzero } x\in \Rd   \bigr\} ,
\end{equation}
coincides with $\overline \Omega$.
\end{proposition}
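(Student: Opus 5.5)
We prove the two inclusions $\overline\Omega\subset\Wcal$ and $\Wcal\subset\overline\Omega$ separately. Both rest on the variational characterisation in Proposition~\ref{prop:geom}: for each nonzero $x$, the quantity $\sigma(x)=\mux\cdot x$ equals $\max_{\mu\in\overline\Omega}\mu\cdot x$. In other words, $\sigma$ is exactly the support function of the compact convex set $\overline\Omega$, and the proposition is then the standard fact that a closed convex set is the intersection of its supporting half-spaces.

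\emph{Inclusion $\overline\Omega\subset\Wcal$.} Let $\mu\in\overline\Omega$ and let $x\ne0$. By \eqref{eq:optimal-mu} we have
\[
\mu\cdot x\le \max_{\nu\in\overline\Omega}\nu\cdot x=\mux\cdot x=\sigma(x).
\]
This holds for every nonzero $x$, so $\mu\in\Wcal$. The identity $\mux\cdot x=\mS\abs x_S$ is just Corollary~\ref{cor:norm}.

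\emph{Inclusion $\Wcal\subset\overline\Omega$.} We argue by contraposition using the separating hyperplane theorem. Suppose $\mu_0\notin\overline\Omega$. The set $\overline\Omega$ is nonempty, closed, bounded and convex by Lemma~\ref{lem:Omega_convex}. Hence there is a nonzero $x\in\Rd$ with
\[
\mu_0\cdot x>\max_{\nu\in\overline\Omega}\nu\cdot x.
\]
For instance, we may take $x=\mu_0-\pi(\mu_0)$, where $\pi$ denotes the Euclidean projection onto $\overline\Omega$. By Proposition~\ref{prop:geom}, the right-hand side equals $\sigma(x)$, so $\mu_0\cdot x>\sigma(x)$ and therefore $\mu_0\notin\Wcal$.

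The argument is essentially routine. The only point needing care is that the maximum in \eqref{eq:optimal-mu} is attained and equals $\sigma(x)$ for every nonzero $x\in\Rd$, not only for lattice points. Proposition~\ref{prop:geom} supplies this, since it is stated for all nonzero $x\in\Rd$ and $\mux$ depends only on $\hat x$. Strict convexity is not needed here; it only guarantees uniqueness of $\mux$.

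As a remark, one could also note that $\sigma$ is the support function of $\overline\Omega$, so $\overline\Omega$ is the ball of radius $\mS$ in the norm dual to $|\cdot|_S$. This matches the claim made after Corollary~\ref{cor:norm}, but it is not required for the proof.
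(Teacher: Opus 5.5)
Your proposal is correct and follows the paper's proof essentially verbatim: $\overline\Omega\subset\Wcal$ from the optimality of $\mux$ in \eqref{eq:optimal-mu}, and $\Wcal\subset\overline\Omega$ by the hyperplane separation theorem applied to the closed convex set $\overline\Omega$. Your explicit choice of separating direction $x=\mu_0-\pi(\mu_0)$ is a harmless concrete instance of the separation step the paper invokes.
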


\begin{proof}
Since $\mu_\hatx$ is defined to be the maximiser of the optimisation problem \eqref{eq:optimal-mu}, we have $\overline \Omega \subset \Wcal$ by definition.
On the other hand, if $\mu' \not \in \overline \Omega$,
then by the hyperplane separation theorem (using that $\overline \Omega$ is closed and convex)
there is a vector $x\in \Rd$ for which
\begin{equation}
\mu' \cdot x >  \max_{\mu\in \overline \Omega}\, \mu \cdot x
= \mu_\hatx \cdot x
= \sigma(x) ,
\end{equation}
so $\mu' \not \in \Wcal$.
This proves $\Wcal \subset \overline \Omega$ and therefore we have
$\overline \Omega = \Wcal $.
\end{proof}

The definition \eqref{eq:Wulff-def} of the Wulff shape can be rewritten as
\begin{align}
\Wcal
&= \bigl\{ \mu \in \Rd    :     \mu \cdot \frac x {\abs x_S} \le  m_S
	\text{ for all nonzero } x\in \Rd   \bigr\}  \nl
&= \bigl\{ \mu \in \Rd    :     \mu \cdot y \le  m_S
	\text{ for all $y\in \Rd$ with } \abs y_S \le 1 \bigr\}  .
\end{align}
Therefore, $\overline \Omega = \Wcal$ is the ball of radius $\mS$ for the norm dual to $|\cdot|_S$; see Figures~\ref{fig:nn}--\ref{fig:non_monotone}.

\section{Proof of the noncentred Gaussian Lemma}
\label{sec:tilted_gaussian}

In this section, we prove Theorem~\ref{thm:deconv-intro}, which we restate here as
the following theorem.  Recall from \eqref{eq:def_Lambda-intro} the notation
\begin{equation} \label{eq:def_Lambda}
\eta_j = \sum_{y\in \Zd} y_j \Jsupmu(y),
\qquad
\Lambda_{j l } = \sum_{y\in \Zd} y_j y_l \Jsupmu(x) .
\end{equation}
As observed below Definition~\ref{def:A},
these obey $\abs {\eta_j}, \abs{ \Lam_{jl} } \le M$
and $k\cdot \Lambda k \asymp \abs k^2$
when $(\Jsupmu,g)\in \Qcal_{M,\KIR,\zeta}$.

\begin{theorem} \label{thm:deconv}
Let $d \ge 1$ and
let $M,\KIR,\zeta>0$ be given.
Suppose $(\Jsupmu,g)\in \Qcal_{M,\KIR,\zeta}$.
Let $\eta$ and $\Lambda$ be given by \eqref{eq:def_Lambda},
and let $x\in\Zd$ be a nonzero vector in the same direction as $\eta$,
\ie, $\eta = \abs \eta \hat x$ ($\eta=0$ is permitted here).
If $d \le 2$, we further assume $\abs x \abs \eta \ge \so$ for some $\so > 0$.
Then, as $\abs x \to \infty$,
\begin{equation} \label{eq:Q_int}
\GQ (x) :=
\int_0^\infty \D t    \int_\Td \frac{\D k }{(2\pi)^d}
	e^{ik\cdot x} \hat g(k)  e^{-t [ \hat \Jsupmu(0) - \hat \Jsupmu (k) ] }
=  \bg(x; \eta, \Lam) \Bigl[\hat{g}(0) + O\Bigl(\frac 1 {\abs x^\delta}\Bigr) \Bigr]
\end{equation}
with some $\delta > 0$
and with the constant depending only on $d, \zeta, M, \KIR, \so$.
\end{theorem}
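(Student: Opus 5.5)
Write $F(k)=\hat \Jsupmu(0)-\hat\Jsupmu(k)$ and $F_0(k)=ik\cdot\eta+\frac12 k\cdot\Lam k$. The proof justifies the heuristic computation following Theorem~\ref{thm:deconv-intro}, in the style of Hara's Gaussian Lemma: compare $\GQ(x)$ with $\hat g(0)\bg(x;\eta,\Lam)$, which is the same $(t,k)$-integral with $\hat g(k)$ replaced by $\hat g(0)$, $F$ by $F_0$, and $\Td$ by $\Rd$.

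The first step is to record lower bounds on $\bg$ from Appendix~\ref{app:Brown}, since every error term must be small relative to $\bg(x;\eta,\Lam)$. Uniformly in $\eta$ aligned with $x$, we should have
\[
\bg(x;\eta,\Lam)\gtrsim \frac{\max\{1,|\eta||x|\}^{(d-3)/2}}{|x|^{d-2}} .
\]
For $d\le2$ this requires $|\eta||x|\ge\so$. The point is that alignment cancels the factor $e^{-m|x|}$, so the target is only polynomially small. All errors should therefore be bounded by $|x|^{-\delta}$ times this quantity.

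Next split $\int_0^\infty\D t$ at $t\le T_1$ and $t\ge T_1$, with $T_1=|x|^{a}$ for some $a<1$ (or at a fraction of $|x|/|\eta|$), and split $k$ into $|k|\le t^{-1/2+\kappa}$ and the rest.

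For small $t$, neither $\GQ$ nor $\bg$ should contribute. For $\bg$ this follows from the explicit Gaussian bound $e^{-|x|^2/(Ct)}$, since with alignment the exponent is $-(|x|-t|\eta|)^2/(2t\lambda)$, which is large when $t\ll|x|/|\eta|$. For $\GQ$, the spatial decay of $\int \hat g e^{-tF}e^{ik\cdot x}$ for small $t$ has to be shown by differentiating in $k$, i.e.\ integrating by parts. This trades powers of $|x|^{-1}$ for moments of $\Jsupmu$ and $g$, and is where the $|y|^{d-1}$ moment bounds \eqref{eq:Q_d-1} and the $L^p$ bounds in \eqref{eq:g_moments} enter. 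Roughly $d-1$ derivatives are needed to beat the $|x|^{-(d-2)}$ target, and Hausdorff--Young/Hölder handles the $L^p$ norms. The infrared bound $\Re F\ge\KIR|k|^2$ controls $e^{-tF}$ throughout. For large $k$ at any $t$, the same bound gives $e^{-\KIR t|k|^2}$, which is negligible once $t|k|^2\ge t^{2\kappa}$.

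For the main region (large $t$, small $k$), Taylor expansion gives
\[
F(k)=F_0(k)+O(|k|^{2+\zeta}), \qquad \hat g(k)=\hat g(0)+O(|k|^{\zeta}),
\]
with constants from \eqref{eq:Q_moments}--\eqref{eq:g_moments}. Then
\[
|e^{-tF}-e^{-tF_0}|\le t|k|^{2+\zeta}e^{-ct|k|^2},
\]
and integrating over $k$ gives a relative error $t^{-\zeta/2}\le |x|^{-a\zeta/2}$ compared with $t^{-d/2}$. Extending the Gaussian integral from the small-$k$ ball back to $\Rd$ costs only superpolynomially small terms.

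The main obstacle is the following. The crude bound $t^{-d/2}$ on the $k$-integral ignores the drift. The true $\rho_t(x)$ is concentrated near $t\approx |x|/|\eta|$ with width $\sqrt{t}$, so integrating absolute errors $t^{-d/2-\zeta/2}$ over all $t$ may exceed $\bg$ in the OZ regime, where $\bg\asymp|\eta|^{(d-3)/2}|x|^{-(d-1)/2}$. To fix this I would first shift the contour, $k\mapsto k+i\nu$, to extract the Gaussian factor $e^{-(x-t\eta)\cdot\Lam^{-1}(x-t\eta)/2t}$ from the error terms as well. This is permissible for small imaginary parts because $\Jsupmu$ has $2+\zeta$ moments only in $L^1$. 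Where that fails, the fallback is integration by parts in $k$ along the direction $\hat x$, using the fact that the phase $k\cdot(x-t\eta)$ has gradient $x-t\eta$, which gives decay $|x-t\eta|^{-j}$. This produces error bounds of the form $t^{-d/2-\zeta/2}\min\{1,(\sqrt t/|x-t\eta|)^{j}\}$. Integrated over $t$, these give $|x|^{-\delta}\bg$ uniformly in $|\eta|$, including $\eta\to0$. For $d\le2$ the condition $|\eta||x|\ge\so$ ensures the $t$-integral converges; this is needed because at large $t$ the drift term makes the phase oscillate.
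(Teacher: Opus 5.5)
Your overall strategy is the right one: compare with $\hat g(0)\bg$, kill large $k$ with the infrared bound, Taylor-expand at small $k$ to gain $t^{-\zeta/2}$, and integrate by parts in $k$ for spatial decay. Your small-time treatment of $I_t$ (about $d-1$ derivatives, Hausdorff--Young/H\"older) is essentially Lemma~\ref{lem:It_decay}. There are, however, two problems, and the second is a genuine gap.

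\emph{The contour shift and the torus.} The shift $k\mapsto k+i\nu$ is not available. It requires $\hat\Jsupmu$ to continue analytically off $\Td$, i.e.\ exponential moments of $\Jsupmu$, whereas $\Qcal_{M,\KIR,\zeta}$ gives only polynomial moments. So everything rests on your fallback, and on $\Td$ that needs a device you omit: $e^{ik\cdot(x-t\eta)}$ is not $2\pi$-periodic. The paper recentres at the lattice point $\integer{t\eta}$. It writes the integrand as $e^{ik\cdot(x-\integer{t\eta})}\hat g(k)e^{-\hat R(k)}$ with $\hat R(k)=-ik\cdot\integer{t\eta}+t[\hat\Jsupmu(0)-\hat\Jsupmu(k)]$. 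This $\hat R$ is periodic, and its gradient $i\fractional{t\eta}+t\nabla(\frac12 k\cdot\Lam k+\hat R_2)$ no longer contains the large drift $t\eta$. The integration by parts is done on all of $\Td$ (with weak derivatives) before any cutoff in $k$. Integrating by parts on your ball $|k|\le t^{-1/2+\kappa}$ would create boundary terms.

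\emph{The splitting time.} The difference $I_t-\hat g(0)\rho_t$ can be integrated by parts at most twice while keeping a relative gain. For $|\alpha|\ge3$ the hypotheses control $\nabla^\alpha\hat\Jsupmu$ only in $L^{d/(|\alpha|-2)}$, with no $|k|^\zeta$ factor. Since $F_0$ is quadratic, the terms $t\nabla^\alpha\hat\Jsupmu\,e^{-tF}$ are pure error, bounded only at the same order $t^{(|\alpha|-d)/2}$ as the main term.

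With $j\le2$ and $T_1=|x|^a$, $a<1$, your bound in the critical regime is $\int_{T_1}^{|x|^2}t^{-d/2-\zeta/2}(t/|x|^2)\,\D t\asymp|x|^{-2}T_1^{(4-d-\zeta)/2}$. This is not $o(|x|^{-(d-2)})$ when $d\ge5$ and $\zeta\le d-4$. Splitting instead at a fraction of $|x|/|\eta|$ fails as $\eta\to0$. The bulk of $\bg$, at $t\asymp|x|^2$, then lies in your small-time region, where you claim nothing contributes. But the exponent $(|x|-t|\eta|)^2/t$ is large only if also $t\ll|x|^2$.

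The missing idea is to split at $T=\eps|x|/((\so^{-1}|\eta|)\vee|x|^{-1})$, i.e.\ $\eps\min\{|x|^2,\so|x|/|\eta|\}$. One then accepts a small-time contribution that is not negligible but only relatively $O(\sqrt\eps)$:
\begin{itemize}
\item For $t\le T$, bound $\int_0^T|I_t|$ and $\int_0^T\rho_t$ separately by $\sqrt\eps\,\Casy(x;\eta)$ using up to $d-1$ derivatives; no cancellation is needed.
\item For $t\ge T$, use the two-derivative comparison of Lemma~\ref{lem:It_asymp-2}. In the critical case take $n=0$. In the OZ case take $n=0$ inside the window $|t-t_0|\le\sqrt{t_0\so}/(2|\eta|)$ around $t_0=|x|/|\eta|$, and $n=2$ outside it.
\item Choose $\eps=|x|^{-\beta}$ with $\beta$ small, so that both $\sqrt\eps$ and the large-time errors, which carry negative powers of $\eps$, are negative powers of $|x|$.
\end{itemize}
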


We define
\begin{equation} \label{eq:G^mu}
I_t(x) =	
	\int_\Td \frac{\D k }{(2\pi)^d}
	e^{ik\cdot x} \hat g(k)  e^{-t [\hat \Jsupmu(0) - \hat \Jsupmu (k) ] } ,
\end{equation}
so that $\GQ(x)
= \int_0^\infty \D t\,  I_t(x)$.

\subsection{Proof of Theorem~\ref{thm:deconv}}

Our goal is to compute the asymptotic behaviour of the Fourier integral \eqref{eq:Q_int} as $\abs x \to \infty$.
We will prove that the leading behaviour of $\GQ(x)$
is dominated by the integral over small $k$, where $\hat \Jsupmu(0)-\hat \Jsupmu(k)$ is dominated
by its quadratic Taylor approximation and $\hat g(k)$ is close to $\hat g(0)$.

The hypothesis \eqref{eq:Q_moments} on the moments of $\Jsupmu$ permits Taylor expansion
of its Fourier transform $\hat \Jsupmu(k) = \sum_{y\in \Zd} \Jsupmu(y) e^{-ik\cdot y}$ up to the second order.
With $\eta$ and $\Lam$ defined by \eqref{eq:def_Lambda},
we have
\begin{equation} \label{eq:J_small_k}
\hat \Jsupmu(0)  - \hat \Jsupmu(k)
= i k \cdot \eta + \half k \cdot \Lambda k + \hat R_2(k) ,
\end{equation}
where $\hat R_2(k) = \sum_{x\in \Zd} \Jsupmu(x) [1 - ik\cdot x - \half (k\cdot x)^2 - e^{-i k \cdot x}] $ is twice differentiable and satisfies
\begin{equation} \label{eq:R2bds}
\frac{ \abs{ \hat R_2(k)} }{ \abs k^2 } , \;\;
\frac{ \abs{ \grad \hat R_2(k)} }{ \abs k } ,\;\;
\abs{ \grad^2 \hat R_2(k) }
	\lesssim \abs k^{\zeta} M
	\qquad(k \in \Td).
\end{equation}
Ignoring the remainder $\hat R_2$ for a moment,
inserting the Taylor polynomial of $\hat \Jsupmu(k)$ into \eqref{eq:G^mu} gives
\begin{equation}
I_t(x) \approx
    \hat g(0)
	\int_\Rd \frac{\D k }{(2\pi)^d}
	e^{ik\cdot x} e^{-t [i k \cdot \eta + \half k \cdot \Lambda k ] }
= \hat g(0) \rho_t(x; \eta, \Lam) ,
\end{equation}
where $\rho_t(x; \eta, \Lam)$ is
the heat kernel of the Brownian motion on $\Rd$ with drift $\eta$ and covariance $\Lambda$, defined in \eqref{eq:def_rho}.
We therefore expect and aim to exploit $\GQ(x) \approx
\hat g(0) \int_0^\infty \D t\, \rho_t(x;\eta, \Lam) =\hat g(0) \bg(x;\eta,\Lam)$.

A more careful analysis is done by writing
\begin{equation} \label{eq:GLSR}
\GQ(x)
= \hat g(0) \bg(x;\eta,\Lam) +  \int_0^\infty \D t\, [I_t(x) - \hat g(0) \rho_t(x;\eta,\Lam)] .
\end{equation}
We will show the integral on the right-hand side is small
relative to $\C(x;\eta,\Lam)$.
We introduce a small parameter $\eps > 0$ and split the above integral at time $T$ defined by
\begin{equation} \label{eq:def_T}
T = \frac{ \eps \abs x }{  (\so\inv\abs \eta) \vee \abs x \inv } .
\end{equation}
The constant $\so$ is only relevant for dimensions $d\le 2$; we simply take $\so = 1$ if $d > 2$.
We do not need any cancellation between $I_t$ and $\hat g(0)\rho_t$ when $t \le T$,
since the next lemma shows that their contributions are individually small.
It is shown in Lemma~\ref{lem:C_bdds} that,
in the specific case where $\eta$ is a nonnegative multiple of $\hat x$, $\bg(x;\eta,\Lam)$ is bounded above and below by multiples of
\begin{equation}
\label{eq:Bdef}
    \Casy(x;\eta)=
\frac{ ( \abs \eta  \vee \abs x\inv )^{(d-3)/2} }{ \abs x^{(d-1)/2} } ,
\end{equation}
with constants depending only on $M,\KIR,s_0$.

\begin{lemma} \label{lem:small_time_intro}
Let $d \ge 1$, $x\in\Zd$, and $\eps \in (0, \half \so\inv]$.
For all $\abs x \ge 2 \sqrt d$, we have
\begin{equation}
\int_0^{T} \D t\, \abs{ I_{t}(x) } ,\;\;
\int_0^{T} \D t\, \rho_t(x;\eta,\Lam)
\lesssim \sqrt \eps  \Casy(x; \eta) ,
\end{equation}
with constants that depend only on $d, M, \KIR, \so$.
\end{lemma}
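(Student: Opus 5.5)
Two separate bounds are needed: one for the heat kernel term and one for $I_t$. Throughout, write $a = (\so^{-1}\abs\eta)\vee\abs x^{-1}$, so that $T = \eps\abs x/a$. Since $\abs\eta\le \so a$ and $\eps\so\le\half$, we have $t\abs\eta \le \eps\so\abs x\le \half\abs x$ for $t\le T$. So on $[0,T]$ the drift has moved the Gaussian at most halfway to $x$.

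\emph{Heat kernel.} For $t\le T$ we get $\abs{x-t\eta}\ge \half\abs x$. Using \eqref{eq:quadratic_form}, this gives
\begin{equation*}
\rho_t(x;\eta,\Lam)\lesssim t^{-d/2}e^{-c\abs x^2/t}.
\end{equation*}
Integrating over $t\in[0,T]$ and substituting $u=\abs x^2/t$ yields the bound $\abs x^{2-d}\,\Gamma(\tfrac d2-1, c\abs x^2/T)$ in terms of the incomplete gamma function. This is valid for all $d$, since the lower limit $\abs x^2/T$ is positive. The lower limit equals $a\abs x/\eps\ge 1/\eps$, which is large, so the integral is at most $\abs x^{2-d}(a\abs x/\eps)^{d/2-2}e^{-ca\abs x/\eps}$. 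Compare this with $\Casy = a^{(d-3)/2}\abs x^{-(d-1)/2}$. The ratio is a power of $(a\abs x)\ge1$ and of $\eps$, times $e^{-ca\abs x/\eps}$. The exponential absorbs every power of $a\abs x$ and of $\eps$, so the ratio is $\lesssim\sqrt\eps$, in fact even $O(\eps^N)$.

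\emph{The integral $I_t$.} Pointwise decay of $I_t(x)$ in $x$ comes from integrating by parts in $k$. The integrand is $e^{ik\cdot x}F_t(k)$ with $F_t(k)=\hat g(k)e^{-t[\hat \Jsupmu(0)-\hat \Jsupmu(k)]}$.

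For small $t$ (say $t\le1$), a cruder bound suffices. Integrating by parts twice in the direction of $x$ (since $\abs x\ge2\sqrt d$, we can divide by $\abs x$), or using the $\abs y^2 g$ and $\Jsupmu$ moments directly, gives $\abs{I_t(x)}\lesssim \abs x^{-2}$. The $t$-integral over $[0,1]$ is then $O(\abs x^{-2})$. This must be compared with $\sqrt\eps\,\Casy$, and that comparison is the delicate point. The fix is to use more integrations by parts, up to order $d-1$; this is presumably why the hypothesis \eqref{eq:Q_d-1} (the $\abs y^{d-1}$ moments) is imposed, and Remark~\ref{rmk:Q} says those bounds enter exactly here.

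The main scheme is to bound $\abs{I_t(x)}$ by $\abs x^{-n}\int_\Td\abs{\partial_{\hat x}^n F_t(k)}\,\D k$. Each derivative falling on the exponential produces a factor $t(\abs\eta + \abs k + \ldots)$. The infrared bound \eqref{eq:Q_infrared} gives $\abs{e^{-t[\ldots]}}\le e^{-\KIR t\abs k^2}$. Integrating over $k$ then gives, schematically,
\begin{equation*}
\abs{I_t(x)}\lesssim \abs x^{-n}\,t^{-d/2}\bigl(t\abs\eta+\sqrt t\bigr)^n .
\end{equation*}
For $t\le T$ this is small: $t\abs\eta\le\eps\so\abs x$ and $\sqrt t\le\sqrt{\eps\abs x/a}$. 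Choosing $n$ around $d-1$ and integrating over $[0,T]$ should produce $\eps^{\text{power}}\Casy$. In the regime $a=\abs\eta/\so$ the bound is at least $\sqrt\eps\,\Casy$, with the $\sqrt\eps$ coming from the $\sqrt t$ term at $t=T$.

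\emph{Main obstacle.} Finite moments allow only a limited number of derivatives. The mixed $L^p$ hypotheses (the $p_d$ and $\frac d3\wedge2$ norms) must be converted into $L^1$-in-$k$ bounds on the derivatives via Hausdorff--Young, combined with H\"older against the Gaussian factor. I would first try $n=d-1$ integrations by parts, using Hausdorff--Young for the $\abs y^{d-1}$ terms. Then I would check the bookkeeping separately in the cases $a=\abs x^{-1}$ (critical-like, where $T=\eps\abs x^2$) and $a=\abs\eta/\so$ (OZ-like). In each case the goal is to confirm that the worst term yields exactly $\sqrt\eps\,\Casy(x;\eta)$.
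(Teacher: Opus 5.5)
Your bound on $\int_0^T\rho_t$ is correct, and keeping the Gaussian tail even gives $O(\eps^N)\,\Casy$; the paper just integrates the polynomial bound \eqref{eq:rhobd} with $n=d-1$. The $I_t$ half, however, has a genuine gap, and it is not the moment bookkeeping you flag as the main obstacle. Without recentring, each $k$-derivative of $e^{-t[\hat\Jsupmu(0)-\hat\Jsupmu(k)]}$ brings down $t\nabla_k[\hat\Jsupmu(0)-\hat\Jsupmu(k)]=it\eta+O(t\abs k)$. This is the $t\abs\eta$ in your schematic bound. Integration by parts against $e^{ik\cdot x}$ then only produces powers of $t\abs\eta/\abs x\le\eps\so$. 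These are small in $\eps$ but do not decay in $\abs x$, whereas $\Casy(x;\eta)\to0$.

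Concretely, for $n=d-1$ the drift part contributes about $\int_0^T(t\abs\eta/\abs x)^{d-1}(1+\sqrt t)^{-d}\,\D t\approx(\abs\eta/\abs x)^{d-1}T^{d/2}$. In the OZ regime $T=\eps\so\abs x/\abs\eta$, this is of order $\eps^{d/2}\sqrt{\abs\eta\abs x}\,\Casy(x;\eta)$, which is unbounded as $\abs\eta\abs x\to\infty$. Other choices of $n$, or more moments, give the same factor $\sqrt{\abs\eta\abs x}$ or worse. (In the critical regime the drift term happens to be harmless.) The unshifted estimate cannot see that the Gaussian part of $I_t$ is centred at $t\eta$, at distance at least $\half\abs x$ from $x$.

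The missing idea, which is the heart of Lemma~\ref{lem:It_decay}, is to recentre the phase at the lattice point nearest the drift. Write $e^{ik\cdot x}=e^{ik\cdot(x-\integer{t\eta})}e^{ik\cdot\integer{t\eta}}$ and absorb the second factor into $\hat R(k)=-ik\cdot\integer{t\eta}+t[\hat\Jsupmu(0)-\hat\Jsupmu(k)]$.

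Since $\integer{t\eta}\in\Zd$, the function $\hat g\,e^{-\hat R}$ is still periodic. So integrating by parts on $\Td$ against $e^{ik\cdot(x-\integer{t\eta})}$ is legitimate. Now $\nabla_j\hat R=i(\fractional{t\eta})_j+t\nabla_j[\half k\cdot\Lambda k+\hat R_2(k)]=O(1+t\abs k)$, with no $t\abs\eta$.

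Combine this with the weak-derivative, H\"older and Hausdorff--Young estimates for derivatives of order $3$ to $d-1$ (where \eqref{eq:Q_d-1} enters, as you guessed). This gives $\abs{I_t(x)}\lesssim(1+\sqrt t)^{-d}\bigl((1+\sqrt t)/\abs{x-\integer{t\eta}}\bigr)^n$ for $0\le n\le2\vee(d-1)$.

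For $t\le T$ one has $\abs{t\eta}\le\half\abs x$. The hypothesis $\abs x\ge2\sqrt d$ absorbs the rounding error $\abs{\fractional{t\eta}}\le\frac{\sqrt d}{2}$, so $\abs{x-\integer{t\eta}}\ge\frac14\abs x$; that is what this hypothesis is for.

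Taking $n=d-1$ gives $\int_0^T\abs{I_t(x)}\,\D t\lesssim\abs x^{-(d-1)}\int_0^Tt^{-1/2}\,\D t=2\sqrt T/\abs x^{d-1}$. This is $\lesssim\sqrt\eps\,\Casy(x;\eta)$ in the critical regime ($T=\eps\abs x^2$). It is also $\lesssim\sqrt\eps\,\Casy(x;\eta)$ in the OZ regime, using $\abs\eta\abs x\ge\so$ and $d\ge2$. For $d=1$, taking $n=1$ gives $T/\abs x\asymp\eps\,\Casy(x;\eta)$.

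Your $\sqrt t$-term computation is exactly what survives after the fix. No separate small-$t$ step is needed, because the torus integral already gives $(1+\sqrt t)^{-d}$ rather than $t^{-d/2}$.
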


For $t \ge T$, we do harvest the similarities between $I_t$ and $\hat g(0)\rho_t$ when $x$ is in the same direction as $\eta$.
In view of the definition of $T$ in \eqref{eq:def_T}, the hypothesis on $T$ requires $\abs x $ to be large.

\begin{lemma} \label{lem:R3_integral_intro}
Let $d \ge 1$, $\eps \in (0,1) $, and denote
$\zeta_0 = \zeta \wedge \frac {d \wedge 2}2$.
Let $x\in\Zd$ be a nonzero vector in the same direction as $\eta$,
\ie, $\eta = \abs \eta \hat x$.
If $d \le 2$, we further assume $\abs x \abs \eta \ge \so$.
Then when $T \ge \eps\inv \vee \eps^{-(2+\zeta_0)/\zeta_0}$ and
$\abs x \ge 4dM / \so$, we have
\begin{equation}
\frac{ 1 }{ \Casy(x; \eta )}
	\int_T^\infty \D t\, \abs{ I_{t}(x)
	- \hat g(0) \rho_t(x; \eta,\Lam) }
\lesssim  e^{-c_1/\eps}  + \frac 1 { \eps^{d+1+\zeta_0} \abs x^{\zeta_0} }
	+ \frac 1 { \eps^{ (d\vee4) + \zeta_0} \abs x^{\zeta_0 / 2} } ,
\end{equation}
with $c_1 = \frac 1 8 \KIR$ and the implicit constant depending only on $d, \zeta, M, \KIR,\so$.
\end{lemma}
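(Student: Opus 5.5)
The plan is to compare the two Fourier representations of $I_t(x)$ and $\hat g(0)\rho_t(x;\eta,\Lam)$ directly. For $t\ge T$ I will split the $k$-integral into a small-$k$ region $\abs k\le r_t$ and its complement, with $r_t = (\eps t)^{-1/2}$ or a similar scale. I write $\rho_t$ as an integral over $\Rd$ of $e^{ik\cdot(x-t\eta)}e^{-\frac t2 k\cdot\Lam k}$, and $I_t$ via \eqref{eq:J_small_k} as the same integrand multiplied by $\hat g(k)e^{-t\hat R_2(k)}$. Both phases combine to $e^{ik\cdot(x-t\eta)}$, so the drift appears only through the displacement $x-t\eta$. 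Because $\eta=\abs\eta\hat x$, this displacement is the scalar $(\abs x - t\abs\eta)\hat x$, and its size controls everything.

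\emph{Large $k$.} On $\abs k>r_t$ I bound $\abs{I_t}$ using the infrared bound \eqref{eq:Q_infrared}, which gives $\abs{e^{-t[\hat Q(0)-\hat Q(k)]}}\le e^{-t\KIR\abs k^2}$, together with $\abs{\hat g}\le M$. The Gaussian tail of $\rho_t$ outside $\abs k\le r_t$ is handled the same way. This yields a factor like $e^{-c t r_t^2}\,t^{-d/2}$. With $tr_t^2\asymp \eps^{-1}$ this produces the $e^{-c_1/\eps}$ term, with $c_1=\KIR/8$ after splitting the exponent. The $t$-integral $\int_T^\infty t^{-d/2}\,\D t$ must then be compared with $\Casy(x;\eta)$; for $d\le 2$ this uses $\abs x\abs\eta\ge \so$, which makes $\Casy$ large enough. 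Since no spatial decay is needed at this exponentially small level, a crude bound may suffice; if not, I will multiply by a decaying factor in $t$ that absorbs it.

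\emph{Small $k$.} For $\abs k\le r_t$ I write $\hat g(k)e^{-t\hat R_2(k)}-\hat g(0)$. Using \eqref{eq:R2bds} and the moments \eqref{eq:g_moments}, this is $O(\abs k^{\zeta_0}+t\abs k^{2+\zeta_0})$, and on the region $t\abs k^2\lesssim\eps^{-1}$ it is of size $t^{-\zeta_0/2}\eps^{-\mathrm{power}}$. A bound that simply takes absolute values gives $t^{-(d+\zeta_0)/2}$. That estimate is too weak away from the critical time $t_*=\abs x/\abs\eta$, because $\Casy$ reflects the Gaussian localisation of $\rho_t$ near $t_*$ in a window of width about $\sqrt{t_*}$. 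To recover this localisation I will integrate by parts in $k$ along $\hat x$, using $e^{ik\cdot(x-t\eta)}$. Each integration by parts gains $\abs{x-t\eta}^{-1}$. Derivatives fall on $e^{-\frac t2 k\cdot\Lam k}$, producing factors $t\abs k\lesssim\sqrt{t/\eps}$; they also fall on $\hat g$, $\hat R_2$ and the cutoff, where the gradient bounds in \eqref{eq:R2bds} and the weighted norms of $g$ and $Q$ apply. I will do this up to $d-1$ times. Beyond two derivatives, the limited regularity of $\hat Q$ and $\hat g$ is handled not pointwise but through the $L^p$ bounds \eqref{eq:Q_d-1} combined with Hausdorff--Young and H\"older. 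I expect this to be packaged as a decay estimate for $I_t(x)$ in terms of $\abs{x-t\eta}/\sqrt t$, namely Lemma~\ref{lem:It_decay}. The result is a bound of the form $t^{-(d+\zeta_0)/2}\eps^{-a}\min\{1,(t/\abs{x-t\eta}^2)^{(d-1)/2}\}$.

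\emph{Integration in $t$.} Finally I integrate over $t\ge T$ and divide by $\Casy(x;\eta)$. Near $t_*$, the window of width $\sqrt{t_*}$ reproduces $\Casy$ times $t_*^{-\zeta_0/2}$. This gives the $\abs x^{-\zeta_0}$ or $\abs x^{-\zeta_0/2}$ gain, with $t_*\gtrsim\abs x$ because $\abs\eta\le dM$. The hypothesis $\abs x\ge 4dM/\so$ ensures $T$ and $t_*$ are well separated in the appropriate sense. Away from the window, the polynomial decay in $\abs{x-t\eta}$ makes the tails integrable. The $\eta\to0$ regime, where $t_*=\infty$, is covered by $\abs x\inv$ in the definition of $T$. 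I expect the main obstacle to be the integration-by-parts step: obtaining enough decay, namely $d-1$ powers, uniformly in $\eta$ while $\hat Q$ and $\hat g$ have only fractional extra smoothness. This is where the second error term carries the weaker exponent $\abs x^{-\zeta_0/2}$ and the large power $\eps^{-(d\vee4)-\zeta_0}$.
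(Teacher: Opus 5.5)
Your skeleton matches the paper's: cut the $k$-integral at $\abs k\asymp(\eps t)^{-1/2}$, use the Taylor remainder inside, the Gaussian tail outside, then a Laplace-type analysis of the $t$-integral around $t_0=\abs x/\abs\eta$. But the large-$k$ step fails as stated. With $r_t=(\eps t)^{-1/2}$ you have $t r_t^2=\eps\inv$ for every $t$, so the tail bound is $e^{-c/\eps}t^{-d/2}$ with no extra decay in $t$ to multiply in. Then $\int_T^\infty t^{-d/2}\,\D t$ diverges for $d\le 2$. For $d>2$ with $\abs\eta\abs x\ge\so$, where $T=\eps\so\abs x/\abs\eta$, it equals $\Casy(x;\eta)$ times a factor of order $\eps^{-(d-2)/2}\sqrt{\abs\eta\abs x}$, which is unbounded in $x$ (the crude bound is fine only when $\abs\eta\abs x\le\so$). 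So the exponentially small piece also needs decay in $\abs{x-t\eta}$, and your integration by parts, confined to the small ball, does not supply it. The paper integrates by parts on the whole torus before splitting in $k$. To keep the integrand periodic it shifts the phase by the lattice point $\integer{t\eta}$ rather than $t\eta$: with $\hat R(k)=-ik\cdot\integer{t\eta}+t[\hat Q(0)-\hat Q(k)]$, it writes $-\abs{x-\integer{t\eta}}^2 I_t(x)$ as the inverse Fourier transform of $\Delta(\hat g e^{-\hat R})$, and only then splits at $k_t$. That is why the $e^{-c/\eps}$ term in Lemma~\ref{lem:It_asymp-2} carries the factor $(\sqrt t/\abs{x-\integer{t\eta}})^n$. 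It is also the real purpose of the hypothesis $\abs x\ge 4dM/\so$, which is not about separating $T$ from $t_0$. Outside the window it gives $\abs\eta\abs{t-t_0}\ge\sqrt d$, so the rounding error $\half\sqrt d$ is absorbed and $\abs{x-\integer{t\eta}}\ge\half\abs\eta\abs{t-t_0}$.

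Second, the small-$k$ step as planned does not deliver the smallness. Lemma~\ref{lem:It_decay} bounds $I_t$ itself and contains no small factor, so it cannot package a bound on $I_t-\hat g(0)\rho_t$. Running its argument on the difference up to order $d-1$ loses the smallness at top order. For $d\in\{4,5,6\}$, Hausdorff--Young applied to \eqref{eq:Q_d-1} puts $\grad^{d-1}\hat Q$ only in $L^{d/(d-3)}$, and this is all the proof of Lemma~\ref{lem:It_decay} uses in any $d\ge4$. So the term $t\,\grad^{\gamma}\hat Q\, e^{-t\hat R_2}\hat g\, e^{-\frac t2 k\cdot\Lam k}$ with $\abs\gamma=d-1$, integrated over $\abs k\le r_t$, is bounded only by $t\,r_t^3\asymp\eps^{-3/2}t^{-1/2}$. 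That is already the size $t^{(\abs\gamma-d)/2}$ of the main term, with no factor $t^{-\zeta_0/2}$.

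None of this is needed, because two integrations by parts suffice. For the Laplacian, $\abs{\grad^2\hat R_2}\lesssim\abs k^{\zeta}$ pointwise by \eqref{eq:R2bds}, which gives the factor $\eps^{-(d+4)/2}(\eps t)^{-\zeta_0/2}$ of Lemma~\ref{lem:It_asymp-2}. In the $t$-integral one uses $n=0$ on the window $\abs{t-t_0}\le\sqrt{t_0\so}/(2\abs\eta)$ and $n=2$ outside it. The window's width is of order $\sqrt{t_0}/\abs\eta$, not $\sqrt{t_0}$; a window of width $\sqrt{t_0}$ would produce $\abs\eta\,\Casy(x;\eta)$ rather than $\Casy(x;\eta)$. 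The $n=2$ tail $\int t^{1-d/2}(\abs\eta\abs{t-t_0})^{-2}\,\D t$ has three features:
\begin{enumerate}
\item it is $O(\Casy(x;\eta))$ near $t_0$;
\item it converges at infinity for every $d\ge1$;
\item near $t=T$ it costs only a polynomial factor, of order $\eps^{-(d-4)/2}$ for $d>4$.
\end{enumerate}
That loss, together with evaluating $(\eps t)^{-\zeta_0/2}$ at $t=T$, is the actual origin of $\eps^{-(d\vee4)}$ and of $\abs x^{-\zeta_0/2}$ versus $\abs x^{-\zeta_0}$. Here $T=\eps\so\abs x/\abs\eta$, versus $T=\eps\abs x^2$ in the regime $\abs\eta\abs x\le\so$, where the $n=0$ bound alone suffices. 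Neither exponent comes from limited regularity of $\hat Q$ at high order.
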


\begin{proof}[Proof of Theorem~\ref{thm:deconv} assuming Lemmas~\ref{lem:small_time_intro} and~\ref{lem:R3_integral_intro}]
Since $x\in\Zd$ is a nonzero vector in the same direction as $\eta$,
we have $\C(x;\eta,\Lam) \asymp \Casy(x;\eta)$ by Lemma~\ref{lem:C_bdds}.
Since $\zeta_0 >0$, it is possible to choose
$\beta \in (0,\half)$ sufficiently small so that we simultaneously have
\begin{equation} \label{eq:beta_T_cond}
(1-\beta)\zeta_0 > (2+\zeta_0) \beta
\end{equation}
and
\begin{equation} \label{eq:beta_gap_cond}
\beta(d+1+\zeta_0) < \zeta_0 ,\qquad
\beta[(d\vee4)+\zeta_0] < \half \zeta_0.
\end{equation}
We set $\eps = \abs x^{- \beta}$.
To verify the hypothesis of Lemma~\ref{lem:R3_integral_intro}, we use \eqref{eq:beta_T_cond} and $\abs \eta \le M$ to get
\begin{equation}
T =  \frac{ \abs x^{1-\beta} }{  (\so\inv\abs \eta) \vee \abs x \inv }
\gtrsim \abs x^{1-\beta}
\gg \abs x^{(2+\zetao)\beta / \zetao}
= \eps^{-(2+\zetao) / \zetao} .
\end{equation}
Similarly, using $\beta < \half$ we get
$T \gtrsim \abs x^{1-\beta} \gg \abs x^\beta = \eps\inv $ for large $\abs x$.
It then follows from \eqref{eq:GLSR} and Lemmas~\ref{lem:small_time_intro}--\ref{lem:R3_integral_intro} that
\begin{align}
\biggabs{  \frac{ \GQ(x) }{\bg(x; \eta,\Lam) }  - \hat g(0)  }
&\lesssim (1 + \abs{\hat g(0)}) \sqrt \eps   + e^{-c_1 / \eps}
	+ \frac 1 { \eps^{d+1+\zeta_0} \abs x^{\zeta_0} }
	+ \frac 1 { \eps^{ (d\vee4) + \zeta_0} \abs x^{\zeta_0 / 2} } \nl
&\le \frac{1 + \norm{g(y)}_1}{\abs x^{\beta/2}}  + e^{-c_1 \abs x^\beta }
	+ \frac 2 { \abs x^{\delta} }
\end{align}
for some $\delta > 0$ whose existence is due to \eqref{eq:beta_gap_cond}.
Since $\norm{ g(y) }_1 \le M$, picking the slowest decay gives the desired result.
\end{proof}

It remains to prove Lemmas~\ref{lem:small_time_intro}--\ref{lem:R3_integral_intro}.
We do so in Sections~\ref{sec:small-t-int}--\ref{sec:large-t-int},
assuming estimates for $I_{t}(x)$ whose proofs are deferred to Section~\ref{sec:Ilemmas}.

\subsection{Small time integral: proof of Lemma~\ref{lem:small_time_intro}}
\label{sec:small-t-int}

We now prove Lemma~\ref{lem:small_time_intro}, which estimates the integrals of $I_t(x)$ and $\rho_t(x;\eta,\Lam)$ from time $0$ to $T$.
We first note that the Brownian heat kernel defined in \eqref{eq:def_rho} satisfies the inequality
\begin{equation} \label{eq:rhobd}
\rho_t(x; \eta, \Lambda)
\lesssim \frac { 1 } {(\sqrt t)^d}
	\biggl( \frac{ \sqrt t } { \abs{ x -{t \eta} } } \biggr)^{n}
\qquad (n\ge 0),
\end{equation}
(here $n=0$ means the factor is absent)
by bounding the exponential by a polynomial
and by using \eqref{eq:quadratic_form}.
The following lemma establishes a similar upper bound for $I_t(x)$.
For a vector $y \in \Rd$, we write $\integer{y} \in \Zd$ for the lattice point closest to $y$ (breaking ties arbitrarily), and we write $\fractional{y}$ for the fractional part of $y$, defined by
\begin{equation}
y = \integer y + \fractional y.
\end{equation}
The fact that $1+\sqrt{t}$ appears in \eqref{eq:It_decay}, rather than
$\sqrt{t}$ as in \eqref{eq:rhobd}, is a reflection of the fact that $I_t(x)$
is finite for $t=0$ due to the finite integration domain in \eqref{eq:G^mu}.

\begin{lemma} \label{lem:It_decay}
Let $d\ge 1$ and $t > 0$.
For any integer $n \in [0, 2 \vee (d-1)]$, we have
\begin{equation} \label{eq:It_decay}
\abs{ I_{t} (x) }
	\lesssim \frac 1 {(1+\sqrt t  )^d}
	\biggl( \frac{ 1 + \sqrt t } { \abs{ x - \integer{t \eta} } } \biggr)^n
\end{equation}
(here $n=0$ means the factor is absent)
with a constant depending only on
$n, d, M, \KIR$.
\end{lemma}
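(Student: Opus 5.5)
We prove Lemma~\ref{lem:It_decay} by Fourier-side integration by parts, after removing the drift phase. Write $\phi(k) = \hat \Jsupmu(0) - \hat \Jsupmu(k)$ and $w = \integer{t\eta}$. Put
\begin{equation}
F_t(k) = \hat g(k)\, e^{-t\phi(k)}\, e^{ik\cdot w}.
\end{equation}
Then
\begin{equation}
I_t(x) = \int_\Td \frac{\D k}{(2\pi)^d}\, e^{ik\cdot(x-w)} F_t(k),
\end{equation}
and $F_t$ is $2\pi$-periodic because $w \in \Zd$. Hence, for any multi-index $\alpha$ with $|\alpha| = n$, integration by parts on the torus produces no boundary terms. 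This gives $(x-w)^\alpha I_t(x) = i^{n}\int \partial^\alpha F_t \,\D k/(2\pi)^d$. It therefore suffices to show
\begin{equation}
\int_\Td \abs{\partial^\alpha F_t(k)}\,\D k \lesssim (1+\sqrt t)^{n-d},
\end{equation}
or a suitable $L^p$ substitute when $n$ is large. For $n=0$ this is the infrared bound \eqref{eq:Q_infrared} alone: $|e^{-t\phi}| \le e^{-t\KIR|k|^2}$ and $|\hat g| \le M$, and $\int e^{-t\KIR|k|^2}\,\D k \lesssim (1+\sqrt t)^{-d}$ on the torus.

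For $n\ge1$ we compute the derivatives of the phase. Set $\psi(k) = t\phi(k) - ik\cdot w$. Then
\begin{equation}
\nabla\psi = t\nabla\phi - iw = i(t\eta - w) + t\bigl(\Lambda k + \nabla \hat R_2(k)\bigr),
\end{equation}
using \eqref{eq:J_small_k}. The first term is $O(1)$ since $|t\eta - w| \le \sqrt d/2$, and the second is $O(t|k|)$ by \eqref{eq:R2bds} together with the global bound $|\nabla\phi(k)|\lesssim |k|$, which follows from the second-moment bound. This is exactly why we centre at $\integer{t\eta}$: the drift $i t\eta$ is cancelled up to an $O(1)$ lattice error. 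Further derivatives satisfy $|\partial^\beta\psi|\lesssim t$ for $|\beta|=2$, and for higher orders they are controlled by the higher moments of $\Jsupmu$.

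By Fa\`a di Bruno, $\partial^\alpha F_t$ is a sum of terms of the form $\partial^{\gamma}\hat g\cdot\prod_j \partial^{\beta_j}\psi\cdot e^{-\psi}$. With $u=\sqrt t|k|$ and $\sqrt t|k|\,e^{-c t|k|^2}$ bounded, each factor $\nabla\psi$ costs $1+t|k|\lesssim (1+\sqrt t)(1+u)$ and each $\partial^2\psi$ costs $t$. Integrating against $e^{-\KIR t|k|^2}$ then yields $(1+\sqrt t)^{n-d}$, as required, for $n\le2$. In that range only $\partial^2\hat g$ and $\partial^2\phi$ are needed, and these are bounded using the $|y|^2$ moment of $g$; the $L^{p_d}$ norm in \eqref{eq:g_moments} suffices via Hausdorff--Young and H\"older against the Gaussian factor.

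The main obstacle is $n$ up to $d-1$ when $d\ge4$. Here $\partial^\alpha\hat g$ and $\partial^\alpha\phi$ are no longer bounded functions: only $\norm{|y|^{d-1}\Jsupmu}_{\frac d3\wedge 2}$ and $\norm{|y|^{d-1}g}_2$ are available from \eqref{eq:Q_d-1}. The plan is to handle the terms in which a single high derivative falls on $\hat g$ or on $\phi$ (with $t$ factor) through Hausdorff--Young. This places those derivatives in $L^{q}(\Td)$ with $q$ conjugate to $\frac d3\wedge2$ (respectively $L^2$). H\"older's inequality then pairs them with the remaining Gaussian factor $e^{-\KIR t|k|^2}$, multiplied by powers of $t|k|$, in the dual norm. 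The Gaussian's $L^{q'}$ norm contributes $t^{-d/(2q')}$. Checking that the resulting exponents of $t$ still total at most $(1+\sqrt t)^{n-d}$ requires care with the exponent bookkeeping, and this is where the specific exponent $\frac d3$ must be shown to suffice. Intermediate-order derivatives, of order between $3$ and $d-2$, are handled by interpolating between the second moment and the $(d-1)$st moment. For small $t\le1$ the bound $(1+\sqrt t)^{n-d}\asymp1$ follows more crudely, since all these $L^p$ norms are finite on the compact torus.
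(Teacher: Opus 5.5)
Your route is the paper's: recentre at $\integer{t\eta}$ so the integrand is periodic, turn $L^1(\Td)$ bounds on Fourier-side (weak) derivatives into decay in $x-\integer{t\eta}$, get the Gaussian factor from the infrared bound, bound derivatives of order at most $2$ classically, and use Hausdorff--Young, interpolation and H\"older beyond that.

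\textbf{The case $n\le 2$.} This part is fine, up to two slips. First, the bound you need is $\abs{\nabla\phi(k)-i\eta}\lesssim\abs k$, not $\abs{\nabla\phi(k)}\lesssim\abs k$. Second, $\partial\hat g$ is not bounded either, so it also needs the H\"older pairing; for $d\ge3$, interpolating between $\norm{g}_1$ and the $\abs y^2$ moment puts it in $L^{d}(\Td)$, which is what the pairing requires.

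\textbf{The case $3\le n\le d-1$.} Here you stop exactly where the proof happens. Moreover, your ``single high derivative'' framing does not cover all terms: once $d\ge 7$, Fa\`a di Bruno produces products of two or more factors $\partial^{\beta_j}\psi$ with $\abs{\beta_j}\ge 3$.

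\textbf{The missing step.} You need an exponent choice under which every derivative of $\psi$ beyond the second, and every derivative of $\hat g$, costs exactly $1/d$ of integrability. That is exactly one factor $1+\sqrt t$ against the Gaussian.
\begin{itemize}
\item Lemma~\ref{lem:Fourier} with $f=Q$, $b=2$, $a=d-1$, $p=d/3$ gives $\norm{\partial^\beta\hat Q}_{d/(\abs\beta-2)}\le M$ for $2\le\abs\beta\le d-1$. This follows from $\frac{\abs\beta-2}{d-3}\bigl(1-\frac3d\bigr)=\frac{\abs\beta-2}{d}$, and this identity is precisely why the exponent $\frac d3$ suffices.
\item The same lemma with $f=g$, $b=0$, $a=d-1$, $p=d$ gives $\norm{\partial^\alpha\hat g}_{d/\abs\alpha}\le M$.
\end{itemize}
Now take a term $\partial^{\alpha_0}\hat g\,e^{-\psi}\prod_{j=1}^N\partial^{\beta_j}\psi$ with $N_1$ first-order factors. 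Put $e^{-\psi}$ together with the first-order factors in $L^s(\Td)$, where $\frac1s=1-\frac{\abs{\alpha_0}}d-\sum_{\abs{\beta_j}\ge2}\frac{\abs{\beta_j}-2}d$. This is at least $\frac1d$, because the total order is at most $d-1$. Then:
\begin{itemize}
\item the Gaussian estimate gives $(1+\sqrt t)^{N_1-d/s}$;
\item the factors with $\abs{\beta_j}\ge2$ give $t^{N-N_1}\le(1+\sqrt t)^{2(N-N_1)}$;
\item the exponents add up to $\abs{\alpha_0}+\sum_j\abs{\beta_j}-d=n-d$, as required.
\end{itemize}

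\textbf{Justifying the expansion.} Since $\psi$ is only $C^2$ classically, the Fa\`a di Bruno expansion of $\partial^\alpha(\hat g e^{-\psi})$ must be justified as a weak derivative. The product rule of \cite[Lemma~A.2]{LS24a} does this, reducing the question to integrability of each formal term, which the estimate above provides.
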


The proof of Lemma~\ref{lem:It_decay} is deferred to Section~\ref{sec:Ilemmas}.
The centred ($\eta =0$) case of Lemma~\ref{lem:It_decay} was proved by Hara in \cite[Lemma~2.3]{Hara08},
assuming
the decay estimate $\abs{\Jsupmu(y)} \lesssim 1 / \abs y^{d+2}$ instead of the $(d-1)^{\rm th}$ moment of $\Jsupmu(y)$ in \eqref{eq:Q_d-1}. It is not clear how to extend Hara's strategy to noncentred $\Jsupmu(y)$.
We use a different strategy inspired by the use of weak derivatives in
\cite{LS24a}.

\begin{proof}[Proof of Lemma~\ref{lem:small_time_intro} assuming Lemma~\ref{lem:It_decay}]
Let $\eps \in (0, \half \so\inv]$ and $\abs x \ge 2 \sqrt d$.
We want to estimate the integrals of $I_t(x)$ and $\rho_t(x;\eta,\Lam)$ from time $0$ to $T$, with $T$ defined in \eqref{eq:def_T}.
In this regime,
we have  $\abs{ t\eta }
\le T \abs \eta
\le \eps \so \abs x
\le \half \abs x$, so
\begin{gather}
\abs{ x - t\eta } \ge  \abs x - \half \abs x = \half \abs x ,
\\
\abs{ x - \integer{t\eta} }
\ge \abs{ x - t\eta } - \abs{ t \eta - \integer{ t\eta } }
\ge \half \abs x - \frac{ \sqrt d } 2
\ge \frac 1 4 \abs x .
\end{gather}
Thus, the bounds \eqref{eq:rhobd} and \eqref{eq:It_decay} simplify to
\begin{equation} \label{eq:It_decay_simpler}
\rho_t(x; \eta, \Lambda)
	\lesssim \frac 1 { \abs x^n } \frac 1 { (\sqrt t)^{d-n} } ,
	\qquad
\abs{ I_{t} (x) }
	\lesssim \frac 1 { \abs x^n } \frac 1 { (1 + \sqrt t)^{d-n} } ,
\end{equation}
with $n \in [0, 2 \vee (d-1)]$.
The rest of the proof is simply integration of these upper bounds.
We consider $\rho_t(x; \eta, \Lambda)$, for which the upper bound is weaker.

In dimensions $d \ge 2$,
we integrate the $n = d-1$ case of \eqref{eq:It_decay_simpler}, to get
\begin{equation} \label{eq:small_time_pf}
\int_0^{T} \D t\,  \rho_t(x; \eta, \Lambda)
\lesssim \frac 1 { \abs x^{d-1} }
	\int_0^T \frac{ \D t } { \sqrt t }
= \frac { 2 \sqrt T} { \abs x^{d-1} } .
\end{equation}
In the critical regime $\abs \eta \abs x \le \so$,
we have $T = \eps \abs x^2$, so the above gives
\begin{equation}
\int_0^{T} \D t\,  \rho_t(x; \eta, \Lambda)
\lesssim \frac { \sqrt \eps } { \abs x^{d-2} }
\asymp_{\so} \sqrt \eps \Casy(x;\eta) .
\end{equation}
In the OZ regime $\abs \eta \abs x \ge \so$,
we have $T=\eps \so \abs x/\abs \eta$ and $\abs x \ge \so \abs \eta \inv$,
so \eqref{eq:small_time_pf} gives, as required,
\begin{equation}
\int_0^{T} \D t\,  \rho_t(x; \eta, \Lambda)
\lesssim  \sqrt {\eps \so}  \frac { \abs x^{1/2} } { \abs x^{d - 1} \abs \eta^{1/2} }
\le  \sqrt \eps \frac {  ( \so\inv \abs  \eta )^{(d-3)/2} } { \abs x^{(d-1)/2} }
\asymp_{\so} \sqrt \eps \Casy(x;\eta) .
\end{equation}

Finally, for dimension $d=1$, we integrate the $n=1$ case of \eqref{eq:It_decay_simpler}.
This gives
\begin{equation}
\int_0^{T} \D t\,  \rho_t(x; \eta, \Lambda)
\lesssim \frac 1 { \abs x }
	\int_0^T \D t
= \frac { T} { \abs x }
= \frac \eps { (\so\inv \abs \eta) \vee \abs x\inv }
\asymp_{\so} \eps  \Casy(x;\eta) ,
\end{equation}
and the desired result follows from $\eps \le \sqrt{ \eps / (2\so) }$.
This concludes the proof.
\end{proof}

\subsection{Large time integral: proof of Lemma~\ref{lem:R3_integral_intro}}
\label{sec:large-t-int}

We now prove Lemma~\ref{lem:R3_integral_intro}, which we restate here as Lemma~\ref{lem:R3_integral-2}.

\begin{lemma} \label{lem:R3_integral-2}
Let $d \ge 1$, $\eps \in (0,1) $, and denote
$\zeta_0 = \zeta \wedge \frac {d \wedge 2}2$.
Let $x\in\Zd$ be a nonzero vector in the same direction as $\eta$,
\ie, $\eta = \abs \eta \hat x$.
If $d \le 2$, we further assume $\abs x \abs \eta \ge \so$.
Then when $T \ge \eps\inv \vee \eps^{-(2+\zeta_0)/\zeta_0}$ and
$\abs x \ge 4dM / \so$, we have
\begin{equation} \label{eq:large_time_int}
\frac{ 1 }{ \Casy(x; \eta )}
	\int_T^\infty \D t\, \abs{ I_{t}(x)
	- \hat g(0) \rho_t(x; \eta,\Lam) }
\lesssim  e^{-c_1/\eps}  + \frac 1 { \eps^{d+1+\zeta_0} \abs x^{\zeta_0} }
	+ \frac 1 { \eps^{ (d\vee4) + \zeta_0} \abs x^{\zeta_0 / 2} } ,
\end{equation}
with $c_1 = \frac 1 8 \KIR$ and the implicit constant depending only on $d, \zeta, M, \KIR,\so$.
\end{lemma}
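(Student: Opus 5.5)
We aim to prove Lemma~\ref{lem:R3_integral-2} by splitting the integrand, for each $t \ge T$, into a large-$k$ part, a small-$k$ part, and an error from replacing $\hat g(k)$ and $\hat\Jsupmu$ by their Taylor approximations. Introduce a cutoff $|k| \le r_t := \eps^{-1} t^{-1/2}$ (so $r_t \le \eps^{-1}T^{-1/2}$ is small because $T \ge \eps^{-(2+\zeta_0)/\zeta_0}$). We write
\begin{equation*}
I_t(x) - \hat g(0)\rho_t(x;\eta,\Lam) = A_t + B_t + C_t ,
\end{equation*}
where $A_t$ is the $\Td$-integral over $|k|>r_t$ of $I_t$'s integrand; $B_t$ is minus the $\Rd$-integral over $|k|>r_t$ of $\hat g(0)e^{ik\cdot x}e^{-t[ik\cdot\eta+\frac12 k\cdot\Lam k]}$; and $C_t$ is the integral over $|k|\le r_t$ of $e^{ik\cdot x}e^{-tik\cdot\eta}\bigl[\hat g(k)e^{-t\frac12 k\cdot\Lam k - t\hat R_2(k)} - \hat g(0)e^{-t\frac12 k\cdot \Lam k}\bigr]$.

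\emph{Step 1 (tails).} The infrared bound \eqref{eq:Q_infrared} gives $|e^{-t[\hat\Jsupmu(0)-\hat\Jsupmu(k)]}| \le e^{-\KIR t|k|^2}$. Hence $|A_t|,|B_t| \lesssim t^{-d/2}\int_{|u|>\eps^{-1}}e^{-\KIR|u|^2}\,du \lesssim t^{-d/2}e^{-c_1/\eps}$, using $\eps<1$. The crude bound $t^{-d/2}$ is not integrable relative to $\Casy$ in low dimensions. So in the OZ regime we instead want a bound of the form $t^{-d/2}\min\{1, (\sqrt t/|x-t\eta|)^n\}e^{-c_1/\eps}$. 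To get it, integrate by parts in $k$, as in Lemma~\ref{lem:It_decay}, with the cutoff handled through a smooth partition of unity rather than a sharp cutoff. Integrating this over $t\ge T$ reproduces $\Casy$: the main contribution comes from $t\approx |x|/|\eta|$, with window width $\sqrt t$ and height $t^{-d/2}$, which gives $|\eta|^{(d-3)/2}|x|^{-(d-1)/2}$. In the critical regime it gives $|x|^{2-d}$. This yields the $e^{-c_1/\eps}$ term.

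\emph{Step 2 (Taylor errors).} On $|k|\le r_t$ we have $t|\hat R_2(k)| \lesssim t|k|^{2+\zeta} \le \eps^{-2-\zeta}t^{-\zeta/2} \le 1$, and $|\hat g(k)-\hat g(0)| \lesssim |k|^{\zeta_0}$ by the moment bound in \eqref{eq:g_moments}. So the integrand of $C_t$ is bounded by $e^{-\frac12\KIR t|k|^2}(|k|^{\zeta_0}+t|k|^{2+\zeta_0})$, and therefore $|C_t| \lesssim t^{-d/2}\,t^{-\zeta_0/2}\eps^{-(2+\zeta_0)}$. Again we need decay in $|x-t\eta|$. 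We get it by integrating by parts $n$ times in the direction $\hat x$. The derivatives that fall on $\hat g$ and $\hat R_2$ are controlled by \eqref{eq:R2bds} and the $|y|^2 g$ bound, and each derivative costs $\sqrt t$ or $\eps^{-1}\sqrt t$, producing powers of $\eps^{-1}$. Taking $n=2$ (or $d-1$), integrating in $t\ge T$ against the window described in Step 1, and using $t\gtrsim T\gtrsim \eps|x|$ (or $\eps |x|^2$) converts $t^{-\zeta_0/2}$ into $\eps^{-\zeta_0/2}|x|^{-\zeta_0/2}$ or $|x|^{-\zeta_0}$. After bookkeeping of the $\eps$ powers, this gives the two polynomial error terms. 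The distinction between $|x|^{-\zeta_0}$ and $|x|^{-\zeta_0/2}$ comes from whether $t\asymp|x|^2$ (critical regime) or $t\asymp|x|/|\eta|$ (OZ regime).

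\emph{Main obstacle.} The hard step is obtaining decay in $|x-t\eta|$ for $C_t$ uniformly as $\eta\to 0$, while keeping the derivative bounds on $\hat g$ valid with only $L^{p_d\wedge 2}$ control of $|y|^2 g$. Second derivatives of $\hat g$ need not be bounded, so we would use a Hausdorff--Young/H\"older argument on the small ball, exploiting that $p_d$ was chosen precisely so that $\|\nabla^2\hat g\|_{L^{p'}}$ times $\|e^{-t|k|^2}\|_{L^{p}}$ gives the right power of $t$. A further point is that the condition $|x|\ge 4dM/\so$ is needed to guarantee $|x-\integer{t\eta}|$ behaves like $|x-t\eta|$ and that $t\eta$ crosses $x$ within the integration range. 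That transit region, $|x-t\eta|\lesssim\sqrt t$, is where no integration by parts is used and where the sup bounds of Steps 1--2 are applied directly.
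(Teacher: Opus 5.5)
Your proposal follows essentially the paper's argument: the paper packages your Steps~1--2 into the pointwise-in-$t$ bound of Lemma~\ref{lem:It_asymp-2}, which integrates by parts via $\Delta$ on all of $\T^d$ before splitting at a sharp cutoff (so no cutoff derivatives appear) and treats $\nabla^2\hat g$ by H\"older with Lemma~\ref{lem:Fourier}; it then integrates over $t\ge T$, using the $n=0$ bound in the critical regime, and the $n=0$ and $n=2$ bounds inside and outside the window $|t-t_0|\le\sqrt{t_0\so}/(2|\eta|)$ in the OZ regime, just as you describe. The one slip is your cutoff: with $r_t=\eps^{-1}t^{-1/2}$, the claim $\eps^{-2-\zeta}t^{-\zeta/2}\le 1$ requires $t\ge\eps^{-2(2+\zeta)/\zeta}$, which the hypothesis $T\ge\eps^{-(2+\zeta_0)/\zeta_0}$ does not provide, whereas the paper's choice $k_t=(\eps t)^{-1/2}$ makes $t\,k_t^{2+\zeta_0}\le 1$ follow exactly from that hypothesis while the tails still give $e^{-c/\eps}$.
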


The next lemma provides a bound on the difference $ I_{t}(x)
	-  \hat g(0)  \rho_t(x; \eta,\Lam)$ when $t$ is large.
A similar result with $n=0$ and $\eta=0$ was obtained in \cite[Lemma~2.2]{Hara08}.
We defer the proof of Lemma~\ref{lem:It_asymp-2} to Section~\ref{sec:Ilemmas}.

\begin{lemma} \label{lem:It_asymp-2}
Let $d \ge 1$, $\eps \in (0,1] $, and denote
$\zeta_0 = \zeta \wedge \frac {d \wedge 2}2$.
For any $t \ge \eps\inv \vee \eps^{-(2+\zeta_0)/\zeta_0}$ and any $x\in \Zd$, we have
\begin{equation}  \label{eq:large_t_error-2a}
\bigabs{ I_{t}(x) - \hat g(0) \rho_t(x; \eta, \Lam) }
\le C_1 \biggl[ e^{-c/\eps}
	+ \frac{1}{  (\sqrt \eps )^{d+4} (\sqrt{ \eps t})^{\zeta_0}} \biggr]
	\frac 1 {(\sqrt t)^d}
	\biggl( \frac{ \sqrt t } { \abs{ x - \integer{t \eta} } } \biggr)^n
	\qquad (n = 0,1,2)
\end{equation}
(here $n=0$ means the factor is absent),
with $c = \frac 1 4 \KIR $ and $C_1$ depending only on $n, d, \zeta, M, \KIR$.
\end{lemma}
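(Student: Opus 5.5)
We need to prove Lemma~\ref{lem:It_asymp-2}: a bound on $I_t(x)-\hat g(0)\rho_t(x;\eta,\Lam)$ for large $t$, uniformly in $x$, together with weights $|x-\integer{t\eta}|^{-n}$ for $n\le 2$.

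The plan is to work in Fourier space. Write
\[
\hat g(0)\rho_t(x;\eta,\Lam)=\int_{\Rd}\frac{\D k}{(2\pi)^d}\, e^{ik\cdot x}\hat g(0)e^{-t[ik\cdot\eta+\frac12 k\cdot\Lam k]}.
\]
Then split both integrals at the scale $|k|\le r:=\sqrt{\eps}^{-1}t^{-1/2}$, or rather at a threshold of order $\eps^{1/2}$ in the natural variable $\sqrt t\,k$; equivalently, use a cutoff $|k|\le \kappa$ with $\kappa$ of order $\sqrt\eps$. The point of this choice is that $t|k|^2\ge t\eps$ outside it.

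\textbf{Region outside the cutoff.} On $\T^d$ with $|k|>\kappa$, the infrared bound \eqref{eq:Q_infrared} gives
\[
|e^{-t[\hat Q(0)-\hat Q(k)]}|\le e^{-\KIR t|k|^2}.
\]
Splitting off half of this exponent yields a factor $e^{-\frac12\KIR t\kappa^2}\le e^{-c/\eps}$ (using $t\eps\ge 1$). The remaining half integrates to $(\sqrt t)^{-d}$. The Gaussian tail on $\Rd$ is handled the same way.

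\textbf{Region inside the cutoff.} Use \eqref{eq:J_small_k}:
\[
e^{-t[\hat Q(0)-\hat Q(k)]}=e^{-t[ik\cdot\eta+\frac12k\cdot\Lam k]}\,e^{-t\hat R_2(k)}.
\]
Bound $|e^{-t\hat R_2}-1|\le t|\hat R_2|e^{t|\hat R_2|}\lesssim t|k|^{2+\zeta}$. The factor $e^{t|\hat R_2|}$ is absorbed by $e^{-\frac12 t k\cdot\Lam k}$ when $\kappa$ is small, since $|\hat R_2|\lesssim|k|^{2+\zeta}\le\kappa^\zeta|k|^2$. Also bound
\[
|\hat g(k)-\hat g(0)|\lesssim |k|^{\zeta\wedge1}
\]
from the moment bound on $g$. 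Integrating these against the Gaussian gives errors of order $t^{-d/2}(t^{-\zeta/2}+t^{-\zeta_0/2})$. The $\eps$-powers arise from replacing bounds with $\kappa\sim\sqrt\eps$ and from the requirement $t\ge\eps^{-(2+\zeta_0)/\zeta_0}$, which is precisely what makes $t\kappa^{2+\zeta_0}$ controlled. Here $\zeta_0\le d/4$ enters through the $L^p$ conditions on $|y|^2g$ in low dimensions.

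\textbf{Spatial decay factors $n=1,2$.} Multiply $e^{ik\cdot x}$ by $(x-\integer{t\eta})$ via integration by parts in $k$. Factor out $e^{ik\cdot\integer{t\eta}}$, so that the phase becomes
\[
e^{ik\cdot(x-\integer{t\eta})}\,e^{-ik\cdot\fractional{t\eta}},
\]
and the integrand involves $e^{-t[\hat Q(0)-\hat Q(k)-ik\cdot\eta]}$, which has no linear drift term. Each $k$-derivative then produces factors $t\nabla\hat R_2=O(t|k|^{1+\zeta})$, $t\Lam k$, or derivatives of $\hat g$, each costing $\sqrt t$ relative to the Gaussian scale. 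On $\T^d$ the integration by parts has no boundary terms by periodicity; this works because the shift $\integer{t\eta}$ is a lattice vector, which is why $\integer{\cdot}$ appears. The cutoff must be smooth (a bump $\chi(k/\kappa)$), and its derivatives cost $\kappa^{-1}\sim\eps^{-1/2}$ each. This accounts for the extra $(\sqrt\eps)^{-4}$ in $(\sqrt\eps)^{d+4}$.

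\textbf{Main obstacle.} Second derivatives of $\hat g(k)$ need not be bounded, since only $\||y|^2g\|_{p_d\wedge 2}$ is assumed. I would handle $\nabla^2\hat g$ by Hausdorff--Young/Hölder: $\nabla^2\hat g\in L^{q}$ with $q$ the dual exponent, paired with the Gaussian in $L^{q'}$. This is where $p_d$ and the loss $\zeta_0\le\frac{d\wedge2}{2}$ come in.
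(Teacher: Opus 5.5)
Your strategy works and uses the same ingredients as the paper: integration by parts on $\Td$, legitimised by the lattice shift $\integer{t\eta}$; the Taylor remainder bounds \eqref{eq:R2bds}; the infrared bound for large $k$; and Lemma~\ref{lem:Fourier} with H\"older for derivatives of $\hat g$. The difference lies in how the cutoff is handled, and that is where the $\eps$-dependence actually comes from.

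\textbf{The paper's route.} The paper integrates by parts \emph{first}, over all of $\Td$ for $\hat g e^{-\hat R}$ and over all of $\Rd$ for $\hat g(0)e^{-\hat P}$, so no cutoff is ever differentiated. It then reduces to $\hat g(0)\Delta e^{-\hat R}$ and compares this with $\hat g(0)\Delta e^{-\hat P}$ using a \emph{sharp} cutoff at $k_t=(\eps t)^{-1/2}$.
\begin{itemize}
\item The tails beyond $k_t$ give $e^{-\KIR/(4\eps)}$.
\item Inside, the hypothesis $t\ge\eps^{-(2+\zeta_0)/\zeta_0}$ gives $t\abs k^{2+\zeta}\le 1$.
\item The Gaussian is then discarded ($\abs{e^{-\hat P}}\le 1$), and $t\abs k^{\zeta}+t^2\abs k^{2+\zeta}+t^3\abs k^{4+\zeta}$ is integrated over $\abs k\le k_t$.
\end{itemize}
That last step is the only source of $(\sqrt\eps)^{-(d+4)}(\eps t)^{-\zeta_0/2}$. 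The factor $(\sqrt\eps)^{-d}$ is the excess volume of the ball $\abs k\le k_t$, and $(\sqrt\eps)^{-4}$ comes from $t^3k_t^4$.

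\textbf{Your route.} You smooth the cutoff, integrate by parts afterwards, and keep the Gaussian by absorbing $e^{t\abs{\hat R_2}}$ into $e^{-\frac t2 k\cdot\Lam k}$. Take $\kappa$ to be a \emph{fixed} constant, depending only on $M,\KIR,\zeta$, with $CM\kappa^\zeta\le\frac14\KIR$. Then, for $n=2$, every inner term integrates to $O(t^{1-d/2-\zeta_0/2})$. The tails, and all terms where a derivative lands on the cutoff, are exponentially small in $t$. This yields $\abs{I_t(x)-\hat g(0)\rho_t(x;\eta,\Lam)}\lesssim t^{-\zeta_0/2}\,t^{-d/2}\bigl(\sqrt t/\abs{x-\integer{t\eta}}\bigr)^n$ with no $\eps$ at all. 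That bound implies the lemma, since $t^{-\zeta_0/2}\le(\sqrt\eps)^{-(d+4)}(\eps t)^{-\zeta_0/2}$ for $\eps\le 1$. So your route gives a cleaner, stronger estimate at the cost of tracking cutoff derivatives. The paper's route avoids cutoff derivatives at the cost of an avoidable polynomial loss in $\eps$.

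\textbf{What you should fix.} The explanation needs correcting in several places.
\begin{itemize}
\item The three cutoffs you call equivalent, $(\eps t)^{-1/2}$, $\sqrt{\eps/t}$ and $\sqrt\eps$, are not, and your justifications mix them.
\item With $\kappa\sim\sqrt\eps$, the condition $t\eps\ge 1$ only gives $e^{-\frac12\KIR t\kappa^2}\le e^{-c}$. You need $t\gtrsim\eps^{-2}$, which does hold because $t\ge\eps^{-3}$.
\item The quantity $t\kappa^{2+\zeta_0}$ is controlled by the hypothesis only for $\kappa=(\eps t)^{-1/2}$, and your absorption argument does not need that control anyway.
\item Cutoff derivatives cannot produce the $(\sqrt\eps)^{-4}$. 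For $n=2$ there are at most two of them, and they are supported where the Gaussian is exponentially small.
\item With $\kappa\sim\sqrt\eps$ the absorption also requires $\eps$ small. Then $\eps$ near $1$ needs a separate, trivial treatment via Lemma~\ref{lem:It_decay} and \eqref{eq:rhobd}; a fixed $\kappa$ avoids this.
\item A derivative falling on $\hat g$ must not be counted as ``costing $\sqrt t$'', since that would lose the $t^{-\zeta_0/2}$ gain. The H\"older pairing in your last paragraph is genuinely needed there.
\item The relevant restriction is $\zeta_0\le\frac{d\wedge 2}{2}$, not $d/4$. In $d=1$ it is forced by the term with exactly one derivative on $\hat g$.
\end{itemize}
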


\begin{proof}[Proof of Lemma~\ref{lem:R3_integral-2}]
Since $T$ is assumed to obey $T \ge \eps\inv \vee  \eps^{-(2+\zeta_0)/\zeta_0}$, we can apply Lemma~\ref{lem:It_asymp-2} to all $t \ge T$.
By taking a larger constant, we can also assume the constant $C_1$ in \eqref{eq:large_t_error-2a} is independent of $n$.
For simplicity, we write $\varphi_\eps (t) = C_1 / [(\sqrt \eps )^{d+4} (\sqrt{ \eps t})^{\zeta_0} ]$,
write $\bar \varphi_\eps(T) = \sup_{t\ge T} \abs{ \varphi_\eps(t) } = \varphi_\eps(T)$,
and write
\begin{equation}
    f(x) =\frac{ 1 }{ \Casy(x; \eta )}
	\int_T^\infty \D t\, \abs{ I_{t}(x)
	- \hat g(0) \rho_t(x; \eta,\Lam) } .
\end{equation}
We seek an upper bound on $f(x)$.
We consider the two cases $\abs \eta \abs x \le \so$ and $\abs \eta \abs x \ge \so$ separately.

The critical case $\abs \eta \abs x \le \so$ is relevant only
when $d > 2$.  In this case, we
have $\Casy(x; \eta ) \asymp |x|^{-(d-2)}$
and $T=\eps \abs x^2$, and
the $n=0$ case of \eqref{eq:large_t_error-2a} gives
\begin{equation}
f(x) \lesssim \frac{ C_1 e^{-c/\eps} + \bar \varphi_\eps(T) }{ \abs x^{-(d-2)} }
	\int_{T}^\infty \frac{\D t}{ (\sqrt{t} )^d}
\lesssim
\frac{ C_1 e^{-c/\eps} + \bar \varphi_\eps(\eps \abs x^2) }{ \abs x^{-(d-2)} }
(\eps \abs x^2)^{ - (d-2) / 2} .
\end{equation}
The powers of $\abs x$ on the right-hand side cancel.
Inserting the definition of $\bar \varphi_\eps$, we obtain
\begin{equation}
f(x)
\lesssim \frac{ C_1 e^{-c/\eps}
	+ \bar \varphi_\eps(\eps \abs x^2) }{ (\sqrt\eps)^{ d-2 } }
\lesssim e^{-c/(2\eps)}
	+ \frac 1 { (\sqrt\eps)^{ 2d+2 } (\eps \abs x )^{\zeta_0} } ,
\end{equation}
which gives the desired \eqref{eq:large_time_int} (without its last term).

For the OZ case $\abs \eta\abs x \ge \so$, we have
$\Casy(x; \eta ) \asymp \abs\eta^{(d-3)/2}/|x|^{(d-1)/2}$
and $T=\eps \so \abs x/\abs\eta$.
Let $t_0=\abs x  /\abs \eta \in  (T,\infty)$.
We argue in a manner similar to Laplace's method, with
$\abs \eta \abs x$ playing the role of a large parameter,
and with the integral concentrated in a small window around $t_0$ as $\abs \eta \abs x \to \infty$.
Accordingly, we split the integral
over $[T,\infty)$ into two parts, depending on whether or not
$|t-t_0| \le \sqrt{t_0 \so}/(2|\eta|)$.
We apply the $n=0$ case of \eqref{eq:large_t_error-2a} when $|t-t_0| \le \sqrt{t_0 \so}/(2|\eta|)$, and apply the $n=2$ case otherwise.
This gives
\begin{multline} \label{eq:fI}
    f(x) \lesssim [C_1 e^{-c/\eps} + \bar \varphi_\eps(T) ]\frac{|x|^{(d-1)/2}}{\abs\eta^{(d-3)/2}}
    \\ \times
    \int_T^\infty \frac{\D t}{ (\sqrt{t})^d}
\biggl[
    	\1 \Bigl\{ \abs{t - t_0 }
	\le \frac{ \sqrt{t_0 \so} }{ 2\abs \eta }  \Bigr\}
  +  \Bigl( \frac{ \sqrt t } { \abs{ x - \integer{t \eta} } } \Bigr)^2
  	\1 \Bigl\{ \abs{t - t_0 }
	\ge \frac{ \sqrt{t_0 \so} }{ 2\abs \eta }  \Bigr\}
\biggr]  .
\end{multline}
We compute the two integrals separately.

For $t$ that satisfies $|t-t_0| \le \sqrt{t_0 \so}/(2|\eta|)$,
since $\abs \eta \ge \so \abs x\inv$, we have
\begin{equation}
    \frac{\sqrt{t_0 \so}}{2|\eta|}
    \le \frac{\sqrt{t_0}}{2}  \frac{ |x|^{1/2}}{|\eta|^{1/2}}
    = \frac 12 t_0,
\end{equation}
so $\frac 12 t_0 \le t \le \frac 32 t_0$
for all $t$ in this interval.
Therefore,
\begin{equation}
\int_T^\infty \frac {\D t} {(\sqrt t)^d} \1 \Bigl\{ \abs{t - t_0 }
	\le \frac{ \sqrt{t_0 \so} }{ 2\abs \eta }  \Bigr\}
\lesssim  \frac{1}{ (\sqrt{ t_0 } )^{d}}\frac{ \sqrt{t_0 \so}}{|\eta|}
= \sqrt{\so} \frac{ \abs \eta^{(d-3)/2} }{ \abs x^{(d-1)/2} } .
\end{equation}

For $t$ that satisfies $|t-t_0| \ge \sqrt{t_0 \so}/(2|\eta|)$,
we use the fact that $x=t_0\eta$ (because $x,\eta$ are in the same direction) to get
\begin{equation}
\label{eq:xetat0}
    \abs{ x - \integer{t \eta} }
    \ge \abs{ x - {t \eta} } -  \abs{  t \eta - \integer{t \eta} }
     \ge
     |\eta|  |t-t_0| - \half \sqrt{d} .
\end{equation}
Since $\abs x \ge 4dM / \so$ and $\abs \eta \le M$,
we have $|\eta|  |t-t_0|  \ge \half \sqrt{t_0 \so}
= \half \sqrt{ \so \abs x / \abs \eta }
\ge \sqrt d$,
so
the right-hand side of \eqref{eq:xetat0}
is bounded below by $\half |\eta|  |t-t_0|$.
With this replacement, the second part of the integral in \eqref{eq:fI} can be bounded by a multiple of (via change of variable $t=t_0z$)
\begin{align}
&\int_T^\infty \frac {\D t} {(\sqrt t)^d} \frac{t}{(|\eta||t-t_0|)^2} \1 \Bigl\{ \abs{t - t_0} \ge \frac{ \sqrt{t_0 \so}}{2|\eta|}  \Bigr\}
\nnb & \qquad
=
\frac{1}{|\eta|^2}\frac{1}{(\sqrt{t_0})^{d}}
\int_{\eps \so}^\infty \frac {\D z} {(\sqrt z)^d} \frac{z}{|z-1|^2} \1 \Bigl\{ \abs{z-1} \ge \frac{ \sqrt{ \so} }{2|\eta|\sqrt{t_0}}  \Bigr\}
\nnb & \qquad
=
\frac{\abs \eta^{(d-3)/2}}{\abs x^{(d-1)/2}}\frac{1}{\sqrt{\abs\eta\abs x}}
\int_{\eps \so}^\infty \frac {\D z} {z^{(d-2)/2}} \frac{1}{|z-1|^2} \1 \Bigl\{ \abs{z-1} \ge \frac{ \sqrt{ \so}}{2 \sqrt{\abs\eta\abs x}}  \Bigr\}
.
\end{align}
Since $d > 0$,
the integral on the right-hand side converges at $z=\infty$, is of order $\sqrt{\abs\eta\abs x}$ near $z=1$,
and is of order $1 \vee \eps^{-(d-4)/2}$ at $z=0$.

After insertion of the two integrals into \eqref{eq:fI}, we obtain
\begin{equation}
f(x)
\lesssim [C_1 e^{-c/\eps} + \bar \varphi_\eps(\eps \abs x / \abs \eta) ]
	\biggl( 1 + \frac{1 + \eps^{-(d-4)/2}}{\sqrt{\abs\eta\abs x}}  \biggr) .
\end{equation}
With the definition of $\bar \varphi_\eps$,
and using $\abs \eta\abs x \ge \so$, we get
\begin{equation}
f(x)
\lesssim \biggl[ e^{-c/\eps}
	+ \frac 1  { (\sqrt \eps )^{d+4} ( \eps \sqrt{ \abs x / \abs \eta})^{\zeta_0} } \biggr]
	(1 + \eps^{-(d-4)/2})
\lesssim e^{-c/(2\eps)} +
	\frac { \abs \eta^{\zeta_0/2} }  { (\sqrt \eps )^{d+(d\vee4)} \eps^{\zeta_0}  \abs x^{\zeta_0/2} } .
\end{equation}
This gives the desired \eqref{eq:large_time_int} (without its middle term), since $\abs \eta \le M$ and $d \le d\vee4$.
\end{proof}

\subsection{Proof of estimates on $I_{t} (x)$}
\label{sec:Ilemmas}

We now prove Lemmas~\ref{lem:It_decay} and~\ref{lem:It_asymp-2}.
Part of the analysis uses the theory of weak derivatives, as
found in \cite[Appendix~A]{LS24a}.

\subsubsection{Fourier transform estimate}

We begin with an elementary estimate for derivatives in the Fourier space.

\begin{lemma} \label{lem:Fourier}
Let $d\ge 1$, $a > b \ge 0$, and $1 \le p \le \infty$.
Suppose $f : \Zd \to \R$ satisfies
\begin{equation}
\bignorm{ \abs y^b f(y) }_{1} ,
\bignorm{ \abs y ^a f(y) }_{p \wedge 2} \le M
\end{equation}
for some constant $M < \infty$.
Then its Fourier transform $\hat f$ is $\floor a$ times weakly differentiable on $\Td$, and we have
\begin{equation}
\norm{ \grad^\gamma \hat f }_q \le M
\quad \text{with}  \quad
\frac 1 q = \frac{ \abs \gamma - b }{ a -b }
	\biggl( 1  - \frac 1 p \biggr) ,
\end{equation}
for all multi-indices $\gamma$ with $b \le \abs \gamma \le a$.
\end{lemma}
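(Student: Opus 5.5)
The plan is to interpolate between two endpoint estimates: an $L^\infty$ bound on derivatives of order $b$, and an $L^{p'}$ bound (with $p'$ the conjugate exponent of $p\wedge 2$) on derivatives of order $a$. Intermediate orders are then handled by Hölder's inequality on the Fourier side.

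First, I will identify weak derivatives. For a multi-index $\gamma$, the candidate derivative is
\begin{equation}
\grad^\gamma \hat f(k) = \sum_{y\in\Zd} (-iy)^\gamma f(y) e^{-ik\cdot y},
\end{equation}
the Fourier series of $(-iy)^\gamma f(y)$. If $\abs y^{\abs\gamma} f \in L^r(\Zd)$ for some $r\le 2$, this series converges in $L^2(\Td)$ (and uniformly when $r=1$). One checks that it is the weak derivative by pairing with smooth test functions, integrating by parts term by term on the torus, and passing to the limit in $L^2$ using truncations $f\1_{\abs y\le R}$; this is the approach of \cite[Appendix~A]{LS24a}. The needed summability at each intermediate order comes from pointwise interpolation of the weights: for $b\le \abs\gamma\le a$,
\begin{equation}
\abs y^{\abs\gamma}\abs{f} \le \bigl(\abs y^{b}\abs f\bigr)^{1-\theta}\bigl(\abs y^{a}\abs f\bigr)^{\theta},
\qquad \theta = \frac{\abs\gamma - b}{a-b}.
\end{equation}
By Hölder's inequality on $\Zd$, this places $\abs y^{\abs\gamma}f$ in $L^{r_\theta}$ with $1/r_\theta = (1-\theta) + \theta/(p\wedge 2)$, and the norm is at most $M$. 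Since $r_\theta \le 2$, the derivatives exist, and they do so up to order $\floor a$.

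Second, I will bound the derivative norm using Hausdorff--Young on the torus: $\norm{\hat h}_{L^{r'}(\Td)}\le \norm{h}_{\ell^r(\Zd)}$ for $1\le r\le 2$, with normalised measure $\D k/(2\pi)^d$. Applying this with $h = (-iy)^\gamma f$ and $r=r_\theta$ gives
\begin{equation}
\norm{\grad^\gamma\hat f}_{q}\le M,
\qquad \frac1q = 1-\frac1{r_\theta} = \theta\Bigl(1-\frac1{p\wedge 2}\Bigr).
\end{equation}
This matches the claimed exponent when $p\le 2$. When $p>2$, the hypothesis $\norm{\cdot}_{p\wedge 2}$ is just the $\ell^2$ norm, so Hausdorff--Young yields the exponent with $p\wedge 2$ in place of $p$. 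To get the formula exactly as stated for $p>2$, I would read it with $p$ replaced by $p\wedge 2$, or else verify the intended convention. This endpoint issue is the main obstacle I anticipate: the stated relation $1/q = \theta(1-1/p)$ is stronger than what Hausdorff--Young provides once $p>2$, so I must check it against the definition of $p_d$ used in the paper. In the paper's applications $p\le 2$, since $p_d \le 2$ and $\frac d3\wedge 2\le 2$, so the case $p\le 2$ is the one that matters.

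A secondary technical point is justifying term-by-term differentiation when only $L^r$ summability with $r>1$ is available. The $L^2$ convergence argument in the first step handles this. Note that the torus measure being normalised is exactly what makes the constant come out as $M$, with no extra factors.
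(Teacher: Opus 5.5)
Your route is the paper's: interpolate the weights pointwise via $\abs y^{\abs\gamma}\abs f=(\abs y^b\abs f)^{1-\theta}(\abs y^a\abs f)^{\theta}$ with H\"older on $\Zd$, then apply Hausdorff--Young on $\Td$. It is correct for $p\le 2$. The gap is the case $p>2$, which you leave unproved because you have the comparison backwards.

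Your argument gives $\norm{\grad^\gamma\hat f}_{q_\ast}\le M$ with $1/q_\ast=\theta\bigl(1-1/(p\wedge 2)\bigr)$. Since $1/(p\wedge 2)\ge 1/p$, this $q_\ast$ is at least the stated $q$, defined by $1/q=\theta(1-1/p)$. So the stated conclusion is \emph{weaker} than what you proved, not stronger. Because $\D k/(2\pi)^d$ is a probability measure, $L^q(\Td)$ norms increase with $q$. Hence $\norm{\grad^\gamma\hat f}_q\le\norm{\grad^\gamma\hat f}_{q_\ast}\le M$. This one line uses exactly the normalisation you mention at the end, and it finishes the proof for all $p\in[1,\infty]$ with no reinterpretation of the statement.

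The paper packages the same idea slightly differently. It uses $\norm{\cdot}_p\le\norm{\cdot}_{p\wedge 2}$ on $\Zd$ to run H\"older with the actual $p$, obtaining $\norm{\abs y^{\abs\gamma}f}_{r\wedge 2}\le M$ with $1/r=(1-\theta)+\theta/p$. It then applies Hausdorff--Young together with $\norm{\grad^\gamma\hat f}_q\le\norm{\grad^\gamma\hat f}_{q\vee 2}$.

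Your reason for setting this case aside is also wrong: the case $p>2$ is used in the paper. You conflated the lemma's parameter $p$ with the hypothesis exponent $p\wedge 2$.
\begin{itemize}
\item In the proof of Lemma~\ref{lem:It_decay}, the lemma is applied to $Q$ with $p=d/3$, which exceeds $2$ for $d\ge 7$. This is why \eqref{eq:Q_d-1} carries $\frac d3\wedge 2$ while the conclusion \eqref{eq:R2+} has the clean exponent $d/(\abs\alpha-2)$.
\item In the same proof, it is applied to $g$ with $p=d$ or $p=\infty$.
\item The proof of Lemma~\ref{lem:It_asymp-3} uses $p=p_d$, which exceeds $2$ when, for example, $d=2$ and $\zeta<1$.
\end{itemize}
So the monotonicity step is genuinely needed, although it costs only one line.

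A minor point: your interpolation covers only $b\le\abs\gamma\le a$. For weak derivatives of order $\abs\gamma<b$, which are needed for ``$\floor a$ times weakly differentiable'', use $\abs y^{\abs\gamma}\abs{f(y)}\le\abs y^{b}\abs{f(y)}$ for $y\ne 0$.
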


\begin{proof}
The hypothesis implies $\abs y^{\floor a} f(y) \in L^2(\Zd)$,
so $\hat f(k)$ is $\floor a$ times weakly differentiable on $\Td$ by \cite[Lemma~A.4]{LS24a}.
To estimate the derivatives, we first bound moments of $f(y)$ using H\"older's inequality.
For any multi-index $\gamma$ with $b \le \abs \gamma \le a$,
we decompose
\begin{equation}
\abs y^{\abs \gamma} f(y)
= \bigl( \abs y^b f(y) \bigr)	^{ \frac{a - \abs \gamma}{a-b} }
	\bigl( \abs y^{a} f(y)  \bigr)^{ \frac{\abs \gamma - b}{a-b} } .
\end{equation}
Then, with $r\in [1,\infty]$ defined by
\begin{align} \label{eq:interp_pf}
\frac 1 r
&= \biggl( \frac{a - \abs \gamma}{a-b} \biggr) 1
	+ \biggl(  \frac{\abs \gamma - b}{a-b} \biggr) \frac 1 p ,
\\
\half
&= \biggl( \frac{a - \abs \gamma}{a-b} \biggr) \half
	+ \biggl( \frac{\abs \gamma - b}{a-b} \biggr) \half ,
\end{align}
it follows from H\"older's inequality
and the monotonicity of $L^p(\Zd)$ norms in $p$ that
\begin{equation}
\bignorm{ \abs y^{\abs \gamma} f(y) }_{r \wedge 2}
\le \bignorm{ \abs y^b f(y)  }_{1 \wedge 2}^{ \frac{a - \abs \gamma}{a-b} }
	\bignorm{ \abs y^{a} f(y)  }_{p\wedge 2}^{ \frac{\abs \gamma - b}{a-b} }
\le M .
\end{equation}
Using the Hausdorff–Young inequality and the monotonicity of $L^q(\Td)$ norms in $q$, we obtain
\begin{equation}
\bignorm{ \grad^\gamma \hat f }_q
\le \bignorm{ \grad^\gamma \hat f }_{q \vee 2}
\le \bignorm{ \abs y^{\abs \gamma} f(y) }_{r \wedge 2}
\le M ,
\end{equation}
where $q \inv  = 1 - r \inv$.
Inserting the value of $r\inv$ from \eqref{eq:interp_pf} then gives the desired result.
\end{proof}

\subsubsection{Small $t$ estimate: proof of Lemma~\ref{lem:It_decay}}

Lemma~\ref{lem:It_decay} asserts the upper bound
\begin{equation} \label{eq:It_decay-2}
\abs{ I_{t} (x) }
	\lesssim \frac 1 {(1 + \sqrt t)^d}
	\biggl( \frac{ 1 + \sqrt t } { \abs{ x - \integer{t \eta} } } \biggr)^n
\qquad ( 0 \le n \le 2 \vee (d-1)) ,
\end{equation}
with constants depending only on
$n, d, M, \KIR$.
Recall that
\begin{equation} \label{eq:Q_moments2}
\bignorm{ \Jsupmu(y) }_{1} ,
\bignorm{ \abs y ^{2} \Jsupmu(y) }_{1} \le M  ,
\end{equation}
by the hypothesis \eqref{eq:Q_moments} on the moments of $\Jsupmu$.

\begin{proof}[Proof of Lemma~\ref{lem:It_decay}]
We begin by rewriting the definition of $I_{t}(x)$ in \eqref{eq:G^mu} as
\begin{equation}
I_{t} (x)  = \int_\Td \frac{\D k }{(2\pi)^d}
	e^{ik\cdot x} \hat g(k) e^{-t [\hat \Jsupmu(0) - \hat \Jsupmu (k) ] }
= \int_\Td \frac{\D k }{(2\pi)^d}
	e^{ik\cdot (x - \integer{t\eta}) }
 \hat g(k)
 e^{ik\cdot \integer{t\eta} -t [\hat \Jsupmu(0) - \hat \Jsupmu (k) ] } .
\end{equation}
This can be viewed as the inverse Fourier transform of the function
$ \hat g(k) e^{-\hat R(k)}$
evaluated at the lattice point $x - \integer{t\eta}$, where
\begin{equation}
\hat R(k) = -ik\cdot \integer{t\eta} +t [\hat \Jsupmu(0) - \hat \Jsupmu (k) ] .
\end{equation}
We exploit the fact that derivatives in the Fourier space give control on the decay in the physical space.
To prove the desired \eqref{eq:It_decay-2}, by \cite[Lemma~2.3]{LS24a} it suffices to prove that $\hat g(k) e^{-\hat R(k)}$ is $2\vee(d-1)$ times weakly differentiable and that its derivatives obey
\begin{equation} \label{eq:It_decay_claim}
\bignorm{ \grad^\gamma ( \hat g e^{-\hat R} ) }_{1}
\lesssim \frac 1 { (1 + \sqrt t )^d} (1 + \sqrt t )^{\abs \gamma}
\end{equation}
for every multi-index $\gamma$ with $\abs \gamma \le 2 \vee(d-1)$.
(We needed $\integer{t\eta}\in \Zd$ to get a periodic function.)
Since the function $\hat g$ plays only a minor role in the proof of \eqref{eq:It_decay_claim},
we estimate first the derivatives of $e^{-\hat R}$.
By considering the two cases $\abs \gamma \le 2$ and $3 \le \abs \gamma \le d-1$,
we will prove that
\begin{equation} \label{eq:It_decay_claim_p}
\bignorm{ \grad^\gamma e^{-\hat R}  }_{p}
\lesssim \frac 1 { (1 + \sqrt t )^{d/p}} (1 + \sqrt t )^{\abs \gamma}
	\qquad (  \frac{ \abs \gamma - 2 } d \vee 0 < p\inv \le 1 ) .
\end{equation}

\smallskip \noindent
\emph{Case: $\abs \gamma \le 2$.}
For this case,
since the function $\abs y^2 \Jsupmu(y)$ is absolutely summable by hypothesis \eqref{eq:Q_moments}, the function $\hat \Jsupmu$, and thus $\hat R$ and $e^{- \hat R}$, are classically twice differentiable on $\Td$.
By calculus rules, for any $j,l\in \{1, \dots, d\}$ we have
\begin{equation} \label{eq:eR_derivative}
\begin{aligned}
\grad_j e^{-\hat R}
	&= e^{-\hat R} ( - \grad_j \hat R) , \\
\grad_j \grad_l e^{-\hat R}
	&= e^{-\hat R} ( - \grad_j \hat R)( - \grad_l \hat R)
		- e^{-\hat R} ( \grad_j \grad_l \hat R ) .
\end{aligned}
\end{equation}
By the Taylor expansion \eqref{eq:J_small_k}, we have
\begin{equation} \label{eq:R_derivative}
\begin{aligned}
\grad_j \hat R
&= - i \integer{t\eta}_j - t \grad_j \hat \Jsupmu
= i ( \fractional{ t \eta} )_j + t \grad_j [ \half k \cdot \Lambda k + \hat R_2(k)] ,
\\
\grad_j \grad_l \hat R
&=  t \grad_j \grad_l [ \half k \cdot \Lambda k + \hat R_2(k)] .
\end{aligned}
\end{equation}
These derivatives can then be bounded using \eqref{eq:Q_moments2}
as
\begin{equation} \label{eq:R1}
\abs{ \grad_j \hat R }
\lesssim  \abs{ \fractional{ t \eta} } + t\abs k   M
\le \tfrac{ \sqrt d } 2 +  t\abs k   M ,
	\qquad
\abs{ \grad_j \grad_l \hat R }
	\lesssim  t M     .
\end{equation}
Also, we can bound $e^{-\hat R}$ using the infrared bound \eqref{eq:Q_infrared} since $\hat R(k)$ and $t(\hat \Jsupmu(0)-\hat \Jsupmu(k))$ have the same real part.
This gives, for all $\abs \gamma \le 2$,
\begin{equation} \label{eq:small_gamma_int_pf}
\abs{ \grad^\gamma e^{-\hat R} }
\le C_{\abs \gamma, M}  e^{-\half t \KIR \abs k^2}
	\Bigl[  \bigl( 1+ t\abs k \bigr)^{\abs \gamma}
	+ t \1_{\abs \gamma = 2} \Bigr] .
\end{equation}
To compute its $L^p(\Td)$ norm, we either bound the exponential by $1$ or extend the integral to be over all $k\in \Rd$.
For this, we use the fact that for any real numbers $a > n \ge 0$ and any $p > 0$, there are constants such that
\begin{align} \label{eq:gamma_int}
\bignorm{ \abs k^n e^{- a \abs k^2} }_{L^p(\Td)}
&\le \min \Biggl\{
	\max_{k\in \T^d}\, \abs k^n 	,
	\biggl(
	\int_\Rd \frac{ \D k}{(2\pi)^d}  \abs k^{pn} e^{- pa \abs k^2}
	\biggr)^{1/p}
	\Biggr\}  .
\end{align}
The first item on the right-hand side is bounded by a constant, and with a change
of variables $\ell=\sqrt{pa}k$ we see that the second is bounded by a multiple of
$(\sqrt {p a})^{-(d+pn)}$. Therefore,
\begin{align}	
\bignorm{ \abs k^n e^{- a \abs k^2} }_{L^p(\Td)}
&
\le \frac{ c_{d,n,p} } { (1 + \sqrt a)^{\frac d p + n} } .
\end{align}
Taking the $L^p$ norm of \eqref{eq:small_gamma_int_pf} this way, we get
\begin{equation}
\bignorm{ \grad^\gamma e^{-\hat R} }_{p}
\le \frac{ C_{\abs \gamma, p, M, \KIR} }{ (1 + \sqrt t)^{d/p} }
	\Bigl[  \bigl( 1 + \sqrt t \bigr)^{\abs \gamma}
	+ t \1_{\abs \gamma = 2} \Bigr] ,
\end{equation}
which gives the claimed \eqref{eq:It_decay_claim_p} for all $1 \le p < \infty$.

\smallskip \noindent
\emph{Case: $3 \le \abs \gamma \le d-1$.}
In this case we can assume $d\ge 4$.
We first estimate higher weak derivatives of $\hat R(k)$.
Using \eqref{eq:Q_moments} and \eqref{eq:Q_d-1},
we can apply Lemma~\ref{lem:Fourier} to $f = \Jsupmu$ (with $a = d-1$, $b = 2$, and $p = d/3$), to conclude that $\hat \Jsupmu(k)$, and thus $\hat R(k)$, are $d-1$ times weakly differentiable.
Also, for any multi-index $\alpha$ with $2 \le \abs \alpha \le d-1$,
since
\begin{equation}
\frac{ \abs \alpha - 2 }{ (d-1) -2 } \biggl( 1  - \frac 3 d \biggr)
= \frac{ \abs \alpha - 2 }{ d } ,
\end{equation}
the quantitative estimate of Lemma~\ref{lem:Fourier} gives
\begin{equation} \label{eq:R2+}
\bignorm{ \grad^\alpha \hat R }_{\frac d {\abs \alpha -2}}
= t \bignorm{ \grad^\alpha \hat \Jsupmu }_{\frac d {\abs \alpha -2}}
\le t M .
\end{equation}

To show that $e^{-\hat R(k)}$ is $d-1$ times weakly differentiable, we use the product rule for weak derivatives \cite[Lemma~A.2]{LS24a}, which states that it suffices to compute $\grad^\gamma e^{-\hat R}$ formally for all $\abs \gamma \le d-1$ and verify that all results are integrable. We therefore consider all terms of the form
\begin{equation} \label{eq:grad_eR_decomp}
e^{- \hat R} \prod_{j = 1}^N (- \grad^{\alpha_j} \hat R) ,
\end{equation}
where $0 \le N \le \abs \gamma$, $\abs{ \alpha_j } \ge 1$, and $\gamma = \sum_{j=1}^N \alpha_j$;
a linear combination of these terms gives $\grad^\gamma e^{-\hat R}$.
Let $q \in [1,\infty)$ obey
\begin{equation}
1 \ge q\inv > \frac{ \abs \gamma - 2 } d .
\end{equation}
We define $a = \#\{ j : \abs {\alpha_j} = 1 \} \in [0,N]$
and define a number $p \ge q$ by
\begin{equation} \label{eq:It_decay_pf_p}
\frac 1 p = \frac 1 q - \sum_{j: \abs{\alpha_j} \ge 2} \frac{ \abs{\alpha_j} - 2 } d
= \frac{ d - (\abs \gamma -a) + 2 (N-a) } d .
\end{equation}
Note that $p\inv$ is smallest when $N=1$ and $\alpha_1 = \gamma$,
so we always have $p\inv \ge q\inv - (\abs\gamma - 2) / d  > 0$,
which implies $p < \infty$.
By H\"older's inequality, \eqref{eq:R1}, \eqref{eq:R2+}, and the infrared bound \eqref{eq:Q_infrared}, we have
\begin{align}
\Bignorm{ e^{- \hat R} \prod_{j = 1}^N (- \grad^{\alpha_j} \hat R) }_q
&\le  \Bignorm{ e^{- \hat R} \prod_{j : \abs{\alpha_j} = 1}
		(\grad^{\alpha_j} \hat R) }_p
	\prod_{j : \abs{\alpha_j} \ge 2}
		\bignorm{ \grad^{\alpha_j} \hat R }_{\frac d {\abs{\alpha_j} - 2}}
\nl
&\le C_{ \abs \gamma, M }  \Bignorm{ e^{- \half t \KIR \abs k^2}
	\bigl( 1 + t \abs k \bigr)^a }_p
	\prod_{j : \abs{\alpha_j} \ge 2}  t
\nl
&\le \frac{ C_{ \abs \gamma,q, M, \KIR } } {(1 + \sqrt t)^{d/p} }
	( 1 +  \sqrt t )^a
	t^{N-a}
\nl
&\le \frac{ C_{ \abs \gamma,q, M, \KIR } } {(1 + \sqrt t)^{d/q} }
	( 1 +  \sqrt t )^a
	(1 + \sqrt t)^{\abs \gamma - a} ,
\end{align}
where we used \eqref{eq:gamma_int} in the third and \eqref{eq:It_decay_pf_p} in the fourth line.
In particular, we can take $q = 1$ to conclude that $e^{-\hat R(k)}$ is $d-1$ times weakly differentiable.
The claimed estimate \eqref{eq:It_decay_claim_p} on $\norm{ \grad^\gamma e^{-\hat R} }_q$ also follows from the above estimate, by taking a linear combination.

\smallskip \noindent
\emph{Proof of \eqref{eq:It_decay_claim}.}
We now prove that $\hat g e^{-\hat R}$ is $(d-1)\vee 2$ times weakly differentiable and estimate its derivatives. We use the product rule again.
For the derivatives of $\hat g$,
we use \eqref{eq:g_moments} and \eqref{eq:Q_d-1}
to invoke Lemma~\ref{lem:Fourier} with $f = g$, $a = 2 \vee (d-1)$, $b = 0$, and
\begin{equation}
p = \begin{cases}
d 		&(d >2) \\
\infty		&(d \le 2) .
\end{cases}
\end{equation}
It gives $\norm{ \grad^\alpha \hat g }_{ (2\vee d) /{\abs \alpha}}
\le M$ for all multi-indices $\alpha$ with $\abs \alpha \le 2\vee(d-1)$.
If $d >2$,
since $\abs \alpha \le \abs \gamma \le d-1$,
the product rule and \eqref{eq:It_decay_claim_p} imply
\begin{equation}
\bignorm{ \grad^\gamma ( \hat g e^{-\hat R} ) }_{1}
\le \sum_{\alpha \le \gamma}
	\bignorm{ \grad^\alpha \hat g }_{ \frac d {\abs \alpha}}
	\bignorm{ \grad^{\gamma-\alpha} e^{-\hat R} }_{ \frac d {d - \abs \alpha}}
\lesssim \sum_{\alpha \le \gamma}
	\frac { (1 + \sqrt t )^{\abs \gamma - \abs \alpha} }
		{ (1 + \sqrt t )^{d - \abs \alpha}}
\lesssim 	\frac { (1 + \sqrt t )^{\abs \gamma } }
		{ (1 + \sqrt t )^{d}}
\end{equation}
for every multi-index $\gamma$ with $\abs \gamma \le d-1$,
as desired.
If $d \le 2$,
we instead have
\begin{equation} \label{eq:small_t_d=1}
\bignorm{ \grad^\gamma ( \hat g e^{-\hat R} ) }_{1}
\le \sum_{\alpha \le \gamma}
	\bignorm{ \grad^\alpha \hat g }_{ \frac 2 {\abs \alpha}}
	\bignorm{ \grad^{\gamma-\alpha} e^{-\hat R} }_{ \frac 2 {2 - \abs \alpha}}
\lesssim \sum_{\alpha \le \gamma}
	\frac { (1 + \sqrt t )^{\abs \gamma - \abs \alpha} }
		{ (1 + \sqrt t )^{ d - \frac d 2 \abs \alpha }}
\qquad (\abs \gamma \le 2)
\end{equation}
(note \eqref{eq:It_decay_claim_p} also works for the case $\abs \alpha = \abs \gamma = 2$ since no derivative is taken),
which is $\lesssim (1 + \sqrt t)^{\abs \gamma - d}$ because $d\le 2$.
This concludes the proof of \eqref{eq:It_decay_claim}.
\end{proof}

\subsubsection{Large $t$ estimate: proof of Lemma~\ref{lem:It_asymp-2}}

We now prove Lemma~\ref{lem:It_asymp-2}, which we restate for convenience as Lemma~\ref{lem:It_asymp-3}.

\begin{lemma} \label{lem:It_asymp-3}
Let $d \ge 1$, $\eps \in (0,1] $, and denote
$\zeta_0 = \zeta \wedge \frac {d \wedge 2}2$.
For any $t \ge \eps\inv \vee \eps^{-(2+\zeta_0)/\zeta_0}$ and any $x\in \Zd$, we have
\begin{equation}  \label{eq:large_t_error}
\bigabs{ I_{t}(x) - \hat g(0) \rho_t(x; \eta, \Lam) }
\le C_1 \biggl[ e^{-c/\eps}
	+ \frac{1}{  (\sqrt \eps )^{d+4} (\sqrt{ \eps t})^{\zeta_0}} \biggr]
	\frac 1 {(\sqrt t)^d}
	\biggl( \frac{ \sqrt t } { \abs{ x - \integer{t \eta} } } \biggr)^n
	\qquad (n = 0,1,2)
\end{equation}
(here $n=0$ means the factor is absent),
with $c = \frac 1 4 \KIR $ and $C_1$ depending only on $n, d, \zeta, M, \KIR$.
\end{lemma}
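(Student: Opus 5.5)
The plan is to shift the lattice point as in the proof of Lemma~\ref{lem:It_decay}, writing $x = \integer{t\eta} + w$ with $w = x - \integer{t\eta}$. Both $I_t(x)$ and $\rho_t(x;\eta,\Lam)$ are then Fourier integrals in $w$:
\begin{equation*}
I_t(x) = \int_\Td \frac{\D k}{(2\pi)^d}\, e^{ik\cdot w}\, \hat g(k)\, e^{-\hat R(k)},
\qquad
\hat g(0)\rho_t(x;\eta,\Lam) = \hat g(0)\int_\Rd \frac{\D k}{(2\pi)^d}\, e^{ik\cdot w}\, e^{ik\cdot \fractional{t\eta} - \frac t2 k\cdot\Lam k} .
\end{equation*}
Here $\hat R(k) = -ik\cdot\integer{t\eta} + t[\hat\Jsupmu(0)-\hat\Jsupmu(k)] = ik\cdot\fractional{t\eta} + \tfrac t2 k\cdot\Lam k + t\hat R_2(k)$ by \eqref{eq:J_small_k}.

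Next I split $k$-space at the scale $\abs k \le r_t := \eps^{-1/2} t^{-1/2}$, or a variant with a small power of $t$ built in. On $\abs k > r_t$, the infrared bound \eqref{eq:Q_infrared} and $\Re \hat R = t\Re[\hat\Jsupmu(0)-\hat\Jsupmu(k)]$ bound both integrands by $e^{-t\KIR\abs k^2}$. Their $L^1$ norm over $\abs k>r_t$ is $\lesssim t^{-d/2}e^{-c/\eps}$. This yields the $e^{-c/\eps}$ term for $n=0$.

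On $\abs k\le r_t$, I compare $\hat g(k)e^{-\hat R(k)}$ with $\hat g(0)e^{ik\cdot\fractional{t\eta}-\frac t2 k\cdot\Lam k}$ using three estimates:
\begin{itemize}
\item $\abs{e^{-t\hat R_2}-1}\le t\abs{\hat R_2}e^{t\abs{\hat R_2}}$, with $t\abs{\hat R_2(k)}\lesssim t\abs k^{2+\zeta}\le \eps^{-1-\zeta/2}t^{-\zeta/2}$. This is small because $t\ge \eps^{-(2+\zeta_0)/\zeta_0}$; that hypothesis exists precisely to make $t r_t^{2+\zeta_0}\lesssim 1$.
\item $\abs{\hat g(k)-\hat g(0)}\lesssim \abs k^{\zeta_0}$, from the $\zeta$-moment of $g$ in \eqref{eq:g_moments}.
\item Integration of these errors against $e^{-\frac t4\KIR\abs k^2}$, which gives $t^{-d/2}(\eps t)^{-\zeta_0/2}$ times powers of $\eps^{-1}$.
\end{itemize}
This gives the $n=0$ case.

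For $n=1,2$ I need decay in $w$, which means differentiating in $k$ and integrating by parts, as in \cite[Lemma~2.3]{LS24a}. Inserting a smooth cutoff $\chi(k/r_t)$ avoids boundary terms, and its derivatives cost factors $r_t^{-1}=\sqrt{\eps t}$. I then bound $\norm{\grad^\gamma[\text{difference}]}_{L^1}$ for $\abs\gamma\le 2$, where each derivative should cost a factor $\sqrt t$ up to powers of $\eps^{-1}$. This uses:
\begin{itemize}
\item $\grad\hat R$ and $\grad^2\hat R$ from \eqref{eq:R1};
\item $\abs{\grad \hat R_2}\lesssim\abs k^{1+\zeta}$ and $\abs{\grad^2\hat R_2}\lesssim \abs k^{\zeta}$ from \eqref{eq:R2bds};
\item the weak derivatives of $\hat g$, with $\grad^2\hat g\in L^{p}$ by Lemma~\ref{lem:Fourier} and \eqref{eq:g_moments}.
\end{itemize}
The main obstacle is exactly this second-derivative term, for two reasons. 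First, $\grad^2\hat g$ is only in $L^{p}$ rather than pointwise small, so one must use H\"older with the Gaussian factor. This is where $\zeta_0=\zeta\wedge\frac{d\wedge2}{2}$ arises, and I expect it to give an extra $t^{-\zeta_0/2}$ loss of decay, which is acceptable. Second, the term $t\grad^2\hat R_2$ is only $O(t\abs k^{\zeta})$. Against the Gaussian it produces $t\cdot t^{-\zeta/2}$ relative to the leading $t$, which is again a relative error $t^{-\zeta/2}$.

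Collecting the powers of $\eps^{-1}$ (at most $(\sqrt\eps)^{-(d+4)}$, counting the cutoff, derivatives, and $r_t$) gives \eqref{eq:large_t_error}.
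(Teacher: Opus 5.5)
Your proposal is correct and follows essentially the paper's route. The paper shifts by $\integer{t\eta}$, splits at $k_t=(\eps t)^{-1/2}$, writes $e^{-\hat R}-e^{-\hat P}=e^{-\hat P}(e^{-t\hat R_2}-1)$ (using the hypothesis on $t$ to get $t k_t^{2+\zeta}\le 1$), and handles $\hat g-\hat g(0)$ and the weak derivatives of $\hat g$ by H\"older and Lemma~\ref{lem:Fourier}. The differences are minor: the paper integrates by parts over all of $\Td$ (resp.\ $\Rd$) before splitting the integrals of $\Delta(e^{-\hat R})$ and $\Delta(e^{-\hat P})$ with a sharp cutoff, so no boundary or cutoff-derivative terms arise and your smooth cutoff is unnecessary, and it obtains $n=1$ by interpolating between $n=0$ and $n=2$. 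Also fix the sign slip in your comparison Gaussian, which should be $e^{-\hat P(k)}=e^{-ik\cdot\fractional{t\eta}-\frac t2 k\cdot\Lam k}$.
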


\begin{proof}
Since Assumption~\ref{ass:J} is weaker when $\zeta$ is smaller, we assume $\zeta \le \half( d\wedge 2)$.
We focus on the case $n=2$.
Let $\Delta = \sum_{j=1}^d \grad_j^2 $ denote the Laplacian.
As in the proof of Lemma~\ref{lem:It_decay}, using integration by parts for each coordinate, we can write
\begin{equation} \label{eq:large_t_pf1}
- \bigabs{ x - \integer{t\eta} }^2 I_{t}(x)
= \int_\Td \frac{\D k }{(2\pi)^d}
	e^{ik\cdot (x - \integer{t\eta}) } \Delta( \hat g(k) e^{-\hat R(k)} )  ,
\end{equation}
where $\hat R(k) = -ik\cdot \integer{t\eta} +t [\hat \Jsupmu(0) - \hat \Jsupmu (k) ]$.
We will show that the main contribution of the integral can be computed by replacing $\Delta( \hat g(k) e^{-\hat R(k)} )$ with $ \hat g(0) \Delta (e^{-\hat P(k)})$, where
\begin{equation}
\hat P(k) =  i k\cdot \fractional{ t \eta} + \frac t 2 k \cdot \Lam k
\end{equation}
is the second order Taylor polynomial of $\hat R(k)$ around $k=0$.
Indeed, the $\Rd$ integral of the latter
can be computed exactly, as
\begin{align} \label{eq:large_t_pf2}
\int_\Rd \frac{\D k }{(2\pi)^d}
	e^{ik\cdot (x - \integer{t\eta}) } \hat g(0)  \Delta( e^{-\hat P(k)} )
&= - \bigabs{ x - \integer{t\eta} }^2 \hat g(0)
	\int_\Rd \frac{\D k }{(2\pi)^d}
	e^{ik\cdot x } e^{-t [ i k\cdot \eta + \half k \cdot \Lambda k ] } \nl
&= - \bigabs{ x - \integer{t\eta} }^2 \hat g(0)
	\rho_t(x;\eta,\Lam) .
\end{align}
We will prove
\begin{equation} \label{eq:large_t_claim_g}
\int_\Td \frac{\D k }{(2\pi)^d}
	\bigabs{ \Delta( \hat g(k) e^{-\hat R(k)} )
		- \hat g(0) \Delta( e^{-\hat R(k)} ) }
\lesssim \frac 1 { t^{\zeta/2} } \frac t {(\sqrt t)^d } ,
\end{equation}
and
\begin{multline} \label{eq:large_t_claim}
\abs{ \hat g(0) } \biggabs{
	\int_\Td \frac{\D k }{(2\pi)^d}
		e^{ik\cdot (x - \integer{t\eta}) }
		\Delta( e^{-\hat R(k)} )
	- \int_\Rd \frac{\D k }{(2\pi)^d}
		e^{ik\cdot (x - \integer{t\eta}) }
		\Delta( e^{-\hat P(k)} )
} \\
\lesssim \Bigl( e^{-c/\eps} + \frac{1}{ \eps^{(d+4)/2} ( \eps t)^{\zeta/2}} \Bigr) \frac t {(\sqrt t)^d }.
\end{multline}
By the triangle inequality and by $\eps \le 1$,
the combination of \eqref{eq:large_t_pf1} with \eqref{eq:large_t_pf2}--\eqref{eq:large_t_claim} imply the desired \eqref{eq:large_t_error} with $n=2$.

\smallskip \noindent
\emph{Proof of \eqref{eq:large_t_claim_g}.}
We expand $\Delta( \hat g(k) e^{-\hat R(k)} )$ using the product rule.
If no derivative is applied to $\hat g$,
we use the $\zeta^{\rm th}$ moment of $g(y)$ in \eqref{eq:g_moments} to bound $\abs{ \hat g(k) - \hat g(0) } \lesssim \abs k^\zeta M$
(here we use $\zeta \le 1$.)
Then, by \eqref{eq:small_gamma_int_pf} and \eqref{eq:gamma_int}, we have
\begin{multline}
\int_\Td \frac{\D k }{(2\pi)^d}
	\bigabs{ \hat g(k) \Delta( e^{-\hat R(k)} )
		- \hat g(0) \Delta( e^{-\hat R(k)} ) } 	\\
\lesssim  \int_\Td \frac{\D k }{(2\pi)^d}
	\abs k^\zeta  e^{-\half t \KIR \abs k^2}
	\Bigl[  \bigl( 1+ t\abs k \bigr)^{\abs \gamma}
	+ t \Bigr]
\lesssim \frac t { (\sqrt t )^{d+\zeta} } .
\end{multline}
If some derivative is applied to $\hat g$,
in dimensions $d\ge 2$
we use Lemma~\ref{lem:Fourier} with $f = g$, $a = 2$, $b = \zeta$, and $p =p_d = d / (d-2+\zeta)$.
Since $\zeta \le 1$, and since
\begin{equation}
\frac{ \abs \alpha - \zeta }{ 2 - \zeta } \biggl( 1  - \frac {d-2+\zeta} d \biggr)
= \frac{ \abs \alpha - \zeta }{ d } ,
\end{equation}
we get $\norm{ \grad^\alpha \hat g }_{ d /{ ( \abs \alpha - \zeta )}}
\le M$ for all multi-indices $\alpha$ with $1 \le \abs \alpha \le 2$.
Then for any multi-index $\gamma \ge \alpha$ with $\abs \gamma = 2$,
H\"older's inequality and \eqref{eq:It_decay_claim_p} give
\begin{equation}
\bignorm{ \grad^\alpha \hat g   \grad^{\gamma-\alpha} e^{- \hat R} }_{1}
\le \bignorm{ \grad^\alpha \hat g }_{ \frac d {\abs \alpha - \zeta}}
	\bignorm{ \grad^{\gamma-\alpha} e^{-\hat R} }_{ \frac d {d - \abs \alpha + \zeta}}
\lesssim
	\frac { (1 + \sqrt t )^{\abs \gamma - \abs \alpha} }
		{ (1 + \sqrt t )^{d - \abs \alpha + \zeta}}
\le 	\frac { t }	{ ( \sqrt t )^{d+\zeta}} .
\end{equation}
For $d=1$, it can be seen in \eqref{eq:small_t_d=1} that the same quantity is bounded by $(1+\sqrt t)^{2 - d - \half \abs \alpha}$. This is smaller than the desired $ t  /( \sqrt t )^{d+\zeta}$ because we have $\half \abs \alpha \ge \half \ge \zeta$ when $\abs \alpha \ge 1$ (here we use $\zeta \le \half$).
This concludes the proof of \eqref{eq:large_t_claim_g}.

\smallskip \noindent
\emph{Proof of \eqref{eq:large_t_claim}.}
Since $\abs{\hat g(0)} \le \norm{ g(y) } _1 \le M$ by \eqref{eq:g_moments}, we only need to consider the difference of the two integrals in \eqref{eq:large_t_claim}.
We
decompose the difference of the integrals into three parts, similar to the proof of \cite[Lemma~2.2]{Hara08}, as follows.
We define
\begin{equation}
k_t = (\eps t)^{-1/2} \le 1
\end{equation}
and
\begin{equation} \label{eq:I2-4}
\begin{aligned}
\Ical_{2,j} &=
	- \int_{k\in \Rd : \abs k > k_t} \frac{\D k }{(2\pi)^d}
	e^{ik\cdot (x - \integer{t\eta}) }
    \Delta(e^{-\hat P(k)})
    ,
\\
\Ical_{3,j} &=
	\int_{\abs k \le k_t} \frac{\D k }{(2\pi)^d}
	e^{ik\cdot (x - \integer{t\eta} ) } \Bigl\{
     \Delta(e^{-\hat R(k)}) -  \Delta(e^{-\hat P(k)})
	\Bigr\}
    ,
\\
\Ical_{4,j} &=
	\int_{k\in \Td : \abs k > k_t} \frac{\D k }{(2\pi)^d}
	e^{ik\cdot (x - \integer{t\eta}) }
    \Delta(e^{-\hat R(k)})
    ,
\end{aligned}
\end{equation}
so that our goal becomes to estimate $\abs{ \sum_{j = 1}^d ( \Ical_{2,j} + \Ical_{3,j} + \Ical_{4,j}) }$.
It suffices to prove
\begin{equation} \label{eq:I234_claim}
\abs{ \Ical_{2,j} }, \abs{\Ical_{4,j} }
	\lesssim e^{-c/\eps} \frac t {(\sqrt t)^d},
\qquad
\abs{\Ical_{3,j}} \lesssim  \frac{1}{ \eps^{(d+4)/2} ( \eps t)^{\zeta/2}} \frac t {(\sqrt t)^d}
\end{equation}
for all $j$.
The oscillating factor $e^{ik\cdot (x - \integer{t\eta}) }$ plays no role in the bounds, and for all three integrals
we will take absolute values inside and bound that factor by $1$.

\smallskip\noindent
\emph{Bound on $\Ical_{4,j}$.}  A similar estimate has already been obtained in the proof of  Lemma~\ref{lem:It_decay}. We use the bound \eqref{eq:small_gamma_int_pf} on
$\Delta(e^{-\hat R(k)})$  to get
\begin{equation} \label{eq:I4_pf_1}
\abs{ \Ical_{4,j} }
\le C_{M} \int_{k\in \Td : \abs k > k_t} \frac{\D k }{(2\pi)^d}
	e^{-\half t \KIR \abs k^2}
	\bigl[ ( 1 + t \abs k )^2 + t  \bigr] .
\end{equation}
Since for any real numbers $a,b > 0$ and $n > -d$ we have
\begin{equation} \label{eq:truncated_gamma}
\int_{k\in \Rd : \abs k \ge b} \frac{ \D k}{(2\pi)^d}  \abs k^n e^{- a \abs k^2}
\le e^{- \half ab^2}
	\int_\Rd \frac{ \D k}{(2\pi)^d}  \abs k^n e^{- \half a \abs k^2}
=
e^{- \half ab^2}
\frac{c_{d,n}}{ (\sqrt {a})^{d+n} }
 ,
\end{equation}
we get
\begin{equation} \label{eq:I4_pf_2}
\abs{ \Ical_{4,j} }
\le C_{M, \KIR}  e^{-\frac 1 4 t \KIR k_t^2}
	\frac{   1 +  \sqrt t  + t } { (\sqrt t)^d }
\le C_{M, \KIR}  e^{-\frac 1 4 \KIR / \eps}
	\frac{ 3t} { (\sqrt t)^d } ,
\end{equation}
using the definition $k_t = (\eps t)^{-1/2}$ and the fact that $t\ge 1$.

\smallskip\noindent
\emph{Bound on $\Ical_{2,j}$.}
This is similar to $\Ical_{4,j}$.
We still have an infrared bound by the paragraph before \eqref{eq:quadratic_form}.
Using the definition of $\Lambda$ in \eqref{eq:def_Lambda}, we can estimate derivatives of $\hat P(k) = i k\cdot \fractional{ t \eta} + \frac t 2 k \cdot \Lam k$ by
\begin{equation} \label{eq:P1}
\begin{aligned}
\abs{ \grad_j \hat P }
&= \bigabs{ i ( \fractional{ t \eta} )_j + t (\Lam k)_j  }
\lesssim 1 + t \abs k M
,
\\
\abs{\grad_j \grad_l \hat P}
&=  t \abs{ \Lam_{jl} }
\le   t  M
.
\end{aligned}
\end{equation}
These imply the same upper bound \eqref{eq:I4_pf_1} (with $k\in \Rd$), and thus \eqref{eq:I4_pf_2}, for $\abs{ \Ical_{2,j}}$.

\smallskip\noindent
\emph{Bound on $\Ical_{3,j}$.}
For this, we distribute $\Delta = \sum_{j=1}^d \grad_j^2$ and bound
\begin{equation}
\abs{\Ical_{3,j} }
\le \sum_{j=1}^d \int_{\abs k \le k_t} \frac{\D k }{(2\pi)^d}
	\abs{   \grad_j^2(e^{-\hat R(k)}) -  \grad_j^2(e^{-\hat P(k)}) } .
\end{equation}
Since $\hat R = \hat P + t \hat R_2$ by the Taylor expansion \eqref{eq:J_small_k},
we can write
\begin{equation}
e^{- \hat R} - e^{- \hat P}
= e^{- \hat P} \hat \psi,
\qquad \text{with} \quad
\hat \psi = e^{-t\hat R_2}-1 .
\end{equation}
Then by calculus rules,
\begin{equation}
\abs{ \grad_j^2 e^{- \hat R} - \grad_j^2  e^{- \hat P} }
\lesssim \abs{ e^{- \hat P} }  \Bigl(
    | \grad_j \hat P|^2 | \hat \psi |
    +
    | \grad_j^2 \hat P| | \hat \psi |
    +
    | \grad_j \hat P| | \grad_j \hat \psi |
    +
    | \grad_j^2  \hat \psi | \Bigr) .
\end{equation}
We bound $\abs{ e^{- \hat P} } \le 1$ using the infrared bound, and we recall the derivatives of $\hat P$ from \eqref{eq:P1}.
For derivatives of $\hat \psi$, we first use \eqref{eq:R2bds} to get
\begin{equation}
    |t\hat R_2(k)| \lesssim t|k|^{2+\zeta},
    \qquad
    |t \grad_j \hat R_{2}(k)| \lesssim t|k|^{1+\zeta},
    \qquad
    |t \grad_j^2 \hat R_{2}(k)| \lesssim t|k|^{\zeta}.
    \qquad
\end{equation}
Since $|k|\le k_t = (\eps t)^{-1/2}$, and since $t^\zeta \ge \eps^{-(2+\zeta)}$ by our hypothesis, we have
\begin{equation}
t \abs k^{2+\zeta}
\le \frac 1 { ( \sqrt \eps )^{2+\zeta} (\sqrt t)^{\zeta} } \le 1 .
\end{equation}
It follows that $|tR_2(k)|$ is bounded, and we have
\begin{equation}
    |\hat\psi(k)| \lesssim t|k|^{2+\zeta},
    \qquad
    |\grad_j \hat\psi(k)| \lesssim t|k|^{1+\zeta},
    \qquad
    |\grad_j^2 \hat\psi(k)| \lesssim t|k|^{\zeta} + (t|k|^{1+\zeta})^2.
\end{equation}
Altogether (using $|k|\le k_t\le 1$ to reduce powers of $\abs k$),
\begin{align}
\abs{ \grad_j^2 e^{- \hat R} - \grad_j^2  e^{- \hat P} }
    & \lesssim
    (1+t|k|)^2 (t|k|^{2+\zeta} )
    +
    (t)(t|k|^{2+\zeta} )
    +
    (1+t|k|) (t|k|^{1+\zeta})
    +
    t|k|^{\zeta}
+ (t|k|^{1+\zeta})^2
        \nnb
& \lesssim
    t|k|^{\zeta}
     +
    t^2|k|^{2+\zeta}
    +
    t^3 |k|^{4+\zeta}
        .
\end{align}
Integrating the above over $\abs k \le k_t = (\eps t)^{-1/2}$ then gives
\begin{align}
     \int_{|k|\le k_t} \frac{\D k }{(2\pi)^d}
	|\nabla_j^2 e^{-\hat R} - \nabla_j^2 e^{-\hat P}|
&\lesssim
	t k_t^{d+\zeta} + t^2 k_t^{d+2+\zeta} + t^3 k_t^{d+4+\zeta}  \nl
&= \Bigl( \frac 1 {\eps^{\frac {d} 2 } }  +  \frac 1 {\eps^{\frac {d+2} 2 } }   +  \frac 1 {\eps^{\frac {d+4} 2 } }   \Bigr)
	\frac{1}{(\eps t)^{\zeta/2}} \frac{t}{t^{d/2}} .
\end{align}
Since $\eps \le 1$, this proves \eqref{eq:I234_claim} and completes the proof of \eqref{eq:large_t_error} for the case $n=2$.

\smallskip\noindent
\emph{Bounds for $n=0,1$.}
The case $n=0$ can be proved along the same lines, by decomposing
\begin{equation}
I_{t}(x)  - \hat g(0)\rho_t(x; \eta , \Lambda )
\end{equation}
into terms like in \eqref{eq:large_t_claim_g}, \eqref{eq:large_t_claim}, \eqref{eq:I2-4} but without the Laplacian.
The estimates are easier, and the factor $t$ present in the bound for $n=2$ is
absent for $n=0$ because it occurred only due to the derivatives of $\hat g$,
$\hat R$, and $\hat P$.
We omit the details.
The case $n=1$ follows from the cases $n=0,2$ and the elementary inequality $a \le \half (1 + a^2)$, applied to $a = \sqrt t / \abs{ x - \integer{t\eta} }$.
\end{proof}

\section{Self-avoiding walk}
\label{sec:SAW}

We now verify the hypotheses and apply the results of Sections~\ref{sec:introduction}--\ref{sec:critical}
to the nearest-neighbour self-avoiding walk on $\Zd$ in dimensions $d \ge 5$.

\subsection{Main result}

Let $d \ge 2$.  An $n$-step nearest-neighbour \emph{self-avoiding walk} on $\Zd$, from $0$ to $x$, is a sequence $w=(w_0,w_1,\ldots, w_n)$
with $w_0=0$, $w_n=x$, $|w_{i}-w_{i-1}|=1$ for $1\le i \le n$, and
$w_i \neq w_j$ for all $i \neq j$.
Let $c_n(x)$ denote the number of $n$-step self-avoiding walks from $0$ to $x$, and let $c_n=\sum_{x\in \Zd}c_n(x)$ denote the number of $n$-step self-avoiding walks starting at $0$.
The \emph{two-point function} is
\begin{equation}
    G_z(x)=\sum_{x\in \Z^d}c_n(x) z^n,
\end{equation}
and the \emph{susceptibility} is
\begin{equation}
    \chi(z) = \sum_{x\in\Zd}G_z(x) = \sum_{x\in \Z^d}c_n z^n.
\end{equation}
Both of the above power series have a finite
$d$-dependent radius of convergence $z_c>0$, which is independent of $x$
\cite[Corollary~3.2.6]{MS93}.

Let $P(x) = \frac{1}{2d} \1\{ \|x\|_1=1 \}$.
For $d \ge 5$ and for $z \in [0,z_c]$,
the lace expansion \cite{BS85,MS93,Slad06} provides an OZ equation
\begin{equation}
    G_z(x) = \delta_{0,x} + (J_z*G_z) (x)
    \quad \text{with} \quad
    J_z = 2dzP + \Pi_z.
\end{equation}
The function $\Pi_z$ is an explicit but intricate infinite-range
$\Zd$-symmetric function that takes both positive and negative values.
For $d \ge 5$, control of the function $\Pi_z$ uniformly in
the closed interval $z \in [0,z_c]$ is obtained in \cite{HS92a,HS92b,Hara08};
we rely heavily on that control.

By definition,
\begin{equation} \label{eq:Omega_SAW}
\Omega_z
= \Bigl\{ \mu\in \Rd : \chi\supmu(z) = \sum_{x\in \Zd} G_z(x) e^{\mu\cdot y} < \infty \Bigr \} .
\end{equation}
Our main result for self-avoiding walk is the following theorem.

\begin{theorem}
\label{thm:SAW-intro}
(i)
The function $G_z$ obeys Assumption~\ref{ass:Omega}
for all $d \ge 2$ and $z \in (0,z_c)$.
\\
(ii)
For $d \ge 5$, and for any fixed $\zeta \in (0, d-4]$,
there exist $\delta,M,\KIR >0$ such that,
uniformly in $z\in [z_c-\delta,z_c)$ and $\mu\in\overline\Omega_z$,
$(J_z\supmu,\delta_0\supmu)\in \Qcal_{M,\KIR,\zeta}$.
\end{theorem}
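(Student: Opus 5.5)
Part (i) is handled directly, without the lace expansion, by the submultiplicativity of self-avoiding walks. The concatenation bound $c_{n+m}(x+y)\le \sum c_n(x)c_m(y)$ is not the inequality we want here. Instead we use the standard supermultiplicativity for bridges, or equivalently the inequality $G_z(x)\ge \mathrm{const}\,G_z(y)G_z(x-y)$ in the form proved in \cite[Theorem~A.2]{MS93}. Together with Fekete's lemma, this gives existence of $\tilde m_z=\lim_n -n^{-1}\log G_z(ne_1)$ and the bound $G_z(x)\le C e^{-\tilde m_z\|x\|_\infty}$, exactly as in Lemma~\ref{lem:m_to_0}. For $z<z_c$ we have $\tilde m_z>0$, since $\chi(z)<\infty$ and $G_z(ne_1)\le \sum_{n'\ge n}c_{n'}z^{n'}$ decays exponentially in $n$. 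Summation then shows $\chi^{(\mu)}(z)<\infty$ for $\|\mu\|_1<\tilde m_z$, so $0\in\Omega_z$. Also $\chi^{(te_1)}=\infty$ for $t>\tilde m_z$, which supplies the required $\mu_1$.

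It remains to show that $\Omega_z$ is open. Fix $\mu\in\Omega_z$ and use the Simon--Lieb-type inequality for SAW: for $x\notin B_R$,
\[
G_z(x)\le\sum_{y\in B_R}\sum_{\|w-y\|_1=1,\,w\notin B_R} G_z(y)\,z\,G_z(x-w),
\]
obtained by cutting the walk at its first exit from $B_R$ and dropping mutual avoidance. After tilting this becomes the same finite-range renewal inequality as in the proof of Theorem~\ref{thm:RW}. Since $\chi^{(\mu)}<\infty$, the sum $\sum_{y\in B_R}G_z^{(\mu)}(y)\cdot O(e^{|\mu|})z$ over boundary-adjacent $y$ tends to $0$ as $R\to\infty$. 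Hence \cite[Proposition~A.3]{Hara90} gives $G_z^{(\mu)}(x)\le Ce^{-c|x|}$, and Cauchy--Schwarz yields a neighbourhood of $\mu$ in $\Omega_z$. $\Zd$-symmetry is immediate.

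Part (ii) is the real work. The conditions on $g=\delta_0$ are trivial, so we need uniform moment bounds and a uniform infrared bound for $J_z^{(\mu)}=2dzP^{(\mu)}+\Pi_z^{(\mu)}$ with $|\mu|\lesssim m_z$ small. The plan has three steps.

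\begin{enumerate}
\item Obtain decay of $\Pi_z$. From the diagrammatic bounds of \cite{HS92a,Hara08} we use $|\Pi_z(x)|\lesssim \|x\|^{-3(d-2)}$ uniformly in $z\le z_c$ and $x\ne0$. We also need a tilted version: the lace-expansion diagrams bounding $\Pi_z$ are products of two-point functions, and tilting by $\mu$ distributes over the diagram, since $e^{\mu\cdot x}$ factorises along any path from $0$ to $x$ through the diagram. So $|\Pi^{(\mu)}_z|$ is bounded by the same diagrams built from $G^{(\mu)}_z$ or $|G_z^{(\mu')}|$ with $\mu'$ in the convex hull of $\pm\mu$.
\item Control those tilted diagrams by a bootstrap in $z$ near $z_c$. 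The bootstrap functions are $\sup_{\mu\in\overline\Omega_z}$ of suitable weighted norms of $G_z^{(\mu)}$, say $\sup_x\|x\|^{d-2}G^{(\mu)}_z(x)$ restricted to a cone, together with a tilted bubble. Continuity comes from $\overline\Omega_z$ shrinking to $\{0\}$ as $m_z\to0$, and the improvement step uses the critical bounds of \cite{Hara08}. Alternatively we can avoid a bootstrap on $G^{(\mu)}$ itself. Here $\mu\cdot x\le m_z|x|_z$, so $G_z(x)e^{\mu\cdot x}\lesssim \|x\|^{-(d-2)}\min\{1,\dots\}$ if we already know subcritical decay $G_z(x)\lesssim \|x\|^{-(d-2)}e^{-m_z|x|_z}$ with a modest polynomial loss. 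This gives $|\Pi^{(\mu)}_z(x)|\lesssim \|x\|^{-3(d-2)}$ uniformly.
\item Convert this decay into the required moment conditions. With $3(d-2)>d+2+\zeta$ for $\zeta\le d-4$, the bounds \eqref{eq:Q_moments} and \eqref{eq:Q_d-1} follow by summation.
\end{enumerate}

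The infrared bound then comes from
\[
\Re[\hat J^{(\mu)}(0)-\hat J^{(\mu)}(k)]=\sum_y J_z(y)\cosh(\mu\cdot y)(1-\cos k\cdot y).
\]
The $P$ term contributes $\ge 2dz\,(1-\hat P(k))\gtrsim|k|^2$. The $\Pi$ term is at most $\sum|\Pi^{(\mu)}(y)||y|^2|k|^2\lesssim\beta|k|^2$ with a small diagrammatic constant, as in \cite{HS92a} where $\|\,|y|^2\Pi_z\|_1$ is small in high dimension. For $d\ge5$ where smallness is not automatic, we use the bounds of \cite{Hara08}/\cite{HS92b} at $z_c$, together with the continuity in $\mu$ of the moment near $\mu=0$.

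The main obstacle is step 2, namely uniform control of tilted lace-expansion diagrams up to the boundary of $\Omega_z$, where $G_z^{(\mu)}$ is not summable. There I would first try the route through the decay $G_z(x)e^{\mu\cdot x}\lesssim \|x\|^{-(d-2)}$. This would be proved via a bootstrap on $\sup_{\mu\in\overline\Omega_z}\sup_x \|x\|^{d-2}G^{(\mu)}_z(x)$, using Theorem~\ref{thm:deconv-intro}-type bounds such as Lemma~\ref{lem:C_bdds} for the improvement step. The Brownian Green function with drift is bounded by $\|x\|^{-(d-2)}$ uniformly in the drift.
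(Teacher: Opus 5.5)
Your part (i) is essentially the paper's argument: Simon--Lieb at the first exit from $B_R$, then exponential decay of $G_z^{(\mu)}$, then openness. There is one misattribution. The FKG-type inequality $G_z(x)\ge c\,G_z(y)G_z(x-y)$ is not available for self-avoiding walk, and \cite[Theorem~A.2]{MS93} is the random-walk statement. What you need is the bridge argument, packaged as \cite[Theorem~4.1.18]{MS93}, which gives $G_z(x)\le\kappa_z e^{-m_z|x|_z}$, and that suffices.

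The genuine gap is Step~2 of part (ii).
\begin{itemize}
\item Your bootstrap quantity $\sup_{\mu\in\overline\Omega_z}\sup_x\|x\|^{d-2}G_z^{(\mu)}(x)$ is infinite for every fixed $z<z_c$ once $d\ge 4$. Take $\mu=m_ze_1\in\partial\Omega_z$: the classical OZ asymptotics give $G_z(ne_1)e^{m_zn}\asymp n^{-(d-1)/2}\gg n^{-(d-2)}$.
\item Quantitatively, in the direction selected by the tilt, $G_z^{(\mu_{\hat x,z})}(x)\asymp\mathbb{C}(x;\eta_{\hat x,z},\Lambda_{\hat x,z})\asymp(m_z|x|)^{(d-3)/2}|x|^{-(d-2)}$ for $m_z|x|\ge 1$ (Lemma~\ref{lem:C_bdds}). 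So your claim that the drifted Brownian Green function is bounded by $\|x\|^{-(d-2)}$ uniformly in the drift is false for $d>3$.
\item For the same reason, the a priori bound $G_z(x)\lesssim\|x\|^{-(d-2)}e^{-m_z|x|_z}$ you propose as an alternative is false.
\item Allowing the correct prefactor does not rescue a pointwise scheme. The envelope $(1\vee m_z|x|)^{(d-3)/2}|x|^{-(d-2)}$ is not square-summable on $\mathbb{Z}^d$. Isotropic pointwise bounds on tilted lines therefore cannot control the tilted diagrams, in which several lines along the top path carry the full tilt. Nor can they yield the pointwise bound $|\Pi_z^{(\mu)}(x)|\lesssim|x|^{-3(d-2)}$ that your Step~3 relies on.
\item A bootstrap in $z$ also has no starting point, since the bounds are only claimed near $z_c$. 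You give no continuity in $z$ of a supremum over the moving set $\overline\Omega_z$.
\end{itemize}

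The missing idea is to bootstrap an $L^2$ quantity, the tilted bubble $\sum_{x\ne 0}G_z^{(\mu)}(x)^2$, which does stay bounded up to $\partial\Omega_z$.
\begin{itemize}
\item One varies $\mu$ over the connected set $\Omega_z$ at fixed $z$, starting from $\mu=0$, where $\mathsf{B}(z_c)\le\kappa_0$ is known.
\item The improvement step is in Fourier space. With $F_z=\delta_0-J_z$, Parseval and $|\hat G_z^{(\mu)}(k)|\le 1/\Re[\hat F_z^{(\mu)}(k)-\hat F_z^{(\mu)}(0)]$ let you compare the tilted bubble with the untilted one.
\item This uses a tilted infrared bound, obtained from a $(2+\varepsilon)$-moment of $\Pi_z^{(\mu)}$. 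That moment in turn comes from the bubble bound, by putting $L^2$ norms on tilted lines and all weights and pointwise decay on untilted lines.
\item The bounds then pass to $\overline\Omega_z$ by Fatou's lemma.
\item The $|y|^{d-1}$ bound in $L^{d/3\wedge 2}$ needs a separate H\"older decomposition of the diagrams, because $L^1$ moments of $\Pi_z^{(\mu)}$ only reach order $d-2$.
\end{itemize}
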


By Theorem~\ref{thm:SAW-intro}, the conclusions of
Theorem~\ref{thm:crossover} and
Corollaries~\ref{cor:OZ} and~\ref{cor:crossover_weak} hold
for dimensions $d \ge 5$,
uniformly in $z \in [z_c-\delta,z_c)$.

The elementary result \cite[Theorem~4.1.18]{MS93} tells us that
there is a norm $|\cdot|_z$ on $\Rd$, satisfying $\|v\|_\infty \le |v|_z \le \|v\|_1$ for every $v\in \Rd$, such that the limit
\begin{equation}
\label{eq:tildemSAW}
    \tilde m_z = \lim_{|x|_z\to \infty}\frac{-\log G_z(x)}{|x|_z}
\end{equation}
exists in $(0,\infty)$.  We also see from \cite[Theorem~4.1.18]{MS93}
that there is a $\kappa_z>0$ such that, for every $x \in \Zd$,
\begin{equation}
    G_z(x) \le \kappa_z e^{- \tilde m_z|x|_z}.
\end{equation}
Consequently, $\chi^{(te_1)}(z) < \infty$ for $t<\tilde m_z$, and
$\chi^{(te_1)}(z) > \infty$ for $t>\tilde m_z$.  Therefore, $\tilde m_z$ coincides with the quantity $m_{G_z}$ defined in \eqref{eq:def_mS}.
We denote this quantity $\tilde m_z=m_{G_z}$ henceforth by $m_z$.
We know from \cite[Corollary~4.1.15]{MS93} that
\begin{equation}
\label{eq:mass_asymp}
    \lim_{z\to z_c}m_z = 0,
\end{equation}
so all results of Section~\ref{sec:critical} also apply.
By comparing the OZ decay of Corollary~\ref{cor:OZ} with \eqref{eq:tildemSAW}, we see that $| x |_z$ coincides with the norm $|x|_{G_z}$ of Corollary~\ref{cor:norm}.

We list the results with comparison to previous literature:
\begin{itemize}
\item
Precise asymptotics are given in Theorem~\ref{thm:crossover}:
Uniformly in  $z\in[z_c-\delta,z_c)$, for some $\eps >0$,
\begin{equation} \label{eq:SAW-asymp-intro}
G_z(x)
= \C(x; \eta_{\hat x,z} , \Lambda_{\hat x,z}) e^{-\mz  |x|_z} [ 1 + O(|x|^{-\eps})]
\end{equation}
as $\abs x \to \infty$,
with $|\eta_{\hatxz}| \asymp m_z$ and $\hat x \cdot \Lam_{\hatxz}\inv \hat x\asymp 1$.

\item
The error term in \eqref{eq:SAW-asymp-intro} remains uniform in the limit $z\to z_c$.  In this limit, $m_z\to 0$, $\eta_{\hat x,z}\to 0$,
and $\Lambda_{\hat x,z} \to (\sigma_{z_c}^2/d){\rm Id}$,
with $\sigma_{z_c}^2 = \sum_{x\in \Zd}|x|^2 J_{z_c}(x)$.  It then follows from
\eqref{eq:Ccrit} that
\begin{equation}
\label{eq:SAWcrit}
G_{z_c}(x) =
    \frac{d\Gamma(\frac{d-2}{2})}{2\pi^{d/2}}
    \frac{1}{\sigma_{z_c}^2|x|^{d-2}}
    [ 1 + O(|x|^{-\eps})].
\end{equation}
This critical decay was proved previously in \cite{Hara08,LS24a}.

\item
By Corollary~\ref{cor:OZ}, $G_z$ exhibits the OZ decay
\begin{equation}
\label{eq:SAW-OZ}
G_z(x)
=
\frac 1 { (2\pi)^{(d-1)/2} \sqrt{\det\Lambda_{\hatxz}} }
	\frac{1}{ (\hat x \cdot \Lam_{\hatxz}\inv \hat x)^{1/2} }
    \frac{ \abs {\eta_{\hatxz}}^{(d-3)/2}}{ \abs  x^{(d-1)/2}}
    e^{-\mz  \abs x_z}[1+o(1)].
\end{equation}
It was proved decades ago that, for all $d \ge 2$ and all
\emph{fixed} $z\in (0,z_c)$, there exists  $c_{d,z,\hat x}>0$ such that
\begin{equation}
\label{eq:SAW-CC}
    G_z(x) \sim c_{d,z,\hat x} \frac{1}{|x|^{(d-1)/2}}e^{-\mz |x|_z}
\end{equation}
as $|x|\to \infty$,
first for $x$ on axis \cite{CC86,CC86b,MS93} and later for all $x \in \Rd$ \cite{Ioff98}.
However, the results of \cite{CC86,CC86b,Ioff98,MS93} do not control the precise $z$-dependence of the prefactor
$c_{d,z,\hat x}$ present in \eqref{eq:SAW-OZ}, do not include the uniformity in $z$ present in \eqref{eq:SAW-asymp-intro}, and do not include the critical decay for $z=z_c$.

\item
By Theorem~\ref{thm:critical_limit}, Euclidean invariance of the norm emerges at the critical point:
\begin{equation} \label{eq:crit_limit-SAW}
\abs x_z = \norm x_2 [ 1 + O(\mz ^{ 2 } ) ]
	\qquad (z\to z_c).
\end{equation}

\item
It is proved in \cite{HS92a,HS92b} that, for any $\eps < \frac{d-4}{2}\wedge 1$,
\begin{equation}
\label{eq:chimSAW}
    \chi(z)
    =
    \frac{1}{\partial_z \hat J_{z_c}(0)} \frac{1}{1-z/z_c}[1+O(1-z/z_c)^\eps]
    	\qquad (z\to z_c).
\end{equation}
By Corollary~\ref{cor:chim}, we therefore have
\begin{equation}
\mz^2 = \frac{2d \, \partial_z \hat J_{z_c}(0)}{\sigma_{z_c}^2} (1-z/z_c) [ 1+O(1-z/z_c)^\eps ]
	\qquad (z\to z_c),
\end{equation}
which agrees with \cite{HS92a,HS92b}.

\item
By Corollary~\ref{cor:crossover_weak},
\begin{equation}
\label{eq:SAW-asymp}
    G_z(x) \asymp
    \frac{    \max\{ 1 , \mz  |x|_z \} ^{(d-3)/2} } { \abs x_z^{d-2} }
	e^{- m_z| x|_z}
	\qquad (z \in [z_c-\delta,z_c),\ |x|_z \ge R).
\end{equation}
Such near-critical bounds have been studied previously for the self-avoiding walk \cite{Slad23_wsaw, Liu25EJP, DP25a},
the Ising model \cite{DP25-Ising}, and percolation \cite{DP25b, HMS23}.  The bound \eqref{eq:SAW-asymp} improves on previous bounds
of this type by having the correct exponential decay rate
in both the upper and lower bounds, rather than having different
(undetermined) constants in the exponents.

\item
By Theorem~\ref{thm:xiphi} and \eqref{eq:chimSAW}, for any $\phi>0$ the correlation length of order $\phi$ is given asymptotically by
\begin{equation}
\label{eq:xiphiSAW}
    \xi_\phi(z)
    \sim
    \Bigl(\frac{A_\phi}{A_0}\Bigr)^{1/\phi} \frac 1 \mz
    \sim
    \Bigl(\frac{A_\phi}{2}\Bigr)^{1/\phi}
    \frac{\sigma_{z_c} }{(2d \, \partial_z \hat J_{z_c}(0))^{1/2}} \frac{1}{(1-z/z_c)^{1/2}}.
\end{equation}
This extends the result proved in \cite{HS92a,HS92b} for the special case $\phi=2$ to all $\phi>0$, and exposes the limiting universal ratio for $\xi_\phi/\xi_\psi$.
A variant of \eqref{eq:xiphiSAW} was proved
for $4$-dimensional
continuous-time weakly self-avoiding walk and $n$-component $|\varphi|^4$ spin models (all $n \ge 1$) in \cite{BSTW-clp}.  Despite the logarithmic corrections present for $d=4$,
the same limiting universal ratio for $\xi_\phi/\xi_\psi$ occurs.
\end{itemize}

We restate Theorem~\ref{thm:SAW-intro}  in more explicit detail as Theorem~\ref{thm:SAW}.

\begin{theorem}
\label{thm:SAW}
(i)
The function $G_z$ obeys Assumption~\ref{ass:Omega}
for all $d \ge 2$ and $z \in (0,z_c)$.
\\
(ii)
For $d \ge 5$, and for any fixed $\zeta \in (0,d-4]$.
there exist $\delta,M,\KIR >0$ such that,
uniformly in $z\in [z_c-\delta,z_c)$ and $\mu\in\overline\Omega_z$,
\begin{gather}
\label{eq:Q_moments-SAW}
\norm{ J_z\supmu(y) }_{1} ,\,
\bignorm{ \abs y ^{2+\zeta}  J_z\supmu(y) }_{1}
	\le M ,
\\
\label{eq:Q_infrared-SAW}
\Re[ \hat J_z\supmu(0)  - \hat J_z\supmu(k) ]
	\ge \KIR \abs k ^2 	
	\qquad (k\in \Td) ,
\\
\label{eq:pnorm-SAW-ii}
\bignorm{ \abs y^{d-1} J_z\supmu(y) }_{ \frac d 3 \wedge 2} \le M .
\end{gather}
\end{theorem}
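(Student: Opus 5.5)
Part (i) is elementary. Assumption~\ref{ass:Omega} requires $\Zd$-symmetry, which is clear. It also requires some $\mu_1 e_1\notin\Omega_z$: this holds because $c_n(ne_1)\ge 1$ (the straight walk), so $\chi^{(te_1)}(z)\ge\sum_n z^n e^{tn}=\infty$ once $t\ge\log(1/z)$. Openness of $\Omega_z$ follows the Simon--Lieb route used in the proof of Theorem~\ref{thm:RW}. For $\mu\in\Omega_z$, a walk from $0$ to $x\notin B_R$ first exits $B_R$ through some bond $\{y,w\}$, and dropping the mutual avoidance gives
\[
G_z^{(\mu)}(x)\le\sum_{y\in B_R}\sum_{w\notin B_R,|w-y|=1}G_z^{(\mu)}(y)\,z e^{\mu\cdot(w-y)}G_z^{(\mu)}(x-w).
\]
Since $\chi^{(\mu)}(z)<\infty$, the sum $\sum_{y\in\partial B_R}G^{(\mu)}_z(y)$ tends to $0$ as $R\to\infty$, so the prefactor is $<1$ for large $R$. \cite[Proposition~A.3]{Hara90} then gives $G_z^{(\mu)}(x)\le Ce^{-c|x|}$. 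Hence $\chi^{(\mu+\nu)}(z)<\infty$ for $|\nu|<c$, and $0\in\Omega_z$ since $\chi(z)<\infty$ for $z<z_c$.

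Part (ii) uses the lace expansion control of \cite{HS92a,Hara08}. These give, uniformly in $z\le z_c$, diagrammatic bounds on $|\Pi_z(y)|$ in terms of convolutions of two-point functions, for instance $|\Pi_z(y)|\lesssim G_z(y)^2\cdot(\text{bubble})$ at leading order. Their consequence is $|\Pi_{z}(y)|\lesssim |y|^{-3(d-2)}$. The plan is to push these diagrammatic bounds through the tilt. In each lace diagram the endpoints are joined by several disjoint two-point-function paths. The tilt $e^{\mu\cdot y}$ can be placed on one path from $0$ to $y$ (or split via $e^{\mu\cdot y}=\prod e^{\mu\cdot(\text{increments})}$ along one route), while the others stay untilted. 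This yields $|\Pi_z^{(\mu)}(y)|$ bounded by the untilted diagrams with exactly one line replaced by $G_z^{(\mu)}$.

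The key input is then a decay bound on $G_z^{(\mu)}$ for $\mu\in\overline\Omega_z$, uniform in $z$ near $z_c$. A crude critical-type bound such as $G_z^{(\mu)}(x)\lesssim|x|^{-(d-4)}$, or even an $L^p$ bound, should suffice, since the remaining untilted lines supply decay $|x|^{-2(d-2)}$. The product then decays like $|y|^{-(3d-8)}$, which gives $\sum|y|^{2+\zeta}|\Pi^{(\mu)}_z(y)|<\infty$ for $\zeta\le d-4$. The same bound gives the $\frac d3\wedge2$ norm of $|y|^{d-1}\Pi^{(\mu)}_z$, and $J_z^{(\mu)}=2dzP^{(\mu)}+\Pi_z^{(\mu)}$ with $P^{(\mu)}$ trivially bounded. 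The infrared bound \eqref{eq:Q_infrared-SAW} follows by writing
\[
\Re[\hat J^{(\mu)}(0)-\hat J^{(\mu)}(k)]=\sum_y J_z(y)\cosh(\mu\cdot y)(1-\cos k\cdot y)
\]
and comparing with $\mu=0$. The difference involves $(\cosh-1)$, which is small because $|\mu|\le$ a multiple of $m_z\to0$ (Lemma~\ref{lem:Omega_convex}) and the moments are bounded. This leaves the untilted infrared bound from \cite{HS92a}, up to an error $o(1)|k|^2$; this is why $\delta$ is needed.

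The main obstacle is the uniform bound on the tilted two-point function $G_z^{(\mu)}$ up to the boundary $\mu\in\partial\Omega_z$, where $\chi^{(\mu)}=\infty$. This is circular, since such a bound would essentially come from the crossover theorem. I would attempt a bootstrap in the spirit of Hara--Slade in the variable $(z,\mu)$. Define $f(z,\theta)=\sup_{x}|x|^{d-2}G_z^{(\theta\mu)}(x)$, together with suitable tilted bubble norms. These are continuous in $\theta\in[0,1)$ and small at $\theta=0$. For $\theta<1$, apply Lemma~\ref{lem:It_decay} and Theorem~\ref{thm:deconv-intro} (Gaussian lemma with drift, whose bounds are uniform in $\eta$) to improve the bound on $f$, excluding the forbidden region. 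Then let $\theta\to1$ by monotone convergence. The delicate point is that $\sum_y J^{(\theta\mu)}$ approaches $1$, so the improvement must use only the infrared lower bound and not a mass gap.
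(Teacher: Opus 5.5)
Part (i) is fine. It is essentially the paper's argument; your straight-walk bound for $\mu_1e_1\notin\Omega_z$ is a slightly more elementary substitute for the appeal to \cite[Theorem~4.1.18]{MS93}. Your derivation of \eqref{eq:Q_infrared-SAW} from tilted moment bounds is the paper's Lemma~\ref{lem:SAW_infrared}.

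\textbf{The gap is in (ii).} The bootstrap quantity $f(z,\theta)=\sup_x|x|^{d-2}G_z^{(\theta\mu)}(x)$ cannot be bounded uniformly as $\theta\uparrow1$, because the bound it would deliver is false.
\begin{itemize}
\item Take $\mu=m_ze_1$, which lies in $\partial\Omega_z$ by openness of $\Omega_z$ and the definition of $m_z$.
\item For each fixed $z<z_c$, the classical on-axis OZ asymptotics \cite{CC86,MS93} give $G_z(ne_1)e^{m_zn}\sim c_z n^{-(d-1)/2}$. Hence $n^{d-2}G_z^{(m_ze_1)}(ne_1)\to\infty$ whenever $d\ge4$.
\item Quantitatively, the tilted function has OZ size $m_z^{(d-3)/2}|x|^{-(d-1)/2}$ for $|x|\gg m_z^{-1}$ along the direction dual to $\mu$. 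This also rules out your fallback $G_z^{(\mu)}\lesssim|x|^{-(d-4)}$ once $d\ge8$.
\item Since $G_z^{(\theta\mu)}\to G_z^{(\mu)}$ pointwise, any bound on $f$ uniform in $\theta<1$ would pass to $\theta=1$. So no improvement step can close.
\item Indeed, Lemma~\ref{lem:It_decay} and Theorem~\ref{thm:deconv-intro} reproduce exactly this $|x|^{-(d-1)/2}$ behaviour, coming from times $t\approx|x|/|\eta|$. Invoking them also presupposes \eqref{eq:Q_d-1}, i.e.\ the very bound \eqref{eq:pnorm-SAW-ii} you are trying to prove.
\end{itemize}
Separately, your exponent count fails in low dimensions. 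Decay $|y|^{-(3d-8)}$ makes $\sum_y|y|^{2+\zeta}|\Pi_z^{(\mu)}(y)|$ finite only for $\zeta<2d-10$. That gives nothing at $d=5$ and misses $\zeta=2$ at $d=6$.

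\textbf{The missing idea is to control the tilted line only in $\ell^2$.} The paper bootstraps the tilted bubble $\sum_{x\neq0}G_z^{(\mu)}(x)^2$, which does stay bounded up to $\partial\Omega_z$ when $d\ge5$.
\begin{itemize}
\item Assume this bubble is at most $\kappa$ with $\kappa(1+\kappa)<1$. Put $e^{\mu\cdot x}$ on a single line of each lace diagram, estimate that line in $\ell^2$, and estimate the untilted lines with $G_z\le G_{z_c}\lesssim|x|^{-(d-2)}$. This yields $\||x|^a\Pi_z^{(\mu)}\|_1\le K_{a,1}$ for $a\le d-2$ (Lemma~\ref{lem:SAW_diagram-p1}).
\item The bubble is then improved via Parseval and $|\hat G_z^{(\mu)}(k)|\le 1/\Re[\hat F_z^{(\mu)}(k)-\hat F_z^{(\mu)}(0)]$, with $F_z=\delta_0-J_z$. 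This uses only $\hat F_z^{(\mu)}(0)=1/\chi^{(\mu)}(z)\ge0$, so no mass gap is needed.
\item Combined with the tilted/untilted infrared comparison and $|\mu|\lesssim m_z\to0$, continuity in $\mu$ on the connected set $\Omega_z$ closes the bootstrap. Fatou's lemma extends it to $\overline\Omega_z$ (Lemma~\ref{lem:SAW_bubble}, Corollary~\ref{cor:SAW-bubble}).
\item Finally, \eqref{eq:pnorm-SAW-ii} needs $a=d-1>d-2$, which is beyond the $\ell^1$ estimate. It requires a genuinely new $\ell^p$ diagrammatic bound (Proposition~\ref{prop:SAW-Pip}). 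That bound uses a triangle-inequality decomposition isolating a three-line piece. On that piece it applies H\"older with $\ell^\infty$ on $|x|^{d-2}G_z$, $\ell^q$ on $|x|^{s}G_z$, and $\ell^2$ on the tilted line. Your proposal does not supply this.
\end{itemize}
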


The easy proof of Theorem~\ref{thm:SAW}(i) is given in Section~\ref{sec:SAWi}.
The proof of Theorem~\ref{thm:SAW}(ii) requires more effort.  We prove the bounds
\eqref{eq:Q_moments-SAW}--\eqref{eq:Q_infrared-SAW} via minor extensions of the proofs of
\cite[Theorem~2.5]{HS92a} and \cite[Lemma~3.10]{HS92a}.
Those results involve tilting only by on-axis $\mu$, and
our task is to extend to general $\mu$.
 We do so in Section~\ref{sec:SAWboot}.

On the other hand, the bound \eqref{eq:pnorm-SAW-ii} has not appeared previously
in the literature.  Its proof requires a new diagrammatic estimate, which we
carry out in Section~\ref{sec:SAW-Pip}.
In fact, a stronger bound on the $L^p$ norm of $\abs y^{d-1} J_z\supmu(y)$ is proved in Proposition~\ref{prop:SAW-Pip}:
it is finite whenever $p\ge 1$ satisfies $p > \frac{2d}{3d - 6}$
(this is stronger because $\frac{2d}{3d - 6} < \frac d3 \wedge 2$ for all $d>4$ and $L^p(\Zd)$ norms are decreasing in $p$).

\subsection{Proof of Theorem~\ref{thm:SAW}(i)}
\label{sec:SAWi}

\begin{proof}[Proof of Theorem~\ref{thm:SAW}(i)]
Let $d \ge 2$ and $z \in (0,z_c)$.
By definition, $G_z$ is a $\Zd$-symmetric function.
As discussed below \eqref{eq:tildemSAW},
$\chi^{(te_1)}(z)$ is finite for $t<m_z$ ($=\tilde m_z$) and is infinite
for $t>m_z$.

To complete the verification of Assumption~\ref{ass:Omega}, it remains to show that $\Omega_z$ is an open set.  As in the proof of Theorem~\ref{thm:RW},
we apply a Simon--Lieb argument.  Let $B_R =\{y\in\Zd : \|y\|_\infty \le R\}$, and
let $\partial B_R=\{y\in\Zd : \|y\|_\infty = R\}$ be the boundary
of the box $B_R$.
For any $R \ge 1$, any $x\not\in B_R$, and any $\mu \in \R^d$, we have
(as in \cite[(6.5.8)]{MS93})
\begin{equation}
\label{eq:SAW-LS}
    G\supmu_z(x) \le \sum_{y\in \partial B_R}G_z\supmu(y)G_z\supmu(x-y).
\end{equation}
Suppose now that $\mu\in\Omega_z$.
Since $\chi\supmu (z) <\infty$, it follows from \eqref{eq:SAW-LS} and
\cite[Lemma~A.1]{MS93} that there are constants $c,C>0$ (depending on $z$ and $\mu$)
such that
$G_z\supmu(x) \le C e^{-c\|x\|_2}$ for all $x$.  Therefore, by the Cauchy--Schwarz inequality,
\begin{equation} \label{eqref:SL_pf}
    G_z^{(\mu+\nu)}(x)
    = G_z^{(\mu)}(x) e^{\nu \cdot x}
    \le C e^{-(c-\|\nu\|_2)\|x\|_2}.
\end{equation}
Summation then shows that $\chi^{(\mu+\nu)}(z) <\infty$ when $\|\nu\|_2<c$, so $\Omega_z$ is an open set.
\end{proof}

\subsection{Bootstrap argument: proof of \eqref{eq:Q_moments-SAW}--\eqref{eq:Q_infrared-SAW}}
\label{sec:SAWboot}

The (open) \emph{bubble diagram} is defined by
$
    \bubble(z) = \sum_{x\neq 0} G_z(x)^2
$.
We first prove a version of \eqref{eq:Q_moments-SAW} which is conditional on a bound on the tilted bubble diagram
\begin{equation} \label{eq:def_bubble}
    \Bsupmu(z) = \sum_{x\neq 0} G\supmu_z(x)^2
    = \sum_{x\neq 0} \bigl( G_z(x) e^{\mu\cdot x} \bigr)^2
    .
\end{equation}
Although the proof of the bound is initially conditional,
it will be established unconditionally in due course via a
bootstrap argument varying $\mu \in \Omega_z$.
The fact that the bubble tilted by $\mu \in \overline\Omega_z$ can be finite
may be understood intuitively from the fact (which we ultimately prove) that
$G_z\supmu(x)$ has exponential decay $\exp[-(\muxz-\mu)\cdot x]$, and due to
the optimality of $\muxz$ the decay rate is
positive except for the rare $x$ with $\muxz=\mu$.

For $\mu = 0$, it is proved in \cite[Theorem~1.1]{HS92b}
that $\bubble(z_c) \le 0.493$ for all $d\ge 5$.
This is important for the results of \cite{HS92a}, e.g., it
proves the statement of \cite[Theorem~2.5]{HS92a} that there is a positive constant $\kappa_0$, satisfying $\kappa_0(1+\kappa_0) < 1$, such that
\begin{equation} \label{eq:bubble^0}
    \B^{(0)}(z_c) = \B(z_c) \le \kappa_0.
\end{equation}

\begin{lemma} \label{lem:SAW_diagram-p1}
Let $d \ge 5$, $z \le z_c$, $\mu \in \Rd$, and $a\in [0,d-2]$.
Suppose there is a constant $\kappa > 0$, with $\kappa(1+\kappa) < 1$,
such that $\Bsupmu(z)\le \kappa$.
Then
there is a constant $K_{a,1}$ (depending on $\kappa$) such that
\begin{equation} \label{eq:Pi_moments}
\bignorm{ \abs x^a  \Pi_z \supmu(x) }_1  \le  K_{a,1} .
\end{equation}
\end{lemma}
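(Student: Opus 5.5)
We follow the standard diagrammatic estimates of the self-avoiding walk lace expansion, as in \cite[Section~4]{MS93} and \cite[Theorem~2.5]{HS92a}, now carried out for tilted quantities. Write $\Pi_z=\sum_{N\ge1}(-1)^N\Pi_z^{(N)}$. Each $\Pi_z^{(N)}(x)$ is bounded by a sum over diagrams with $2N-1$ loops, built from $2N-1$ two-point functions $G_z$. The tilt $e^{\mu\cdot x}$ factorises along any path of lines from $0$ to $x$ in the diagram. In the standard $N$-loop diagram, the displacement $x$ can be written as a sum of displacements along a chosen ``top'' chain of lines from $0$ to $x$. Putting $e^{\mu\cdot(\text{displacement})}$ on each line of that chain turns those lines into $G_z\supmu$, while the remaining lines stay untilted. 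Since $G_z\le G_z^{(\mu)}+G_z^{(-\mu)}$ is not needed, we instead use the $\Zd$-symmetry of $G_z$ to replace untilted lines by tilted ones where convenient. More directly, in each loop we pair one tilted line with one untilted line, or with a line tilted by $-\mu$ under reflection. Cauchy--Schwarz then bounds each loop by $\Bsupmu(z)^{1/2}\B(z)^{1/2}$ or by $\Bsupmu(z)$.

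With this bookkeeping, the $|x|^0$ bound follows as in \cite[Theorem~4.2.x]{MS93}. First, $\|\Pi_z^{(1)}\supmu\|_1$ involves only $x=0$ and is harmless. For $N\ge2$ we get $\|\Pi_z^{(N)}\supmu\|_1\le C\,\kappa'^{\,N-1}$, where $\kappa'$ is a combination of the tilted and untilted bubbles. Here we use $\B(z)\le\B(z_c)\le\kappa_0$, and we need $\kappa'(1+\kappa')<1$, or more simply geometric convergence. The hypothesis $\kappa(1+\kappa)<1$ plays exactly the role of the condition used for summability over $N$. The extra factor $(1+\kappa)$ arises from the ``open versus closed'' bubble, i.e.\ from including $x=0$ via $\delta_0$.

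For the moment weight $|x|^a$, we split $|x|\le\sum_i|\omega_i|$ over the $2N-1$ lines of the chain. By H\"older, $|x|^a\le(2N-1)^{a}\max_i|\omega_i|^a$, so the weight can be placed on a single line at the cost of a polynomial factor in $N$. That factor is absorbed by the geometric decay. The line carrying $|y|^aG_z\supmu(y)$ must then be estimated inside its loop. The relevant quantity is a weighted bubble $\sum_y|y|^aG_z\supmu(y)G_z(y)$, or a weighted triangle-type convolution.

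This weighted estimate is the main obstacle. For $a$ close to $d-2$, the weighted bubble barely fails to be controlled by plain bubble bounds. We would handle it by using \cite[Lemma~A.?]{MS93}-type bounds, $G_z(y)\lesssim|y|^{-(d-2)}$ uniformly for $z\le z_c$ (from \cite{Hara08}), on the untilted line. That leaves $\sum_y|y|^{a-(d-2)}G_z\supmu(y)\cdot(\ldots)$. Since $a\le d-2$, the factor $|y|^{a-(d-2)}$ is at most $1$, so the sum is bounded by a tilted bubble or convolution that is controlled by $\kappa$ via Cauchy--Schwarz. Combining these bounds with the sum over $N$ gives $K_{a,1}$ depending on $\kappa$ (and $d,a$).
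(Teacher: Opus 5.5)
Your outline (tilt along one path of each lace diagram, move the weight onto single lines at a cost polynomial in $N$, use $G_z\le G_{z_c}\lesssim|y|^{-(d-2)}$ from \cite{Hara08}, bound the rest by bubbles, sum over $N$ using $\kappa(1+\kappa)<1$, and treat the $N=1$ term separately) has the same structure as the argument the paper imports from \cite[Proposition~3.2]{Liu25EJP}. However, the step where you place the weight fails as written.

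You split $|x|$ along the same chain that carries the tilt, so the weighted factor is $|y|^aG_z^{(\mu)}(y)$. The hypothesis $\Bsupmu(z)\le\kappa$ gives only $L^2$ control of $G_z^{(\mu)}$ and no pointwise decay. In fact, for $\mu=\mu_{\hat x,z}\in\partial\Omega_z$ (where the lemma is ultimately applied), $G_z^{(\mu)}(nx)$ decays only like the Ornstein--Zernike power $n^{-(d-1)/2}$. So $|y|^aG_z^{(\mu)}(y)$ is unbounded for $a>(d-1)/2$, in particular for $a$ near $d-2$ when $d\ge5$.

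Your remedy is to apply $G_z(y)\lesssim|y|^{-(d-2)}$ to ``the untilted line''. This cancels $|y|^a$ only if the loop contains an untilted line with the \emph{same} displacement $y$, i.e.\ only in your weighted-bubble case. In the ``weighted triangle-type'' case there is no such twin, and a pointwise bound on a line with a different displacement leaves $|y|^a$ untouched.

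This case cannot be avoided by choosing the chain cleverly. The $N$-loop diagram has vertices $y_1=0,\dots,y_N=x$, and the only doubled lines are $y_1y_2$ and $y_{N-1}y_N$. Hence for $N\ge4$ every path from $0$ to $x$ must use a single line. For example, for $N=4$ the lines are $0u$ twice, $0v$, $uv$, $ux$, and $vx$ twice.

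The missing idea is to weight a path that is edge-disjoint from the tilted one, as in \cite[Section~3.2]{Hara08}. Every diagram for $\Pi_z^{(N)}$ with $N\ge2$ has at least two edge-disjoint paths from $0$ to $x$. Put $e^{\mu\cdot x}$ multiplicatively on the top path, and put $|x|^a\le N^a\sum_i|w_i|^a$ on the bottom path, with $w_i$ the displacements of the bottom-path lines. Then:
\begin{itemize}
\item every weighted line is untilted, and $\sup_y|y|^aG_z(y)\le 1+\sup_y|y|^{d-2}G_{z_c}(y)<\infty$ for $a\in[0,d-2]$;
\item the remaining diagram is bounded with an $L^2$ norm on each line, each contributing $\sqrt\kappa$ or $\sqrt{1+\kappa}$;
\item this gives a bound $CN^{a+1}(\sqrt{\kappa})^{N-c}(\sqrt{1+\kappa})^{N-c'}$ with fixed $c,c'$, summable in $N$ since $\kappa(1+\kappa)<1$.
\end{itemize}
In organising those $L^2$ estimates, every tilted line must be paired with another line and never summed alone. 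This is because $\sum_yG_z^{(\mu)}(y)=\chi^{(\mu)}(z)$ is not controlled by the hypothesis.

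Two smaller corrections. First, $G_z(y)\le G_z^{(\mu)}(y)$ is false pointwise. What you actually need is $\B(z)\le\Bsupmu(z)$, which follows from $\sum_{x\ne0}G_z(x)^2e^{2\mu\cdot x}=\sum_{x\ne0}G_z(x)^2\cosh(2\mu\cdot x)$; so you do not need $\B(z_c)\le\kappa_0$. Second, $\Pi_z^{(N)}$ has $N$ loops and $2N-1$ lines, not $2N-1$ loops.
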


\begin{proof}
A proof for the case $a > 0$ and $\mu$ on-axis is given in \cite[Proposition~3.2]{Liu25EJP}.
The same proof works for general tilt under our assumption that $\Bsupmu(z)\le \kappa$, with the only difference being that we now distribute $e^{\mu \cdot x}$ instead of $e^{m x_1}$ along one side of the diagram.
For $a=0$, we need to additionally estimate the $N=1$ term of the lace expansion. It is given explicitly as $\Pi_{z,1}(x)=\delta_{0,x} 2dz G_z(e_1)$, and we simply use the bound
\begin{equation}
\bignorm{ \abs x^a \Pi_{z,1}(x)e^{\mu\cdot x} }_1
\le \1_{a=0} 2dz_c G_{z_c}(e_1) ,
\end{equation}
which is finite by \cite{HS92a}.
This concludes the proof.
\end{proof}

Since $J_z\supmu = 2dz \Dnn\supmu + \Pi_z \supmu$, an immediate consequence
of Lemma~\ref{lem:SAW_diagram-p1} is that
\begin{equation} \label{eq:J_moments}
\bignorm{ \abs x^a  J_z \supmu(x)  }_1
\le 2dz_c e^{\abs \mu} + K_{a,1}
\end{equation}
under the same hypotheses.

\begin{lemma} \label{lem:SAW_infrared}
Under the hypotheses of Lemma~\ref{lem:SAW_diagram-p1},
for any $\eps \in (0,1]$.
there exists $C_\eps > 0$
for which
\begin{equation}
\bigabs{ \Re[ \hat J_z\supmu (0) - \hat J_z\supmu(k) ]
-[ \hat J_z (0) - \hat J_z(k)  ] }
\le  C_\eps \abs \mu^\eps e^{\abs \mu} ( 1 - \hat \Dnn(k) )
	\qquad (k\in \Td) .
\end{equation}
\end{lemma}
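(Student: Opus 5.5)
We need to bound the difference between the real part of the tilted and untilted infrared quantities. By $\Zd$-symmetry of $J_z$, the real part of the tilted quantity is
\begin{equation}
\Re[\hat J_z\supmu(0)-\hat J_z\supmu(k)] = \sum_{y\in\Zd} J_z(y)\cosh(\mu\cdot y)\,[1-\cos(k\cdot y)],
\end{equation}
as in \eqref{eq:RW_infrared_pf}. The quantity to be estimated is therefore
\begin{equation}
\sum_{y} J_z(y)\,[\cosh(\mu\cdot y)-1]\,[1-\cos(k\cdot y)].
\end{equation}
Since $J_z = 2dzP+\Pi_z$, we treat the two pieces separately.

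For the nearest-neighbour piece the sum is explicit. With $a_j := 2dz[\cosh\mu_j-1]/(2d)$, the $P$-contribution is
\begin{equation}
\sum_j a_j\,\cdot 2(1-\cos k_j).
\end{equation}
We use $\cosh t-1\le \abs t^{\eps}\cdot\abs t^{2-\eps}\cosh t$, together with the observation that for bounded $\abs\mu$ the factor $\abs\mu^{2-\eps}$ is harmless, or more crudely $\cosh t-1\le C_\eps\abs t^\eps e^{\abs t}$ for all $t$. This gives a bound of $C_\eps\abs\mu^\eps e^{\abs\mu}\cdot 2dz_c(1-\hat P(k))$.

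For the $\Pi_z$ piece we take absolute values, since $\Pi_z$ has no sign. Two elementary inequalities are used. The first is $\cosh(\mu\cdot y)-1\le C_\eps\abs\mu^\eps\abs y^\eps\cosh(\mu\cdot y)$, which follows from $\cosh s-1\le \min(s^2,1)\cosh s\cdot 2\le 2\abs s^\eps\cosh s$. The second is the standard bound
\begin{equation}
1-\cos(k\cdot y)\le C\abs y^2(1-\hat P(k)),
\end{equation}
which holds because $1-\hat P(k)\asymp\abs k^2$ on $\Td$ and $1-\cos(k\cdot y)\le\frac12\abs k^2\abs y^2$; alternatively one decomposes $y$ into nearest-neighbour steps and uses the subadditivity of $\sqrt{1-\cos}$. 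Combining, the $\Pi$-contribution is at most
\begin{equation}
C_\eps\abs\mu^\eps(1-\hat P(k))\sum_y\abs y^{2+\eps}\abs{\Pi_z(y)}\cosh(\mu\cdot y).
\end{equation}
By symmetry, $\sum_y\abs y^{2+\eps}\abs{\Pi_z(y)}\cosh(\mu\cdot y)=\norm{\abs y^{2+\eps}\Pi_z\supmu}_1$, and this is bounded by $K_{2+\eps,1}$ from Lemma~\ref{lem:SAW_diagram-p1}, since $2+\eps\le 3\le d-2$ for $d\ge5$.

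Note that the lemma's factor is $e^{\abs\mu}$ rather than a constant. For the $\Pi$-term a constant suffices, because $e^{\abs\mu}\ge1$, so the bound above already fits the stated form.

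The main obstacle is making sure the $\abs y^{\eps}$ gained from $\cosh-1$ combined with $\abs y^2$ stays within the moments controlled by Lemma~\ref{lem:SAW_diagram-p1}. It does, since $a=2+\eps\le d-2$. One must also check that the absolute-value bound on the tilted $\Pi$ is indeed what that lemma controls. The proof of Lemma~\ref{lem:SAW_diagram-p1} bounds diagrams dominating $\abs{\Pi_z}$, so this holds.
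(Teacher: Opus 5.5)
Your proposal is correct and follows essentially the paper's proof: the same symmetrised identity, the same two elementary inequalities $\cosh s-1\lesssim\abs s^{\eps}\cosh s$ and $1-\cos(k\cdot y)\lesssim\abs k^2\abs y^2$, and the same appeal to the $(2+\eps)$-moment bound with $2+\eps\le d-2$. The only cosmetic difference is that you split off the nearest-neighbour part $2dzP$ and bound it explicitly, whereas the paper handles all of $J_z$ at once via \eqref{eq:J_moments}. That bound is itself just your $P$-contribution $2dz_ce^{\abs\mu}$ plus the bound from Lemma~\ref{lem:SAW_diagram-p1}, which is also where the factor $e^{\abs\mu}$ comes from in both arguments.
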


Since $J_z = 2dz \Dnn + \Pi_z$ satisfies the infrared bound \cite[Theorem~2.8]{HS92a}
\begin{equation} \label{eq:SAW_infrared}
\hat J_z (0) - \hat J_z(k)
\ge ( 2d z - C_5 ) ( 1 - \hat \Dnn(k) )
\end{equation}
with $2d z_c - C_5 >0$, an immediate consequence of Lemma~\ref{lem:SAW_infrared} is that
\begin{equation} \label{eq:SAW_tilted_infrared}
\Re[ \hat J_z\supmu (0) - \hat J_z\supmu(k) ]
\ge (2d z - C_5 - C_\eps  \abs \mu^\eps e^{\abs \mu} ) ( 1 - \hat \Dnn(k) )
\gtrsim 1 - \hat \Dnn(k)
\end{equation}
when $z_c - z$ and $\abs \mu$ are sufficiently small.

\begin{proof}[Proof of Lemma~\ref{lem:SAW_infrared}]
Using $\Zd$ symmetry, we can write
\begin{equation} \label{eq:SAW_boot_pf00}
\Re [ \hat J_z\supmu(0) - \hat J_z\supmu (k) ]
- [ \hat J_z(0) - \hat J_z(k) ]
=  \sum_{x\in \Z^d} ( 1 - \cos (k\cdot x) ) J_z(x) (\cosh (\mu\cdot x) - 1) .
\end{equation}
Since $1 - \hat \Dnn(k) \asymp \abs k^2$,
it suffices to prove that the above is bounded in absolute value by $O(\abs \mu^\eps e^{\abs \mu} \abs k^2)$.
We use the elementary inequalities
\begin{equation}
\abs{ 1 - \cos (k\cdot x)  } \lesssim \abs{ k \cdot x }^2 ,
\qquad
\abs{ \cosh (\mu\cdot x) - 1 } \lesssim \abs{ \mu\cdot x}^\eps \cosh (\mu\cdot x)
\quad (\eps \le 2)
\end{equation}
to bound the right-hand side of \eqref{eq:SAW_boot_pf00} in absolute value by
\begin{equation}
O(\abs \mu^\eps) \abs k^2 \sum_{x\in \Z^d} \abs x^{2+\eps}
	\abs{ J_z(x) } \cosh(\mu \cdot x)
= O(\abs \mu^\eps) \abs k^2 	
	\sum_{x\in \Z^d} \abs x^{2+\eps}  \abs{ J_z \supmu (x) } .
\end{equation}
For any $\eps \le 2$ with $2 + \eps \le d-2$,
it follows from \eqref{eq:J_moments} that
$\sum_{x\in \Zd} \abs x^{2+\eps}  \abs{ J_z \supmu (x) }
\lesssim e^{\abs \mu } $.
This gives the desired result with $\eps \le 1$
in dimensions $d \ge 5$.
\end{proof}

Our choice of $\delta>0$ occurs in the next lemma.
We will apply Lemma~\ref{lem:SAW_bubble} with a small value of $\gamma$.

\begin{lemma} \label{lem:SAW_bubble}
Let $d \ge 5$.
For any $\gamma > 0$, there is a $\delta = \delta(\gamma) > 0$ such that for $z \in [z_c - \delta, z_c)$ and $\mu \in \Omega_z$,
\begin{equation}
\Bsupmu(z) \le \B^{(0)}(z_c) + \gamma .
\end{equation}
\end{lemma}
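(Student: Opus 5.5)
The plan is a bootstrap in the tilt, carried out along the segment from $0$ to $\mu$. Fix $\gamma>0$ small enough that $\kappa := \B^{(0)}(z_c)+2\gamma$ still satisfies $\kappa(1+\kappa)<1$; this is possible by \eqref{eq:bubble^0}. For $z<z_c$ and $\mu\in\Omega_z$, set $f(t)=\B^{(2t\mu)}(z)$ for $t\in[0,1]$. By convexity of $\Omega_z$ and $0\in\Omega_z$, each $t\mu$ lies in $\Omega_z$.

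First I show that $f$ is finite and continuous on $[0,1]$. The Simon--Lieb argument in the proof of Theorem~\ref{thm:SAW}(i) gives $G_z^{(\mu)}(x)\le Ce^{-c|x|}$. Hence $G_z^{(t\mu)}(x)^2\le \max\{G_z(x),G_z^{(\mu)}(x)\}^2$, which is summable, and dominated convergence gives continuity. We also have $f(0)=\B(z)\le \B(z_c)$ by monotonicity in $z$. It then suffices to prove the following improvement: if $z\in[z_c-\delta,z_c)$, $\nu\in\Omega_z$ and $\B^{(2\nu)}(z)\le\kappa$, then $\B^{(2\nu)}(z)\le \B(z_c)+\gamma$. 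Since $f$ is continuous, starts below $\B(z_c)+\gamma$, and can never enter the interval $(\B(z_c)+\gamma,\kappa]$, it stays below $\B(z_c)+\gamma$ on all of $[0,1]$.

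To prove the improvement, I use Parseval. Since $G_z(0)=1$ and $\nu\in\Omega_z$,
\begin{equation*}
\B^{(2\nu)}(z)=\int_{\Td}\frac{\D k}{(2\pi)^d}\,|\hat G_z^{(\nu)}(k)|^2-1,
\qquad
\hat G_z^{(\nu)}(k)=\frac{1}{1-\hat J_z^{(\nu)}(k)} .
\end{equation*}
The bound $\B^{(2\nu)}\le\kappa$ makes Lemmas~\ref{lem:SAW_diagram-p1} and~\ref{lem:SAW_infrared} available. Also $|\nu|\lesssim m_z$ by Lemma~\ref{lem:Omega_convex}, and $m_z\to0$ by \eqref{eq:mass_asymp}, so $|\nu|$ is small once $\delta$ is small. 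Because $1-\hat J_z^{(\nu)}(0)>0$ by \eqref{eq:J<1}, we get
\begin{equation*}
|1-\hat J_z^{(\nu)}(k)|\ \ge\ \Re[\hat J_z^{(\nu)}(0)-\hat J_z^{(\nu)}(k)]
\ \ge\ [\hat J_z(0)-\hat J_z(k)] - C|\nu|^{\eps}e^{|\nu|}\,(1-\hat\Dnn(k)) .
\end{equation*}
Next I compare $\hat J_z(0)-\hat J_z(k)$ with $\hat J_{z_c}(0)-\hat J_{z_c}(k)$ relative to $1-\hat\Dnn(k)$. From the lace expansion bounds of \cite{HS92a}, the map $z\mapsto \hat J_z(0)-\hat J_z(k)$ is continuous up to $z_c$ in the sense that the difference is $o(1)\,(1-\hat\Dnn(k))$ as $z\to z_c$, uniformly in $k$; this follows from convergence of $|x|^2\Pi_z(x)$ in $L^1$ by dominated or monotone convergence. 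Combining this with \eqref{eq:SAW_infrared} at $z_c$ gives
\begin{equation*}
|\hat G_z^{(\nu)}(k)|\le \bigl(1+o(1)\bigr)\,\hat G_{z_c}(k)
\end{equation*}
as $\delta\to0$, uniformly in $k$ and $\nu$. Squaring and integrating yields $\B^{(2\nu)}(z)\le (1+o(1))(\B(z_c)+1)-1$, and this is at most $\B(z_c)+\gamma$ for $\delta$ small. The integral $\int|\hat G_{z_c}|^2$ is finite because $d\ge5$ makes $|k|^{-4}$ integrable.

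The main obstacle is the uniform relative comparison of $\hat J_z(0)-\hat J_z(k)$ with its value at $z_c$, with error $o(1)(1-\hat\Dnn(k))$ rather than $o(1)$. I would take this from the continuity of $z\mapsto\Pi_z$ in the weighted $L^1$ norm with weight $|x|^2$, using \cite{HS92a}. A secondary point is making sure the constants $C_\eps$ and $K_{a,1}$ depend only on $\kappa$, so that the choice of $\delta$ does not depend on $\nu$.
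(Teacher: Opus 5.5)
Your proposal is correct. Its outer structure matches the paper: a bootstrap in the tilt via continuity and connectedness, then Parseval together with Lemma~\ref{lem:SAW_infrared}. Running the bootstrap along the segment from $0$ to $\mu$ rather than over all of $\Omega_z$ is an inessential variation.

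The genuine difference is in the comparison step. Write $\hat F_z=1-\hat J_z$. The paper compares $\hat G_z^{(\mu)}$ with the untilted $\hat G_z$ at the \emph{same} $z$. It bounds $1/\Re[\hat F_z^{(\mu)}(k)-\hat F_z^{(\mu)}(0)]$ by $1/\hat F_z(k)$ plus two error terms in $L^2$. One of these is the mass term $\hat F_z(0)/(|k|^2(|k|^2+\hat F_z(0)))$, whose $L^2$ norm vanishes as $\hat F_z(0)\to 0$. The paper then concludes with monotonicity $\B(z)\le \B(z_c)$. That route needs only the infrared bound at $z$, and no information about how $\Pi_z$ varies with $z$ near $z_c$.

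You instead compare pointwise and multiplicatively with $z_c$, where the denominator is already massless, and obtain $|\hat G_z^{(\nu)}(k)|\le (1+o(1))\,\hat G_{z_c}(k)$. This avoids the mass-term estimate but requires $\| |x|^2(\Pi_z-\Pi_{z_c})\|_1\to 0$ as $z\to z_c$. That limit does hold, as you indicate, but it must be argued term by term, since $\Pi_z$ is an alternating sum. Each nonnegative lace-expansion term $\Pi_z^{(N)}(x)$ is a power series in $z$ with nonnegative coefficients, so $\Pi_z^{(N)}\uparrow \Pi_{z_c}^{(N)}$. Combined with summability over $N$ of the $|x|^2$-weighted diagrammatic bounds at $z_c$ (the $\mu=0$, $z=z_c$ case of Lemma~\ref{lem:SAW_diagram-p1}), monotone convergence then gives the limit.

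You should also state explicitly that $\hat J_{z_c}(0)=1$. This follows from $1-\hat J_z(0)=1/\chi(z)\to 0$ and the same continuity in $z$. It is exactly what identifies $1/(\hat J_{z_c}(0)-\hat J_{z_c}(k))$ with $\hat G_{z_c}(k)$, and hence its squared $L^2$ norm with $1+\B(z_c)$. Without it your final Parseval step does not close.
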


\begin{proof}
The proof uses a bootstrap argument varying $\mu \in \Omega_z$.
We begin by noting that $\Bsupmu(z)$ is a continuous function in $\mu \in \Omega_z$ when $z < z_c$ is fixed, since $G_z\supmu(x)$ decays exponentially as noted above \eqref{eqref:SL_pf}.

Let $\gamma > 0$, $\eps \in ( 0, 1]$, and $z < z_c$.
Since $\Bsupmu(z)$ is continuous in $\mu \in \Omega_z$, and since $\Omega_z$ is connected (by convexity from Lemma~\ref{lem:Omega_convex}), the image of the map $\mu \mapsto \Bsupmu(z)$ must be connected.
Since $\B\supzero(z) \le \B\supzero(z_c)$ also, it suffices to prove that there is a $\delta > 0$ such that, if $z \in [z_c - \delta, z_c)$ and $\mu \in \Omega_z$, then
\begin{equation} \label{eq:bootstrap_claim}
\Bsupmu(z) \le \B\supzero(z_c) + 2 \gamma
\quad \implies \quad
\Bsupmu(z) \le \B\supzero(z_c) + \gamma .
\end{equation}

We write $F_z = \delta_0 - J_z = \delta_0 - 2dz \Dnn - \Pi_z$.
Given $\Bsupmu(z) < \B\supzero(z_c) + 2 \gamma$ with $\gamma$ sufficiently small, we can use Lemmas~\ref{lem:SAW_diagram-p1}--\ref{lem:SAW_infrared}
with $\kappa = \kappa_0 + 2 \gamma$
to estimate $\Pi_z$ and $J_z$.
Since $\hat F_z \supmu(0) = 1/  \chi\supmu(z)  \ge 0$,
we have
\begin{align} \label{eq:SAW_boot_pf0}
\abs{ \hat G_z \supmu (k)}
&=
\frac{ 1 }{ \abs{ \hat F_z \supmu (k) }}
\le  \frac{ 1}{ \Re [ \hat F_z\supmu(k) - \hat F_z\supmu (0) ]  } 	\nl
&=
\frac{ 1 }{ \hat F_z  (k)  } +
\Biggl[
\frac 1{ \hat F_z(k) - \hat F_z(0) } - \frac{ 1 }{ \hat F_z  (k)  }
\Biggr]
	+ \frac{ [\hat F_z(k) - \hat F_z(0) ] - \Re [ \hat F_z\supmu(k) - \hat F_z\supmu (0) ] }
		{  [\hat F_z(k) - \hat F_z(0) ] \Re [ \hat F_z\supmu(k) - \hat F_z\supmu (0) ] } .
\end{align}
We take the $L^2$ norm and apply the triangle inequality.
By Parseval's identity,
the $L^2$ norm of the left-hand side is $\sqrt{ 1 + \Bsupmu(z)}$, and the $L^2$ norm of the first term on the right-hand side is $\sqrt{ 1 + \B\supzero(z) }$.
For the second term on the right-hand side, by the infrared bound \eqref{eq:SAW_infrared} we have
\begin{equation} \label{eq:SAW_bubble_pf}
\biggnorm{ \frac 1{ \hat F_z(k) - \hat F_z(0) }
	- \frac 1{ \hat F_z(k) } }_2
\lesssim \hat F_z(0)
	\biggnorm{ \frac 1{ \abs k^2 (\abs k^2 + \hat F_z(0) ) } }_2
\lesssim
\begin{cases}
\hat F_z(0) ^{1 - \frac{8-d}4}
	&( 4 < d <  8)
\\
\hat F_z(0) \abs{\log \hat F_z(0)} & (d=8)
\\
\hat F_z(0) & (d>8),
\end{cases}
\end{equation}
where the divergence of the integral for $d \le 8$ can
be computed via the change of variables $k \mapsto \hat F_z(0)^{1/2} k$
(see \cite[Lemma~4.2]{HS00a}).
For the last term on the right-hand side, using Lemma~\ref{lem:SAW_infrared}, we know
its numerator is bounded by $O(\abs \mu^\eps e^{\abs \mu} (1 - \hat \Dnn(k)))$,
and its denominator is $\gtrsim (1 - \hat \Dnn(k))^2$ when $\mu$ is sufficiently small by \eqref{eq:SAW_tilted_infrared}.
Since $\hat F_z (0) \to 0$ and
 $\abs \mu \lesssim \mz  \to 0$ as $z \to z_c$ by Lemma~\ref{lem:Omega_convex} and \eqref{eq:mass_asymp},
we obtain
\begin{equation}
\sqrt{ 1 + \Bsupmu(z)}
\le \sqrt{ 1 + \B\supzero(z)} + o(1)
	+ O(\abs \mu^\eps e^{\abs \mu})
		\biggnorm{ \frac 1{ 1 - \hat P(k) } }_2 ,
\end{equation}
which implies $\Bsupmu(z) \le \B\supzero(z_c) + o(1)$ as $z \to z_c$.
This gives \eqref{eq:bootstrap_claim} and completes the proof.
\end{proof}

\begin{corollary}
\label{cor:SAW-bubble}
Let
$d \ge 5$.  There exist $\delta > 0$ and  $\kappa > 0$,
satisfying $\kappa(1+ \kappa) <1$,
such that,
uniformly in $z \in [z_c - \delta, z_c)$ and in $\mu \in \overline \Omega_z$,
\begin{equation}
\Bsupmu(z)
\le  \kappa.
\end{equation}
\end{corollary}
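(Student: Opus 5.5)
The plan is to combine Lemma~\ref{lem:SAW_bubble} with the critical bound \eqref{eq:bubble^0}, and then pass from $\Omega_z$ to its closure by monotone convergence.

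\emph{Choice of constants.} By \eqref{eq:bubble^0}, $\B^{(0)}(z_c)\le\kappa_0$ with $\kappa_0(1+\kappa_0)<1$. The map $\kappa\mapsto\kappa(1+\kappa)$ is continuous and increasing, so we can choose $\gamma>0$ small enough that $\kappa:=\kappa_0+\gamma$ still satisfies $\kappa(1+\kappa)<1$. We then take $\delta=\delta(\gamma)$ from Lemma~\ref{lem:SAW_bubble}. That lemma gives $\Bsupmu(z)\le\kappa$ for every $z\in[z_c-\delta,z_c)$ and every $\mu$ in the open set $\Omega_z$.

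\emph{Extension to boundary points.} Fix $\mu_\infty\in\partial\Omega_z$. As in the proof of Lemma~\ref{lem:strictly_convex}, we pick a sequence $\tilde\mu_n\in\Omega_z$ with $\tilde\mu_n\to\mu_\infty$ and with $|\tilde\mu_{n,j}|$ increasing in $n$ for each $j$. The simplest choice is $\tilde\mu_n=\theta_n\mu_\infty$ with $\theta_n\uparrow1$. These points lie in $\Omega_z$ because $\Omega_z$ is convex, open and contains $0$, so $\theta\mu_\infty\in\Omega_z$ for $\theta<1$.

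By $\Zd$-symmetry of $G_z$, we can replace each summand of $\Bsupmu(z)$ by its average over $\Zd$-reflections of $x$. This gives
\begin{equation}
\Bsupmu(z)=\sum_{x\ne0}G_z(x)^2\prod_{j=1}^d\cosh(2\mu_jx_j)\cdot c(x,\mu),
\end{equation}
where the correction factor $c(x,\mu)$ comes from averaging over sign flips, so that the expression becomes a symmetrised sum of $G_z(x)^2e^{2\mu\cdot x}$.

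More simply, one can avoid this representation. For each fixed $x$, the summand $G_z(x)^2e^{2\theta\mu_\infty\cdot x}$ converges to $G_z(x)^2e^{2\mu_\infty\cdot x}$ as $\theta\to1$. Fatou's lemma then gives
\begin{equation}
\B^{(2\mu_\infty)}(z)\le\liminf_{n\to\infty}\B^{(2\tilde\mu_n)}(z)\le\kappa .
\end{equation}
This requires no monotonicity at all, since every term is nonnegative.

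\emph{Where the difficulty lies.} There is essentially no obstacle at this stage: the substantive work is the bootstrap argument already done in Lemma~\ref{lem:SAW_bubble}. The only point needing care is the passage to $\partial\Omega_z$, and Fatou's lemma handles it because the lower bound from Fatou suffices for a uniform upper bound on the limit. Finally, $\kappa$ and $\delta$ depend only on $d$ (through $\kappa_0$ and Lemma~\ref{lem:SAW_bubble}), so the bound is uniform in $z\in[z_c-\delta,z_c)$ and $\mu\in\overline\Omega_z$, as required.
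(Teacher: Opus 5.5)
Your proposal is correct and matches the paper's proof: choose $\gamma$ so that $\kappa=\kappa_0+\gamma$ still satisfies $\kappa(1+\kappa)<1$, take $\delta(\gamma)$ from Lemma~\ref{lem:SAW_bubble} to get the bound for $\mu\in\Omega_z$, then extend to $\overline\Omega_z$ by Fatou's lemma along $\theta\mu_\infty$ with $\theta\uparrow1$. You can delete the abandoned aside about a correction factor $c(x,\mu)$; averaging over sign flips is in fact exact and gives $\prod_j\cosh(2\mu_jx_j)$ with no correction.
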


\begin{proof}
Recall the constant $\kappa_0$ from \eqref{eq:bubble^0}.
We take $\gamma >0$ sufficiently small so that $\kappa = \kappa_0 + \gamma$ satisfies $\kappa(1+ \kappa) < 1$ still.
Then Lemma~\ref{lem:SAW_bubble} produces a $\delta > 0$ for which
\begin{equation}
\sup_{ z \in [z_c - \delta, z_c) } \sup_{\mu \in \Omega_z}
	\Bsupmu(z)
\le \B^{(0)}(z_c) + \gamma
\le \kappa_0 + \gamma
= \kappa .
\end{equation}
This uniform estimate can be extended to all $\mu \in \bar \Omega_z$, by Fatou's Lemma.
\end{proof}

\begin{proof}[Proof of \eqref{eq:Q_moments-SAW}--\eqref{eq:Q_infrared-SAW}]
Let $\delta$ and $\kappa$ be given by Corollary~\ref{cor:SAW-bubble}.
We can now use Lemmas~\ref{lem:SAW_diagram-p1}--\ref{lem:SAW_infrared} with this $\kappa$.
The norm estimates \eqref{eq:Q_moments-SAW} follow immediately from Lemma~\ref{lem:SAW_diagram-p1} and \eqref{eq:J_moments}, since $2+\zeta \le d-2$
under our assumption that $\zeta \in (0,d-4]$.
For the infrared bound,
since $\abs \mu \lesssim \mz  \to 0$ as $z \to z_c$ by Lemma~\ref{lem:Omega_convex} and \eqref{eq:mass_asymp},
the desired \eqref{eq:Q_infrared-SAW} follows immediately from Lemma~\ref{lem:SAW_infrared} and \eqref{eq:SAW_tilted_infrared}, by taking a smaller $\delta > 0$ if necessary.
\end{proof}

\subsection{Diagrammatic estimate: proof of \eqref{eq:pnorm-SAW-ii}}
\label{sec:SAW-Pip}

The following proposition involves a new diagrammatic estimate for self-avoiding walk.
Compared to Lemma~\ref{lem:SAW_diagram-p1}, it allows us to take $a > d-2$, at the cost of having the $L^p$ norm instead of the $L^1$ norm.
The constant $\delta >0$ is given by Corollary~\ref{cor:SAW-bubble}.

\begin{proposition} \label{prop:SAW-Pip}
Let $d \ge 5$ and $a \in [0,2d-4]$.
For any choice of $p \in [1,\infty]$ satisfying
\begin{equation} \label{eq:pa-restriction}
\frac 1 p < 1 + \frac{ \frac 3 2 d - 4 - a } d ,
\end{equation}
there is a constant $K_{a,p} < \infty$ such that
\begin{equation}
\label{eq:Pi_moments-pnorm}
\bignorm{ \abs x^a  \Pi_z \supmu(x)  }_p  \le  K_{a,p}
\end{equation}
uniformly in $z \in [z_c - \delta, z_c)$ and in $\mu \in \overline \Omega_z$.
For $a = 2d-4$, \eqref{eq:Pi_moments-pnorm} also holds with $p = 2$.
\end{proposition}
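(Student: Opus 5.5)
We bound the lace expansion coefficients $\Pi_z^{(N)}$ diagrammatically, but now in weighted $L^p$ norms rather than $L^1$. For $N\ge 2$, the standard bound (as in \cite{MS93,HS92a} and \cite[Proposition~3.2]{Liu25EJP}) dominates $|\Pi^{(N)}_z(x)|$ by a sum of diagrams: closed loops of two-point functions $G_z$ arranged in a ladder of $N$ bubbles joined from $0$ to $x$. We tilt by distributing $e^{\mu\cdot x}$ along one side of the diagram, that is, along a path of $G_z$ lines from $0$ to $x$. The weight $|x|^a$ is split using $|x|^a\lesssim N^{a}\sum_i |y_i|^a$ over the displacements $y_i$ of the lines on that side, so that the weight falls on one line at a time. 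The $N=1$ term is a delta function at the origin and is trivially fine for any $p$.

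For each diagram we aim to show that the $L^p$ norm in $x$ is bounded by a product of tilted bubbles $\Bsupmu(z)\le\kappa$ (by Corollary~\ref{cor:SAW-bubble}), times a single "weighted" factor. The geometric decay $\kappa^{N-1}$ then makes the sum over $N$ converge. The weighted factor is handled with the near-critical pointwise bound
\[
G_z(y)\lesssim \langle y\rangle^{-(d-2)},
\]
which is uniform in $z\le z_c$ for $d\ge5$ \cite{Hara08}, combined with the exponential tilt of the line carrying $\mu$. Along the tilted side we use only $L^2$ control through $\Bsupmu$. The line that carries the weight $|y|^a$ is instead bounded by $|y|^a G_z(y)G_z\supmu(y)$ or similar, while the other side of its bubble is untilted and contributes $\langle y\rangle^{-(d-2)}$.

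Concretely, the weighted bubble at the end of the diagram becomes a convolution in $x$ of two pieces. One is $|x|^a G_z(x)^2$-type decay, which behaves like $\langle x\rangle^{a-2(d-2)}$ after absorbing the tilt by $L^2$/bubble control. The other is the remaining diagram, which lies in $L^1\cap L^2$ by Lemma~\ref{lem:SAW_diagram-p1}. We then apply Young's inequality: $\|f*h\|_p\le\|f\|_r\|h\|_s$ with $1+\frac1p=\frac1r+\frac1s$. The factor $\langle x\rangle^{a-2d+4}$ is in $L^r$ exactly when $r(2d-4-a)>d$. The three-halves in the hypothesis \eqref{eq:pa-restriction} suggests that one $G$ factor is kept in $L^2$ (via the bubble) while a factor $\langle x\rangle^{a-(d-2)}\cdot\langle x\rangle^{-(d-2)/2}$ is estimated in the complementary Lebesgue space. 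Balancing these exponents should reproduce the condition $\frac1p<1+\frac{\frac32 d-4-a}{d}$, and I will tune the Hölder/Young exponents to match it. For $a=2d-4$ and $p=2$ the weight exactly cancels the two $G$ factors on a line pair. In that case we bound $\||x|^{2d-4}G_z(x)G_z\supmu(x)\cdots\|_2$ by a bounded factor times $\|G_z\supmu\|_2^2$-type quantities, using the bubble directly.

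The main obstacle is keeping the tilt and the weight compatible. The tilted lines $G_z\supmu$ have no pointwise power-law bound uniform over $\mu\in\overline\Omega_z$, since near $\mu_{\hat x,z}$ they barely decay. So the weight must always be placed against an untilted $G_z$, where $\langle y\rangle^{-(d-2)}$ is available, with the tilted line controlled only in $L^2$. I would first try routing the tilt along one fixed side of the ladder and the weight to the opposite side of the same bubble. That requires arguing that the displacement of that bubble is shared by both sides, which holds at the level of the diagrammatic bounds, with triangle-type vertices absorbed by the usual $\sup_x$ of tilted triangles $\lesssim \kappa$. If this routing fails for the intermediate bubbles, I would fall back to splitting the sum over $N$ with an extra factor $N^{a}$, which is harmless against $\kappa^{N}$.
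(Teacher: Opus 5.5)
Your proposal has a genuine gap in the only range where the proposition has content, $a>d-2$. For $a\le d-2$ the bound follows from Lemma~\ref{lem:SAW_diagram-p1} and the monotonicity of $L^p$ norms. For $a>d-2$, a single untilted line cannot carry the weight: $G_z(y)$ is of order $|y|^{-(d-2)}$ up to the correlation length, so $|y|^aG_z(y)$ is not bounded uniformly in $z$. A tilted line can carry no weight at all. The weight must therefore be shared by two distinct untilted lines.

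Your device for this is a ``bubble'' whose partner lines have the same displacement as the weighted line, producing $|y|^aG_z(y)^2G_z^{(\mu)}(y)$. That works for the two-loop term $\Pi_{z,2}(x)\le \1_{x\neq0}G_z(x)^3$. There H\"older with $\|\,|x|^aG_z^2\|_q\,\|G_z^{(\mu)}\|_2$, $1/p=1/q+1/2$ and $1/q<(2d-4-a)/d$ gives exactly \eqref{eq:pa-restriction}, including $p=2$ at $a=2d-4$.

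For $N\ge 3$, however, the self-avoiding-walk diagrams are not a ladder of bubbles in series. Apart from the two end loops, every loop is a triangle sharing lines with its neighbours. So a line that receives the full weight $|y_i|^a$ from $|x|^a\lesssim N^a\sum_i|y_i|^a$ typically has no partner with the same displacement. The diagram is also not a convolution of a weighted bubble with ``the rest''. Hence neither ``the other side of its bubble'' nor your Young step has anything to act on. The exponent count you sketch, $\langle x\rangle^{a-(d-2)}\langle x\rangle^{-(d-2)/2}$, is also not the right one.

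The missing idea is from Step~1 of \cite[Section~3.2]{Hara08}, which the paper uses.
\begin{enumerate}
\item Factor $|x|^a=|x|^{d-2}|x|^s$ with $s=a-(d-2)\in(0,d-2]$. Distribute $|x|^{d-2}$ along the untilted bottom path, and put $|x|^s$ on an adjacent diagonal line in the same loop. The tilt is carried multiplicatively along the top path.
\item The two weighted lines are consecutive in the walk order. Grouping them with one neighbouring top line gives a three-line piece $\Zcal$, and the diagram decomposes as $\Dcal(x)=\sum\Lcal(a,b)\Zcal(a,b,c,d)\Rcal(c,d,x)$.
\item Move the $L^p_x$ norm inside the sums and use translation invariance. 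This gives $\|\Dcal\|_p\le(\sum\Lcal)\,(\sup\|\Zcal\|_{L^p_x})\,(\sum\Rcal)$.
\item The outer factors are bounded line by line in $L^2$, each line contributing $\sqrt\kappa$ or $\sqrt{1+\kappa}$.
\item $\Zcal$ is a pointwise product of three translated lines, not a convolution. H\"older bounds it by $\|G_z^{(\mu)}\|_2\,\|\,|x|^sG_z\|_q\,\|\,|x|^{d-2}G_z\|_\infty$ with $1/p=1/2+1/q$.
\end{enumerate}
Since $G_z\le G_{z_c}\lesssim\langle x\rangle^{-(d-2)}$, the last bound is finite iff $1/q<(d-2-s)/d$, with equality allowed at $s=d-2$. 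This is precisely \eqref{eq:pa-restriction}. The sum over $N$ then converges because $\kappa(1+\kappa)<1$.
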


\begin{proof}[Proof of \eqref{eq:pnorm-SAW-ii} assuming Proposition~\ref{prop:SAW-Pip}]
Let $z \in [z_c - \delta, z_c)$ and $\mu \in \overline \Omega_z$.
Fix $p\ge 1$ satisfying $p > \frac{2d}{3d - 6}$.  As discussed below
the statement of Theorem~\ref{thm:SAW}, it suffices to show that
$\norm{\abs x^{d-1}  J_z \supmu(x)}_p$ is uniformly bounded.
Since $J_z\supmu = 2dz \Dnn\supmu + \Pi_z \supmu$, the triangle inequality gives
\begin{equation} \label{eq:SAW_pnorm_pf}
    \bignorm{ \abs x^{d-1}  J_z \supmu(x)  }_{p}
    \le
    2dz_c \bignorm{ \abs x^{d-1}  P \supmu(x)  }_{p}
    +
    \bignorm{ \abs x^{d-1}  \Pi_z \supmu(x)  }_{p} .
\end{equation}
The first term on the right-hand side is bounded by $2dz_c \norm{ \abs x^{d-1}  P \supmu(x)  }_1 \le 2dz_c e^{\norm \mu_\infty}$,
which is finite because $\norm \mu_\infty \le \mz  \le \tilde m_{z_c - \delta}$ by \eqref{eq:mass_asymp}.
The second term is also bounded, because our condition on $p$ is exactly the hypothesis on $p$ in Proposition~\ref{prop:SAW-Pip} when $a = d-1$.
This shows that both terms of \eqref{eq:SAW_pnorm_pf} are uniformly bounded and concludes the proof.
\end{proof}

\begin{proof}[Proof of Proposition~\ref{prop:SAW-Pip}]
We assume familiarity with
diagrammatic estimates for  the lace expansion \cite{MS93,Slad06}.
Since $L^p(\Zd)$ norms are decreasing in $p$, the $a \le d-2$ case follows directly from Lemma~\ref{lem:SAW_diagram-p1} and Corollary~\ref{cor:SAW-bubble}.
We therefore assume $a > d-2$, in which case we can write $a = d-2 + s$ with $s \in (0, d-2]$.
Also, since the $1$-loop diagram for $\Pi_{z,1}(x)$ contains a factor $\delta_{0,x}$,
it does not contribute to \eqref{eq:Pi_moments-pnorm}  when $a > 0$.

For the $2$-loop diagram $\Pi_{z,2}(x)$, using H\"older's inequality,
its weighted and tilted version obeys the bound
\begin{align}
    \bignorm{|x|^a e^{\mu\cdot x}\Pi_{z,2}(x)}_p
    & \le
    \bignorm{ \1_{x\neq 0} |x|^a e^{\mu\cdot x} G_z(x)^3 }_p
    \nl & =
    \bignorm { |x|^{d-2}G_z(x) \cdot |x|^{s}G_z(x) \cdot \1_{x\neq 0} G\supmu_z(x) }_p
    \nl & \le
    \bignorm {|x|^{d-2}G_z(x)}_\infty \bignorm{|x|^{s}G_z(x)}_q \bignorm{\1_{x\neq 0} G\supmu_z(x) }_2 ,
\label{eq:SAW2loop}
\end{align}
where $p\inv = \infty\inv + q \inv + 2 \inv$.
Since $G_z(x) \le G_{z_c}(x) \lesssim \nnnorm x^{-(d-2)}$ by \cite{Hara08},
the above is finite (in fact, $O(\sqrt{\kappa})$) when $q$ satisfies
\begin{equation}
\frac 1 q < \frac{ d - 2 - s } d ,
\end{equation}
with the equality allowed when $s = d-2$ (in which case $q=\infty$).
This translates to the restriction
\begin{equation}
\frac 1 p
< \frac 1 2 +  \frac{ d - 2 - s } d
= 1 +  \frac{ \half d - 2 - s } d
=1 + \frac{ \frac 3 2 d - 4 - a } d
\end{equation}
on $p$, with the equality allowed when $a = 2d-4$,
since $a = d-2+s$.
This is exactly condition \eqref{eq:pa-restriction}.

For the remaining $N$-loop diagrams with $N \ge 3$,
the diagram for $\Pi_{z,N}(x)$
has three edge-disjoint paths from $0$ to $x$.
We distribute the exponential tilt $e^{\mu \cdot x}$ multiplicatively along the top path.
Also, as explained in Step~1 of \cite[Section~3.2]{Hara08}, we can weight the bottom path in
the diagram with $\abs x^{d-2}$ and weight an adjacent diagonal line in the same loop with $\abs x^{s}$.
Using the natural ordering of the lines given by the walk, two cases can happen for the relative locations of the two weighted edges, as follows.
Let $\ell_s$ and $\ell_{d-2}$ denote
the line number of the $|x|^s$-weighted line and
the $|x|^{d-2}$-weighted line, respectively.
These two line numbers necessarily differ by $1$.
If $\ell_s=\ell_{d-2}-1$ then we group the two lines with the line numbered $\ell_s-1$.
If instead $\ell_s=\ell_{d-2}+1$ then we group the two lines with line numbered $\ell_s+1$.
In either case, the third line in the group lies on the top of the diagram.
Since all diagrams have an odd number of lines in total, there are an even number of lines
preceding (to the left of)
the group, and there are also an even number of lines following
(to the right of) the group.  In this way, the diagram decomposes as
\begin{equation}
\label{eq:Dcal-decomp}
    \Dcal(x) =\sum_{a,b\in \Zd}\sum_{c,d\in \Zd} \Lcal(a,b) \Zcal(a,b,c,d) \Rcal(c,d,x).
\end{equation}
See Figure~\ref{fig:SAWdiagram}.

\begin{figure}[h] 
\center{
\SAWOne   \qquad   \SAWTwo
\caption{
Two scenarios in decomposing $\abs x^{d-2+s} \Pi_z^{(8)}(x)$.
The wavy lines are weighted by $\abs x^{d-2}$ or $\abs x^{s}$.
In the first scenario, $\Zcal(a,b,c,d)$ consists of the three lines
$ac$, $cb$, $bd$ forming a ``Z'' shape.  In the second scenario,
$\Zcal(a,b,c,d)$ consists of the three lines
$ca$, $ad$, $db$ forming a ``backwards Z'' shape.
}
\label{fig:SAWdiagram}
}
\end{figure}

By the triangle inequality, we have
\begin{align}
    \| \Dcal(x)\|_{L^p_x}
    & \le \sum_{a,b} \Bignorm{ \Lcal(a,b) \sum_{c,d}\Zcal(a,b,c,d) \Rcal(c,d,x) }_{L^p_x}
    \nnb & \le
    \Bigl(\sum_{a,b} \Lcal(a,b)\Bigr)
    \sup_{a,b}\,\Bignorm{ \sum_{c,d}\Zcal(a,b,c,d) \Rcal(c,d,x) }_{L^p_x}.
\end{align}
By translation invariance,
\begin{equation}
    \Zcal(a,b,c,d) \Rcal(c,d,x)
    =
    \Zcal(a-x,b-x,c-x,d-x) \Rcal(c-x,d-x,0),
\end{equation}
and by the change of summation variables $c'=c-x$ and $d'=d-x$, we have
\begin{align}
    \Bignorm{ \sum_{c',d'}\Zcal(a-x,b-x,c',d') \Rcal(c',d',0) }_{L^p_x}
    & \le
    \sup_{c',d'}\,
    \norm{\Zcal(a-x,b-x,c',d') }_{L^p_x}
    \sum_{c',d'} \Rcal(c',d',0) .
\end{align}
Altogether, the above gives
\begin{equation}
    \| \Dcal(x)\|_{L^p_x}
    \le
    \Bigl(\sum_{a,b} \Lcal(a,b)\Bigr)
    \Bigl(\sup_{a,b,c',d'}
    \norm{  \Zcal(a-x,b-x,c',d') }_{L^p_x} \Bigr)
    \sum_{c',d'} \Rcal(c',d',0) .
\end{equation}
The first and last factors on the right-hand side are bounded by standard methods with $L^2$ norms on each line.
Each line contributes either $\sqrt \kappa$ or $\sqrt{ 1 + \kappa}$, as in
the proof of \cite[Proposition~3.2]{Liu25EJP}.
Overall, this argument gives
\begin{equation}
\bignorm{ \abs x^{d-2+s} \Pi_{z,N}(x) e^{\mu\cdot x} }_{L^p_x}
\le
C_a
N^{a+1} (\sqrt \kappa)^{N-1} (\sqrt{1+\kappa})^{N-3}
	\sup_{a,b,c',d'}
    \norm{  \Zcal(a-x,b-x,c',d') }_{L^p_x},
\end{equation}
for some $C_a>0$ and for all $N\ge 3$ (the power of $N$ is computed as in Step~3
of \cite[Section~3.2]{Hara08}).
Since $\kappa(1+\kappa)<1$, summation of the above over $N$ would give the desired \eqref{eq:Pi_moments-pnorm},
once we show that the term involving $\Zcal$ is finite for
$p\ge 1$ satisfying \eqref{eq:pa-restriction}.
We prove this next.

Depending on the two possible orderings of the weighted edges, there are the following two possibilities for $\Zcal$ corresponding to the left/right diagram of Figure~\ref{fig:SAWdiagram}:
\begin{equation}
\begin{aligned}
\Zcal_1 &=
	G_z \supmu(a-x-c')
	\cdot \abs{ b-x-c' }^s G_z( b-x-c' )
	\cdot \abs{ b-x-d' }^{d-2} G_z( b-x-d' ) ,
\\
\Zcal_2 &=
	G_z \supmu(a-x-c')
	\cdot \abs{ a-x-d' }^s G_z( a-x-d' )
	\cdot \abs{ b-x-d' }^{d-2} G_z( b-x-d' ) .
\end{aligned}
\end{equation}
We define $q\in [1,\infty]$ by $p\inv = 2 \inv + q \inv$.
Using H\"older's inequality and translation invariance, for both $i = 1,2$ we get
\begin{align}
\norm{ \Zcal_i(a-x,b-x,c',d') }_{L^p_x}
&\le
\bignorm{ G_z \supmu(x) }_2
\bignorm{ \abs x^s G_z(x) }_q
\bignorm{ \abs x^{d-2} G_z(x) }_\infty  \nl
&\le
\sqrt{1+\kappa}\,
\bignorm{ \abs x^s G_z(x) }_q
\bignorm{ \abs x^{d-2} G_z(x) }_\infty .
\end{align}
Apart from the appearance of $\sqrt{1+\kappa}$ here, rather than $\sqrt  \kappa$,
this is bounded as in \eqref{eq:SAW2loop} for the $2$-loop diagram.
We therefore have the same restrictions on $p$ and $q$,
and the proof is complete.
\end{proof}

\section{Percolation}
\label{sec:perc}

We now verify the hypotheses and apply the results of Sections~\ref{sec:introduction}--\ref{sec:critical}
to the nearest-neighbour percolation on $\Zd$ in dimensions $d \ge 15$.

\subsection{Main result}
\label{sec:perc-mr}

Let $d \ge 2$ and $p\in[0,1]$.
We consider nearest-neighbour Bernoulli bond percolation
on $\Zd$ with
occupation probability $p$: bonds are independently \emph{occupied}
with probability $p$ and \emph{vacant} with probability $1-p$.
The \emph{two-point function} is defined by
\begin{equation}
    \tau_p(x) = \P_p(0 \leftrightarrow x).
\end{equation}
There is a critical value $p_c\in (0,1)$ such that the \emph{susceptibility}
$\chi(p) = \sum_{x\in\Zd}\tau_p(x)$
is finite if and only if $p<p_c$.  For background on percolation, see \cite{Grim99}.

For high dimensions and for $p \in [0,p_c]$,
the lace expansion \cite{HS90a,Slad06,HH17book} provides an OZ equation
\begin{equation}
\label{eq:OZperc}
    \tau_p = \g_p + J_p*\tau_p
\end{equation}
with
\begin{equation}
    \g_p=\delta_0 + \Pi_p, \qquad J_p = 2dp P * \g_p
    = 2dp P + 2dp P * \Pi_p.
\end{equation}
The function $P(x) = \frac{1}{2d} \1\{ \|x\|_1=1 \}$ is the nearest-neighbour kernel, and $\Pi_p(x)$ is an infinite-range
$\Zd$-symmetric function which takes both positive and negative values.
For
dimensions $d \ge 11$, control of $\Pi_p$ uniformly in $p \in [0,p_c]$
(including the critical point $p_c$) is obtained in \cite{FH17,HS90a,Hara08}.

Our main result for percolation is the following theorem.
The proof of Theorem~\ref{thm:perc-intro}(i) exactly parallels the
proof for self-avoiding walk in Section~\ref{sec:SAWi}, and we do not comment
further on it.

\begin{theorem}
\label{thm:perc-intro}
(i) The function $\tau_p$ obeys Assumption~\ref{ass:Omega}
for all $d \ge 2$ and $p \in (0,p_c)$.
\\
(ii)
For nearest-neighbour percolation in dimensions
$d \ge 15$,
and for any fixed $\zeta \in (0,\frac 12 (d-6))$,
there exist $\delta,M,\KIR >0$ such that
$(J\supmu_p,\g\supmu_p)\in \Qcal_{M,\KIR,\zeta}$
uniformly in $p\in [p_c-\delta,p_c)$ and $\mu\in\overline\Omega_p$.
\end{theorem}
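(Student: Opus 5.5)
Part (i) follows the self-avoiding walk argument of Section~\ref{sec:SAWi} nearly verbatim. For part (ii) I would run the bootstrap of Section~\ref{sec:SAWboot}, with the bubble replaced by the tilted triangle diagram and its relatives. Then I would add a new diagrammatic estimate for the high-moment $L^p$ bounds \eqref{eq:Q_d-1}, in the spirit of Proposition~\ref{prop:SAW-Pip}.

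\textbf{Step 1: conditional bounds.} Suppose a tilted triangle-type quantity $\Triangle^{(\mu)}(p)$ is at most $\kappa$, with $\kappa$ small. Here $\Triangle^{(\mu)}(p)$ is the triangle with one or two lines tilted and the others untilted, together with the weighted variants needed in Hara's diagrammatic bounds. Under this hypothesis, the standard percolation lace expansion bounds of \cite{HS90a,FH17,Hara08} control the tilted coefficients. For $a\le (d-6)/2$ roughly, they give
\[
\bignorm{\abs x^{a}\Pi_p\supmu(x)}_1\le K_a .
\]
This is done by distributing $e^{\mu\cdot x}$ multiplicatively along one connection path from $0$ to $x$ in each diagram. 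Since $J_p\supmu=2dp\,\Dnn\supmu+2dp\,\Dnn\supmu*\Pi_p\supmu$ and $\g_p\supmu=\delta_0+\Pi_p\supmu$, this yields \eqref{eq:Q_moments} and \eqref{eq:g_moments}. Indeed $2+\zeta<(d-6)/2+2$ is exactly the restriction $\zeta<\frac12(d-6)$. As noted in Remark~\ref{rmk:Q}, the last bound in \eqref{eq:g_moments} comes from an $L^1$ bound on $\abs y^2 g$. The tilted infrared bound \eqref{eq:Q_infrared} then follows as in Lemma~\ref{lem:SAW_infrared}: the difference between tilted and untilted $\Re[\hat J(0)-\hat J(k)]$ is $O(\abs\mu^\eps)\abs k^2$ times a $(2+\eps)$-moment. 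Combined with the known untilted infrared bound and $\abs\mu\lesssim m_p\to 0$, this gives a uniform lower bound.

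\textbf{Step 2: closing the bootstrap.} Fix $p<p_c$. The tilted triangle is continuous in $\mu\in\Omega_p$ because of the exponential decay from the Simon--Lieb argument, and $\Omega_p$ is connected. So it suffices to show that $\Triangle^{(\mu)}\le\Triangle(p_c)+2\gamma$ implies $\Triangle^{(\mu)}\le\Triangle(p_c)+\gamma$ for $p$ near $p_c$. I would mimic \eqref{eq:SAW_boot_pf0}. Write $\hat\tau_p\supmu=\hat\g_p\supmu/(1-\hat J_p\supmu)$ and bound
\[
\abs{\hat\tau_p\supmu(k)}\le \frac{\abs{\hat\g_p\supmu(k)}}{\Re[\hat J_p\supmu(0)-\hat J_p\supmu(k)]}.
\]
Then compare with the untilted quantity in $L^3(\Td)$ norm, since the triangle is a Fourier $L^3$ quantity, or via $\int \hat\tau^3$ with absolute values. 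The error terms are $O(\abs\mu^\eps)$ times $\norm{(1-\hat\Dnn)^{-1}}_3$, which is finite for $d>6$, plus the mass-removal term analogous to \eqref{eq:SAW_bubble_pf}, which vanishes as $\chi(p)\to\infty$. Fatou's lemma extends the bound to $\overline\Omega_p$.

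\textbf{Step 3: the $L^p$ bounds \eqref{eq:Q_d-1}.} This is where I expect the main obstacle, and where the hypothesis $d\ge15$ rather than $d\ge11$ enters. We need $\norm{\abs y^{d-1}J_p\supmu}_{\frac d3\wedge2}$ and $\norm{\abs y^{d-1}\g_p\supmu}_2$ finite. The weight $\abs y^{d-1}$ far exceeds the $L^1$-moment range $(d-6)/2$. The plan is to split $\abs x^{d-1}$ over several disjoint connection paths of a diagram, as in Proposition~\ref{prop:SAW-Pip}, using $\tau_{p_c}(x)\lesssim\abs x^{-(d-2)}$ from \cite{Hara08,FH17}. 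Each of two or three lines then absorbs a weight of up to $\abs x^{d-2}$ in $L^\infty$, or a smaller power in some $L^q$. The tilt goes on one further line, measured in $L^2$ using the tilted bubble, which is bounded by the tilted triangle. The remaining part is then decomposed into left, middle and right blocks as in \eqref{eq:Dcal-decomp}. Percolation diagrams have fewer guaranteed disjoint lines per step, because of the shared vertices in the triangle structure. So the H\"older bookkeeping that leaves an $L^2$-summable remainder is where the dimension requirement $d\ge15$ should come from. I would first try to weight three lines of the final ``Z''-type group, and verify that the exponents close when $d\ge15$.
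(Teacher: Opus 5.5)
Part (i) and the skeleton of your bootstrap (tilted triangles, continuity in $\mu$ over the connected set $\Omega_p$, Fatou to reach $\overline\Omega_p$, and the tilted infrared bound as in Lemma~\ref{lem:SAW_infrared}) match the paper. The genuine gap is in Step~3, and it also caps the top of the range in Step~1.

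\textbf{The missing estimate.} Your plan keeps the tilted line unweighted, controlled only by the tilted bubble $\norm{\tau_p\supmu}_2$, and puts the polynomial weight on other lines. That works for self-avoiding walk because its diagrams have three edge-disjoint paths. Percolation diagrams guarantee only two. Already $\Pi_{p,0}(x)\le \1_{x\ne 0}\tau_p(x)^2$ consists of two lines, both joining $0$ to $x$, so you must bound $\norm{\abs x^{d-1}\tau_p(x)\tau_p\supmu(x)}_2$. Uniformly in $p$ near $p_c$, the untilted factor can absorb at most $\abs x^{d-2}$ (in $L^\infty$, via $\tau_{p_c}(x)\lesssim \abs x^{-(d-2)}$). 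A factor $\abs x$ therefore has to land on the tilted line. No H\"older bookkeeping and no increase of $d$ avoids this.

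The same obstruction limits Step~1. With only $\norm{\tau_p\supmu}_2$ available, $\norm{\abs x^a\Pi_{p,0}\supmu}_1\le \norm{\abs x^a\tau_p}_2\norm{\tau_p\supmu}_2$ is finite only for $a<\frac{d-4}2$. That gives $\zeta<\frac12(d-8)$, not $\frac12(d-6)$. (Your ``$a\le (d-6)/2$'' and ``$2+\zeta<(d-6)/2+2$'' are also inconsistent with each other.)

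The idea you need is a uniform weighted tilted bound $\norm{\abs x\,\tau_p\supmu(x)}_2\lesssim 1$ for $\mu\in\overline\Omega_p$. The paper proves this by Parseval, $\norm{\abs x\,\tau_p\supmu}_2\le\sum_i\norm{\grad_i\hat\tau_p\supmu}_2$. The quotient rule for $\hat\tau_p\supmu=\hat\g_p\supmu/(1-\hat J_p\supmu)$, the tilted infrared bound and first moments of $\Pi_p\supmu$ give $\abs{\grad_i\hat\tau_p\supmu(k)}\lesssim \abs k^{-2}+\abs k^{-4}$, which is in $L^2(\Td)$ for $d>8$.

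One then distributes both $e^{\mu\cdot x}$ and $\abs x^{1}$ along the top path, and $\abs x^{a-1}$ (with $a-1\le d-2$) along the bottom path. $\Pi_{p,0}$ is bounded by $\norm{\abs x^{a-1}\tau_p}_{q'}\norm{\abs x\,\tau_p\supmu}_2$. Higher $N$ need only an $\abs x$-weighted tilted triangle, $L^q$ bubbles, and an $L^1$ martini. This single estimate yields both the $L^2$ bound with weight $\abs x^{d-1}$ and the $L^1$ moments up to $2+\zeta$ for all $\zeta<\frac12(d-6)$.

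\textbf{Where $d\ge 15$ enters.} These diagrammatic estimates need only $d>8$, plus $d\ge 11$ for the critical decay. In the paper, $d\ge15$ comes from the bootstrap. The hypothesis there is not that a triangle is small: at $d=15$ one only has $\Triangle\supzero(p_c)\le 0.43$. It is the convergence condition $e^{\norm\mu_\infty}\tilde\Triangle\supmu(p)(1+2\Triangle\supmu(p))\le r<1$. The tilted triangles are compared with the symmetrised $\tau_\sym\supmu=\tau_p\cosh(\mu\cdot x)$, whose transform is $\Re\hat\tau_p\supmu$, at the cost of a factor $2$. One then needs $2\tilde\Triangle\supzero(p_c)\bigl(1+4\Triangle\supzero(p_c)\bigr)<1$, which the numerics of \cite{FH17} give for $d\ge15$.

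Your Step~2 modulus bound $\abs{\hat\tau_p\supmu}\le\abs{\hat\g_p\supmu}/\Re[\cdots]$ does not by itself control $\norm{\tau_p\supmu*\tau_p*\tau_p-\delta_0}_\infty$. That quantity is a supremum over $x$ with $\delta_0$ removed, not a Parseval quantity. You need $L^3(\Td)$ smallness of the complex difference $\hat\tau_p\supmu-\hat\tau_p$. Its imaginary part, driven by $\Im\hat J_p\supmu(k)=O(\abs\mu\abs k)$, is missing from the error terms you list. Alternatively, use the paper's symmetrisation.
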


We expect Theorem~\ref{thm:perc-intro}(ii) to remain valid for all
$d>6$, but our use of the lace expansion for the nearest-neighbour model
requires us to take $d \ge 15$, in order to apply the numerical results proved in \cite{FH17}.
It would be of interest to prove Theorem~\ref{thm:perc-intro} for spread-out models in dimensions $d>6$.  However, our proof of Theorem~\ref{thm:perc-intro}(ii) uses diagrams that can converge only for $d>8$, so new ideas would be needed.
One example of a diagram requiring $d>8$ is the weighted tilted bubble diagram in Lemma~\ref{lem:Gmu1}.

It follows from
elementary observations (e.g., \cite[Proposition 3.2]{ACC90})
that there is a norm $|\cdot|_p$ on $\Rd$, satisfying $\|v\|_\infty \le |v|_p \le \|v\|_1$ for every $v\in \Rd$, such that the limit
\begin{equation} \label{eq:tildemperc}
    \tilde m_p = \lim_{|x|_p\to \infty}\frac{-\log \tau_p(x)}{|x|_p}
\end{equation}
exists in $(0,\infty)$, and
\begin{equation}
    \tau_p(x) \le e^{-\tilde m_p|x|_p}.
\end{equation}
Consequently, $\chi^{(te_1)}(p) < \infty$ for $t<\tilde m_p$, and
$\chi^{(te_1)}(p) > \infty$ for $t>\tilde m_p$.  Therefore, $\tilde m_p$ coincides with the quantity $m_{\tau_p}$ defined in \eqref{eq:def_mS}.
We denote this quantity $\tilde m_p=m_{\tau_p}$ henceforth by $m_p$.
It is a standard fact (see \cite[Chapter~6]{Grim99}) that
\begin{equation}
\label{eq:mass_asymp-perc}
    \lim_{p\to p_c}m_p = 0,
\end{equation}
so all results of Section~\ref{sec:critical} also apply.
By comparing the OZ decay of Corollary~\ref{cor:OZ} with \eqref{eq:tildemperc}, we see that $| x |_p$ coincides with the norm $|x|_{\tau_p}$ of Corollary~\ref{cor:norm}.

In the following, we consider nearest-neighbour percolation in dimensions $d \ge 15$.
All limits $p\to p_c$ are left limits from the subcritical side $p<p_c$.
We list the consequences of Theorem~\ref{thm:perc-intro} with comparison to previous literature:
\begin{itemize}
\item
Precise asymptotics are given in Theorem~\ref{thm:crossover}:
Uniformly in  $p\in[p_c-\delta,p_c)$, for some $\eps >0$,
\begin{equation} \label{eq:perc-asymp-intro}
\tau_p(x)
= \C(x; \eta_{\hat x,p} , \Lambda_{\hat x,p}) e^{-m_p  |x|_p}
	[  \hat h_p^{(\mu_{\hatx,p})}(0)  + O(|x|^{-\eps})]
\end{equation}
as $\abs x \to \infty$,
with $|\eta_{\hat x,p}| \asymp m_p$ and $\hat x \cdot \Lam_{\hatx,p}\inv \hat x\asymp 1$.
Aspects of \eqref{eq:perc-asymp-intro} also appear in the forthcoming work \cite{Blan27}.

\item
The error term in \eqref{eq:perc-asymp-intro} remains uniform in the limit $p\to p_c$.  In this limit, $m_p\to 0$, $\eta_{\hat x,p}\to 0$,
and $\Lambda_{\hat x,p} \to (\sigma_{p_c}^2/d){\rm Id}$,
with $\sigma_{p_c}^2 = \sum_{x\in \Zd}|x|^2 J_{p_c}(x)$.  It then follows from
\eqref{eq:Ccrit} that
\begin{equation}
\label{eq:perccrit}
\tau_{p_c}(x) =
    \frac{d\Gamma(\frac{d-2}{2})}{2\pi^{d/2}}
    \frac{1}{\sigma_{p_c}^2|x|^{d-2}}
    [ \hat h_{p_c}(0) + O(|x|^{-\eps})].
\end{equation}
This critical decay was proved previously in \cite{Hara08,LS24a}.

\item
By Corollary~\ref{cor:OZ}, $\tau_p$ exhibits the OZ decay
\begin{equation}
\label{eq:perc-OZ}
\tau_p(x)
=
\frac 1 { (2\pi)^{(d-1)/2} \sqrt{\det\Lambda_{\hat x,p}} }
	\frac{1}{ (\hat x \cdot \Lam_{\hat x,p}\inv \hat x)^{1/2} }
    \frac{ \abs {\eta_{\hat x,p}}^{(d-3)/2}}{ \abs  x^{(d-1)/2}}
    e^{-\mp  \abs x_p}[ \hat h_p^{(\mu_{\hatx,p})}(0)  +o(1)].
\end{equation}
It was proved earlier that, for all $d \ge 2$ and all
\emph{fixed} $p\in (0,p_c)$, there exists  $c_{d,p,\hat x}>0$ such that
\begin{equation}
\label{eq:perc-CCC}
    \tau_p(x) \sim c_{d,p,\hat x} \frac{1}{|x|^{(d-1)/2}}e^{-m_p |x|_p}
\end{equation}
as $|x| \to\infty$,
first for $x$ on axis \cite{CCC91} and later for all $x \in \Rd$ \cite{CI02,CIV08}.
However, the results of \cite{CCC91,CI02,CIV08} do not control the precise $p$-dependence of the prefactor
$c_{d,p,\hat x}$ present in \eqref{eq:perc-OZ}, do not include the uniformity in $p$ present in \eqref{eq:perc-asymp-intro}, and do not include the critical decay for $p=p_c$.

\item
By Theorem~\ref{thm:critical_limit}, Euclidean invariance of the norm emerges at the critical point:
\begin{equation} \label{eq:crit_limit-perc}
\abs x_p = \norm x_2 [ 1 + O(m_p^{ 2 } ) ]
	\qquad (p\to p_c).
\end{equation}

\item
It follows from Corollary~\ref{cor:crossover_weak} that
\begin{equation}
\label{eq:SAW-asymp}
    \tau_p(x) \asymp
    \frac{    \max\{ 1 , m_p  |x|_p \} ^{(d-3)/2} } { \abs x_p^{d-2} }
	e^{- m_p| x|_p}
	\qquad (p \in [p_c-\delta,p_c),\ |x|_p \ge R).
\end{equation}
This improves on previous bounds \cite{DP25b, HMS23}
of this type by having the correct exponential decay rate
in both the upper and lower bounds, rather than having different
(undetermined) constants in the exponents.

\item
In dimensions $d\ge11$ (or for spread-out models with $d>6$),
it was recently proved in \cite{Blan27} (see also \cite{BRN26})
that there are constants $C_\chi,C_\xi \in (0,\infty)$ such that,
as $p\to p_c$ and for any $\eps < \frac{d-6}{2}\wedge 1$,
\begin{equation} \label{eq:chim_perc}
    \chi(p)
    =
    \frac{C_\chi }{1-p/p_c}[1+O(1-p/p_c)^\eps],
    	\qquad
    \xi_2(p)^2
    =
    \frac{C_{\xi_2} }{1-p/p_c} [1+O(1-p/p_c)^\eps].
\end{equation}
This refines $\chi(p)\asymp (1-p/p_c)^{-1} \asymp \xi_2(p)^2$ from \cite{HS90a,FH17} (see also \cite{DP25b,PS27}), by providing an asymptotic formula with error estimate.
By Lemma~\ref{lem:xi2-chi-ratio}, \emph{the} correlation length $\xi(p)=m_p^{-1}$ therefore satisfies
\begin{equation} \label{eq:mpchi}
\xi(p) = \frac { \xi_2(p)} {\sqrt{2d}}
\Bigl[ 1 +O(1-p/p_c)^{\varepsilon'} \Bigr]
	\qquad (p\to p_c)
\end{equation}
for $d\ge 15$
and for any $\varepsilon' < \frac{d-6}{4}\wedge 1$.
With \eqref{eq:chim_perc}, this
refines the bounds $\xi(p)^2 \asymp (1-p/p_c)\inv$ from \cite{Hara90} to an asymptotic formula with error estimate.
It also follows from Lemma~\ref{lem:xi2-chi-ratio} that
$    \hat h_{p_c}(0)C_{\xi_2} = \sigma_{p_c}^2 C_\chi $,
which computes the ratio of the
amplitudes in \eqref{eq:chim_perc}.
More generally, we also know from Theorem~\ref{thm:xiphi} that
for any $\phi>0$ the correlation length of order $\phi$ is given asymptotically by
\begin{equation}
\label{eq:xiphi-perc}
    \xi_\phi(p)
    \sim
    \Bigl(\frac{A_\phi}{A_0}\Bigr)^{1/\phi} \xi(p) 
    	\qquad (p\to p_c),
\end{equation}
which reveals the limiting universal ratios for all correlation lengths.
\end{itemize}

Next, we show that Theorem~\ref{thm:perc-intro}(ii) reduces to the following theorem.
By definition,
\begin{equation}
\Omega_p = \Bigl\{ \mu \in \Rd : \chi\supmu(p)=
	 \sum_{y\in \Zd} \tau_p(y) e^{\mu\cdot y} < \infty \Bigr\} .
\end{equation}

\begin{theorem} \label{thm:percPi}
For $d \ge 15$
and for any fixed $\zeta \in (0,\half(d-6))$,
there exist $\delta,M,\KIR >0$ such that,
uniformly in $p\in [p_c-\delta,p_c)$ and $\mu\in\overline\Omega_p$,
\begin{align}
\label{eq:Q_moments-perc-a}
\bignorm{ \abs y ^{u}  \Pi_p\supmu(y) }_{1}
	&\le M \qquad (u=0,2+\zeta),
\\
\label{eq:Q_infrared-perc-a}
\Re[ \hat J_p\supmu(0)  - \hat J_p\supmu(k) ]
	&\ge \KIR \abs k ^2 	
	\qquad (k\in \Td) ,
\\
\label{eq:pnorm-perc-ii-a}
    \bignorm{ \abs y^{d-1} \Pi_p\supmu(y) }_{ 2} &\le M .
\end{align}
\end{theorem}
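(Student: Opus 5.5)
The plan follows the template of Section~\ref{sec:SAWboot} for self-avoiding walk, adapted to the percolation lace expansion. The key intermediate object is a finite family of tilted diagrams: the tilted triangle, and tilted bubbles carrying polynomial weights. Concretely, I would bound
\[
\Triangle\supmu(p)=\sup_x \sum_{y,w}\tau_p\supmu(y)\,\tau_p(w-y)\,\tau_p(x-w)
\]
and the weighted bubbles $\sum_x \abs x^{b}\tau_p(x)\,\tau_p\supmu(x)$, with the tilt placed on a single line. The first step is conditional: assuming these tilted diagrams are bounded by small constants $\kappa$, derive \eqref{eq:Q_moments-perc-a} and \eqref{eq:pnorm-perc-ii-a}. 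This is the analogue of Lemma~\ref{lem:SAW_diagram-p1}. I would use the standard percolation diagrammatic bounds on $\Pi_p^{(N)}$ from \cite{HS90a,Hara08,FH17}. The tilt $e^{\mu\cdot x}$ is distributed multiplicatively along one path from $0$ to $x$, and the weight $\abs x^{2+\zeta}$ is split along a second path as in \cite[Section~3.2]{Hara08}. This reduces everything to the tilted triangle together with weighted (untilted) diagrams, which are finite for $2+\zeta<\frac{d-2}{2}$, and hence for $\zeta<\frac12(d-6)$, using $\tau_{p}\le\tau_{p_c}\lesssim\nnnorm x^{-(d-2)}$. Summability in $N$ comes from smallness of the diagrams in $d\ge15$, using the numerics of \cite{FH17}.

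The second step is the tilted infrared bound \eqref{eq:Q_infrared-perc-a}, proved exactly as in Lemma~\ref{lem:SAW_infrared}. The difference between the tilted and untilted expressions is
\[
\sum_x(1-\cos k\cdot x)\,J_p(x)\,\bigl(\cosh(\mu\cdot x)-1\bigr)=O\bigl(\abs\mu^{\eps}\abs k^2\bigr),
\]
by the $(2+\eps)$-moment bound from the first step. Since $J_p=2dpP*\g_p$, its moments are controlled by those of $\Pi_p\supmu$, using the convolution bound $\abs y^{u}\le C(\abs{y-y'}^u+\abs{y'}^u)$. I would then combine this with the known untilted infrared bound for $J_p$ \cite{HS90a,FH17} and with $\abs\mu\lesssim m_p\to0$ from Lemma~\ref{lem:Omega_convex} and \eqref{eq:mass_asymp-perc}.

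The third step, which I expect to be the main obstacle, is the bootstrap removing the conditional hypothesis, in the manner of Lemma~\ref{lem:SAW_bubble}. For fixed $p<p_c$ the tilted diagrams are continuous in $\mu\in\Omega_p$ by exponential decay of $\tau_p\supmu$ (Simon--Lieb, as in Theorem~\ref{thm:perc-intro}(i)). They are also small at $\mu=0$, and $\Omega_p$ is connected. It therefore suffices to show that a bound $\le\kappa_0+2\gamma$ improves to $\le\kappa_0+\gamma$ once $p$ is close to $p_c$. For this I would write the Fourier representation
\[
\hat\tau\supmu_p=\frac{\hat\g\supmu_p}{1-\hat J\supmu_p}
\]
and compare it with the untilted version, as in \eqref{eq:SAW_boot_pf0}. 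Unlike the self-avoiding walk case, the numerator $\hat\g\supmu_p$ is no longer $1$, and the triangle is a three-fold convolution. The diagrams must therefore be controlled in weighted $L^p$ or Fourier $L^1$ form: the triangle via $\int \abs{\hat\tau\supmu_p}\,\abs{\hat\tau_p}^2$, and the weighted bubbles via derivatives of $\hat\tau_p$ in $k$. The error terms involve $\abs\mu^\eps\abs k^{-2}\abs k^{-4}$-type integrands, with additional weights, and these converge only when $d>8$ or more. This is where I would verify that $d\ge15$ suffices, in particular for a weighted tilted bubble.

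The last step is the $L^2$ bound \eqref{eq:pnorm-perc-ii-a}. I would mimic Proposition~\ref{prop:SAW-Pip}: isolate the three lines carrying the weights $\abs x^{d-2}$, $\abs x^{1}$ and the tilt, then apply H\"older's inequality with $\norm{\abs x^{d-2}\tau_p}_\infty$, $\norm{\abs x\,\tau_p}_q$ and $\norm{\tau\supmu_p}_2$ (or $\norm{\tau\supmu_p}_\infty$ paired with $L^2$ weighted factors). The remaining pieces are bounded by standard diagram sums.
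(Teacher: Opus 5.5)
Your architecture matches the paper's: conditional lace-expansion bounds, the tilted infrared bound from moments, and a bootstrap over the connected set $\Omega_p$. However, the weight bookkeeping in Steps 1 and 4 cannot reach the stated exponents. You put all of $\abs x^{2+\zeta}$ on the untilted path, so tilted lines are controlled only through $\Triangle\supmu$ and $\Bsupmu$, i.e.\ in $L^2$. Already the leading term $\Pi_{p,0}\le\tau_p^2$ then gives $\bignorm{\abs x^a\tau_p\tau_p\supmu}_1\le\bignorm{\abs x^a\tau_p}_2\bignorm{\tau_p\supmu}_2$. This method bounds that only for $a<\frac{d-4}2$, i.e.\ $\zeta<\frac12(d-8)$, which is precisely Lemma~\ref{lem:perc_diagram-p1}. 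For \eqref{eq:pnorm-perc-ii-a} the situation is worse. The $N=0$ percolation diagram has only two lines, so the three-line SAW H\"older scheme you copy in Step 4 is unavailable. Moreover, $\abs x^{d-1}\tau_p(x)$ has no uniform $L^q$ bound as $p\to p_c$, and $\tau_p\supmu$ has no uniform $\abs x^{-(d-2)}$ bound (it has OZ decay in the direction for which $\mu$ is optimal). Hence at least one power of $\abs x$ must be carried by the tilted line. The missing ingredient is the uniform estimate $\bignorm{\abs x\,\tau_p\supmu(x)}_2\lesssim1$. It is proved after the bootstrap from $\norm{\nabla\hat\tau_p\supmu}_{L^2(\Td)}$, using first moments of $\Pi_p\supmu$, the tilted infrared bound and $d>8$. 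You then split $\abs x^a=\abs x^{a-1}\cdot\abs x$, distributing $\abs x^1$ along the tilted path and $\abs x^{a-1}$ along the other. The pieces become bubbles of the form $\norm{\abs x^{a-1}\tau_p}_{q'}\norm{\abs x\,\tau_p\supmu}_2$, an $\abs x$-weighted tilted triangle, and a weighted martini. Your mention of $k$-derivatives for weighted bubbles points this way. But with all the weight on the untilted path, the shifted bubbles $\sup_u\sum_x\abs x^b\tau_p(x)\tau_p\supmu(u-x)$ that actually arise cannot use such a bound.

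The bootstrap for the triangle also does not close as described. As you define it (without $-\delta_0$), $\Triangle\supmu\ge1$, so it is never small. Bounding it by $\int\abs{\hat\tau_p\supmu}\hat\tau_p^2$ after an SAW-style modulus comparison yields roughly $(\tau_p*\tau_p*\tau_p)(0)\ge1$ even at $x\neq0$. You must keep $\tau_p*\tau_p*\tau_p-\delta_0$ intact and bound the \emph{difference} $\hat\tau_p\supmu-\hat\tau_p$ in $L^3(\Td)$, phase included; a bound on $\abs{\hat\tau_p\supmu}$ alone does not give this. The paper symmetrises: $\hat\tau_\sym\supmu=\Re\hat\tau_p\supmu$ satisfies $\norm{\hat\tau_\sym\supmu-\hat\tau_p}_3\to0$ (Lemma~\ref{lem:Gsym}), and $\Triangle\supmu\le2\norm{\tau_\sym\supmu*\tau_p*\tau_p-\delta_0}_\infty$. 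This gives $\Triangle\supmu(p)\le2\Triangle\supzero(p_c)+o(1)$, and likewise for $\tilde\Triangle\supmu$. You need $\tilde\Triangle\supmu$ too, for the convergence condition $e^{\norm\mu_\infty}\tilde\Triangle\supmu(1+2\Triangle\supmu)<1$. That factor $2$, through $2(0.18)(1+4(0.43))<1$, is where $d\ge15$ enters. It is not the convergence of a weighted tilted bubble, which needs only $d>8$.
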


\begin{proof}[Proof of Theorem~\ref{thm:perc-intro}(ii)]
By Definition~\ref{def:A}, uniform bounds sufficient for $(J\supmu_p,\g\supmu_p)\in \Qcal_{M,\KIR,\zeta}$
  are:
\begin{gather}
\label{eq:Q_moments-perc-pf}
\norm{ J_p\supmu(y) }_{1} ,\,
\bignorm{ \abs y ^{2+\zeta}  J_p\supmu(y) }_{1}
	\le M ,
\\
\label{eq:g_moments-perc-pf}
\norm{ \g_p\supmu(y) }_{1} ,\,
	\bignorm{ \abs y ^{\zeta} \g_p\supmu(y) }_{1} ,\,
    \bignorm{ \abs y ^{2}  \g_p\supmu(y) }_{1} \,
	\le M ,
\\
\label{eq:Q_infrared-perc-pf}
\Re[ \hat J_p\supmu(0)  - \hat J_p\supmu(k) ]
	\ge \KIR \abs k ^2 	
	\qquad (k\in \Td) ,
\\
\label{eq:pnorm-perc-ii-pf}
\bignorm{ \abs y^{d-1} J_p\supmu(y) }_{ 2},\,
    \bignorm{ \abs y^{d-1} \g_p\supmu(y) }_{ 2} \le M .
\end{gather}
The infrared bound \eqref{eq:Q_infrared-perc-pf} is repeated in \eqref{eq:Q_infrared-perc-a}.
The four bounds on $\g\supmu_p=\delta_0+\Pi_p\supmu$ in \eqref{eq:g_moments-perc-pf} and \eqref{eq:pnorm-perc-ii-pf} follow immediately from the
bounds on $\Pi_p\supmu$ in \eqref{eq:Q_moments-perc-a} and \eqref{eq:pnorm-perc-ii-a}.

For the $L^1$ bounds on $J\supmu_p = 2dp\Dnn\supmu * h_p\supmu$ in \eqref{eq:Q_moments-perc-pf}, we apply $\abs x^u \le 2^u ( \abs {x-y}^u + \abs y^u )$ to obtain
\begin{align} \label{eq:J_moments-perc}
\bignorm{ \abs x^a  J_p\supmu (x)  }_1
&\le 2dpe^{\norm \mu_\infty} \bignorm{ \abs x^u (P * h_p \supmu)(x) }_1 \nl
&\le 2dpe^{\norm \mu_\infty} 2^u
\Bigl(
	\bignorm{ \abs x^u P (x) }_1 \norm{ h_p \supmu }_1
	+ \norm{ P }_1  \bignorm{ \abs x^u h_p \supmu (x) }_1	\Bigr).
\end{align}
This is finite because
the norms involving $\Dnn$ are both $1$, the norms of $\g\supmu$ have been bounded in the previous paragraph, $p \le p_c$, and $\|\mu\|_\infty \le m_{p_c-\delta} < \infty$.

Finally, the $L^2$ bound on $\g\supmu_p$ in \eqref{eq:pnorm-perc-ii-pf} follows from the $L^2$ norm on $\Pi\supmu_p$ in \eqref{eq:pnorm-perc-ii-a}.
The $L^2$ bound on $J\supmu_p$ in \eqref{eq:pnorm-perc-ii-pf} then follows via a small modification of \eqref{eq:J_moments-perc}, using $\|f*g\|_2 \le \|f\|_1 \|g\|_2$.
This completes the proof.
\end{proof}

It remains to prove Theorem~\ref{thm:percPi}.
We prove the $L^1$ bounds \eqref{eq:Q_moments-perc-a}
for $\zeta < \half(d-8)$
and the infrared bound  \eqref{eq:Q_infrared-perc-a}  in Section~\ref{sec:perc-tilt},
by adapting the bootstrap argument in Section~\ref{sec:SAWboot} for the self-avoiding walk.
This argument is somewhat different from \cite{Hara90} which obtained the bounds for $\zeta=0$ and on-axis $\mu$.
The improvement of the $L^1$ bound \eqref{eq:Q_moments-perc-a} to
$\zeta < \half(d-8)$, and
the $L^2$ bound \eqref{eq:pnorm-perc-ii-a}, require a new diagrammatic estimate for percolation, which we
prove in Section~\ref{sec:perc-Pip}
following the approach of \cite{DL26}.

\subsection{Bootstrap argument: proof of \eqref{eq:Q_moments-perc-a}--\eqref{eq:Q_infrared-perc-a}}
\label{sec:perc-tilt}

We first prove \eqref{eq:Q_moments-perc-a} for
$\zeta < \half(d-8)$. At the end of Section~\ref{sec:perc-tilt},
we discuss the improvement to
$\zeta < \half(d-6)$.

We define the tilted triangle diagrams
\begin{equation}
\Triangle \supmu(p)
	= \norm{ \tau_p\supmu * \tau_p * \tau_p - \delta_0 }_\infty ,
\qquad
\tilde \Triangle \supmu(p)
	= 2dp \norm{ P * \tau_{p}\supmu * \tau_{p} * \tau_{p} }_\infty .
\end{equation}
For $\mu = 0$, it is proved in \cite{HS90a} that $\Triangle \supzero(p_c) = O(1/d)$ and $p_c = 1/(2d) + O(1/d^2)$ as $d \to \infty$.
Since
\begin{equation}
\tilde \Triangle \supzero(p)
\le 2dp \bigl( \norm P_1 \Triangle \supzero(p)  + \norm{ P }_\infty \bigr)
\le O(1/d)
\end{equation}
too, we can take $d_0$ sufficiently large so that
\begin{equation} \label{eq:triangle^0}
\Triangle\supzero(p_c) \le 0.43 ,
\qquad
\tilde \Triangle \supzero(p_c) \le 0.18
\end{equation}
for all $d \ge d_0$.
By the Mathematica notebook of \cite{FH17}, $d_0=15$ is sufficient.
In particular, we have
\begin{equation}
\tilde \Triangle \supzero(p_c)
(1 + 2\Triangle \supzero(p_c) )
\le (0.18)(1.86)
= 0.3348 .
\end{equation}
Another important ingredient is the pointwise bound
\begin{equation}
\label{eq:taupc}
    \tau_{p_c}(x) \lesssim |x|^{-(d-2)},
\end{equation}
which has been proved for $d \ge 11$ \cite{Hara90,FH17,LS24a}.
We also define the tilted (open) bubble diagram
\begin{equation}
\label{eq:perc-bubble}
    \Bsupmu(p) = \sum_{x\neq 0} (\tau_p(x) e^{\mu\cdot x})^2.
\end{equation}

\begin{lemma} \label{lem:perc_diagram-p1}
Let $d \ge 11$, 
$p \le p_c$, $\mu \in \Rd$, and $a\in [0,\frac{d-4}2)$.
Suppose there are constants $r < 1$ and $\kappa >0$ such that
\begin{equation} \label{eq:perc_r}
e^{\norm \mu_\infty}
\tilde \Triangle\supmu(p)
	(1 + 2 \Triangle\supmu(p) )
	\le r < 1
\end{equation}
Then
there is a constant $K_{a,1}$ (depending on $r, \kappa$) such that
\begin{equation} \label{eq:Pi_moments-perc}
\bignorm{ \abs x^a  \Pi_p \supmu(x) }_1  \; \le  K_{a,1} .
\end{equation}
\end{lemma}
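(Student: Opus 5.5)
The plan follows the standard diagrammatic bounds for the percolation lace expansion \cite{HS90a,Slad06,HH17book}, adapted to a tilt and a weight in the same way as Lemma~\ref{lem:SAW_diagram-p1} adapts \cite[Proposition~3.2]{Liu25EJP}. First I expand $\Pi_p = \sum_{N\ge 0}(-1)^N \Pi_p^{(N)}$. Each $\Pi_p^{(N)}(x)$ is bounded by a sum of diagrams built from two-point functions $\tau_p$, arranged as a chain of $N+1$ triangle-like loops from $0$ to $x$, and every loop except the first and last sits next to a bond, giving factors $2dpP$. I multiply by $e^{\mu\cdot x}$ and route it along one path of the diagram from $0$ to $x$: I write $x=\sum_i (y_i - y_{i-1})$ along that path and place $e^{\mu\cdot(y_i-y_{i-1})}$ on the corresponding line. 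A nearest-neighbour bond on that path costs at most $e^{\norm\mu_\infty}$. This turns one line of each loop into $\tau_p\supmu$, so each interior loop is controlled by $\tilde\Triangle\supmu(p)$ times $(1+2\Triangle\supmu(p))$, where the second factor accounts for the possibility that the loop degenerates. The resulting bound is
\[
\bignorm{\Pi^{(N)\,(\mu)}_p}_1 \lesssim \bigl[e^{\norm\mu_\infty}\tilde\Triangle\supmu(p)(1+2\Triangle\supmu(p))\bigr]^{N-1}\times C ,
\]
which is summable by \eqref{eq:perc_r}. The boundary loops $N=0,1$ need separate treatment. The $N=0$ term is $\tau_p(x)^2$ with one factor tilted, so I bound it by Cauchy--Schwarz, using finiteness of $\Bsupmu(p)$ and the untilted $\tau_p\le\tau_{p_c}$. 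I expect this is where the constant $\kappa$ in the statement enters, presumably as an assumed bound $\Bsupmu(p)\le\kappa$ analogous to the self-avoiding walk case.

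To handle the weight $\abs x^a$, I use $\abs x^a\le (N')^{a}\sum_j \abs{y_j-y_{j-1}}^a$ over the $N'=O(N)$ lines of a path from $0$ to $x$. This places the weight on a single line, at the cost of a polynomial factor in $N$ that is absorbed by the geometric decay. I put the weight on a line that carries no tilt, i.e.\ on a path different from the tilted one, which is possible since every loop has two edge-disjoint sides. That weighted line is then bounded using \eqref{eq:taupc}, via $\abs y^a\tau_p(y)\le\abs y^a\tau_{p_c}(y)\lesssim\abs y^{a-(d-2)}$.

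The main obstacle is the loop containing the weighted line. The triangle there becomes a weighted triangle such as $\sup_{w}\sum_{u,v}\abs{u}^a\tau_{p_c}(u)\,\tau_p\supmu(v-u)\,\tau_p(w-v)$. I would bound it by Hölder or Cauchy--Schwarz, putting the tilted line in $L^2$, where it costs $\sqrt{1+\Bsupmu}$, and convolving the weighted line with the untilted one. Convergence needs $\abs y^{a-(d-2)}*\abs y^{-(d-2)}\in L^2$, which holds when $2(2d-4-a-d)>-d$ with a margin, i.e.\ $a<\frac{d-4}2$ up to the required strictness. This explains the range of $a$ in the statement. I expect the delicate part to be making this estimate uniform over the diagram's degenerate configurations (coinciding vertices). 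I would treat those separately using the $(1+2\Triangle)$ bookkeeping, and then sum over $N$.
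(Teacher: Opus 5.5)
There is a genuine gap, in the step you yourself flag as the main obstacle. The convolution $\abs y^{a-(d-2)}*\abs y^{-(d-2)}$ behaves like $\abs y^{-(d-4-a)}$. It therefore lies in $L^2(\Zd)$ if and only if $2(d-4-a)>d$, i.e.\ $a<\frac{d-8}{2}$, not $a<\frac{d-4}{2}$; your exponent count is off. So your bound on a triangle made of a weighted line, a tilted line and an untilted line proves the lemma only for $a<\frac{d-8}{2}$. Reshuffling H\"older exponents does not help. With the tilted line controlled only through $\norm{\tau_p\supmu}_2$ (so in $L^{q}$ only for $q\ge 2$), Young's inequality $\norm{f*g*h}_\infty\le\norm{f}_{q_1}\norm{g}_{q_2}\norm{h}_{q_3}$, with $q_1^{-1}+q_2^{-1}+q_3^{-1}=2$ and $q_2\ge 2$, needs $\frac{d-2-a}{d}+\frac{d-2}{d}>\frac 32$, which is again $a<\frac{d-8}{2}$. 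The full range matters, because the lemma is applied with $a=2+\eps$ and $a=2+\zeta$ up to $\frac{d-4}{2}$ (Lemma~\ref{lem:perc_infrared} and the proof of \eqref{eq:Q_moments-perc-a}).

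The missing idea is a decomposition in which the weighted line never lies in a triangle together with a tilted line. In the paper, $e^{\mu\cdot x}$ is distributed along the top path and $\abs x^a$ along the bottom path. As in \cite[(3.3)]{Hara08}, the $N$-th term is then bounded by a multiple of $N^{a+1}r^{N-2}\bigl(\bignorm{[\abs x^a\tau_p]*\tau_p\supmu}_\infty+H^{(\mu),a}_p\bigr)$.

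In the first term, the weighted line forms a \emph{bubble} with a tilted line. It is bounded by $\bignorm{\abs x^a\tau_p(x)}_2\bignorm{\tau_p\supmu}_2$, just as for $N=0$. This is finite exactly when $a<\frac{d-4}{2}$, and it is the real source of the threshold.

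In $H^{(\mu),a}_p$, the weighted bottom line closes an \emph{untilted} triangle, contributing $([\abs x^a\tau_p]*\tau_p*\tau_p)(0)$, which is finite for $a<d-6$. The tilted lines are absorbed into $\norm{\tau_p\supmu*\tau_p}_\infty$ and $\norm{\tau_p\supmu*\tau_p*\tau_p}_\infty$. These are bounded by Cauchy--Schwarz using $\norm{\tau_p\supmu}_2^2\le 1+\kappa$ and \eqref{eq:taupc}, valid for $d>8$. Since $d\ge 11$, $\frac{d-4}{2}\wedge(d-6)=\frac{d-4}{2}$.

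The rest of your outline agrees with the paper: routing the tilt along one path with $e^{\norm\mu_\infty}$ per bond absorbed into $r$, splitting $\abs x^a$ along a path at polynomial cost in $N$, handling the $N=0$ term by Cauchy--Schwarz, and reading $\kappa$ as a bound on the tilted bubble.
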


\begin{proof}
The proof is a slight modification of the proof of \cite[Lemma~3.2]{Hara90}.
Our hypothesis \eqref{eq:perc_r} is slightly weaker than the original hypothesis
\begin{equation}
 e^{\norm \mu_\infty}
	\Bigl( p + 2dp \Triangle\supmu(p) \Bigr)
	(1 + 2 \Triangle\supmu(p) )
	\le r < 1
\end{equation}
in \cite[(3.2)]{Hara90};
the adequacy of the weaker hypothesis was observed
by Hara and is described in \cite[(7.2)]{FH17}.
We assume familiarity with diagrammatic estimates for the percolation lace
expansion (see, e.g., \cite{HS90a,Hara90,MS93,Slad06}).
The lace expansion gives a representation of $\Pi_p(x)$ as an alternating sum of nonnegative terms $\Pi_{p,N}(x)$, summed over $N \ge 0$.

We first consider the simplest contribution
$\Pi_{p,0}(x)$, which is bounded above by $\1_{x\neq 0}\tau_p(x)^2$.  Its weighted version therefore obeys
\begin{equation}\label{eq:percN0}
    \bignorm{ \abs x^a  \Pi_{p,0} \supmu(x) }_1
     \le
    \bignorm{ \abs x^a \tau_p(x) }_2
    \bignorm{ \1_{x\neq 0}\tau_p\supmu(x) }_2.
\end{equation}
The second factor on the right-hand side is bounded above by $\sqrt{\kappa}$.  Since
$\tau_{p_c}(x) \lesssim |x|^{-(d-2)}$ by \eqref{eq:taupc}
and our assumption that $d \ge 11$, the first factor on the right-hand side is finite exactly when $a$ satisfies our assumption
that $a < \frac{d-4}{2}$.

\begin{figure}[h] 
\center{
\PercH
\caption{
Diagrammatic representation for $H_p^{(\mu),a}$, before multiplying by $|x|^a$, summing over $u,w,x,y,z$, and taking the supremum over $b,c$.
The wavy lines are tilted.
}
\label{fig:perc_diagram}
}
\end{figure}

We omit the details for $N=1$ and
sketch the argument for $\Pi_{p,N}$ for general $N \ge 2$.
In the diagrams for $\Pi_{p,N}(x)$, there are always two edge-disjoint paths from $0$ to $x$. We distribute $e^{\mu\cdot x}$ multiplicatively along the top path, and distribute $\abs x^a$ additively along the bottom path.
Factors $p e^{\|\mu\|_\infty}$ arising from $2dp P\supmu$ are bounded by $r < 1$.
As in \cite[(3.3)]{Hara08},
for all $N\ge 2$, we obtain a bound
\begin{equation} \label{eq:perc_diagram_pf}
\bignorm{ \abs x^a  \Pi_{p,N} (x) e^{\mu\cdot x} }_1
\lesssim N^{a+1} r^{N-2} 	\Bigl(
	\bignorm{ [\abs x^a \tau_p (x) ]  * \tau_p \supmu }_\infty
	+ H^{(\mu),a}_p \Bigr) ,
\end{equation}
where, as depicted in Figure~\ref{fig:perc_diagram},
\begin{align} \label{eq:percH}
H^{(\mu),a}_p & = \sup_{b,c\in \Zd} \sum_{v,w,x,y,z\in \Zd}
	\abs x^a
    \Big( \tau_p(z) \tau_p(x)\tau_p(w)\tau_p(x-w)\tau_p(y-x)
    \tau_p(v-w)
    \nl & \hspace{40mm} \times \tau\supmu_p(z+b-v)\tau\supmu_p(y+c-v) \Big).	
\end{align}
Since $r < 1$ by our hypotheses, we can sum \eqref{eq:perc_diagram_pf} over $N$ to get the desired \eqref{eq:Pi_moments-perc}, provided that the two terms in the parentheses in \eqref{eq:perc_diagram_pf} are finite.
The first term is bounded just as in \eqref{eq:percN0}.
For
the other term, a standard diagrammatic estimate gives
\begin{align}
H^{(\mu),a}_p
&\le   \norm{ \tau_p \supmu * \tau_p  }_\infty
\norm{ \tau_p \supmu * \tau_p * \tau_p }_\infty
	\bigl( [\abs x^a \tau_p(x) ]  * \tau_p * \tau_p \bigr)(0)   .
\end{align}
The first two factors on the right-hand side are bounded by
\begin{equation}
\norm{ \tau_p \supmu }_2 \norm{ \tau_p * \tau_p }_2
\le \sqrt{1+\kappa}  \norm{ \tau_p * \tau_p }_2 ,
\end{equation}
which is finite in dimensions $d \ge 11 > 8$ by the critical decay \eqref{eq:taupc}.
For the final factor,
the three-fold convolution of $\abs x^{a-(d-2)}$ with two factors $\abs x^{-(d-2)}$ is bounded when $a<d-6$.
This gives the desired result for all $a < \frac {d-4}2  \wedge (d-6) =  \frac {d-4}2$, using $d \ge 11$. 
This completes the proof.
\end{proof}

\begin{lemma} \label{lem:perc_infrared}
Suppose the hypotheses of Lemma~\ref{lem:perc_diagram-p1} hold.
Then
for any $\eps \in (0,2\wedge \frac{d-8}{2})$
there exists $C_\eps > 0$
such that
\begin{equation}
\bigabs{ \Re[ \hat J_p\supmu (0) - \hat J_p\supmu(k) ]
-[ \hat J_p (0) - \hat J_p(k)  ] }
\le  C_\eps \abs \mu^\eps e^{\abs \mu} ( 1 - \hat \Dnn(k) )
	\qquad (k\in \Td) .
\end{equation}
\end{lemma}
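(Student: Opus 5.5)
Our plan is to follow the proof of Lemma~\ref{lem:SAW_infrared} for self-avoiding walk almost verbatim, with the moment bound of Lemma~\ref{lem:perc_diagram-p1} replacing Lemma~\ref{lem:SAW_diagram-p1}. By $\Zd$-symmetry of $J_p$ (which holds since $P$ and $\Pi_p$ are $\Zd$-symmetric), the odd part $\sinh(\mu\cdot x)$ of $e^{\mu\cdot x}$ contributes only to the imaginary part. This gives the identity
\begin{equation}
\Re [ \hat J_p\supmu(0) - \hat J_p\supmu (k) ] - [ \hat J_p(0) - \hat J_p(k) ]
= \sum_{x\in\Zd} ( 1 - \cos (k\cdot x) ) J_p(x) (\cosh (\mu\cdot x) - 1) .
\end{equation}
We then use $1-\cos(k\cdot x)\lesssim |k|^2|x|^2$ and $\cosh t - 1 \lesssim |t|^\eps\cosh t$ for $\eps\le 2$. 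Together these bound the absolute value by $O(|\mu|^\eps)\,|k|^2 \sum_x |x|^{2+\eps}|J_p\supmu(x)|$, where we used $|J_p(x)|\cosh(\mu\cdot x)\le |J_p\supmu(x)|+|J_p^{(-\mu)}(x)|$. By symmetry the second term has the same norm as the first. Since $1-\hat\Dnn(k)\asymp|k|^2$, it then remains to show that $\norm{|x|^{2+\eps}J_p\supmu(x)}_1\lesssim e^{|\mu|}$.

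For this moment bound we write $J_p\supmu = 2dp\,\Dnn\supmu + 2dp\,\Dnn\supmu*\Pi_p\supmu$. The first term is bounded by $2dp\,e^{\|\mu\|_\infty}\le 2dp\, e^{|\mu|}$, with $p\le p_c$. For the second term we apply $|x|^u\le 2^u(|x-y|^u+|y|^u)$ with $u = 2+\eps$, exactly as in \eqref{eq:J_moments-perc}. This bounds it by $2dp\,e^{|\mu|}2^u\bigl(\norm{\Pi_p\supmu}_1+\norm{|x|^u\Pi_p\supmu(x)}_1\bigr)$. Both norms are at most $K_{0,1}+K_{u,1}$ by Lemma~\ref{lem:perc_diagram-p1}, provided $u=2+\eps<\frac{d-4}{2}$, that is, $\eps<\frac{d-8}{2}$. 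This is precisely the stated range of $\eps$, with the additional restriction $\eps< 2$ needed for the $\cosh$ inequality. The constant $C_\eps$ then depends on $r,\kappa$ through $K_{u,1}$, and on $p_c$.

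The only real point of care is that the hypotheses of Lemma~\ref{lem:perc_diagram-p1} require $a<\frac{d-4}2$ strictly, which forces the restriction $\eps<\frac{d-8}{2}$. We also need to track the tilt factor coming from the nearest-neighbour step, $\Dnn\supmu\le e^{\|\mu\|_\infty}\Dnn$, and check that it is absorbed into the factor $e^{|\mu|}$. Beyond that the argument is a routine transcription, and we expect no genuine obstacle.
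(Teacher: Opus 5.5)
Your proposal is correct and is essentially the paper's proof. The paper says only that the argument is the same as for Lemma~\ref{lem:SAW_infrared}, with Lemma~\ref{lem:perc_diagram-p1} applied under the condition $2+\eps<\frac{d-4}{2}$; your moment bound on $\abs{x}^{2+\eps}J_p\supmu(x)$, obtained from $J_p\supmu = 2dp\,\Dnn\supmu * \g_p\supmu$ and the splitting $\abs{x}^u\le 2^u(\abs{x-y}^u+\abs{y}^u)$, is exactly the computation in \eqref{eq:J_moments-perc}, and your inequality $\abs{J_p(x)}\cosh(\mu\cdot x)\le\abs{J_p\supmu(x)}+\abs{J_p^{(-\mu)}(x)}$ differs from the paper's symmetry equality only by a harmless constant.
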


\begin{proof}
The proof is the same as the proof of Lemma~\ref{lem:SAW_infrared}.
We can use Lemma~\ref{lem:perc_diagram-p1} with any $\eps \le 2$ such that $2 + \eps < \frac{d-4}2 = 2 + \frac{d-8}2$.
\end{proof}

For $p \in [\frac{1}{2d}, p_c)$, we know from \cite[Lemma~4.5]{HS90a}
that $J_p = 2dp P * ( \delta_0 + \Pi_p )$ satisfies the infrared bound
\begin{equation} \label{eq:perc_infrared}
\hat J_p (0) - \hat J_p(k)
\ge (1 - O(d\inv)) ( 1 - \hat \Dnn(k) ) .
\end{equation}
Therefore, Lemma~\ref{lem:perc_infrared} implies that
(under the hypotheses of Lemma~\ref{lem:perc_diagram-p1})
\begin{equation} \label{eq:perc_tilted_infrared}
\Re[ \hat J_p\supmu (0) - \hat J_p\supmu(k) ]
\ge [1 - O(d\inv) - O( \abs \mu^\eps e^{\abs \mu}) ] ( 1 - \hat \Dnn(k) )
\gtrsim 1 - \hat \Dnn(k)
\end{equation}
when $\abs \mu$ is sufficiently small.  Since $\lim_{p\to p_c}m_p=0$,
and since $\|\mu\|_\infty \le m_p$ when $\mu \in \Omega_p$, we can achieve
small $\abs \mu$ by taking $p$ close to $p_c$.
Also, when also $\mu \in \Omega_p$ we have $\chi \supmu(p) < \infty$, so
\begin{equation} \label{eq:Jz^mu}
1 - \hat J_p \supmu(0)
= \frac{ 1 + \hat \Pi_p\supmu(0) }{ \chi\supmu(p)  }
= \frac{ 1 + \hat \Pi_p(0) - O(\abs \mu^2) }{ \chi\supmu(p)  }
\ge 0 .
\end{equation}
Since $\chi\supmu(p) \ge \chi(p) \to \infty$ as $p \to p_c$, it follows that
$1 - \hat J_p \supmu(0)$ can also be made as small as desired by taking $p$ close to $p_c$.

For the next lemma, we define
\begin{equation}
\tau_\sym \supmu (x) = \half ( \tau_p\supmu(x) +  \tau_p\supmu(-x) )
= \tau_p(x) \cosh( \mu\cdot x).
\end{equation}
The lemma states that $\hat \tau_\sym \supmu$ is close to $\hat \tau_p$ in the Fourier space. We are not aware of a previous estimate of this form.

\begin{lemma} \label{lem:Gsym}
Suppose the hypotheses of Lemma~\ref{lem:perc_diagram-p1} hold.
Let $p\in [p_c-\delta_1, p_c)$ and $\mu \in \Omega_p$, with $\delta_1$ chosen
small enough that \eqref{eq:perc_tilted_infrared} holds.
Then there are (explicit) functions $f_j: [0,\infty) \to [0,\infty)$, satisfying $f_j(t) \to 0$ as $t \to 0$, for which
\begin{equation}
\norm{ \hat \tau_\sym \supmu - \hat \tau_p }_3
\le f_1(\abs \mu) + f_2( 1 - \hat J_p \supmu(0) )
	+  f_3( 1 - \hat J_p(0) ) .
\end{equation}
\end{lemma}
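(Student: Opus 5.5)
Since $\tau_\sym\supmu(x)=\tau_p(x)\cosh(\mu\cdot x)$, the function $\hat\tau_\sym\supmu$ is real and equals $\Re \hat\tau_p\supmu$. I would express both transforms through the OZ equation \eqref{eq:OZperc}. For $\mu\in\Omega_p$ all tilted objects are summable, so
\begin{equation*}
\hat\tau_p\supmu(k)=\frac{\hat\g_p\supmu(k)}{1-\hat J_p\supmu(k)},\qquad \hat\tau_p(k)=\frac{\hat\g_p(k)}{1-\hat J_p(k)} .
\end{equation*}
Then $\hat\tau_\sym\supmu=\Re\hat\tau_p\supmu$, and I would split
\begin{equation*}
\Re\hat\tau_p\supmu-\hat\tau_p
=\Re\frac{\hat\g_p\supmu-\hat\g_p}{1-\hat J_p\supmu}
+\Re\biggl[\hat\g_p\Bigl(\frac1{1-\hat J_p\supmu}-\frac1{1-\hat J_p}\Bigr)\biggr].
\end{equation*}
The second term is written further as
\begin{equation*}
\hat\g_p\,\frac{\hat J_p\supmu-\hat J_p}{(1-\hat J_p\supmu)(1-\hat J_p)} .
\end{equation*}
The denominators are controlled from below. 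Write $1-\hat J_p\supmu(k)=[1-\hat J_p\supmu(0)]+[\hat J_p\supmu(0)-\hat J_p\supmu(k)]$. Using \eqref{eq:perc_tilted_infrared} and $1-\hat J_p\supmu(0)\ge0$ from \eqref{eq:Jz^mu}, this gives $|1-\hat J_p\supmu(k)|\ge \Re[\cdot]\gtrsim |k|^2$. The same holds for the untilted denominator by \eqref{eq:perc_infrared}.

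For the numerators, Lemma~\ref{lem:perc_diagram-p1} with small moments, together with $|e^{\mu\cdot y}-1|\lesssim |\mu\cdot y|^\eps\cosh(\mu\cdot y)$, gives $\|\hat\g_p\supmu-\hat\g_p\|_\infty\lesssim|\mu|^\eps e^{|\mu|}$. For $\hat J_p\supmu-\hat J_p$ a bound uniform in $k$ is too crude. Instead I would separate the imaginary (odd) part from the real part. The imaginary part $\sum_y J_p(y)\sinh(\mu\cdot y)\sin(k\cdot y)$ is $O(|\mu||k|)$, using one moment and $|\sinh t|\le|t|\cosh t$. The real part is
\begin{equation*}
[\hat J_p\supmu(0)-\hat J_p(0)]-\bigl(\Re[\hat J_p\supmu(0)-\hat J_p\supmu(k)]-[\hat J_p(0)-\hat J_p(k)]\bigr),
\end{equation*}
where the first bracket is $O(|\mu|^2)$ and the second is $O(|\mu|^\eps|k|^2)$ by Lemma~\ref{lem:perc_infrared}. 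The troublesome piece is the constant $\hat J_p\supmu(0)-\hat J_p(0)=[1-\hat J_p(0)]-[1-\hat J_p\supmu(0)]$, which is exactly why the statement contains $f_2$ and $f_3$. I would keep it in this form rather than bounding it by $|\mu|^2$.

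Next I would estimate the $L^3(\Td)$ norms. Since $d\ge 15$, $|k|^{-2}\in L^3$, so a term like $|\mu|^\eps/|k|^2$ contributes $f_1(|\mu|)$. The quotient with numerator $|\mu|^\eps|k|^2$ and denominator $\gtrsim|k|^4$ also contributes $f_1$. For the odd part $|\mu||k|/|k|^4$, I would use $|k|^{-3}\in L^3$, which is valid since $d>9$; this again gives $f_1$. For the constant part $a=1-\hat J_p(0)$ and $b=1-\hat J_p\supmu(0)$, I would use the sharper lower bounds $|1-\hat J_p(k)|\gtrsim a+|k|^2$ and $|1-\hat J_p\supmu(k)|\gtrsim b+|k|^2$. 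This gives the difference
\begin{equation*}
\frac1{b+|k|^2}-\frac1{a+|k|^2},
\end{equation*}
more precisely $|a-b|/((a+|k|^2)(b+|k|^2))$, whose $L^3$ norm is bounded by
\begin{equation*}
\Bigl\|\frac{a}{|k|^2(a+|k|^2)}\Bigr\|_3+\Bigl\|\frac{b}{|k|^2(b+|k|^2)}\Bigr\|_3 .
\end{equation*}
By scaling, as in \eqref{eq:SAW_bubble_pf}, this is $\lesssim a^{\theta}+b^{\theta}$ for some $\theta>0$ (because $d>6$). That produces $f_3(a)+f_2(b)$. A bounded $\hat\g_p$ factor multiplies all of these.

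The main obstacle is this last step. Bounding $\hat J_p\supmu(0)-\hat J_p(0)$ by $|\mu|^2$ would give $|\mu|^2/|k|^4$, which fails to lie in $L^3$ when $d\le 12$ and in any case is not small uniformly. The two-sided control via $a,b$, with the full denominators $a+|k|^2$ and $b+|k|^2$, is what I rely on; it requires checking that the tilted infrared bound combines additively with $b$ as described. If that additivity were delicate, I would fall back on using $\Re$ of the denominator only, which suffices because both pieces are nonnegative.
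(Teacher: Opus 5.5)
Your proof is correct, and it differs from the paper's mainly in how you compare $\Re[1/(1-\hat J_p\supmu)]$ with $1/(1-\hat J_p)$. The splitting off of $\hat\Pi_p\supmu-\hat\Pi_p$ is the same in both.

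The paper inserts the intermediate quantity $1/\Re[\hat J_p\supmu(0)-\hat J_p\supmu(k)]$. It compares this with $1/(1-\hat J_p)$ as in the self-avoiding walk bootstrap, which produces $f_3$ and a term of order $\abs\mu^\eps/\abs k^2$. It compares it with $\Re[1/(1-\hat J_p\supmu)]$ through the exact identity $\Re\frac1w-\frac1{\Re z}=-\frac{(u-x)u+v^2}{x\abs w^2}$. There $b=1-\hat J_p\supmu(0)$ gives $f_2$, and the odd part $\Im\hat J_p\supmu(k)$ enters \emph{quadratically}, damped by $\abs k^4+\abs{\Im\hat J_p\supmu(k)}^2$. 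Hence the crude bound $\abs{\Im\hat J_p\supmu}\lesssim\abs\mu$ suffices, and every $L^3$ norm only needs $d>6$.

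Your direct difference of reciprocals, bounded in absolute value, is shorter and avoids that identity. The cost is that the odd part enters linearly. You therefore need $\abs{\Im\hat J_p\supmu(k)}\lesssim\abs\mu\abs k$, which uses the second moment of $J_p\supmu$, not one moment as you say. You also need $\abs k^{-3}\in L^3(\Td)$, i.e.\ $d>9$. This is fine here since $d\ge 15$.

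The additivity you were unsure about does hold: $\Re[1-\hat J_p\supmu(k)]=b+\Re[\hat J_p\supmu(0)-\hat J_p\supmu(k)]$, and both summands are nonnegative.

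Your ``main obstacle'' is not really one in $d\ge 15$, and your remark that the crude bound is not small is incorrect there. By the second moment, $\abs{a-b}=\abs{\hat J_p\supmu(0)-\hat J_p(0)}\lesssim\abs\mu^2$. Once $d>12$, $\abs\mu^2\abs k^{-4}$ has $L^3$ norm $O(\abs\mu^2)$. So on your route the $a,b$ splitting, and hence $f_2$ and $f_3$, is only needed to cover $9<d\le 12$.
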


\begin{proof}
Since $\mu \in \Omega_p$, by tilting the OZ equation \eqref{eq:OZperc} we have
\begin{equation}
\hat \tau_\sym \supmu (k)
= \Re \hat \tau_p \supmu (k)
= \Re \biggl( \frac{ 1 + \hat \Pi_p \supmu (k) } { 1 - \hat J_p \supmu (k) } \biggr) .
\end{equation}
With $ \hat \Pi_p \supmu = \hat \Pi_p + ( \hat \Pi_p \supmu - \hat \Pi_p )$
and by Lemma~\ref{lem:perc_diagram-p1} and \eqref{eq:perc_tilted_infrared}, this gives
\begin{align}
\abs{ \hat \tau_\sym \supmu(k) - \hat \tau_p(k) }
&\le  \biggabs{ \Re \biggl( \frac{ 1 + \hat \Pi_p (k) } { 1 - \hat J_p \supmu (k) } \biggr)
	- \hat \tau_p(k) }
	+ \frac{  \norm{ \hat \Pi_p \supmu - \hat \Pi_p }_\infty }
		{ \abs{ 1 - \hat J_p \supmu (k) } } 		\nl
&\le \biggabs{ \Re \biggl( \frac{ 1 + \hat \Pi_p (k) } { 1 - \hat J_p \supmu (k) } \biggr)
	- \hat \tau_p(k) }
	+ \frac{  O( \abs \mu^2 )  } { \abs k^2 } .
\end{align}

The $L^3$ norm of the last term is finite and of order $\abs \mu^2$ in dimensions $d > 6$; we put this as part of the function $f_1(\abs \mu)$.
For the first term, since $\hat \Pi_p (k)$ is real (because $\Pi_p(x)$ is symmetric), we have
\begin{equation}
\biggnorm{ \Re \Biggl( \frac{ 1 + \hat \Pi_p } { 1 - \hat J_p \supmu } \biggr)
	- \hat \tau_p } _3
\le \norm{ 1 + \hat \Pi_p }_\infty
	\biggnorm{ \Re \Biggl( \frac1 { 1 - \hat J_p \supmu } \biggr)
	- \frac1 { 1 - \hat J_p } } _3 .
\end{equation}
Since $\norm{ \hat \Pi_p(k) }_\infty \le \norm{ \Pi_p (x) }_1 \lesssim 1$,
it suffices to bound each of the norms
\begin{equation} \label{eq:Gsym_pf1}
\biggnorm{ \Re \biggl( \frac{ 1 } { 1 - \hat J_p \supmu } \biggr)
	- \frac{ 1 } { \Re[ \hat J_p \supmu(0) - \hat J_p \supmu ] } }_3 ,
\qquad
\biggnorm{ \frac{ 1 } { 1 - \hat J_p  }
	- \frac{ 1 } { \Re[ \hat J_p \supmu(0) - \hat J_p \supmu ] } }_3
\end{equation}
in terms of $\abs \mu$, $1 - \hat J_p \supmu(0)$, and $1 - \hat J_p(0)$.
For the second norm, the required estimate has essentially appeared in \eqref{eq:SAW_boot_pf0} for the self-avoiding walk, with the only difference being the $L^3$ norm here versus the $L^2$ norm there.
We therefore omit its details, which lead to $f_3$ in the desired upper bound.

For the first norm of \eqref{eq:Gsym_pf1},
we use the elementary identity, for $w=u+iv$ and $z=x+iy$, that
\begin{equation}
    \Re \frac 1w  - \frac{1}{\Re z}
    = \frac{u}{\abs w^2} - \frac 1 x
    =
   - \frac{(u-x)u +v^2}{x|w|^2}.
\end{equation}
With the infrared bound \eqref{eq:perc_tilted_infrared}, this gives
\begin{multline} \label{eq:Gsym_pf2}
\biggabs{
\Re \biggl( \frac{ 1 } { 1 - \hat J_p \supmu(k) } \biggr)
-
\frac{ 1 } { \Re[ \hat J_p \supmu(0) - \hat J_p \supmu(k) ] }
}
= \biggabs{
	\frac{ [ 1 - \hat J_p \supmu(0)] \Re [1 -  \hat J_p \supmu (k)]
		+ \abs{\Im \hat J_p\supmu(k) }^2 }
	{ \Re[ \hat J_p \supmu(0) - \hat J_p \supmu(k) ]
		\abs{ 1 - \hat J_p \supmu (k) }^2}
	} 	\\
\lesssim  \frac{ 1 - \hat J_p \supmu(0) }{ \abs k^2 ( 1 - \hat J_p \supmu(0) + \abs k^2 )}
	+ \frac{  \abs{\Im \hat J_p\supmu(k) }^2 }
		{ \abs k^2 ( \abs k^4 +  \abs{\Im \hat J_p\supmu(k) }^2 ) } .
\end{multline}
We take the norm of the two terms separately.
Similarly to \eqref{eq:SAW_bubble_pf} for the self-avoiding walk, the norm of the first term obeys
\begin{equation}
\biggnorm{ \frac{ 1 - \hat J_p \supmu(0) }
	{ \abs k^2 ( \abs k^2 +  [1 - \hat J_p \supmu(0)] )} }_3
\lesssim
\begin{cases}
(1 - \hat J_p \supmu(0)) ^{1 - \frac{12-d}6}
	&( 6 < d < 12)
\\
(1 - \hat J_p \supmu(0)) \abs{\log (1 - \hat J_p \supmu(0))} & (d=12)
\\
(1 - \hat J_p \supmu(0)) & (d>12),
\end{cases}
\end{equation}
which defines the function $f_2(1 - \hat J_p \supmu(0))$.
For the norm of the last term of \eqref{eq:Gsym_pf2},
we use that the function $t \mapsto t / (\abs k^4 + t)$ is increasing in $t\ge0$ for each $k$,
with $t = \abs{\Im \hat J_p\supmu(k) }^2 \lesssim \abs \mu^2$, to get
\begin{equation}
\biggnorm{ \frac{  \abs{\Im \hat J_p\supmu(k) }^2 }
		{ \abs k^2 ( \abs k^4 +  \abs{\Im \hat J_p\supmu(k) }^2 ) } }_3
\lesssim \biggnorm{ \frac{ \abs \mu^2 }
		{ \abs k^2 ( \abs k^4 +   \abs \mu^2 ) } }_3
\lesssim
\begin{cases}
\abs \mu^{2 ( 1 - \frac{18-d}{12} ) }
	&( 6 < d < 18)
\\
\abs \mu^2 \abs{\log \abs \mu^2   } & (d=18)
\\
\abs \mu^2 & (d>18) .
\end{cases}
\end{equation}
This
gives a part of the function $f_1(\abs \mu)$ and
concludes the proof.
\end{proof}

Our choice of $\delta>0$,
and the need to take $d\ge 15$ (rather than $d\ge11$),
occur in the next lemma.
We will apply Lemma~\ref{lem:perc_triangle} with a small value of $\gamma$.

\begin{lemma} \label{lem:perc_triangle}
Let $d \ge 15$.
For any $\gamma > 0$, there is a $\delta = \delta(\gamma) \in (0,\delta_1)$ such that for $p \in [p_c - \delta, p_c)$ and $\mu \in \Omega_p$,
\begin{equation}
\begin{aligned}
\label{eq:perc_triangle}
\Triangle\supmu(p) &\le 2 \Triangle^{(0)}(p_c) + \gamma , \\
\tilde\Triangle\supmu(p) &\le 2 \tilde \Triangle^{(0)}(p_c) + \gamma , \\
\Bsupmu(p) &\le \B^{(0)}(p_c) + \gamma .
\end{aligned}
\end{equation}
\end{lemma}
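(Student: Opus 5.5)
The plan is to run a bootstrap in $\mu\in\Omega_p$, modelled on the proof of Lemma~\ref{lem:SAW_bubble}. Fix $p<p_c$. By part (i) and the Simon--Lieb exponential decay of $\tau_p\supmu$ for $\mu\in\Omega_p$ (as in Section~\ref{sec:SAWi}), the three quantities $\Triangle\supmu(p)$, $\tilde\Triangle\supmu(p)$ and $\Bsupmu(p)$ are continuous in $\mu$ on the convex, hence connected, set $\Omega_p$. At $\mu=0$ they are bounded by their values at $p_c$ by monotonicity in $p$. It therefore suffices to show the following forbidden-region statement. There is $\delta$ such that for $p\in[p_c-\delta,p_c)$ and $\mu\in\Omega_p$, if the three bounds \eqref{eq:perc_triangle} hold with $\gamma$ replaced by $2\gamma$, then they hold with $\gamma$. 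For small $\gamma$, the weakened bounds together with \eqref{eq:triangle^0} give $e^{\|\mu\|_\infty}\tilde\Triangle\supmu(1+2\Triangle\supmu)\le(0.36+2\gamma)(1.86+\dots)e^{m_p}<1$. This is where the numerical margin for $d\ge15$ is used. Under this, Lemmas~\ref{lem:perc_diagram-p1}, \ref{lem:perc_infrared} and \ref{lem:Gsym} become available.

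For the bubble, follow \eqref{eq:SAW_boot_pf0} exactly. Parseval gives $\sqrt{1+\Bsupmu}=\|\hat\tau_p\supmu\|_2$. Write $\hat\tau_p\supmu=(1+\hat\Pi_p\supmu)/(1-\hat J_p\supmu)$ and compare with $\hat\tau_p$, using the tilted infrared bound \eqref{eq:perc_tilted_infrared}, Lemma~\ref{lem:perc_infrared}, and $\|\hat\Pi_p\supmu-\hat\Pi_p\|_\infty=O(|\mu|^2)$. All error terms are $o(1)$ as $p\to p_c$, because $|\mu|\le\sqrt d\,m_p\to0$, $1-\hat J_p(0)\to0$, and $1-\hat J_p\supmu(0)\to0$ by \eqref{eq:Jz^mu}.

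For the triangles, the key observation is that the tilt appears on only one of the three factors. Using $\Zd$-symmetry of $\tau_p*\tau_p$, the sup over $x$ of $\tau_p\supmu*\tau_p*\tau_p(x)$ is bounded via Fourier by $\int|\hat\tau_p\supmu|\,|\hat\tau_p|^2$. This is the first place I am unsure, since the tilted factor is not symmetric. My plan is to bound $\tau_p\supmu(y)\le 2\tau_\sym\supmu(y)$ pointwise (valid since $e^{t}\le 2\cosh t$). This explains the factor $2$ in \eqref{eq:perc_triangle}, and it gives
\[
\Triangle\supmu(p)\le 2\int_\Td\frac{\D k}{(2\pi)^d}\hat\tau_\sym\supmu(k)\hat\tau_p(k)^2-\dots .
\]
I then replace $\hat\tau_\sym\supmu$ by $\hat\tau_p$, bounding the error by Hölder as $\|\hat\tau_\sym\supmu-\hat\tau_p\|_3\|\hat\tau_p\|_3^2$. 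Lemma~\ref{lem:Gsym} makes the first factor $o(1)$, and $\|\hat\tau_p\|_3\lesssim\||k|^{-2}\|_3<\infty$ for $d>6$. The main term is at most $\Triangle\supzero(p_c)$ (plus the $\delta_0$ bookkeeping) by monotonicity in $p$. $\tilde\Triangle\supmu$ is handled identically with an extra factor $2dp\hat P$, which is bounded by $2dp$.

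The main obstacle I anticipate is making the pointwise-to-Fourier step rigorous. The sup norm of a convolution is at most the $L^1$ norm of the Fourier product only after one has the positivity/reality needed to drop the phases. Also, $\hat\tau_\sym\supmu$ may not be nonnegative, so the Hölder error must be taken in absolute value. I would also check the $\delta_0$ subtraction and use Fatou to pass to $\overline\Omega_p$ afterwards. Finally, $\delta$ is chosen so that all $o(1)$ terms are below $\gamma$ and $\delta<\delta_1$.
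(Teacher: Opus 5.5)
Your route is the paper's: a bootstrap over the connected set $\Omega_p$ with a forbidden-region step from $2\gamma$ to $\gamma$, then verification of \eqref{eq:perc_r} so that Lemmas~\ref{lem:perc_diagram-p1}--\ref{lem:Gsym} apply. After that come symmetrisation through $\tau_\sym\supmu$, which is the source of the factor $2$, and the H\"older bound $\|\hat\tau_\sym\supmu-\hat\tau_p\|_3\|\hat\tau_p\|_3^2$ on the error, with the main term kept in $x$-space. For the bubble you compare $|\hat\tau_p\supmu|$ with $\hat\tau_p$ in $L^2$ as in \eqref{eq:SAW_boot_pf0}. The paper instead passes through $\tau_\sym\supmu$ and Lemma~\ref{lem:Gsym} again; both work.

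The $\delta_0$ bookkeeping you left open is a real defect of the pointwise bound $\tau_p\supmu\le 2\tau_\sym\supmu$. At $x=0$ it only gives
\[
(\tau_p\supmu*\tau_p*\tau_p)(0)-1\le 2(\tau_\sym\supmu*\tau_p*\tau_p)(0)-1
=2\bigl[(\tau_p*\tau_p*\tau_p)(0)-1\bigr]+1+o(1),
\]
hence only $\Triangle\supmu(p)\le 2\Triangle\supzero(p_c)+1+o(1)$, which does not close the bootstrap. The missing observation is that nothing is lost at the origin. Since $\tau_p*\tau_p$ is even,
\[
(\tau_p\supmu*\tau_p*\tau_p)(0)=\sum_y\tau_p(y)\cosh(\mu\cdot y)(\tau_p*\tau_p)(y)=(\tau_\sym\supmu*\tau_p*\tau_p)(0).
\]
The paper gets this for free by using the identity $(\tau_\sym\supmu*\tau_p*\tau_p)(x)=\frac12[(\tau_p\supmu*\tau_p*\tau_p)(x)+(\tau_p\supmu*\tau_p*\tau_p)(-x)]$ instead of a pointwise bound. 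That identity yields $\Triangle\supmu(p)\le 2\|\tau_\sym\supmu*\tau_p*\tau_p-\delta_0\|_\infty$. Equivalently, you can use the sharper pointwise bound $\tau_p\supmu\le 2\tau_\sym\supmu-\delta_0$. There is no $\delta_0$ in $\tilde\Triangle\supmu$, so that part is fine as you wrote it.

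Some smaller corrections:

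\textbf{Smallness check.} You drop the factor $2$ on $\Triangle$. Under the bootstrap hypothesis, $1+2\Triangle\supmu(p)\le 1+4(0.43)+4\gamma$, so the left side of \eqref{eq:perc_r} is at most $e^{\|\mu\|_\infty}[0.9792+O(\gamma)]$, not about $0.67$. This is still below any $r\in[0.98,1)$ for small $\gamma,\delta$. However, the margin is only about $2\%$, so there is no room for any loss beyond the factor $2$, and in particular not for the extra $+1$ above.

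\textbf{The ``phases'' obstacle.} This is not an obstacle. For the error term you only need $|\int e^{ik\cdot x}F(k)\,\D k|\le\|F\|_1$, which holds for any complex $F$.

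\textbf{Size of $\hat\Pi_p\supmu-\hat\Pi_p$.} The quantity $\|\hat\Pi_p\supmu-\hat\Pi_p\|_\infty$ is $O(|\mu|)$, not $O(|\mu|^2)$, because of the imaginary part $-\sum_x\Pi_p(x)\sinh(\mu\cdot x)\sin(k\cdot x)$. This is harmless in your bubble argument, since $|\mu|\,\bignorm{\abs k^{-2}}_2\to0$ for $d>4$.
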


\begin{proof}
The proof uses a bootstrap argument varying $\mu \in \Omega_p$.
Let $\gamma > 0$, $\eps \in ( 0, 2\wedge \frac{d-8}{2} )$, and $p \in [p_c-\delta_1, p_c)$.
All of $\Triangle \supmu(p)$, $\tilde \Triangle \supmu(p)$, and $\Bsupmu(p)$ are continuous function in $\mu \in \Omega_p$ when $p < p_c$ is fixed, since $\tau_p\supmu(x)$ decays exponentially.
Since $\Omega_p$ is connected (by convexity from Lemma~\ref{lem:Omega_convex}), the image of the map $\mu \mapsto (\Triangle \supmu(p),\tilde \Triangle \supmu(p), \Bsupmu(p))$ must be a connected subset of $\R^3$.
Since $\Triangle \supzero(p) \le \Triangle \supzero(p_c)$,
$\tilde\Triangle \supzero(p) \le \tilde\Triangle \supzero(p_c)$, and
$\B\supzero(p) \le \B\supzero(p_c)$ also, it suffices to prove that there is a $\delta > 0$ for which if $p \in [p_c - \delta, p_c)$ and $\mu \in \Omega_p$ then
\begin{equation} \label{eq:perc_boot}
\begin{cases}
\Triangle\supmu(p) \le 2 \Triangle^{(0)}(p_c) + 2\gamma  \\
\tilde\Triangle\supmu(p) \le 2 \tilde \Triangle^{(0)}(p_c) + 2\gamma \\
\Bsupmu(p) \le \B^{(0)}(p_c) + 2\gamma
\end{cases}
\implies \quad
\begin{cases}
\Triangle\supmu(p) \le 2 \Triangle^{(0)}(p_c) + \gamma  \\
\tilde\Triangle\supmu(p) \le 2 \tilde \Triangle^{(0)}(p_c) + \gamma \\
\Bsupmu(p) \le \B^{(0)}(p_c) + \gamma .
\end{cases}
\end{equation}

Fix  $r \in [0.98,1)$.
Given the bootstrap hypothesis (and $d \ge 15$),
we can use \eqref{eq:triangle^0} to estimate \eqref{eq:perc_r} by
\begin{align}  \label{eq:perc_boot_pf}
e^{\norm \mu_\infty}
\tilde \Triangle\supmu(p)
	(1 + 2 \Triangle\supmu(p) )
&\le  e^{\norm \mu_\infty}
	(2\tilde \Triangle\supzero(p_c) + 2\gamma )
	(1 + 4 \Triangle\supzero(p_c) + 4\gamma ) 	\nl
&\le  e^{\norm \mu_\infty} [
	2(0.18)(1+4(0.43) )
	+O(\gamma) +8\gamma^2 ] \nl
&=  e^{\norm \mu_\infty} [
	0.9792
	+O(\gamma) +8\gamma^2 ] .
\end{align}
Since $\norm \mu_\infty \le m_p  \to 0$ as $p \to p_c$,
the above is less than $r$ when $\gamma$ and $\delta$ are sufficiently small.
We therefore can use Lemmas~\ref{lem:perc_diagram-p1}--\ref{lem:Gsym} with this $r$ and with $\kappa = \B^{(0)}(p_c) + 2\gamma$.
By taking a smaller $\delta$, we can apply
Lemma~\ref{lem:Gsym} to  all $p \in [p_c - \delta, p_c)$ and all $\mu \in \Omega_p$.

To estimate $\Triangle\supmu(p) $,
we first use
\begin{equation}
(\tau_\sym\supmu * \tau_p * \tau_p)(x)
= \half \Bigl( (\tau_p\supmu * \tau_p * \tau_p)(x)
	+ (\tau_p\supmu * \tau_p * \tau_p)(-x) \Bigr)
\end{equation}
to see that $\Triangle\supmu(p) \le 2 \norm{ \tau_\sym\supmu * \tau_p * \tau_p - \delta_0 }_\infty$.
By the inverse Fourier transform,
\begin{equation}
( \tau_\sym\supmu * \tau_p * \tau_p - \delta_0 )(x)
=  ( \tau_p * \tau_p * \tau_p - \delta_0 )(x)
	+  \int_\Td \frac{\D k }{(2\pi)^d}	e^{ik\cdot x}
		[\hat \tau_\sym\supmu (k) - \hat \tau_p(k)] [\hat \tau_p(k)]^2 .
\end{equation}
Taking the supremum over $x\in \Zd$ then gives
\begin{equation}
\norm{ \tau_\sym\supmu * \tau_p * \tau_p - \delta_0 }_\infty
\le  \Triangle  \supzero(p)
	+ \norm{ \hat \tau_\sym \supmu - \hat \tau_p }_3  \norm{\hat \tau_p}_3^2 .	
\end{equation}
We observed below \eqref{eq:Jz^mu} that $1 - \hat J_p \supmu(0) \to 0$ as $p \to p_c$.
Therefore, using Lemma~\ref{lem:Gsym} we obtain
\begin{equation}
\Triangle \supmu(p)
\le 2 \Triangle \supzero(p) + o(1)
	\qquad(p\to p_c).
\end{equation}
This gives the $\Triangle \supmu(p)$ part of \eqref{eq:perc_boot}, since $\Triangle \supzero(p) \le \Triangle \supzero(p_c)$.

The $\tilde \Triangle \supmu(p)$ part of \eqref{eq:perc_boot} is analogous.
We first use $\norm{ P * \tau_{p}\supmu * \tau_{p} * \tau_{p} }_\infty
\le 2\norm{ P * \tau_\sym\supmu * \tau_{p} * \tau_{p} }_\infty$,
and then use the inverse Fourier transform to get
\begin{equation}
\norm{ P * \tau_\sym\supmu * \tau_{p} * \tau_{p} }_\infty
\le \norm{ P * \tau_p * \tau_{p} * \tau_{p} }_\infty + \norm {\hat P}_\infty \norm{\hat\tau_\sym\supmu - \hat\tau_p}_3 \norm {\hat \tau_p}_3^2.
\end{equation}
Since $\norm {\hat P}_\infty \le \norm{P(x)}_1  = 1$,
by Lemma~\ref{lem:Gsym} we get
$\tilde\Triangle \supmu(p)
\le 2 \tilde \Triangle \supzero(p) + o(1)$ as $p \to p_c$.
The $\Bsupmu(p)$ part of \eqref{eq:perc_boot} similarly follows from Lemma~\ref{lem:Gsym} and the fact that $L^2(\Td)$ norm is at most the $L^3(\Td)$ norm.
This completes the proof.
\end{proof}

\begin{corollary} \label{cor:perc-triangle}
Let $d \ge 15$.
There exist $\delta, \kappa > 0$ and  $r < 1$ such that \eqref{eq:perc_r}
and the bound $\Bsupmu(p) \le  \kappa$
hold
uniformly in $p \in [p_c - \delta, p_c)$ and in $\mu \in \overline \Omega_p$.
\end{corollary}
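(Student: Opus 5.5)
The plan is to deduce Corollary~\ref{cor:perc-triangle} from Lemma~\ref{lem:perc_triangle}, in the same way Corollary~\ref{cor:SAW-bubble} was deduced from Lemma~\ref{lem:SAW_bubble}. There are two steps: first fix a small $\gamma$ so that the bootstrap bounds imply \eqref{eq:perc_r}; then extend from $\Omega_p$ to $\overline\Omega_p$ by lower semicontinuity.

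\emph{Choice of constants.} Take $\gamma>0$ small and let $\delta=\delta(\gamma)$ be given by Lemma~\ref{lem:perc_triangle}, shrinking $\delta$ further if needed. For $p\in[p_c-\delta,p_c)$ and $\mu\in\Omega_p$, the conclusions \eqref{eq:perc_triangle} hold, so the computation \eqref{eq:perc_boot_pf} gives
\[
e^{\norm\mu_\infty}\tilde\Triangle\supmu(p)(1+2\Triangle\supmu(p)) \le e^{\norm\mu_\infty}\bigl[0.9792+O(\gamma)+O(\gamma^2)\bigr].
\]
Since $\norm\mu_\infty\le m_p\le m_{p_c-\delta}$ and $m_p\to0$ by \eqref{eq:mass_asymp-perc}, we can choose $\gamma$ and then $\delta$ small enough that the right-hand side is at most some $r'<1$. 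We also set $\kappa=\B^{(0)}(p_c)+\gamma$, which is finite because $d>4$ and \eqref{eq:taupc} holds.

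\emph{Extension to the closure.} Let $\mu\in\partial\Omega_p$. Pick $\mu_n\in\Omega_p$ with $\mu_n\to\mu$; for example $\mu_n=\theta_n\mu$ with $\theta_n\uparrow1$, which lies in $\Omega_p$ by convexity and openness. For each fixed $x$, the quantities $\tau_p\supk{\mu_n}(x)$ and $(\tau_p\supk{\mu_n}*\tau_p*\tau_p)(x)$ are nonnegative sums that converge termwise. Fatou's lemma then gives
\[
(\tau_p\supmu*\tau_p*\tau_p)(x)\le\liminf_n(\tau_p\supk{\mu_n}*\tau_p*\tau_p)(x).
\]
Taking the supremum over $x$, we get $\Triangle\supmu(p)\le\liminf_n\Triangle\supk{\mu_n}(p)$. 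The same argument handles $\tilde\Triangle$. The subtraction of $\delta_0$ in $\Triangle$ is harmless because the value at $x=0$ also converges by monotonicity; alternatively, bound $\norm{\tau\supmu*\tau*\tau-\delta_0}_\infty$ via the pointwise liminf at every $x$, with $\tau_p(0)=1$. Fatou likewise gives $\Bsupmu(p)\le\kappa$. Finally, $e^{\norm{\mu_n}_\infty}\to e^{\norm\mu_\infty}$, so the product in \eqref{eq:perc_r} at $\mu$ is at most $\liminf\le r'$. We take $r=r'$.

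The only delicate point is lower semicontinuity of the sup-norms of the triangle quantities. Here the relevant sums consist of nonnegative terms, except for the $\delta_0$ correction, which is pointwise continuous, so Fatou applies; this settles the argument.
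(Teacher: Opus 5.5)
Your proposal is correct and follows the same route as the paper. You fix a small $\gamma$, take $\delta$ from Lemma~\ref{lem:perc_triangle}, extend the bounds \eqref{eq:perc_triangle} from $\Omega_p$ to $\overline\Omega_p$ by Fatou's lemma, and read off \eqref{eq:perc_r} from the computation \eqref{eq:perc_boot_pf} using $\norm{\mu}_\infty \le m_p \to 0$. Your extra care with the $\delta_0$ subtraction in $\Triangle\supmu(p)$ and with lower semicontinuity of the product only spells out the paper's one-line appeal to Fatou, and your choice $\kappa=\B^{(0)}(p_c)+\gamma$ instead of $+2\gamma$ makes no difference.
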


\begin{proof}
We fix $\gamma > 0$ sufficiently small and take $\delta > 0$ from Lemma~\ref{lem:perc_triangle}.
The uniform estimates \eqref{eq:perc_triangle} on $\Triangle \supmu(p)$, $\tilde\Triangle \supmu(p)$, and $\Bsupmu(p)$ extend to all $\mu \in \overline \Omega_p$ by Fatou's lemma, so we can take $\kappa = \B\supzero(p_c) + 2\gamma$.
The paragraph containing \eqref{eq:perc_boot_pf} shows that we can take any $r \in [0.98,1)$.
\end{proof}

\begin{proof}[Proof of \eqref{eq:Q_moments-perc-a} with $\zeta < \half(d-8)$ and \eqref{eq:Q_infrared-perc-a}]

Let $\delta, \kappa, r$ be given by Corollary~\ref{cor:perc-triangle}.
We can now use Lemmas~\ref{lem:perc_diagram-p1}--\ref{lem:perc_infrared} with these $\kappa,r$.
The norm estimates \eqref{eq:Q_moments-perc-a} follow immediately from Lemma~\ref{lem:perc_diagram-p1}, since $2+\zeta < \frac{d-4}2$
under our assumption that $\zeta \in (0,\frac{d-8}2)$.
For the infrared bound,
since $\abs \mu \lesssim \mp  \to 0$ as $p \to p_c$ by Lemma~\ref{lem:Omega_convex} and \eqref{eq:mass_asymp-perc},
the desired \eqref{eq:Q_infrared-perc-a} follows immediately from
\eqref{eq:perc_tilted_infrared}.
\end{proof}

The fact that \eqref{eq:Q_moments-perc-a} actually holds for any
$\zeta<\half(d-6)$ is proved above Proposition~\ref{prop:perc-Pip}.

\subsection{Diagrammatic estimate: proof of \eqref{eq:pnorm-perc-ii-a}}
\label{sec:perc-Pip}

We now prove the $L^2$ bound \eqref{eq:pnorm-perc-ii-a}, beginning with an easy lemma.

\begin{lemma} \label{lem:Gmu1}
Let $d \ge 15$.
There is a constant such that
\begin{equation}
\bignorm{ \abs x \tau_p \supmu(x) }_2 \lesssim 1
\end{equation}
uniformly in $p \in [p_c - \delta, p_c)$ and in $\mu \in \overline \Omega_p$.
\end{lemma}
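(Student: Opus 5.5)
We will bound $\norm{\abs x \tau_p\supmu(x)}_2^2 = \sum_x \abs x^2 \tau_p(x)^2 e^{2\mu\cdot x}$ by passing to Fourier space, where the weight $\abs x^2$ becomes derivatives in $k$. By Parseval,
\begin{equation}
\bignorm{ \abs x \tau_p \supmu(x) }_2^2 = \sum_{j=1}^d \int_\Td \frac{\D k}{(2\pi)^d} \bigabs{ \grad_j \hat \tau_p\supmu(k) }^2 .
\end{equation}
For $\mu \in \Omega_p$, the tilted OZ equation gives $\hat \tau_p\supmu = \hat \g_p\supmu / (1 - \hat J_p\supmu)$. Differentiating once yields
\begin{equation}
\grad_j \hat \tau_p\supmu = \frac{ \grad_j \hat \g_p\supmu }{ 1 - \hat J_p\supmu } + \frac{ \hat \g_p\supmu \, \grad_j \hat J_p\supmu }{ (1 - \hat J_p\supmu)^2 } .
\end{equation}

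We then control each factor uniformly in $p \in [p_c-\delta,p_c)$ and $\mu\in\Omega_p$, using Corollary~\ref{cor:perc-triangle} together with the bounds already established.

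\textbf{Numerators.} By \eqref{eq:Q_moments-perc-a} with $u=0$ and $u=2+\zeta$, we have $\norm{\abs y \Pi_p\supmu}_1 \lesssim 1$, and therefore $\abs{\grad \hat \g_p\supmu} \lesssim 1$ and $\abs{\hat \g_p\supmu}\lesssim 1$. Since $1-\hat J_p\supmu(0) \ge 0$, the gradient of $\hat J_p\supmu$ satisfies
\begin{equation}
\abs{\grad \hat J_p\supmu(k)} \le \abs{\eta} + \abs{\grad(\hat J_p\supmu(k) - \hat J_p\supmu(0) + ik\cdot\eta)} \lesssim \abs{\mu} + \abs k ,
\end{equation}
using the second-moment bound on $J_p\supmu$. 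Here $\abs\eta \lesssim \abs\mu$ follows from the $\sinh$ argument in the proof of Proposition~\ref{prop:geom}.

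\textbf{Denominator.} Since $1-\hat J_p\supmu(0) \ge 0$, the real part of the denominator satisfies $\Re(1-\hat J_p\supmu(k)) \ge \KIR\abs k^2$ by \eqref{eq:Q_infrared-perc-a}. We also need a lower bound involving the imaginary part, $\abs{1-\hat J_p\supmu(k)} \gtrsim \abs k^2 + \abs{k\cdot\eta}$ up to error terms. We will not need this refinement, however; the crude bound $\abs{1-\hat J_p\supmu} \gtrsim \abs k^2$ should suffice.

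With the crude bound, the first term is $O(\abs k^{-2})$, which is in $L^2$ for $d>4$. The second term is $O\bigl((\abs\mu + \abs k)\abs k^{-4}\bigr)$. The $\abs k \cdot \abs k^{-4} = \abs k^{-3}$ piece is in $L^2(\Td)$ for $d>6$. The $\abs\mu\abs k^{-4}$ piece is in $L^2$ for $d>8$, with norm $O(\abs\mu)$. This is exactly the "diagram requiring $d>8$" mentioned after Theorem~\ref{thm:perc-intro}, and since $d\ge15$ it is fine. Hence $\norm{\grad_j \hat \tau_p\supmu}_2 \lesssim 1$ uniformly.

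\textbf{Extension to the boundary and main obstacle.} The bound extends from $\mu\in\Omega_p$ to $\mu \in \overline\Omega_p$ by Fatou's lemma applied to $\sum_x \abs x^2 \tau_p(x)^2 e^{2\mu\cdot x}$, taking limits $\mu_n\to\mu$ in $\Omega_p$. The main obstacle is justifying the differentiation: we must check that $\hat\tau_p\supmu$ is weakly differentiable with the formula above, that is, that the Fourier representation is legitimate. For $\mu\in\Omega_p$ and $p<p_c$, $\tau_p\supmu$ decays exponentially (by the Simon--Lieb argument of Section~\ref{sec:SAWi}), so $\hat\tau_p\supmu$ is smooth. The identity then holds classically, and the only issue is uniformity, which the above estimates provide.
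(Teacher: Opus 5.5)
Your proposal is correct and follows the paper's argument: Parseval moves the weight onto $\nabla_j \hat\tau_p\supmu$, you differentiate $\hat h_p\supmu/(1-\hat J_p\supmu)$, bound the numerators by the uniform moment bounds and the denominator by the tilted infrared bound, and conclude because $|k|^{-2}+|k|^{-4}\in L^2(\Td)$ when $d>8$. The Fatou step extending to $\mu\in\partial\Omega_p$, and the smoothness justification via exponential decay for $\mu\in\Omega_p$, are details the paper leaves implicit.
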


\begin{proof}
By $\abs x \le \norm x_1 = \sum_{i=1}^d \abs{x_i}$ and  Parseval's identity, we have
\begin{equation}
\bignorm{ \abs x \tau_p \supmu(x) }_2
\le \sum_{i=1}^d \bignorm{ \grad_i \hat \tau_p \supmu(k) }_2
= \sum_{i=1}^d \biggnorm{
	\frac{ \grad_i \Pi_p \supmu } { 1 - \hat J_p \supmu }
	+ \frac{ 1+ \Pi_p \supmu} { 1 - \hat J_p \supmu }
		\frac{ \grad_i J_p \supmu } { 1 - \hat J_p \supmu }
	}_2 .
\end{equation}
For each term in the right-hand side,
we use Corollary~\ref{cor:perc-triangle}, Lemmas~\ref{lem:perc_diagram-p1}, \eqref{eq:perc_tilted_infrared}, and \eqref{eq:Jz^mu} to bound it by a constant multiple of
\begin{equation}
\biggnorm{ \frac{ 1} { \abs k^2 }
	+ \frac{ 1 } { \abs k^4 }	}_2 ,
\end{equation}
which is finite in dimensions $d > 8$. Summation over components then gives the desired result.
\end{proof}

The case $a=d-1$ and $q=2$ of the next proposition proves \eqref{eq:pnorm-perc-ii-a}.
Also, we can take $q = 1$ when $a < \half d - 1$.
For $a=2+\zeta$, this gives $\zeta < \frac 12 (d-6)$, which proves \eqref{eq:Q_moments-perc-a} for $\zeta < \frac 12 (d-6)$.

\begin{proposition} \label{prop:perc-Pip}
Let $d \ge 15$
and $a \in [0,d-1]$.
For any choice of $q \in [1,\infty]$ satisfying
\begin{equation} \label{eq:pa-restriction-perc}
\frac 1 q < 1 + \frac{ \frac 1 2 d - 1 - a } d ,
\end{equation}
there is a constant $K_{a,q} < \infty$ such that
\begin{equation} \label{eq:perc_moments-pnorm}
\bignorm{ \abs x^a  \Pi_p \supmu(x)  }_q  \le  K_{a,q}
\end{equation}
uniformly in $p \in [p_c - \delta, p_c)$ and in $\mu \in \overline \Omega_p$.
For $a = d-1$, \eqref{eq:perc_moments-pnorm} also holds with $q = 2$.
\end{proposition}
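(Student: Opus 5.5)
Since $L^q(\Zd)$ norms decrease in $q$, it suffices to treat $a > \frac{d-4}{2}$; for smaller $a$ and $q=1$ the bound is Lemma~\ref{lem:perc_diagram-p1} combined with Corollary~\ref{cor:perc-triangle}. For larger $a$, I would follow the scheme of Proposition~\ref{prop:SAW-Pip}, adapted to the percolation diagrams as in the proof of Lemma~\ref{lem:perc_diagram-p1}. The basic tools are:
\begin{itemize}
\item the critical pointwise bound $\tau_p(x)\le\tau_{p_c}(x)\lesssim \nnnorm{x}^{-(d-2)}$ from \eqref{eq:taupc};
\item the tilted bubble bound $\Bsupmu(p)\le\kappa$ from Corollary~\ref{cor:perc-triangle};
\item the weighted $L^2$ bound $\norm{\abs x\tau_p\supmu(x)}_2\lesssim 1$ from Lemma~\ref{lem:Gmu1}.
\end{itemize}
The exponential tilt $e^{\mu\cdot x}$ is always placed multiplicatively on one line of the top path, and the weight $\abs x^a$ is distributed additively along the bottom path. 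Tilted factors are then estimated only in $L^2$ (or $L^\infty$ via convolutions with $L^2$ pieces), so that $\overline\Omega_p$ causes no trouble.

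For the $N=0$ term, $\Pi_{p,0}(x)\le \1_{x\ne0}\tau_p(x)^2$, and H\"older gives
\begin{equation}
\bignorm{\abs x^a\tau_p(x)\tau_p\supmu(x)}_q\le \bignorm{\abs x^{a}\tau_p(x)}_r\,\bignorm{\1_{x\neq 0}\tau_p\supmu(x)}_2,\qquad \frac1q=\frac1r+\frac12 .
\end{equation}
By \eqref{eq:taupc}, $\abs x^a\tau_p\in L^r$ whenever $\frac1r<\frac{d-2-a}{d}$. This is exactly \eqref{eq:pa-restriction-perc}, since $\frac12+\frac{d-2-a}{d}=1+\frac{\frac12 d-1-a}{d}$. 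For $a=d-1$ the required $r=\infty$ fails by one power, since $\abs x^{d-1}\tau_p$ grows like $\abs x$. To handle this endpoint, I would instead split the weight as $\abs x^{d-2}\tau_p\in L^\infty$ times $\abs x\,\tau_p\supmu\in L^2$, using Lemma~\ref{lem:Gmu1}; this gives the $q=2$ claim.

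For $N\ge1$, I would decompose each diagram as in \eqref{eq:Dcal-decomp}, grouping the lines that carry the weight with one adjacent line that carries the tilt. The key is that $\abs x^a$ is split as $\abs x^{d-2}$ on one bottom line and $\abs x^{a-d+2}$ on an adjacent line. I then apply the triangle inequality in $L^q_x$ and use translation invariance to isolate a local block $\Zcal$. This block is bounded in $L^q_x$ by H\"older, by the same product $\norm{\abs x^{d-2}\tau_p}_\infty\norm{\abs x^{s}\tau_p}_{r}\norm{\tau_p\supmu}_2$ as in the $N=0$ case, or by its variant with Lemma~\ref{lem:Gmu1} at the endpoint. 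The remaining left and right parts of the diagram are summed using the triangle and bubble bounds, as in \eqref{eq:perc_diagram_pf}, which produces a factor $N^{a+1}r^{N-2}$. Because $r<1$ by Corollary~\ref{cor:perc-triangle}, the sum over $N$ converges.

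The main obstacle is the combinatorics of the percolation diagrams. They have only two edge-disjoint paths and contain the ``$H$''-type substructures of Figure~\ref{fig:perc_diagram}, so there is not always a clean three-line ``Z'' block. In such cases the adjacent line may have to be replaced by a short convolution chain. I would control those chains with $\norm{\tau_p*\tau_p}_2<\infty$, which holds since $d>8$, and with the weighted convolution estimate $[\abs x^{b}\tau_p]*\tau_p*\tau_p$ bounded for $b<d-6$, following the method of \cite{DL26}. This is the step I expect to be most delicate. The assumption $d\ge15$ leaves room in these exponent conditions.
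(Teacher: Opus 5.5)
There is a genuine gap, and it already shows up in the $N=0$ term. The identity you invoke is false. One has $\frac12+\frac{d-2-a}{d}=\frac32-\frac{a+2}{d}$, whereas the right-hand side of \eqref{eq:pa-restriction-perc} is $1+\frac{\frac12 d-1-a}{d}=\frac32-\frac{a+1}{d}$, and these differ by $\frac1d$. You put the whole weight $\abs x^a$ on the untilted factor and use only the unweighted tilted bubble $\norm{\1_{x\ne0}\tau_p\supmu}_2\le\sqrt\kappa$. That gives only $\frac1q<\frac32-\frac{a+2}{d}$, and misses the strip up to $\frac32-\frac{a+1}{d}$. The loss matters in two places:
\begin{enumerate}
\item For $q=1$ your bound needs $a<\frac d2-2$. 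This is exactly the range Lemma~\ref{lem:perc_diagram-p1} already provides. So you do not obtain the $L^1$ bound for $a<\frac d2-1$ that the paper uses to upgrade \eqref{eq:Q_moments-perc-a} from $\zeta<\frac12(d-8)$ to $\zeta<\frac12(d-6)$.
\item For $a\in(d-2,d-1)$, the factor $\abs x^a\tau_p$ has no uniform $L^r$ bound at all, since it grows like $\abs x^{a-d+2}$ at $p_c$. Only $q\ge 2$ then survives, by domination from the endpoint $a=d-1$, although the statement allows some $q<2$ there.
\end{enumerate}

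The missing idea is the one you use only at the endpoint. For every $a>1$, write $\abs x^a=\abs x^{a-1}\cdot\abs x$, put the single power $\abs x$ on the tilted line, and control it in $L^2$ by Lemma~\ref{lem:Gmu1}. Then $\norm{\abs x^{a-1}\tau_p}_{r}$ is uniformly bounded for $\frac1r<\frac{d-1-a}{d}$, with $r=\infty$ allowed at $a=d-1$. This gives exactly \eqref{eq:pa-restriction-perc}.

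The same choice should drive the $N\ge1$ terms. Distribute $\abs x^1$, together with the tilt, along the top path, and $\abs x^{a-1}$ along the bottom path. Do not split $\abs x^{d-2}$ and $\abs x^{a-d+2}$ over two adjacent bottom lines. That SAW-style three-line ``Z'' block relies on three edge-disjoint paths, which percolation diagrams lack, and the exponent $a-d+2$ is negative for $a<d-2$. With one weighted line per path, the only new pieces are:
\begin{enumerate}
\item a two-line $L^q$ bubble, handled exactly like the $N=0$ term;
\item the $\abs x^1$-weighted triangle, bounded by $\norm{[\abs x\tau_p\supmu]*\tau_p*\tau_p}_\infty\le\norm{\abs x\tau_p\supmu}_2\norm{\tau_p*\tau_p}_2$;
\item an $L^q$ ``martini'', which can be dominated by its $L^1$ version using $\norm{\abs x^{a-1}\tau_p}_\infty<\infty$ together with convolution and infrared bounds valid for $d>8$.
\end{enumerate}
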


\begin{proof}
Since the $a \le 1$ case is covered by Corollary~\ref{cor:perc-triangle} and Lemma~\ref{lem:perc_diagram-p1}, we assume $a > 1$ and write $a = \phi + 1$  with $\phi \in (0,d-2]$.
As in \eqref{eq:percN0},
\begin{equation}\label{eq:percN0-p}
    \bignorm{ \abs x^a  \Pi_{p,0} \supmu(x) }_q
     \le
    \bignorm{ \abs x^\phi \tau_p(x) }_{q'}
    \bignorm{ \abs x^1 \tau_p\supmu(x) }_2,
\end{equation}
with $q\inv =(q')\inv + 2\inv$.
The last factor is bounded using Lemma~\ref{lem:Gmu1}, and
by \eqref{eq:taupc} the other factor
is bounded when
$q' \in [1,\infty]$ satisfies
\begin{equation}
\frac 1 {q'} < \frac{ d - 2 - \phi } d ,
\end{equation}
with the equality allowed when $\phi = d-2$ (in which case $q=\infty$).
Since $a = \phi + 1$,
this translates to the restriction
\begin{equation}
\frac 1 q
< \frac 1 2 +  \frac{ d - 2 - \phi } d
= 1 +  \frac{ \half d - 1 - a } d
\end{equation}
on $q$, with the equality allowed when $a = d-1$.
This agrees with the restriction on $q$ in \eqref{eq:pa-restriction-perc}.

For $\Pi\supmu_{p,N}$ with $N \ge 1$, standard but more involved diagrammatic
estimates can be used.
We apply a simplified version of the method of \cite[Section~4]{DL26}.
In a diagram for $\Pi_{p,N}(x)$,
we distribute $e^{\mu\cdot x}$ multiplicatively along the top path,
distribute $\abs x^1$ additively also along the top path,
and distribute $\abs x^\phi$ additively along the bottom path.
This can produce a line that is both tilted and weighted by $\abs x^1$, for which we use Lemma~\ref{lem:Gmu1}.
As in Proposition~\ref{prop:SAW-Pip} for the self-avoiding walk, we use the triangle inequality
to decompose the diagram.
In brief, we always put the $L^q$ norm on a part of the diagram that contains the $\abs x^\phi$-weighted line, and we estimate the left/right part of the diagram (which might contain the $\abs x^1$-weighted line) using usual methods.
Overall, if the ``$L^q$ bubble,'' the ``$L^q$ martini,'' and the ``$\abs x^1$-weighted triangle'' (defined below) are all $\lesssim 1$, then,
with $r<1$ from Corollary~\ref{cor:perc-triangle}, we get
\begin{equation}
\bignorm{ \abs x^a  \Pi_{p,N} (x) e^{\mu\cdot x} }_q
\lesssim (2N+1)^{a+2} r^{(N-3)\vee0 }
\end{equation}
for all $N\ge 1$.
Since $r < 1$, we can sum this over $N$ to get the desired \eqref{eq:perc_moments-pnorm}.

We now provide more details on the new nonstandard
diagram pieces that need to be bounded.
The $\abs x^1$-weighted triangle is
\begin{equation} \label{eq:weighted_triangle_pf}
\bignorm{ [ \abs x^1 \tau_p \supmu(x) ] * \tau_p * \tau_p }_\infty
\le \bignorm{ \abs x^1 \tau_p \supmu(x) }_2
	\norm{ \tau_p * \tau_p }_2 .
\end{equation}
This is $\lesssim 1$ for $d > 8$, by Lemma~\ref{lem:Gmu1}
and the pointwise estimate \eqref{eq:taupc}.
The two kinds of ``$L^q$ bubbles'' (with or without an $\abs x^1$-weighted line) can be summarised as
\begin{equation}
\sup_{u\in\Zd} \bignorm{
	(1+\abs{ x}^1) \tau_p \supmu(x)
	\abs{ x-u }^\phi \tau_p( x-u )
	}_{L^q_x} .
\end{equation}
This is bounded using H\"older's inequality as in \eqref{eq:percN0-p}.

Finally, for the ``$L^q$ martini'' diagram, we simply use the monotonicity of the  $L^q(\Zd)$ norms in $q$ to bound them by the following ($L^1$) version of \eqref{eq:percH} with extra weight:
\begin{align}
\label{eq:percH-p}
\tilde H^{(\mu),a}_p & = \sup_{b,c\in \Zd} \sum_{v,w,x,y,z\in \Zd}
	\Bigl( \tau_p(z) [ \abs x^\phi
    \tau_p(x)] \tau_p(w) \tau_p(x-w)\tau_p(y-x)
    \tau_p(v-w)
    \nl & \hspace{40mm} \times [(1+ |z+b-v|^1)
    \tau\supmu_p(z+b-v)]\tau\supmu_p(y+c-v) \Bigr).	
\end{align}
This can be decomposed and bounded by
\begin{equation} \label{eq:perc_p-pf}
\bignorm{ \abs x^\phi \tau_p(x) }_\infty
\bignorm{ [ (1+\abs x^1) \tau_p \supmu(x) ] * \tau_p * \tau_p }_\infty
\bignorm{ \tau_p \supmu * \tau_p * \tau_p * \tau_p  }_\infty .
\end{equation}
The first factor is finite since $\phi \le d-2$.
The middle factor
can be bounded by the sum of the
$\abs x^1$-weighted triangle bounded previously in \eqref{eq:weighted_triangle_pf}
and the last factor.
For the last factor of \eqref{eq:perc_p-pf},
we use the Fourier transform and the infrared bounds \eqref{eq:perc_infrared} and \eqref{eq:perc_tilted_infrared} to get
\begin{equation}
\bignorm{ \tau_p \supmu * \tau_p * \tau_p * \tau_p  }_\infty
\le   \bignorm{  \hat \tau_p \supmu (\hat \tau_p)^3 }_1
\lesssim 	\biggnorm{ \frac{ 1 }{ \abs k^8 } }_1
< \infty
\end{equation}
in dimensions $d > 8$.  This completes the proof.
\end{proof}

\appendix

\section{Brownian Green function}
\label{app:Brown}

In this appendix, we prove several elementary results regarding the
Brownian Green function on $\Rd$, defined by
$\bg(x; \eta, \Lambda) = \int_0^\infty \D t \, \rho_t(x; \eta, \Lambda)$ with
\begin{equation} \label{eq:def_rho-app}
\rho_t(x; \eta, \Lambda)
= \frac 1 { \sqrt{\det \Lambda } } \frac{ 1 }{(2\pi t)^{d/2} }
	\exp \Bigl\{ - \frac 1 {2t} (x-t\eta) \cdot \Lambda\inv (x - t\eta) \Bigr\} .
\end{equation}
We also prove Corollaries~\ref{cor:OZ} and~\ref{cor:crossover_weak}.

\subsection{Representation as Bessel function}

We use the modified Bessel function $K_\nu(z)$ of the second kind,
with $\nu = (d-2) / 2$.
For $d=1$ or $3$, $K_\nu(z)$ has the explicit formulas \cite[8.469.3]{GR07}\footnote{
This is a special case of \cite[8.468]{GR07}, which
for integers $n \ge 0$ gives the
explicit formula
\begin{equation*}
    K_{n+\frac 12}(z) =
    \Big( \frac{\pi}{2z} \Big)^{1/2} e^{-z}
    \sum_{k=0}^n \frac{(m+k)!}{k!(n-k)! (2z)^k}
    	\qquad (z > 0).
\end{equation*}
}
\begin{equation} \label{eq:K13}
K_{-1/2}(z)=K_{1/2}(z) = \Big( \frac{\pi}{2z} \Big)^{1/2} e^{-z}
	\qquad (z > 0).
\end{equation}
An integral representation for the Bessel function \cite[8.432.6]{GR07} is
\begin{equation} \label{eq:Kint}
K_\nu(z) = \frac 12 \Big(\frac z2\Big)^\nu \int_0^\infty \frac{1}{u^{\nu+1}}e^{-u-z^2/(4u)}\D u
    \qquad    (\nu \in \R,\, z>0).
\end{equation}
The asymptotic behaviour of $K_\nu(z)$ is known to be
\begin{equation}
\label{eq:Kasy-infty}
    K_\nu(z) \sim \Big( \frac{\pi}{2z} \Big)^{1/2} e^{-z}
    \qquad (z \to \infty)
\end{equation}
and
\begin{equation}
\label{eq:Kasy-0}
    K_\nu(z) \sim \frac{\Gamma(\nu)}{2} \Big(\frac{2}{z}\Big)^{\nu}
    \quad (\nu > 0,\,z \to 0),
    \qquad
    K_0(z) \sim \log\frac 1z +c_0
    \quad (z \to 0),
\end{equation}
where $c_0=\log 2 - \gamma >0$ with $\gamma$  the Euler--Mascheroni constant.

Let $x,\eta \in \Rd$, $x\ne 0$, and let $\Lambda \in \R^{d\times d}$ be symmetric and positive-definite. We define
\begin{equation}
a^2 =   \eta \cdot \Lambda\inv \eta,
\qquad
b^2 =  x \cdot \Lambda\inv x .
\end{equation}

\begin{lemma}
\label{lem:CBess}
Let $d\ge 1$ and $\nu = (d-2)/2$.
Let $x,\eta\in\R^d$ with $x\neq 0$,
and also $\eta \neq 0$ if $d\le 2$.
Let $\Lambda \in \R^{d\times d}$ be symmetric and positive-definite.
Then
\begin{equation} \label{eq:CBess}
\bg(x; \eta, \Lambda)
= \frac { 2} { (2\pi)^{d/2}   \sqrt{\det \Lambda }  }
\left( \frac ab \right)^\nu
K_\nu(ab)  e^{ x \cdot \Lambda\inv \eta }.
\end{equation}
In particular, for $d=1$ and $d=3$ we have
\begin{alignat}2
\bg(x; \eta, \Lambda)
&=
\frac{ 1 } { |\eta| }
	e^{ x \cdot \Lambda\inv \eta -ab},
   &&(d=1),
    \\
    \bg(x; \eta, \Lambda)
&= \frac { 1} { 2\pi \sqrt{\det \Lambda } }
\frac{ 1 } {\sqrt{x \cdot \Lambda\inv x} }  e^{ x \cdot \Lambda\inv \eta -ab}
    \qquad&& (d=3).
\end{alignat}
\end{lemma}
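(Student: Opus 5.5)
The plan is to compute the time integral defining $\bg$ directly and match it with the integral representation \eqref{eq:Kint} of $K_\nu$. First I expand the quadratic form in the exponent of \eqref{eq:def_rho-app}:
\begin{equation*}
\frac 1{2t}(x-t\eta)\cdot\Lambda\inv(x-t\eta) = \frac{b^2}{2t} - x\cdot\Lambda\inv\eta + \frac{t a^2}{2},
\end{equation*}
using the symmetry of $\Lambda\inv$. The cross term does not depend on $t$, so it comes out of the integral as the factor $e^{x\cdot\Lambda\inv\eta}$. This leaves
\begin{equation*}
\bg(x;\eta,\Lambda) = \frac{e^{x\cdot\Lambda\inv\eta}}{\sqrt{\det\Lambda}\,(2\pi)^{d/2}} \int_0^\infty t^{-d/2} e^{-b^2/(2t) - a^2 t/2}\,\D t .
\end{equation*}
Here $b>0$ because $x\ne 0$ and $\Lambda$ is positive-definite.

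Next I would treat the case $a>0$, i.e.\ $\eta\ne0$. I substitute $u = a^2 t/2$, so that $t = 2u/a^2$, $t^{-d/2} = (a^2/2)^{d/2}u^{-d/2}$, $\D t = (2/a^2)\D u$, and the exponent becomes $-u - a^2b^2/(4u)$. The integral then equals $(a^2/2)^{d/2-1}\int_0^\infty u^{-(\nu+1)}e^{-u-(ab)^2/(4u)}\D u$, since $d/2 = \nu+1$. By \eqref{eq:Kint} with $z=ab$, this is $(a^2/2)^{\nu}\cdot 2(2/(ab))^{\nu}K_\nu(ab) = 2(a/b)^\nu K_\nu(ab)$, which is \eqref{eq:CBess}. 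The only care needed is the bookkeeping of powers of $2$ and $a$.

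For $d>2$ the statement also allows $\eta=0$, and I would deal with this by continuity. The integral is computed directly as $\Gamma(\nu)(2/b^2)^{\nu}$. On the other side, by \eqref{eq:Kasy-0}, $(a/b)^\nu K_\nu(ab)\to \tfrac12\Gamma(\nu)(2/b^2)^\nu$ as $a\to 0$. Alternatively, monotone convergence in $a$ lets one pass to the limit inside the integral.

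Finally, the cases $d=1,3$ (so $\nu=\mp\tfrac12$) follow by inserting \eqref{eq:K13}, which gives $K_\nu(ab)=(\pi/(2ab))^{1/2}e^{-ab}$. For $d=3$, the prefactor becomes $\frac{2}{(2\pi)^{3/2}\sqrt{\det\Lambda}}(a/b)^{1/2}(\pi/(2ab))^{1/2} = \frac{1}{2\pi\sqrt{\det\Lambda}\,b}$. For $d=1$, $\Lambda$ is a positive scalar and $a=|\eta|/\sqrt\Lambda$, so the prefactor is $\frac{2}{(2\pi)^{1/2}\sqrt\Lambda}(b/a)^{1/2}(\pi/(2ab))^{1/2} = \frac{1}{\sqrt\Lambda\,a} = 1/|\eta|$. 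I expect no real obstacle here, only careful constant-tracking.
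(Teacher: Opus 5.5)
Your proposal is correct and follows essentially the same route as the paper: expand the quadratic form, pull out $e^{x\cdot\Lambda^{-1}\eta}$, substitute $u=a^2t/2$, invoke the integral representation of $K_\nu$, and then specialise using the closed form for $K_{\pm1/2}$. Your separate treatment of $\eta=0$ when $d>2$ is a small addition; the paper's proof passes over it, although there \eqref{eq:CBess} has to be read as the $a\to0$ limit.
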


\begin{proof}
By inserting the definitions of $a$ and $b$ into \eqref{eq:def_rho-app}, we have
\begin{equation}
\rho_t(x; \eta, \Lambda)
= \frac {e^{ x \cdot \Lam\inv \eta}} {(2\pi )^{d/2}  \sqrt{\det \Lambda } }
 \frac{ 1 }{t^{d/2} }
	e^{-  a^2t/2- b^2/(2t) } .
\end{equation}
We integrate this to get $\bg(x;\eta,\Lam)$.
Let $z=ab$.
The $t$ integral can be computed via the change of variables $u=a^2t/2$ and \eqref{eq:Kint} to be
\begin{equation}
    \int_0^\infty  \frac{ 1 }{t^{d/2} } 	e^{-a^2t/2-b^2/(2t) } \D t
     =
    \Bigl( \frac{a^2}{2} \Bigr)^\nu \int_0^\infty  \frac{ 1 }{u^{\nu+1} } 	e^{-u-z^2/(4u) } \D u
=    2 \Bigl( \frac ab \Bigr)^{\nu} K_\nu(ab) .
\end{equation}
Collecting terms then gives the desired \eqref{eq:CBess}.
The exact formulas for $d=1,3$ follow from \eqref{eq:K13}.
\end{proof}

\subsection{Asymptotic behaviour}

We now consider the case where $x \ne 0$ is in aligned with the drift $\eta$. That is, we assume
\begin{equation}
    \eta=\abs\eta  \hat x,
    \qquad
    \hat x = \frac x { \abs x } ,
\end{equation}
where $\abs x$ is the Euclidean norm.
Define $\w = \sqrt{ \hat x \cdot \Lam\inv \hat x }$, so
\begin{equation}
a = \w \abs \eta  ,
\qquad
 b = \w \abs x  ,
\qquad
x\cdot \Lam\inv \eta = ab = \w^2 \abs \eta \abs x  .
\end{equation}
By inserting the above into \eqref{eq:CBess}, we get
\begin{equation} \label{eq:Brown_pf}
\bg(x; \abs \eta  \hat x, \Lam)
= \frac { 2} { (2\pi)^{d/2}   \sqrt{\det \Lambda }  }
	\Bigl(  \frac{ \abs \eta  } { \abs x } \Bigr)^\nu
	K_\nu( \w^2 \abs \eta   \abs x ) e^{ \w^2 \abs \eta   \abs x }  .
\end{equation}
We note in passing that for $d>2$, in the limit where $\eta \to 0$ and $\Lambda \to \Lambda_0$
(with $\Lambda_0$ positive definite), it follows from
\eqref{eq:Brown_pf} and \eqref{eq:Kasy-0} that, for nonzero $x$,
\begin{equation}
\label{eq:Ccrit}
    \bg(x; \abs \eta  \hat x, \Lam)
    \to
    \frac{\Gamma(\frac{d-2}{2})}{2\pi^{d/2}}\frac{1}{\sqrt{\det\Lam_0}}
    \frac{1}{(x \cdot \Lam_0\inv x)^{(d-2)/2}}.
\end{equation}

\begin{proof}[Proof of Corollary~\ref{cor:OZ}]
By Theorem~\ref{thm:crossover}, as $\abs x \to \infty$ we have
\begin{equation} \label{eq:OZ_pf}
\GL(x) =
\bg(x; \eta_{\hat x} , \Lambda_{\hat x}) e^{-\mS  \abs x_S} [ \hat \g \supmux (0) + o(1) ] .
\end{equation}
Since $x$ is aligned with $\eta_\hatx$ by Proposition~\ref{prop:geom-intro}, we can use \eqref{eq:Brown_pf} and \eqref{eq:Kasy-infty} with $\eta = \eta_\hatx$, to get
\begin{align}
\bg(x; \eta_\hatx, \Lam_\hatx)
&\sim \frac { 2} { (2\pi)^{d/2}   \sqrt{\det \Lambda_\hatx }  }
	\biggl(  \frac{ \abs{ \eta_\hatx } } { \abs x } \biggr)^\nu
	\biggl( \frac \pi {2 (\hat x \cdot \Lam_\hatx \inv \hat x) \abs{ \eta_\hatx}   \abs x  } \biggr)^{1/2}
\nnb &=
\frac 1 { (2\pi)^{(d-1)/2} \sqrt{\det\Lambda_{\hat x}} }
	\frac{1}{ (\hat x \cdot \Lam_{\hat x}\inv \hat x)^{1/2} }
	\frac{ \abs{ \eta_\hatx} ^{(d-3)/2} } { \abs x ^{(d-1)/2} }  .
\end{align}
Inserting this into \eqref{eq:OZ_pf} gives the desired result.
\end{proof}

The following lemma is an estimate on $\bg$ that is uniform in $x,\eta,\Lam$.

\begin{lemma} \label{lem:C_bdds}
Let $d \ge 1$,
$x,\eta\in \R^d$, and let $\Lambda \in \R^{d\times d}$ be symmetric and positive-definite.
Suppose $\w = \sqrt{ \hat x \cdot \Lam\inv \hat x} \asymp 1$ for all $\hat x$.
Then, for $d > 2$ we have
\begin{equation} \label{eq:C-combo}
\bg(x;\abs \eta \hat x,\Lam)    \asymp	\Casy(x;\abs \eta \hat x)
\quad \text{with} \quad
\Casy(x;\eta)=
\frac{ ( \abs \eta  \vee \abs x\inv )^{(d-3)/2} }{ \abs x^{(d-1)/2} }
\end{equation}
uniformly in $\eta$ and in $x\ne 0$.
If $d \le 2$ and $\so > 0$ is a given constant,
\eqref{eq:C-combo} still holds subject to the condition that $\abs \eta \abs x \ge \so$, with constants that depends on $\so$.
\end{lemma}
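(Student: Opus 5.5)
The plan is to start from the exact Bessel representation \eqref{eq:Brown_pf}, valid whenever $x\neq0$ is aligned with the drift (and $\eta\ne0$ if $d\le2$). With $z=\w^2\abs\eta\abs x$ and $\nu=(d-2)/2$ it reads
\begin{equation*}
\bg(x;\abs\eta\hat x,\Lam)=\frac{2}{(2\pi)^{d/2}\sqrt{\det\Lam}}\Bigl(\frac{\abs\eta}{\abs x}\Bigr)^{\nu}K_\nu(z)e^{z}.
\end{equation*}
I will use $\w\asymp1$ for all directions, which gives $\Lam\inv$ bounded above and below as a quadratic form, so $\det\Lam\asymp1$ and $z\asymp\abs\eta\abs x$. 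The proof therefore reduces to two-sided bounds on $K_\nu(z)e^{z}$, which I will obtain separately in the regimes $z\ge1$ and $z\le1$.

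\emph{Regime $z\ge 1$ (OZ).} By \eqref{eq:Kasy-infty} and continuity of $K_\nu$ on $[1,\infty)$, we have $K_\nu(z)e^z\asymp z^{-1/2}$ there. Substituting gives
\begin{equation*}
\bg\asymp\frac{\abs\eta^{\nu}}{\abs x^{\nu}}\,(\abs\eta\abs x)^{-1/2}=\frac{\abs\eta^{(d-3)/2}}{\abs x^{(d-1)/2}}.
\end{equation*}
This equals $\Casy$ because $\abs\eta\ge\abs x\inv$ up to constants in this regime. The only care needed is that the threshold $z=1$ corresponds to $\abs\eta\abs x\asymp1$, so on the boundary layer the two candidate expressions in $\Casy$ are comparable and the maximum $\abs\eta\vee\abs x\inv$ costs only constants.

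\emph{Regime $z\le1$ with $d>2$ (critical).} Here $\nu>0$, and \eqref{eq:Kasy-0} together with continuity gives $K_\nu(z)e^z\asymp z^{-\nu}$ on $(0,1]$. Hence
\begin{equation*}
\bg\asymp\Bigl(\frac{\abs\eta}{\abs x}\Bigr)^{\nu}(\abs\eta\abs x)^{-\nu}=\abs x^{-(d-2)},
\end{equation*}
which matches $\Casy=\abs x^{-(d-3)/2}\abs x^{-(d-1)/2}$ when $\abs x\inv\ge\abs\eta$. The case $\eta=0$ does not fit the Bessel formula, and I will handle it either by the limit \eqref{eq:Ccrit} or by direct integration of the centred heat kernel, which gives $\asymp\abs x^{-(d-2)}$.

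\emph{Case $d\le2$ with $\abs\eta\abs x\ge\so$.} Now $z\gtrsim\so$. On $[c\so,\infty)$ the function $K_\nu(z)e^z z^{1/2}$ is continuous and positive and has a finite positive limit at infinity, so it is bounded above and below by constants depending on $\so$. The computation from the OZ regime then applies verbatim, and $\abs\eta\vee\abs x\inv\asymp_{\so}\abs\eta$. The main obstacle is only the uniformity bookkeeping: making sure all comparison constants depend solely on the bounds for $\w$, on $d$, and on $\so$. The compactness-plus-asymptotics argument for $K_\nu e^z$ on each range handles this.
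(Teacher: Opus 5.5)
Your proposal is correct and follows the paper's proof. Both proofs insert the two-sided bound $K_\nu(z)e^z\asymp(1\vee z)^{\nu-1/2}z^{-\nu}$, obtained from the small- and large-$z$ asymptotics plus continuity, into the Bessel representation \eqref{eq:Brown_pf}; for $d\le2$ both use only the large-$z$ behaviour on $z\gtrsim\so$. Your remarks that $\det\Lam\asymp1$ and that $\eta=0$ needs separate treatment supply details the paper leaves implicit.
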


\begin{proof}
For $d > 2$, we have $\nu = \frac{d-2}{2} > 0$,
so we can use the asymptotics for $K_\nu(z)$ from \eqref{eq:Kasy-infty} and \eqref{eq:Kasy-0} to get
\begin{equation} \label{eq:C_bdds_pf}
K_\nu(z) e^z \asymp  \frac{  ( 1 \vee z )^{\nu - 1/2} }{ z^\nu }
	\qquad (\nu > 0,\, z > 0) .
\end{equation}
We insert this into \eqref{eq:Brown_pf}, and use $\w \asymp 1$, to obtain
\begin{align}
\bg(x; \abs \eta  \hat x, \Lam)
&\asymp  \Bigl(  \frac{ \abs \eta  } { \abs x } \Bigr)^\nu
	\frac{ ( 1 \vee \abs \eta \abs x)^{\nu - 1/2} }{ ( \abs \eta \abs x )^\nu }
\nnb & =   \frac{ ( 1 \vee \abs \eta  \abs x )^{\nu - 1/2} } { \abs x^{2\nu} }
=   \frac{ (\abs \eta  \vee \abs x\inv  )^{( d-3)/2} } { \abs x^{(d-1)/2} }  ,
\end{align}
as desired.
For $d\le 2$,
we restrict to $\abs \eta \abs x \ge \so > 0$,
which implies that $\w^2 \abs \eta \abs x \ge \so'$ for some $\so' > 0$.
In this case, we use \eqref{eq:Kasy-infty} to get \eqref{eq:C_bdds_pf} only for $z \ge \so'$, with constants depending on $\so'$.
This allows us to conclude in the same way.
\end{proof}

\begin{proof}[Proof of Corollary~\ref{cor:crossover_weak}]
We first consider $d >2$.
In Theorem~\ref{thm:crossover}, we choose $R$ such that the error term in \eqref{eq:main_asymp-intro} has absolute value at most $1/(2M)$.
It then follows from
Theorem~\ref{thm:crossover}, from $\hat \g\supmux(0) \in [M\inv, M]$,
and from
Lemma~\ref{lem:C_bdds}, that if $\abs x \ge R$ then
\begin{equation}
\GL(x) \asymp \Casy(x, \eta_\hatx) e^{- \mS  \abs x_S }
= \frac{  (1 \vee \abs{ \eta_\hatx} \abs x )^{(d-3)/2}  } { \abs x^{d-2} }
	e^{- \mS  \abs x_S } .
\end{equation}
Since $\abs{ \etax }  \asymp \mS $ by \eqref{eq:crossover-bds-intro},
and since $\abs x_S \asymp \abs x$ by Corollary~\ref{cor:norm},
this proves that there exists $R' > 0$ such that
\begin{equation}
\GL(x) \asymp
	\frac{  \max\{ 1 , \mS  \abs x_S \} ^{ (d-3)/2} } { \abs x_S^{d-2} }
	e^{- \mS  \abs x_S }
	\qquad (\abs x_S \ge R') ,
\end{equation}
with constants that depend only on $d,M,\KIR,\zeta$.
This concludes the proof for $d>2$.

Now suppose $d\le 2$ and $\so > 0$. The proof for $d>2$ still applies to $x$ with $\abs x \ge \so/ ( m_S \sqrt d)$, so we get
\begin{equation}
\GL(x) \asymp
	\frac{  1  } { \mS   ^{ (3-d)/2}  \abs x_S^{ (d-1)/2} }
	e^{- \mS  \abs x_S }
	\qquad (\abs x_S \ge R \vee \frac{ \so }{ \mS  })
\end{equation}
with constants that depend only on $d,M,\KIR,\zeta,\so$.
This completes the proof.
\end{proof}

\subsection{Massive Green function}

The massive Green function is defined
for $a \ge 0$ if $d>2$, and for $a>0$ for $d \le 2$, by
\begin{equation} \label{eq:def_mG-app}
    \mG_a(x) = \int_0^\infty \D t\, \rho_t(x;0,\mathrm{Id}) e^{-\half ta^2}
	=
    \int_0^\infty \frac{ \D t}{(2\pi t)^{d/2} } e^{-\frac{1}{2t}|x|^2} e^{-\half ta^2} .
\end{equation}
The change of variables $t \mapsto s^2 t'$
shows that it obeys the scaling identity
\begin{equation} \label{eq:Gscaling}
\mG_a(x) = \frac 1 {s^{d-2}} \mG_{as}(x / s)
\qquad (s > 0) .
\end{equation}

\begin{lemma} \label{lem:C=G}
Let $\Lambda$ be symmetric and positive-definite,
and let $x,\eta \in \R^d$.
Define $a^2= \eta\cdot \Lambda\inv \eta$.
Then
\begin{equation} \label{eq:C=G}
\bg(x;\eta,\Lam)
=  \frac 1 {\sqrt{\det \Lam}}
\mG_{a}( \Lam^{-1/2} x) e^{x\cdot \Lam\inv \eta}
=  \frac {1} { \abs x^{d-2} \sqrt{\det \Lam}}
\mG_{a\abs x}(   \Lam^{-1/2} \hat x )
	e^{x\cdot \Lam\inv \eta} .
\end{equation}
\end{lemma}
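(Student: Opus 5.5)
The plan is to complete the square in the exponent of the heat kernel and then change variables so that $\Lambda$ becomes the identity. Expanding the quadratic form in \eqref{eq:def_rho-app}, I would write
\begin{equation*}
\frac{1}{2t}(x-t\eta)\cdot\Lambda\inv(x-t\eta)
= \frac{1}{2t}\, x\cdot\Lambda\inv x - x\cdot\Lambda\inv\eta + \frac t2\, \eta\cdot\Lambda\inv\eta .
\end{equation*}
Here the symmetry of $\Lambda$ is what makes the two cross terms combine into $-x\cdot\Lambda\inv\eta$. The middle term does not depend on $t$, so it factors out of the time integral as $e^{x\cdot\Lambda\inv\eta}$. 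The last term is $\frac t2 a^2$ by the definition of $a$. For the first term, I set $y=\Lambda^{-1/2}x$, using the symmetric positive-definite square root, which gives $x\cdot\Lambda\inv x=\abs y^2$.

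Substituting these three pieces, I get
\begin{equation*}
\rho_t(x;\eta,\Lambda)=\frac{1}{\sqrt{\det\Lambda}}\,\rho_t(\Lambda^{-1/2}x;0,\mathrm{Id})\,e^{-\frac12 ta^2}\,e^{x\cdot\Lambda\inv\eta}.
\end{equation*}
Integrating over $t\in(0,\infty)$ and comparing with \eqref{eq:def_mG-app} then gives the first equality in \eqref{eq:C=G}. This is a straightforward use of Tonelli's theorem, because every factor is nonnegative.

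For the second equality, I would apply the scaling identity \eqref{eq:Gscaling} with $s=\abs x$, which gives
\begin{equation*}
\mG_a(\Lambda^{-1/2}x)=\abs x^{-(d-2)}\,\mG_{a\abs x}(\Lambda^{-1/2}\hat x).
\end{equation*}
This uses $x\neq 0$. If $x=0$ is allowed, only the first form of \eqref{eq:C=G} is meaningful, and I would state the lemma that way.

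There is no real obstacle here. The only point that needs care is finiteness: both sides may be $+\infty$ at the same time, namely for $d\le2$ with $a=0$. The identity still holds in $[0,\infty]$ in that case, since we are integrating nonnegative functions.
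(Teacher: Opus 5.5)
Your proof is correct, but it takes a different route from the paper's proof of this lemma. The second equality is obtained the same way in both, via the scaling identity with $s=|x|$.

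For the first equality, the paper works in Fourier space. It notes that $\hat\mG_a(k)^{-1}=\frac12|k|^2+\frac12a^2$ and $\hat\bg(k;\eta,\Lambda)^{-1}=\frac12k\cdot\Lambda k+ik\cdot\eta$. It then checks that evaluating $\hat\mG_a$ at the complex point $\Lambda^{1/2}k+i\Lambda^{-1/2}\eta$ reproduces $\hat\bg(k;\eta,\Lambda)$. In that formulation the tilt $e^{x\cdot\Lambda^{-1}\eta}$ appears as an imaginary shift of the frequency variable.

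You instead complete the square directly in the heat-kernel exponent. This is exactly the computation the paper itself performs in the proof of Lemma~\ref{lem:CBess}. Your route is more elementary and fully rigorous. Every integrand is nonnegative, so Tonelli applies, and the identity holds in $[0,\infty]$ even in the degenerate cases you flag.

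The Fourier argument does convey a nice structural point: tilting by $\Lambda^{-1}\eta$ turns the drifted operator into a massive one. Taken literally, however, it needs extra justification. The imaginary shift $\Lambda^{-1/2}\eta$ has Euclidean norm exactly $a$, which is the borderline exponential decay rate of $\mG_a$. Correspondingly, $\bg(\cdot;\eta,\Lambda)$ is not integrable (its total integral is $\int_0^\infty \D t=\infty$), and its transform is singular at $k=0$. So the transform identity holds only in a limiting or distributional sense. Your argument avoids this issue entirely.
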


\begin{proof}
The second equality of \eqref{eq:C=G} follows from \eqref{eq:Gscaling}.
For the first equality, we take the Fourier transform
and observe that
\begin{equation}
\hat \mG_a(k) \inv = \half \abs k^2 +  \half a^2 ,\qquad
\hat \bg(k;\eta,\Lam) \inv = \half k \cdot \Lam k + i k \cdot \eta .
\end{equation}
If $\mu \in \Rd$, we have
\begin{equation}
\hat \mG_a( \Lam^{1/2} k + i \Lam^{1/2} \mu) \inv
= \half k \cdot \Lam k  + i \mu \cdot \Lam k
	- \half \mu \cdot \Lam \mu  + \half a^2 ,
\end{equation}
so the choice $\mu = \Lambda^{-1}\eta$
and $a^2  = \mu \cdot \Lam \mu = \eta \cdot \Lam\inv \eta$ gives
$\hat \bg(k;\eta,\Lam)
=\hat \mG_a( \Lam^{1/2} k + i \Lam^{1/2} \mu) $.
Taking the inverse Fourier transform then gives the first equality of \eqref{eq:C=G}.
\end{proof}

\begin{lemma} \label{lem:G1}
For $d \ge 1$ and $\phi \ge 0$,
\begin{equation}
    \int_{\Rd} |y|^\phi  \mG_{1}(y) \D y
      =
    2^{\phi+1}
    \frac{\Gamma( \frac{\phi+2}{2})\Gamma( \frac{\phi+d}{2} )}{\Gamma(\frac d2)}
    .
\end{equation}
\end{lemma}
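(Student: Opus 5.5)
We compute the integral by substituting the definition of $\mG_1$ and applying Fubini/Tonelli. Everything is nonnegative, so this is justified. We get
\begin{equation*}
\int_{\Rd} |y|^\phi \mG_1(y)\,\D y = \int_0^\infty \D t\, e^{-t/2} \int_{\Rd} |y|^\phi \frac{e^{-|y|^2/(2t)}}{(2\pi t)^{d/2}}\,\D y .
\end{equation*}
The inner integral is the $\phi$-th absolute moment of a centred Gaussian vector with covariance $t\,\mathrm{Id}$. Writing $y=\sqrt t\, w$, it equals $t^{\phi/2}\,\mathbb{E}|W|^\phi$ with $W$ standard normal in $\Rd$.

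To evaluate $\mathbb{E}|W|^\phi$, we use polar coordinates, or equivalently the fact that $|W|^2$ has the chi-squared distribution with $d$ degrees of freedom. This gives
\begin{equation*}
\mathbb{E}|W|^\phi = 2^{\phi/2}\,\frac{\Gamma(\frac{\phi+d}{2})}{\Gamma(\frac d2)} .
\end{equation*}
The computation is the substitution $r^2/2=s$ in $\int_0^\infty r^{\phi+d-1}e^{-r^2/2}\,\D r$, normalised by the same integral with $\phi=0$.

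The remaining time integral is
\begin{equation*}
\int_0^\infty t^{\phi/2} e^{-t/2}\,\D t = 2^{\phi/2+1}\,\Gamma\Bigl(\frac{\phi+2}{2}\Bigr),
\end{equation*}
which is finite for all $\phi\ge 0$. Multiplying the two factors gives $2^{\phi+1}\Gamma(\frac{\phi+2}{2})\Gamma(\frac{\phi+d}{2})/\Gamma(\frac d2)$, as claimed.

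There is no real obstacle here. The only points requiring care are the powers of $2$ and confirming absolute integrability. The integrability holds for every $d\ge1$ because $a=1>0$ supplies the factor $e^{-t/2}$. As a sanity check, $\phi=0$ gives $A_0=2$, consistent with $\hat\mG_1(0)=2$ from $\hat\mG_a(k)^{-1}=\half|k|^2+\half a^2$.
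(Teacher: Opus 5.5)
Your proposal is correct and follows essentially the same route as the paper: Fubini, the rescaling $y=\sqrt t\,v$, the chi-squared moment $2^{\phi/2}\Gamma(\frac{\phi+d}{2})/\Gamma(\frac d2)$, and the Gamma integral $\int_0^\infty t^{\phi/2}e^{-t/2}\,\D t = 2^{(\phi+2)/2}\Gamma(\frac{\phi+2}{2})$. Your additional justification via Tonelli and your consistency check $A_0=\hat\mG_1(0)=2$ are both sound.
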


\begin{proof}
By definition, Fubini's theorem, and the change of variables $y = t^{1/2} v$,
\begin{align}
    \int_{\Rd} |y|^\phi  \mG_{1}(y) \D y
     &=
    \int_0^\infty e^{-\frac 12 t}
    \int_{\Rd} |y|^\phi \frac{1}{(2\pi t)^{d/2}} e^{-\frac{1}{2t} |y|^2}
    \D y \D t  \nl
    &=
    \int_0^\infty e^{-\frac 12 t} t^{\phi/2}
    \int_{\Rd} |v|^\phi  \frac{1}{(2\pi)^{d/2}} e^{-\frac 12 |v|^2}
    \D v \D t  .
\end{align}
The $v$-integral is the $\phi^{\rm th}$ moment of a chi-squared distribution with $d$ degrees of freedom, so
\begin{equation}
\int_{\Rd} |y|^\phi  \mG_{1}(y) \D y
= 2^{\phi / 2} \frac{ \Gamma( \frac{d+\phi} 2) } { \Gamma( \frac d 2) }
	\int_0^\infty e^{-\frac 12 t} t^{\phi/2}	 \D t
= 2^{\phi / 2} \frac{ \Gamma( \frac{d+\phi} 2) } { \Gamma( \frac d 2) }
	\cdot 2^{(\phi+2) / 2} \Gamma\Bigl(\frac{ \phi+2} 2 \Bigr) ,
\end{equation}
which is the desired result.
\end{proof}

\section{Proof of Theorem~\ref{thm:crossover} for $d\le 2$}
\label{app:1-2D}

We have proved Theorem~\ref{thm:crossover} for dimensions $d> 2$ in Section~\ref{sec:pf_crossover}.
In this appendix, we present the proof for the remaining dimensions $d\le 2$.
The proof differs in the way we take the limit $\theta \to 1$.

\begin{proof}[Proof of Theorem~\ref{thm:crossover} for $d\le 2$]
Suppose Assumptions~\ref{ass:Omega} and \ref{ass:J} hold.
Let $x\ne 0$, $\theta \in [0,1)$, and $\mux\in \del \Omega$ be given by Proposition~\ref{prop:geom-intro}.
As in \eqref{eq:S_int_pf}, we have
\begin{equation}
S(x) e^{\theta \mu_{\hat x}\cdot x}
= \int_\Td \frac{\D k }{(2\pi)^d}
	\frac { \hat \g\supk{\theta \mux} (k)  } { 1 - \hat J\supk{\theta \mux} (k) }   e^{ik\cdot x}
\end{equation}
and want to take $\theta \to 1$, but we lack
a dominating function when $d\le 2$.
Instead, we use the identity $z\inv = \int_0^\infty e^{-t z} \D t$ for $\Re(z) > 0$ and Fubini's theorem (which applies when $\theta<1$) to write
\begin{equation} \label{eq:S_int_low}
S(x) e^{\theta \mu_{\hat x}\cdot x}
= \int_0^\infty \D t    \int_\Td \frac{\D k }{(2\pi)^d} e^{ik\cdot x}
	\hat \g\supk{\theta \mux} (k)
		e^{-t[ 1 - \hat J\supk{\theta \mux} (k) ] }
= \int_0^\infty \D t  \,e^{-t[ 1- \hat J\supk{\theta \mux} (0) ] } I_{t,\theta}(x)
\end{equation}
with
\begin{equation}
I_{t,\theta}(x) =  \int_\Td \frac{\D k }{(2\pi)^d}  e^{ik\cdot x}
	\hat \g\supk{\theta \mux} (k)
	e^{-t[ \hat J\supk{\theta \mux} (0) - \hat J\supk{\theta \mux} (k) ] }  .
\end{equation}
We will prove below that
there exists a $\delta > 0$ for which
\begin{equation} \label{eq:It_theta_claim}
\int_0^\infty \D t
	\sup_{ \theta \in [1- \delta ,1]} \abs{ I_{t,\theta}(x) }
	< \infty.
\end{equation}
Given this, and using $\hat J\supk{\theta \mux} (0) \le 1$ from Proposition~\ref{prop:geom-intro}, we see that $\sup_{ \theta \in [1- \delta ,1]} \abs{ I_{t,\theta}(x) }$ is a dominating function for taking the limit of \eqref{eq:S_int_low} as $\theta \to 1$.
Also, since $\mu \mapsto \hat J\supmu(k)$ and $\mu \mapsto \g\supmu(k)$ are both continuous functions on $\mu \in \overline \Omega$, we have (using Bounded Convergence Theorem for $I_{t,\theta}$)
\begin{equation}
\lim_{\theta \to 1}
	e^{-t[ 1- \hat J\supk{\theta \mux} (0) ] } I_{t,\theta}(x)
= e^{-t[ 1- \hat J\supk{\mux} (0) ] } I_{t,1}(x) .
\end{equation}
Then, since $\hat J\supmux(0) = 1$
and $\eta_\hatx = \abs{ \eta_\hatx} \hat x$ by Proposition~\ref{prop:geom-intro},
it follows from dominated convergence and Theorem~\ref{thm:deconv-intro} that
\begin{equation}
S(x) e^{\mu_{\hat x}\cdot x}
= \int_0^\infty \D t  \, I_{t,1}(x)
= \bg(x; \eta_\hatx, \Lam_\hatx) \Bigl[\hat \g\supk{\mux} (0)  + O\Bigl(\frac 1 {\abs x^\eps}\Bigr) \Bigr]
\end{equation}
as $\abs x\to \infty$, subject to the condition that
$\abs x \abs{ \eta_x } \ge \so' > 0$ for some $\so' > 0$.
Since $\abs { \eta_x } \gtrsim \mS $ by Proposition~\ref{prop:geom-intro}, we do have this condition by the hypothesis that $\mS  \abs x \ge \so > 0$.
This gives the desired result after multiplication by $e^{-\mu_{\hat x}\cdot x} = e^{- \mS  \abs x_S}$, using Corollary~\ref{cor:norm}.

It remains to prove \eqref{eq:It_theta_claim}.
We write $\eta_\theta = \sum_{y\in \Zd} y J\supk{\theta \mux}(y)$.
Since the function $\mu \mapsto  \sum_{y\in \Zd} y J\supk{\mu}(y)$ is continuous on $\mu \in \overline \Omega$, we can take $\delta > 0$ sufficiently small so that $\abs{ \eta_\theta - \eta_1 } \le \half \abs{ \eta_1}$ for all $\theta \in [1-\delta, 1]$.
Using Lemma~\ref{lem:It_decay}, for $n=0,2$ we have
\begin{equation}
\abs{ I_{t,\theta} (x) }
	\lesssim \frac 1 {(1+\sqrt t  )^d}
	\biggl( \frac{ 1 + \sqrt t } { \abs{ x - \integer{t \eta_\theta} } } \biggr)^n,
\end{equation}
with constants depending only on $n,d,M,\KIR$.
Let
\begin{equation}
T = 4 \times \frac{ \abs x + \half \sqrt d } { \abs{ \eta_1 } } .
\end{equation}
We use the $n=0$ bound when $t \le T$ and use the $n=2$ bound when $t\ge T$.
For $t\ge T$,
\begin{equation}
\abs{ x - \integer{t \eta_\theta} }
\ge \abs{ x - {t \eta_\theta} } -  \abs{  t \eta_\theta - \integer{t \eta_\theta} }
\ge t \frac{ \abs {\eta_1} }2 - \abs x - \half \sqrt d
\ge  t \frac{ \abs {\eta_1} }4 .
\end{equation}
Thus, we get
\begin{equation}
\sup_{ \theta \in [1- \delta ,1]} \abs{ I_{t,\theta}(x) }
\lesssim \begin{cases}
1						& (t \le T) \\
\frac 1 {(1+\sqrt t  )^d}
	\bigl( \frac{ 1 + \sqrt t } { t \abs {\eta_1 } } \bigr)^2
						& (t \ge T) ,
\end{cases}
\end{equation}
which is integrable over $t$ in all dimensions $d > 0$.
This completes the proof.
\end{proof}

\section*{Acknowledgements}
The work of YL was partly undertaken at the University of British
Columbia.
The work of YL was supported in part by the National Natural Science Foundation of China (Grant No.~12595284, 12595280).
The work of both authors was supported in part by the Natural Sciences and Engineering Research Council of Canada (NSERC), Grant No.~GR010086.
We thank Romain Panis for comments on a preliminary version and Yvan
Velenik for advice on the literature.


\end{document}